\DeclareRobustCommand{\arr}{%
 \mathrel{\mathpalette\short@to\relax}%
}
\newcommand{\short@to}[2]{%
  \mkern2mu
  \clipbox{{.3\width} 0 0 0}{$\m@th#1\vphantom{+}{\shortrightarrow}$}%
  }
\def\ol#1{\overline{#1}}
\def\wt#1{\widetilde{#1}}
\theoremstyle{plain}
    \newtheorem{thmint}{Theorem}
    \newtheorem{theorem}{Theorem}[section]
    \newtheorem{proposition}[theorem]{Proposition}
    \newtheorem{lemma}[theorem]{Lemma}
    \newtheorem{corollary}[theorem]{Corollary}
    \newtheorem{corint}[thmint]{Corollary}
      \newtheorem*{question*}{Question}
\theoremstyle{definition}
    \newtheorem{definition}[theorem]{Definition}
    \newtheorem{example}[theorem]{Example}
    \newtheorem{remark}[theorem]{Remark}
\def\Alphabet{A,B,C,D,E,F,G,H,I,J,K,L,M,N,O,P,Q,R,S,T,U,V,W,X,Y,Z}
\def\grabet{a,b,c,d,e,f,g,h,i,j,k,l,m,n,o,p,q,r,s,t,u,v,w,x,y,z}
\def\endpiece{xxx}
\def\makeAlphabet[#1]{\expandafter\makeA#1,xxx,}
\def\makealphabet[#1]{\expandafter\makea#1,xxx,}
\def\makeA#1,{\def\temp{#1}\ifx\temp\endpiece\else%
\mkbb{#1}\mkfrak{#1}\mkbf{#1}\mkcal{#1}\mkscr{#1}\mkbs{#1}\expandafter\makeA\fi}%
\def\makea#1,{\def\temp{#1}\ifx\temp\endpiece\else\mkfrak{#1}\mkbf{#1}\mkbs{#1}\expandafter\makea\fi}%
\def\mkbb#1{\expandafter\def\csname bb#1\endcsname{\mathbb{#1}}}
\def\mkfrak#1{\expandafter\def\csname fr#1\endcsname{\mathfrak{#1}}}
\def\mkbf#1{\expandafter\def\csname b#1\endcsname{\mathbf{#1}}}
\def\mkcal#1{\expandafter\def\csname c#1\endcsname{\mathcal{#1}}}
\def\mkscr#1{\expandafter\def\csname s#1\endcsname{\mathscr{#1}}}
\def\mkbs#1{\expandafter\def\csname bs#1\endcsname{{\boldsymbol{#1}}}}
\def\makeop[#1]{\xmakeop#1,xxx,}
\def\mkop#1{\expandafter\def\csname #1\endcsname{{\mathrm{#1}}}} %
\def\xmakeop#1,{\def\temp{#1}\ifx\temp\endpiece\else\mkop{#1}\expandafter\xmakeop\fi}%
\def\makeup[#1]{\xmakeup#1,xxx,}
\def\mkup#1{\expandafter\def\csname #1\endcsname{{\mathrm{#1}\,}}} %
\def\xmakeup#1,{\def\temp{#1}\ifx\temp\endpiece\else\mkup{#1}\expandafter\xmakeup\fi}%
\def\state{\eta}
\def\bsxi{{\boldsymbol{\xi}}}
\def\State{S^X}
\def\bsone{\boldsymbol{1}}
\def\Group{G}
\def\gra{\alpha}
\def\grb{\beta}
\def\grc{\gamma}
\def\grs{\sigma}
\def\grt{\tau}
\def\grhom{\pi}
\def\diam#1{\operatorname{diam}(#1)}
\def\La{\Lambda}
\def\subl{Y}
\def\sublc{Y^c}
\def\ic{\hat\imath}
\def\CS{C(\State_*)}
\def\CuS{C^0_\unif(\State)}
\def\nabe{\nabla_{\!e}}
\def\FC{\sF\kern-0.5mm\sC}
\def\Ba{B}
\def\bsx{x}
\def\bsy{y}
\def\indj{j}
\def\indi{i}
\def\Val{\cM}
\begin{document}

\setcounter{tocdepth}{1}
\newpage
\title[Topological Structures of Large Scale Interacting Systems]{Topological Structures of Large Scale Interacting Systems via Uniform Functions and Forms}
\author[Bannai]{Kenichi Bannai}\email{bannai@math.keio.ac.jp}
\author[Kametani]{Yukio Kametani}\email{kametani@math.keio.ac.jp}
\author[Sasada]{Makiko Sasada}\email{sasada@ms.u-tokyo.ac.jp}
\thanks{This work was supported in part by JST CREST Grant Number JPMJCR1913, KAKENHI 18H05233,
and the UTokyo Global Activity Support Program for Young Researchers.}
\address[Bannai, Kametani]{Department of Mathematics, Faculty of Science and Technology, Keio University, 3-14-1 Hiyoshi, Kouhoku-ku, Yokohama 223-8522, Japan.}
\address[Sasada]{Department of Mathematics, University of Tokyo, 3-8-1 Komaba, Meguro-ku, Tokyo 153-0041, Japan.}
\address[Bannai, Sasada]{Mathematical Science Team, RIKEN Center for Advanced Intelligence Project (AIP),1-4-1 Nihonbashi, Chuo-ku, Tokyo 103-0027, Japan.}

\date{\today}
\begin{abstract}
	In this article, we investigate the topological structure of
	large scale interacting systems on infinite graphs,
	by constructing a suitable cohomology 
	which we call the \textit{uniform cohomology}.
	The central idea for the construction is the introduction of a class of functions called \emph{uniform functions}.
	Uniform cohomology provides a new perspective for the identification of macroscopic observables
	from the microscopic system.
	As a straightforward application of our theory 
	when the underlying graph has a free action of a group, 
	we prove a certain decomposition theorem for shift-invariant closed uniform forms.
	This result is a uniform version in a very general setting of the 
	decomposition result for shift-invariant closed $L^2$-forms originally proposed by Varadhan,
	which has repeatedly played a key role in the proof of the
	hydrodynamic limits of \textit{nongradient} large scale
	interacting systems.  In a subsequent article \cite{BS21L2}, we use this result as a key to 
	prove Varadhan's decomposition theorem for a general class of large scale interacting systems.
\end{abstract}

\subjclass[2020]{Primary: 82C22, Secondary: 05C63, 55N91, 60J60, 70G40} 
\maketitle

\tableofcontents

%
%
%
\section{Introduction}
%
%
%

%
\subsection{Introduction}\label{subsec: introduction}
%

One of the fundamental problems in the natural and social sciences 
is to explain macroscopic phenomena that we can observe
from the rules governing the microscopic system
giving rise to the phenomena. \textit{Hydrodynamic limit} provides a 
rigorous mathematical method to derive
the deterministic partial differential equations describing the time evolution of macroscopic parameters, 
from the stochastic dynamics of a microscopic large scale interacting system.
The heart of this method is to take the limit with respect to proper space-time
scaling, so that the law of large numbers absorbs the large degree of freedom of 
the microscopic system, allowing to extract the behavior
of the macroscopic parameters which characterize the equilibrium states of the microscopic system.
Hence, techniques from probability theory including various estimates on Markov processes and their stationary
distributions have played a central role.
In this article, we introduce a novel, geometric
perspective to the theory of hydrodynamic limits.
Instead of using the law of large numbers, we construct a new cohomology theory for
microscopic models to identify the macroscopic observables and give 
interpretations to the mechanism giving the macroscopic partial differential equations.
Our main theorem 
gives an analogue of Varadhan's decomposition of closed $L^2$-forms,
which has played a key role in the proofs of the
hydrodynamic limits of \textit{nongradient} 
systems.  

Initially, many of the techniques developed in the theory of hydrodynamic limit
were specific to the 
interacting system under consideration.
In the seminal article \cite{GPV88}, Guo, Papanicolaou and Varadhan
introduced a widely applicable strategy known as the \textit{entropy method}
 for proving the hydrodynamic limit
when the interacting system satisfies a certain condition known as the \textit{gradient condition}.
Furthermore, Varadhan in \cite{Var93} introduced 
a novel, refined strategy for 
systems which do not necessarily satisfy the gradient condition,
relying on proving the so-called \textit{decomposition of closed $L^2$-forms}.
Although this strategy has been successful in proving the hydrodynamic
limit for a number of 
nongradient systems \cites{Qua92,KLO94,FUY96,VY97,Sas10,Sas11,OS13},
the implementation in practice has proven notoriously
difficult, requiring arguments with sharp spectral gap
estimates specific to the system under consideration (see for example \cite{KL99}*{Section 7}).
Due to the restrictiveness of the gradient condition,
many interesting microscopic systems are known or expected to be nongradient.
Thus it is vital to understand the mechanism of Varadhan's strategy and construct model independent 
criteria for implementation applicable to a wide variety of models.


The motivation of this article is
to systematically investigate various large scale interacting systems
in a unified fashion, 
especially to understand the mechanism in which similar decompositions
seemingly independent of the stochastic data appear
 in the proofs of the hydrodynamic limits.
 For this goal, we introduce a general framework encompassing
a wide variety of interacting systems,
including the different variants of the 
exclusion processes and the lattice-gas with energy
(see Examples \ref{example: interactions} and \ref{example: interactions2}).
We let $X$ be a certain infinite graph which we call a \textit{locale}, generalizing the
typical Euclidean lattice modeling the space where the
microscopic dynamics takes place. 
We let $S$ be a set expressing the possible states at each vertex,
such as the number of particles or amount of energy,
and let $\State\coloneqq\prod_{x\in X}S$
be the \textit{configuration space} expressing all of the possible configurations of states on $X$. 
The dynamics of a microscopic stochastic system is usually expressed by a generator.
However, in our framework, we focus on the \textit{interaction} $\phi$  --
a certain map $\phi\colon S\times S\rightarrow S\times S$
encoding the permitted change in states on adjacent vertices (see Definition \ref{def: interaction}).
The interaction gives $\State$ a geometric structure, that of a graph whose edges correspond to the 
\textit{transitions}, i.e., all possible change of the configuration at a single instant
 (see \S \ref{subsec: model}).  
This structure is independent of the
\textit{transition rate} -- stochastic data which encodes the expected frequency of the transitions.


In this article, we construct the \textit{uniform cohomology} 
reflecting the topological property of the geometric structure of $\State$,
by replacing the space of functions on $\State$ with a new class of functions called the 
\textit{uniform functions}, which considers the distances between the vertices of the locale.
Our key result, Theorem \ref{thm: B}, states that under general assumptions,
the \textit{zeroth} uniform cohomology is isomorphic to
the space of conserved quantities -- functions on $S$ whose sums are conserved by $\phi$.
This cohomology is finite dimensional even though $\State$ in general
has an infinite number of connected components.
For the cases where the hydrodynamic limit is proven, conserved quantities 
are known to correspond to the macroscopic parameters which characterize the equilibrium (or stationary) measures
of the microscopic system.
Thus, we believe uniform cohomology 
gives an alternative justification
for the origin of the macroscopic observables.
In addition, Theorem \ref{thm: B} also states that the uniform cohomology of $\State$
 for degrees other than \textit{zero} vanish.
The essential case is for degree \textit{one}, where we prove that any 
closed uniform form is the differential of a uniform function.

Our main theorem, Theorem \ref{thm: A}, gives a 
certain structure theorem for closed uniform forms
that are shift-invariant, i.e.,\
invariant by the action of a group.
Here, we assume the existence of a free action of a group on the locale,
which ensures a certain homogeneity. 
The theorem is obtained as a straightforward application of group cohomology to Theorem \ref{thm: B}.
If we choose a fundamental domain of $X$ for the action of the group,
then we obtain a decomposition theorem in the spirit of the decomposition of Varadhan
(see Theorems \ref{thm: V},  \ref{thm: A1} and \ref{thm: A2} of \S \ref{subsec: main}).
The closed forms of Varadhan are $L^2$-forms for the equilibrium measure
arising from the choice of the transition rate.
Although uniform functions and forms are defined algebraically without the need for any 
stochastic data, 
our shift-invariant forms in fact form a common core of the
various spaces of shift-invariant $L^2$-forms constructed for each choice of the transition rate,
and will in subsequent research play a crucial role in proving Varadhan's decomposition for $L^2$-forms
(see for example \cite{BS21L2}).
Our main theorem indicates that the specification of Varadhan's 
decomposition is determined by the underlying geometric
structure of the model.  Moreover, our theory gives a cohomological 
interpretation of the dimension of the space of 
shift-invariant closed forms modulo the exact forms -- whose origin up until now had been a mystery.
The proof of our main theorem does not require 
any spectral gap estimates
and can be applied universally to a wide variety of systems.


Currently, all existing research concerning hydrodynamic limits  
for nongradient systems deal exclusively with the case when the locale is the Euclidean lattice $\bbZ^d$,
with an action of 
$\Group=\bbZ^d$ given by the 
translation.
Our decomposition theorem 
is valid for far more general infinite locales and groups, including 
various crystal lattices with their group of translations and Cayley graphs associated
to finitely generated infinite groups with natural action of the group.
The theorem is also true for systems with multiple linearly independent conserved quantities.
Our result provides crucial insight into the formulation of
Varadhan's decomposition in these general settings.
One of our goals is to find a more intuitive and universal proof of the hydrodynamic limits for nongradient models.
In a subsequent article \cite{BS21L2}, we prove 
Varadhan's decomposition theorem for a general class of models, where the main result of the present article 
is the key for the generalization. Furthermore, we will use this decomposition theorem to perform scaling limits for the general class of models.

Our theory is constructed from scratch, using only algebraic and combinatorial methods.
In particular, no probability theory, measure theory, or analytic methods are used.
Most importantly, we have taken care to make this article including the proof of our main result
\textit{self-contained}, except for the proof of the 
well established long exact sequence arising in group cohomology (see \S\ref{subsec: group}).
Thus, we believe our article should be accessible to mathematicians in a wide 
range of disciplines.
We hope this article would introduce to a broad audience 
interesting mathematical concepts related to typical large scale 
interacting systems, and to researchers in probability theory 
potentially powerful cohomological techniques that may be relevant in identifying
important structures of stochastic models.

The remainder of this section is as follows.
In \S\ref{subsec: model}, we describe our framework and present some examples.
Then in \S\ref{subsec: main}, we state Theorem \ref{thm: A}, 
the main theorem of our article, asserting the decomposition for shift-invariant uniform
closed forms.  We then explain its relation to the decomposition by Varadhan.
Finally, in \S\ref{subsec: overview}, we provide an overview of our article
and the outline of the proof of our main theorem.

%
\subsection{The Large Scale Interacting System}\label{subsec: model}
%

In this subsection, we introduce the various objects in our framework describing
large scale interacting systems and give natural assumptions which ensure our main theorem.
The precise mathematical definitions of the objects in the triplet $(X,S,\phi)$ given in 
\S\ref{subsec: introduction} are as follows.
We define a locale $(X,E)$ to 
be any locally finite simple symmetric directed graph which is connected 
(see \S\ref{subsec: configuration} for details). 
Here, $X$ denotes the set of vertices and $E\subset X\times X$ 
denotes the set of directed edges of the locale. We regard a locale $(X,E)$ as a metric space equipped with the graph distance. By abuse of notation (see Remark \ref{rem: abuse}), we will 
often denote the locale $(X,E)$ with the same symbol as its set of vertices $X$.
The condition that $X$ is connected and locally finite implies that the set of vertices of $X$ is countable.
If the set of vertices is an infinite set, then we say that $X$ is an \textit{infinite locale}.
We define the set of states $S$ as a nonempty set with a designated element $*\in S$
which we call the \textit{base state}, and we define the \textit{symmetric binary interaction}, 
or simply an interaction $\phi$ on $S$ to be a 
 map $\phi\colon S\times S\rightarrow S\times S$
such that
for any pair of states $(s_1,s_2)\in S\times S$ satisfying $\phi(s_1,s_2)\neq(s_1,s_2)$,  
we have  $\ic\circ\phi\circ\ic\circ\phi(s_1,s_2)=(s_1,s_2)$,
where $\ic\colon S\times S\rightarrow S\times S$
is the bijection obtained by exchanging the components of $S\times S$.
The ordering of $S\times S$ determines the direction of the interaction,
and the condition intuitively means that if we execute the interaction and if it is nontrivial,
then further executing the interaction in the reverse direction takes us back to where we started.
To realize the full large scale interacting system, we also need to 
choose a transition rate.  However, this is outside the scope of the current article.

The most typical example of an infinite locale is given by the \textit{Euclidean lattice} 
$\bbZ^d=(\bbZ^d,\bbE^d)$ for integers $d\geq 1$, where
$\bbZ^d$ is the $d$-fold product of the set of integers $\bbZ$, and
\[
	\bbE^d\coloneqq \bigl\{  (\bsx,\bsy) \in \bbZ^d\times\bbZ^d \,\,\big|\,\,  |\bsx-\bsy|=1\bigr\}.
\]
Here, we let $|\bsx-\bsy|\coloneqq\sum_{\indj=1}^d|x_\indj-y_\indj|$ for any 
$\bsx=(x_1,\ldots,x_d)$, $\bsy=(y_1,\ldots,y_d)$ in $\bbZ^d$.
Crystal lattices such as the triangular and hexagonal lattices as well as Cayley graphs
associated to finitely generated infinite groups  (see Figure \ref{fig: 1})
are other examples of infinite locales.
\begin{figure}[ht]
	\centering
	\includegraphics[width=13cm]{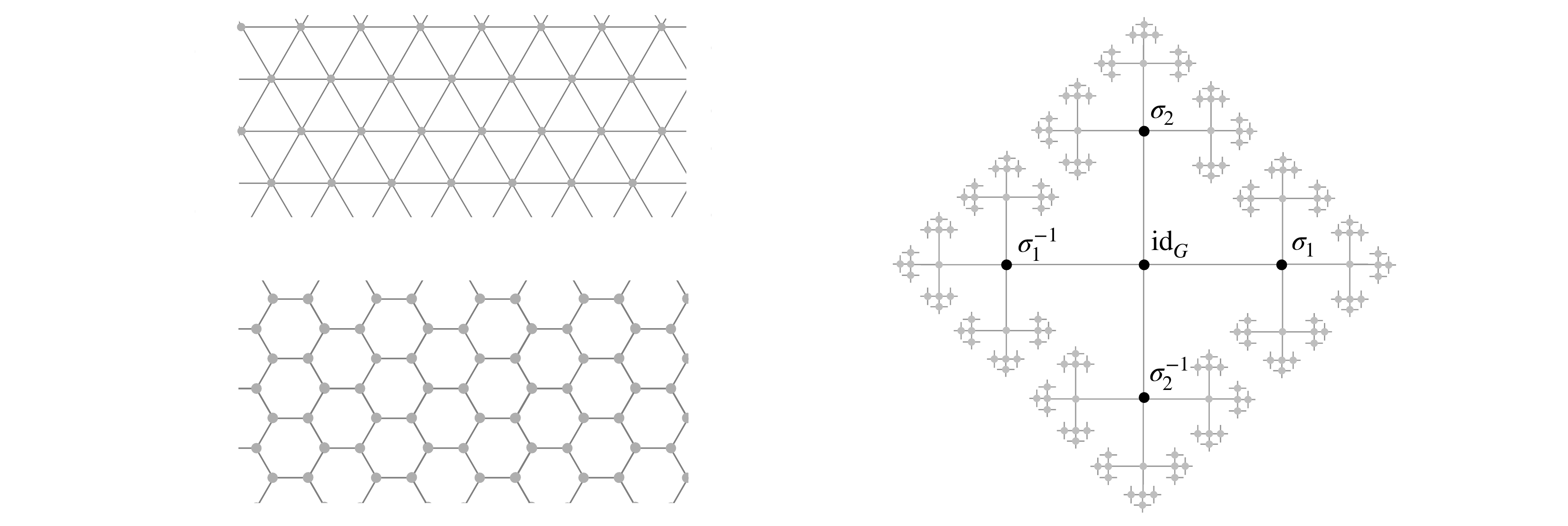}
	\caption{The Triangular and Hexagonal Lattices, and the Cayley graph for the free group
	$G$ generated by $\grs_1$ and $\grs_2$ (see Example \ref{example: locale} (4))}\label{fig: 1}
\end{figure}

We say that a locale is \textit{weakly transferable}, if for any ball $\Ba\subset X$,
the complement $X\setminus\Ba$ is a nonempty finite disjoint union of connected infinite graphs. 
By definition, a weakly transferable locale is an infinite locale.
We will also consider a stronger condition on the locale which we call \textit{transferable}
(see Definition \ref{def: transferable} for the precise definition).
Immediately from the definition, we see that if for any ball $\Ba\subset X$,
the complement $X\setminus\Ba$ is a connected infinite graph,
then $X$ is transferable.  The Euclidean lattice $\bbZ^d=(\bbZ^d,\bbE^d)$ for $d>1$, crystal lattices
such as the triangular and hexagonal lattices, as well as the Cayley graph
for a finitely generated free group generated by $d>1$ elements give examples of transferable locales
(see Remark \ref{rem: Cayley}).
The Euclidean lattice $\bbZ=(\bbZ,\bbE)$ for $d=1$, which is also 
the Cayley graph for a free group generated by one element, gives an example of a weakly transferable locale
which is not transferable.
See Example \ref{example: locale} in \S \ref{sec: configuration} for other examples of locales.

{We call the triplet $(X,S,\phi)$ as above a \textit{topological interacting system},
or simply a \emph{system} for short.}
For the system $(X,S,\phi)$, we define the configuration space $\State$ by 
\[
	\State\coloneqq\prod_{x\in X} S.
\]
We call any element $\state\in\State$ a \textit{configuration},
and we denote by $\star$ the \textit{base configuration}, defined to be the
configuration whose components are all at base state.
Next, for any $\state\in\State$ and $e=(x_1,x_2)\in E$, we let $(\eta'_{x_1}, \eta'_{x_2}) =\phi(\eta_{x_1},\eta_{x_2})$.
We define $\state^e=(\eta_x^e)\in\State$ by
\begin{equation}\label{eq: se}
	\eta_x^e\coloneqq
	\begin{cases}
		\eta_x  &   x\neq x_1,x_2\\
		\eta'_x & x=x_1,x_2.\\
	\end{cases}
\end{equation}
The transition structure on $\State$,
expressing all possible change of the configuration at a single instant,
is defined as
$\Phi\coloneqq\{(\state,\state^e)\mid\state\in\State, e\in E\}\subset\State\times\State$.
Then $(\State,\Phi)$ is a symmetric directed graph (see Lemma \ref{lem: SDG} for a proof).
Again by abuse of notation, we will often denote the graph $(\State,\Phi)$ by $\State$ 
(see also Remark \ref{rem: abuse}).  We remark that generally, 
$\State$ on an infinite locale  is not connected,
simple nor locally finite as a graph, and the set of vertices is not countable.


Next, we introduce the \textit{conserved quantity} for the interaction $\phi$,
which we define to be any function
$\xi\colon S\rightarrow\bbR$ satisfying $\xi(*)=0$ and
\begin{equation}\label{eq: conserve}
	\xi(s_1)+\xi(s_2)=\xi(s'_1)+\xi(s'_2)
\end{equation}
for any $(s_1,s_2)\in S\times S$, where $(s'_1,s'_2)\coloneqq\phi(s_1,s_2)$.
This function gives values reflecting the state, 
such as the number of particles or energy depending on the physical context, whose sums are 
conserved by the interaction.
We denote by $\Consv^\phi(S)$ the $\bbR$-linear space of all conserved quantities
for the interaction $\phi$.  

Denote by
$\State_*$ the subset of $\State$ consisting of elements $\state=(\eta_x)$ such that $\eta_x=*$ for all but finite $x\in X$.
Then $\State_*$ also has a structure of a graph induced from that of $\State$.
Note that If $X$ is finite, then $S^X_*=S^X$.
Any conserved quantity $\xi\in\Consv^\phi(S)$ defines a function $\xi_X\colon\State_*\rightarrow\bbR$
by $\xi_X(\state)\coloneqq\sum_{x\in X}\xi(\eta_x)$ for any $\state\in\State_*$.
Note that the sum is a finite sum since $\state\in\State_*$.
We call the value $\xi_X(\state)$ a conserved quantity of the configuration $\state$.

Throughout this article,
we let $\bbN=\{0,1,\ldots,\}$ denote the set of natural numbers.
We consider the following properties of an interaction, which will play an important
role in our main theorem.

\begin{definition}\label{def: conditions}
	For an interaction $\phi$ on $S$, let $c_\phi\coloneqq\dim_\bbR\Consv^\phi(S)$.
	\begin{enumerate}
	\item
	We say that the interaction $\phi$ is \textit{irreducibly quantified},
	if for any finite locale $X$, if the configurations
	$\state,\state'\in\State$ have
	the same conserved quantities, i.e.\ if $\xi_X(\state)=\xi_X(\state')$ for any 
	$\xi\in\Consv^\phi(S)$, then there exists a finite path (see \S\ref{subsec: configuration})
	from $\state$ to $\state'$ in $\State$.
	\item 
	We say that the interaction $\phi$ is \textit{simple}, if $c_\phi=1$,
	and for any nonzero conserved quantity $\xi\in\Consv^\phi(S)$,
	the monoid generated by $\xi(S)$ via addition in $\bbR$ 
	is isomorphic to $\bbN$ or $\bbZ$.
	\end{enumerate}
\end{definition}

A \textit{monoid} is defined to be a set with a binary operation 
that is associative and has an identity element, the first examples being
$\bbN$, $\bbZ$ or $\bbR$ with the operation being the usual addition and with identity element $0$.
Provided $c_\phi=1$, the second condition in Definition \ref{def: conditions} (2)
is satisfied for example if there exists $\xi\in\Consv^\phi(S)$ such that $\xi(S)\subset\bbN$ and $1\in\xi(S)$,
or  $\xi(S)\subset\bbZ$ and $\pm1\in\xi(S)$.

\begin{remark}
	The second condition of Definition \ref{def: conditions} (1) implies that
	any configurations with the same conserved quantities
	are in the same connected component of
	the configuration space $\State_*$.
	The configuration space on an infinite locale $X$ usually has an infinite number 
	of connected components.  If the above condition is satisfied,
	then we may prove that the connected components of $\State_*$ are
	characterized by its conserved quantities (see Remark \ref{rem: intrinsic}). 
	This condition is equivalent 
	to the condition that the associated stochastic process on the configurations 
	with fixed conserved quantities are \textit{irreducible}.
\end{remark}

The following are examples of interactions and corresponding conserved quantities.

\begin{example}\label{example: interactions}
	\begin{enumerate}
		\item The most basic situation is when $S=\{0,1\}$ with base state $*=0$.
		The map $\phi\colon S\times S\rightarrow S\times S$ defined by exchanging the 
		components of $S\times S$ is an interaction (see Figure \ref{fig: 2}). 
		The conserved quantity $\xi\colon S\rightarrow\bbN$ given by $\xi(s)=s$
		gives a basis of the one-dimensional $\bbR$-linear space $\Consv^\phi(S)$.
		This interaction is simple.
		The stochastic process induced from this interaction via a choice of a transition rate
		is called the \textit{exclusion process}.
		\begin{figure}[ht]
			\centering
			\includegraphics[width=13cm]{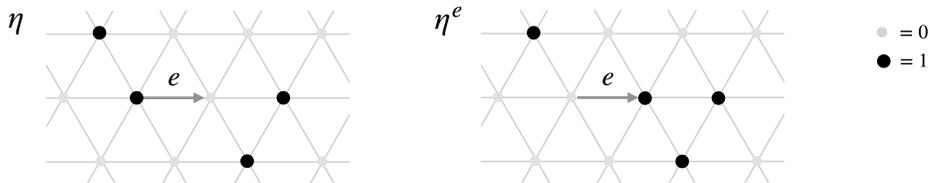}
			\caption{Exclusion Process}\label{fig: 2}
		\end{figure}
		\item Consider the case 
		$S=\{0,1,\ldots,\kappa\}$ with base state $*=0$ for some integer $\kappa>1$.
		The map $\phi\colon S\times S\rightarrow S\times S$ defined by exchanging the 
		components of $S\times S$ is an interaction (see Figure \ref{fig: 3}).  	
		For $\indi=1,\ldots,\kappa$, let $\xi^{(\indi)}$ be the conserved quantity given by
		$\xi^{(\indi)}(s)=1$ if $s=\indi$ and $\xi^{(\indi)}(s)=0$ otherwise.		
		Then $\xi^{(1)},\ldots,\xi^{(\kappa)}$ gives a basis of the $\bbR$-linear space $\Consv^\phi(S)$.
		The stochastic process induced from this interaction via a choice of a transition rate
		is called the \textit{multi-color exclusion process},
		or more generally, the \textit{multi-species exclusion process}.	
		\begin{figure}[ht]
			\centering
			\includegraphics[width=13cm]{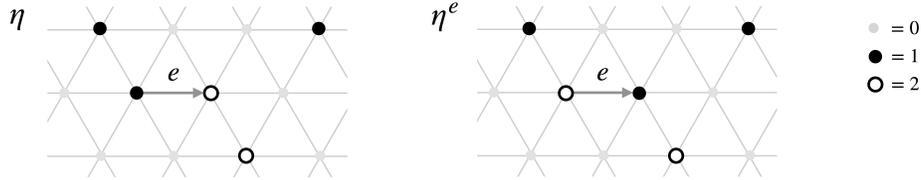}
			\caption{Multi-species Exclusion Process}\label{fig: 3}
		\end{figure}
		\end{enumerate}
		See Example \ref{example: interactions2} in \S\ref{subsec: CQ} for other examples of interactions
		covered by our theory.
		We will prove in Proposition \ref{prop: irreducibly quantified} that all of the interactions 
		in Examples \ref{example: interactions} and \ref{example: interactions2} are irreducibly quantified.
\end{example}

\begin{remark}\label{rem: interactions}
	For $s\in S$ in the interactions in Example \ref{example: interactions}, 
	$s=0$ describes the state where there are no particles
	at the vertex,
	and $s=i$ for an integer $i>0$ the state where there exists a particle of type labeled as $i$
	(referred to as color $i$ or species $i$)
	at the vertex.
	The \textit{exclusion} in the exclusion and the multi-species exclusion processes signify that 
	at most one particle is allowed to occupy each vertex.
	The conserved quantity $\xi^{(\indi)}$ returns $1$ if a particle of species $i$ occupies the vertex and $0$
	otherwise.  Then $\xi^{(\indi)}_{X}(\state)\coloneqq\sum_{x\in X}\xi^{(\indi)}(\eta_x)$ for a configuration $\state=(\eta_x)\in\State_*$
	on a locale $X$ expresses the total number of particles of species $k$ in the configuration.
\end{remark}

The hydrodynamic limits of the exclusion process for certain choices of transition rates of nongradient type
have been studied by Funaki, Uchiyama and Yau \cite{FUY96} and Varadhan and Yau \cite{VY97}
(see also Theorem \ref{thm: V} of \S\ref{subsec: main}).
For the multi-species exclusion process, a variant has been studied by
Quastel \cite{Qua92} and Erignoux \cite{Eri20}.
See Remark \ref{rem: hydrodynamic limit2} for other known cases
corresponding to the interactions given in Example \ref{example: interactions2}.
Up until now, all of the interacting systems of nongradient type whose hydrodynamic limits
have been proved are models over the Euclidean lattice.

Typical research in hydrodynamic limit investigates
the stochastic process of large scale interacting systems 
obtained from a specific interaction with a specific transition rate
on a specific locale.
One important purpose of this article is to construct a mathematical framework to 
study many types of models at once and to find specific conditions on the 
locale and interactions to allow for a suitable theory.  The notion of transferable locales
and irreducibly quantified interactions, which we believe are new and have not previously appeared in literature,
are steps in this direction.  The distinctive feature of our framework is the separation of the stochastic data
from the geometric data, as well as the separation of the set of states and the interaction from the underlying locale.
The theory works best when $S$ is discrete,
a case which already covers a wide variety of models.  In future research, we hope to generalize
our framework to include known models with more general $S$, such as $S=\bbR$ and $S=\bbR_{\geq0}$,
where a more subtle notion of uniform functions incorporating smoothness 
should be necessary for compatibility with existing models.

%
\subsection{Main Theorem and Relation to Varadhan's Decomposition}\label{subsec: main}
%

The goal of our article is to study 
the topological properties of the configuration space $\State$ with transition structure
via a newly defined class of uniform functions and forms.
In this subsection, we introduce Theorem \ref{thm: A}, 
which is the main theorem of this article,
giving a decomposition of shift-invariant closed uniform forms.
We will then discuss its relation to Varadhan's decomposition
of shift-invariant closed $L^2$-forms.

We first introduce notations concerning functions and forms on the configuration space with transition structure.
Consider the system $(X,S,\phi)$, and let $\State$ be the corresponding configuration space with transition structure.
For any set $A$, we let $C(A)\coloneqq\Map(A,\bbR)$ be the $\bbR$-linear space of functions from $A$ to $\bbR$.
We say that a function $f\in C(\State)$ is \textit{local}, if there exists a finite $\La\subset X$ such that
$f$ is in the image of $C(S^\La)$ with respect to the inclusion 
$C(S^\La)\hookrightarrow C(\State)$ induced 
from the projection $\State\rightarrow S^\La$. 
In other words, a function is local if it only depends on a configuration through a finite sublocale.
Any local function may be 
regarded as a function in $\CS$ via the map induced from the projection 
$\State_*\rightarrow S^\La$.
We denote by $C_\loc(\State)$ the space of local functions on $\State$,
which is a subspace of both $C(\State)$ and $\CS$.
We define the space of \textit{uniform functions} (see Definition \ref{def: uniform} for the precise definition) to be a certain 
$\bbR$-linear subspace $C_\unif(\State)$ of $\CS$ 
containing the space of local functions $C_\loc(\State)$,
and we let $C^0_\unif(\State)$ be the subspace of $C_\unif(\State)$
consisting of function $f$ satisfying $f(\star)=0$.
We define the space of closed uniform forms $Z_\unif^1(\State)$  (see Definitions \ref{def: closed} and \ref{def: unif form} for the precise definition) to be a certain 
$\bbR$-linear subspace of $\prod_{e\in E}C_\loc(\State)$, and
we define the differential $\partial\colon C^0_\unif(\State)\rightarrow Z_\unif^1(\State)$ by
$
	\partial f\coloneqq (\nabe f),
$
where $\nabe f$ for any $e\in E$
is the function defined by 
\begin{equation}\label{eq: differential}	
	\nabe f(\state)\coloneqq f(\state^e)-f(\state)
\end{equation}
for any $f\in C^0_\unif(\State)$ and $\state\in\State_*$.  
The differential $\partial$ is induced from the differential
of the standard cochain complex associated with the graph $(\State,\Phi)$ (see \S\ref{subsec: naive} and Appendix \ref{sec: A}).

For our main theorem,
we consider a locale with a free action of a group.
Let $\Group$ be a group, and we assume that the locale $X$ has a free action of a group $\Group$.
This induces actions of $\Group$ on various functions and forms.
Both the Euclidean lattice and crystal lattices such as the triangular and hexagonal lattices
of dimension $d$
have natural free actions of $\Group=\bbZ^d$.
For any $\bbR$-linear space with an action of $\Group$, we denote by $U^\Group$ the $\Group$-invariant 
subspace of $U$.  We will often say \textit{shift-invariant} to mean $G$-invariant
if the group $G$ is understood.
We denote by $\cC\coloneqq (Z^1_\unif(\State))^\Group$ 
the space of shift-invariant closed uniform forms, and by
$\cE\coloneqq \partial\bigl(C^0_\unif(\State)^\Group\bigr)$
the image by $\partial$ of the space of shift-invariant uniform functions.

\begin{remark}
	The existence of a free action of $\Group$ ensures
	that the locale $X$ is homogenous.
	We understand the quotient $\cC/\cE$ to philosophically represent the first uniform 
	cohomology of the quotient space
	$\State/\Group$, a topological space which can be interpreted as a model of an infinitesimal neighborhood
	of a macroscopic point.
\end{remark}

We denote by $\Hom(G,\Consv^\phi(S))$ the space of group homomorphisms from
$G$ to $\Consv^\phi(S)$.  In other words, $\psi\in\Hom(G,\Consv^\phi(S))$
is any map $\psi\colon G\rightarrow\Consv^\phi(S)$ satisfying $\psi(\sigma\tau)=\psi(\sigma)+\psi(\tau)$
for any $\sigma,\tau\in G$.
Our decomposition theorem giving the uniform analogue of Varadhan's decomposition
is as follows.

\begin{thmint}[=Theorem \ref{thm: main}]\label{thm: A}
	For the system $(X,S,\phi)$, assume that the interaction $\phi$
	is irreducibly quantified, and that $X$ has a free action of a group
	$\Group$. If $X$ is transferable, or if the interaction $\phi$ is simple and 
	$X$ is weakly transferable, then we have a canonical isomorphism	
	\begin{equation}\label{eq: delta0}
		 \cC/\cE\cong\Hom(G,\Consv^\phi(S)).
	\end{equation}
	Moreover, a choice of a fundamental domain for the action of $G$ on $X$
	gives a natural decomposition
	\[
		 \cC\cong \cE\oplus \Hom(G,\Consv^\phi(S))
	\]
	of $\bbR$-linear spaces.
\end{thmint}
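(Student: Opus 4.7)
The plan is to derive Theorem A formally from Theorem B via the long exact sequence in group cohomology, which the introduction notes is the sole external result used. Theorem B, under the given hypotheses, identifies $\Ker\partial$ with $\Consv^\phi(S)$ via $\xi\mapsto\xi_X$ where $\xi_X(\state)\coloneqq\sum_{x\in X}\xi(\eta_x)$, and makes $\partial\colon C^0_\unif(\State)\to Z^1_\unif(\State)$ surjective. Together these assertions amount to a short exact sequence of $\bbR$-linear spaces
\[
0 \to \Consv^\phi(S) \to C^0_\unif(\State) \xrightarrow{\partial} Z^1_\unif(\State) \to 0.
\]
The free $\Group$-action on $X$ makes each term a $\Group$-module with $\partial$ equivariant, while the induced action on $\Consv^\phi(S)$ is trivial since $\xi_X$ is manifestly shift-invariant.

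Applying the long exact sequence in group cohomology yields
\[
\Consv^\phi(S) \hookrightarrow C^0_\unif(\State)^\Group \xrightarrow{\partial} Z^1_\unif(\State)^\Group \xrightarrow{\delta} H^1\bigl(\Group,\Consv^\phi(S)\bigr) \to H^1\bigl(\Group,C^0_\unif(\State)\bigr).
\]
Since $\cC = Z^1_\unif(\State)^\Group$ and $\cE = \partial(C^0_\unif(\State)^\Group)$, exactness already provides a canonical injection $\cC/\cE \hookrightarrow H^1(\Group,\Consv^\phi(S))$, which is the map of \eqref{eq: delta0}. Upgrading it to an isomorphism amounts to showing that every $1$-cocycle $c\colon\Group\to\Consv^\phi(S)$---equivalently, every additive homomorphism, since the action is trivial---becomes a coboundary once pushed into $C^0_\unif(\State)$.

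Both this surjectivity and the splitting come from a single construction using the fundamental domain. Choose $X_0\subset X$ meeting each $\Group$-orbit in exactly one point; every $x\in X$ then decomposes uniquely as $x=g_x\cdot x_0$ with $g_x\in\Group$ and $x_0\in X_0$, and $g_{hx}=hg_x$ for all $h\in\Group$. Given $c$, set
\[
 f(\state) \coloneqq -\sum_{x \in X} c(g_x)(\eta_x),
\]
which is a finite sum on $\State_*$ because each conserved quantity vanishes at $\star$. A short computation using $g_{h^{-1}y}=h^{-1}g_y$ together with additivity of $c$ yields $h\cdot f - f = c(h)_X$, so $c$ is a coboundary in $C^0_\unif(\State)$; consequently $\delta$ is surjective, $\partial f$ is $\Group$-invariant (its $\Group$-translates differ from it by $\partial(c(h)_X)=0$), and $\delta(\partial f)=[c]$ by construction. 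Hence $[c]\mapsto\partial f$ is an $\bbR$-linear section of $\delta$, producing the direct-sum decomposition $\cC \cong \cE \oplus H^1(\Group,\Consv^\phi(S))$ that depends on the chosen fundamental domain.

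The main obstacle I anticipate is verifying that the primitive $f$ actually lies in the subspace $C^0_\unif(\State)$; uniformity is a regularity condition strictly stronger than finiteness of the defining sum, requiring uniform control of the derivatives $\nabe f$ over $e\in E$ and $\state\in\State_*$. The conservation identity $\xi(s_1)+\xi(s_2)=\xi(s'_1)+\xi(s'_2)$ for $\xi\in\Consv^\phi(S)$ should localize $\nabe f$ to a bounded combination of $c(g_x)$ and $c(g_y)$ at the two endpoints of $e$, but matching this cancellation against the precise uniform estimate of Definition~\ref{def: uniform} is the real technical work; once that is settled, the cohomological assembly above delivers the theorem.
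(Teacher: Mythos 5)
Your proposal is essentially the paper's own proof: Theorem \ref{thm: B} gives the short exact sequence, the group-cohomology long exact sequence gives the canonical injection $\cC/\cE\hookrightarrow H^1(\Group,\Consv^\phi(S))$, and your primitive $f(\state)=-\sum_{x\in X}c(g_x)(\eta_x)$ is (up to the paper's sign convention for $\delta$) exactly the paper's $\theta_\rho=\sum_{\grt\in\Group}\rho(\grt)_{\grt(\La_0)}$, whose differential furnishes the $\bbR$-linear section and hence the splitting. The ``main obstacle'' you flag is not one: uniformity in the sense of Definition \ref{def: uniform} is a condition on the exact-support expansion, not a derivative estimate, and since each $c(g_x)$ vanishes at the base state, the canonical expansion of your $f$ is $\sum_{x\in X}f_{\{x\}}$ with $f_{\{x\}}(\state)=-c(g_x)(\eta_x)$ supported on singletons, so $f\in C^0_\unif(\State)$ immediately (and then $\partial f\in Z^1_\unif(\State)$ by Lemma \ref{lem: preserve}).
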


If the rank of the maximal abelian quotient $\Group^\ab$ of $\Group$ is finite,
then we have the following.

\begin{corint}[=Corollary \ref{cor: main}]\label{cor: A}
	Let the assumptions be as in Theorem \ref{thm: A}.  Moreover,  suppose that 
	$G^\ab$ is of finite rank $d$.
	If we fix a generator of the free part of $G^\ab$,
	then we have an isomorphism
	$
		\Hom(G,\Consv^\phi(S))
		 \cong \bigoplus_{\indj=1}^d\Consv^\phi(S).
	$
	A choice of a fundamental domain  of $X$
	for the action of $\Group$
	gives a decomposition
	\begin{equation}\label{eq: V0}
		 \cC\cong \cE\oplus
		\bigoplus_{\indj=1}^d\Consv^\phi(S).
	\end{equation}
\end{corint}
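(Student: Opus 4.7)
The plan is to combine Theorem \ref{thm: A} with a standard group cohomology calculation. First I would observe that the $G$-module structure on $\Consv^\phi(S)$ is trivial, since $\Consv^\phi(S)$ is defined purely from the pair $(S,\phi)$, with no reference to the locale $X$ on which $G$ acts. Under this trivial action, the definition of group cohomology (to be recalled in \S\ref{subsec: group}) reduces the first cohomology to homomorphisms:
\[
 H^1(G, \Consv^\phi(S)) \cong \Hom(G^\ab, \Consv^\phi(S)),
\]
because $1$-cocycles for trivial coefficients are precisely group homomorphisms $G \to \Consv^\phi(S)$, equivalently $G^\ab \to \Consv^\phi(S)$, and all $1$-coboundaries vanish.

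Next I would exploit the fact that $\Consv^\phi(S)$ is an $\bbR$-vector space, hence torsion-free and divisible. Writing $T$ for the torsion subgroup of $G^\ab$, so that $G^\ab/T$ is torsion-free abelian of rank $d$, every homomorphism $T \to \Consv^\phi(S)$ is automatically zero. Therefore
\[
 \Hom(G^\ab, \Consv^\phi(S)) \cong \Hom(G^\ab/T, \Consv^\phi(S)).
\]
A choice of a generator of the free part of $G^\ab$ amounts to an isomorphism $G^\ab/T \cong \bbZ^d$, and evaluation on the ordered basis yields the canonical identification
\[
 \Hom(\bbZ^d, \Consv^\phi(S)) \xrightarrow{\sim} \bigoplus_{\indj=1}^d \Consv^\phi(S),\qquad \varphi \mapsto (\varphi(e_1),\ldots,\varphi(e_d)).
\]

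Chaining these isomorphisms with the two assertions of Theorem \ref{thm: A} yields the claimed formula for $H^1$ and, upon choosing a fundamental domain for the $G$-action on $X$, the direct sum decomposition \eqref{eq: V0}. Since Theorem \ref{thm: A} is already granted, the whole argument is formal. The only mild subtlety, not really an obstacle, is that one does not need $G^\ab$ itself to be finitely generated: finiteness of the rank is enough, because any torsion in $G^\ab$ is killed by the divisible torsion-free coefficient module $\Consv^\phi(S)$ regardless of its cardinality.
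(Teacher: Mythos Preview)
Your proposal is correct and follows essentially the same route as the paper's own proof of Corollary~\ref{cor: main}: the paper also notes that the $G$-action on $\Consv^\phi(S)$ is trivial, identifies $H^1(G,\Consv^\phi(S))=\Hom(G,\Consv^\phi(S))=\Hom_\bbZ(G^\ab/G^\ab_\tors,\Consv^\phi(S))$ using that the coefficients are abelian and torsion-free, reads off the direct sum from a choice of generators, and then invokes Theorem~\ref{thm: main}.
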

The decomposition \eqref{eq: V0} decomposes any shift-invariant closed uniform form in $\cC$
as a unique sum of a form in $\cE$, closed forms whose potential are shift-invariant uniform functions,
and a form obtained as the image with respect to the isomorphism \eqref{eq: V0}
of elements in $\bigoplus_{\indj=1}^d\Consv^\phi(S)$.
The space $\cE$ corresponds to the part which averages out to \textit{zero} when taking 
a proper space-time scaling limit and so does not appear in the hydrodynamic limit. 
Hence the decomposition theorem implies that the
macroscopic property of our model may be completely expressed
in terms of forms arising from the space $\bigoplus_{\indj=1}^d\Consv^\phi(S)$, 
which 
are related to the flow of
conserved quantities in each of the directions induced by the action of the group $\Group$.

In addition to the geometric data $(X,S,\phi)$, if we fix a suitable transition rate,
then this gives a shift-invariant equilibrium measure on the configuration space
and a compatible inner product on the space of forms.
If we consider the case when the locale is the Euclidean lattice 
$X=(\bbZ^d,\bbE^d)$ with standard translation by the group $G=\bbZ^d$, and if 
$(S,\phi)$ is the exclusion process of
Example \ref{example: interactions} (1),
a typical choice of a transition rate gives rise to the 
product measure $\nu=\mu_p^{\otimes \bbZ^d}$ on $\State=\{0,1\}^{\bbZ^d}$, where $\mu_p$ is the probability measure on $S=\{0,1\}$ given by
\[
	\mu_{p}(s=1)=p, \qquad \mu_{p}(s=0)= 1-p
\]
for some real number $0<p<1$.  
We denote by $L^2(\nu)$ the usual 
$L^2$-space of square integrable functions 
on $\{0,1\}^{\bbZ^d}$ with respect to the measure $\nu$.
The space of local functions $C_\loc(\{0,1\}^{\bbZ^d})$ is known to be a dense subspace of $L^2(\nu)$.
We let $\xi\colon \{0,1\}\rightarrow\bbN$ be the conserved quantity given by $\xi(s)=s$,
which gives a basis of $\Consv^\phi(\{0,1\})$.
For any $x\in X$, we let $\xi_x\colon\{0,1\}^{\bbZ^d}\rightarrow\bbR$ be the function defined by 
$\xi_x(\state)\coloneqq \eta_x$ for any $\state=(\eta_x)\in\{0,1\}^{\bbZ^d}$.
For any $x=(x_j)\in\bbZ^d$, denote by $\tau_x$ the translation of $(\bbZ^d,\bbE^d)$ by $x$.
In this case, Varadhan's decomposition of shift-invariant 
closed $L^2$-forms proved by Funaki, Uchiyama and Yau
is the following.

\begin{thmint}[\cite{FUY96}*{Theorem 4.1}]\label{thm: V}
	Let $\omega =(\omega_{e})\in \prod_{e\in E} L^2(\nu)$ be a shift-invariant closed $L^2$-form.
	Then there exists a set of constants $a_1,\ldots,a_d\in\bbR$ and a
	series of local functions $(f_n)_{n\in\bbN}$ in $C_\loc(\{0,1\}^{\bbZ^d})$
	such that
	\[
		\omega_e=\lim_{n\rightarrow\infty}\nabla_e\bigg(\sum_{x\in\bbZ^d}\tau_x(f_n)
		+\sum_{j=1}^da_j\sum_{x\in\bbZ^d} x_j\xi_x\bigg)
	\]
	in $L^2(\nu)$ for any $e\in\bbE$.
\end{thmint}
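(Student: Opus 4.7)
The plan is to deduce Theorem \ref{thm: V} from the uniform decomposition of Corollary \ref{cor: A} by an $L^2$-approximation argument. Specializing Corollary \ref{cor: A} to $X=(\bbZ^d,\bbE^d)$, $\Group=\bbZ^d$ acting by translation, and the exclusion interaction of Example \ref{example: interactions} (1), the space $\Consv^\phi(\{0,1\})$ is one-dimensional with basis $\xi(s)=s$ and $\Group^\ab=\bbZ^d$ has rank $d$; the corollary therefore provides a decomposition $\cC\cong\cE\oplus\bbR^d$ of shift-invariant closed uniform forms.  Taking the standard generators of $\bbZ^d$ and the fundamental domain $\{0\}$, the $\indj$-th summand of $\bbR^d$ is realized by the explicit shift-invariant uniform closed form
\[
\omega^{(\indj)}_e \;\coloneqq\; \nabe\bigl(\textstyle\sum_{x\in\bbZ^d} x_\indj \xi_x\bigr),
\]
whose value at each edge depends on only two coordinates and is therefore well-defined pointwise on $\State_*$.

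Given a shift-invariant closed $L^2$-form $\omega=(\omega_e)$, the strategy is to approximate $\omega$ in $L^2(\nu)$ by a sequence $\omega^{(n)}$ of shift-invariant closed uniform forms, apply Corollary \ref{cor: A} to decompose
\[
\omega^{(n)} \;=\; \partial F_n \;+\; \sum_{\indj=1}^d a_\indj^{(n)}\,\omega^{(\indj)},\qquad F_n\in C^0_\unif(\State)^\Group,
\]
and then pass to the $L^2$-limit.  Any shift-invariant uniform function on $\{0,1\}^{\bbZ^d}$ is formally of the form $F_n=\sum_{x\in\bbZ^d}\tau_x(f_n)$ for a suitable local $f_n$, and its edge-gradient $\nabe F_n=\sum_x\nabe\tau_x(f_n)$ is a genuine finite sum.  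To extract the coefficients $a_\indj^{(n)}$, pair $\omega^{(n)}$ against a family of shift-invariant $L^2$-functionals which annihilate $\cE$ and are dual to the explicit representatives $\omega^{(1)},\ldots,\omega^{(d)}$ (morally, integration of the edge-current against the translation-invariant mean in direction $\indj$); $L^2$-convergence of $\omega^{(n)}$ then forces $a_\indj^{(n)}\to a_\indj\in\bbR$, and subtraction yields $L^2$-convergence of $\partial F_n$.

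The main obstacle is the approximation step itself.  Uniform closedness is a stringent pointwise condition on $\State_*$, whereas $L^2$-closedness only requires the cocycle identity $\nabla_e\omega_{e'}=\nabla_{e'}\omega_e$ $\nu$-almost everywhere; naive truncation, convolution, or finite-box projection of $\omega$ will generally destroy closedness.  In \cite{FUY96} this gap is bridged by analytic input specific to symmetric simple exclusion -- in particular the spectral gap of the generator on finite boxes and sharp finite-volume variance estimates -- which together show that an arbitrary closed $L^2$-form is an $L^2$-limit of forms that are closed in the strong pointwise sense, and hence are amenable to the algebraic decomposition above.  The uniform/cohomological framework of the present paper is designed precisely to isolate the algebraic backbone of the decomposition from this analytic approximation; the model-independent execution of the remaining analytic step is the work carried out in the sequel \cite{BS21L2}.
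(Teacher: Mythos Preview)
Theorem \ref{thm: V} is not proved in this paper at all: it is quoted verbatim from \cite{FUY96}*{Theorem 4.1} as an external result, to motivate and contrast with the paper's own uniform analogue (Theorem \ref{thm: A1}). There is no proof to compare your attempt against.

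Your proposal is not a proof of Theorem \ref{thm: V} but a sketch of a strategy, and you are candid about this: the decisive step --- approximating an arbitrary shift-invariant closed $L^2$-form by shift-invariant closed \emph{uniform} forms --- is exactly the gap you flag, and it cannot be filled with the tools of this paper. Indeed, the paper itself poses this as an open Question at the end of \S\ref{subsec: main} and defers it to \cite{BS21L2}. So what you have written is a faithful summary of the program connecting this paper to the $L^2$ result, but it is not a proof, and the paper makes no claim to supply one. If anything, your sketch slightly overstates what is available here: the existence of $L^2$-functionals that annihilate $\cE$ and are dual to the $\omega^{(j)}$ already presupposes the kind of orthogonal decomposition \eqref{eq: od} that depends on the measure-theoretic input you are trying to avoid.
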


The same statement for certain transition rates giving non-product measures on $\{0,1\}^{\bbZ^d}$
was proved by Varadhan and Yau \cite{VY97}, requiring different spectral gap estimates.
The uniform version of Theorem \ref{thm: V}, obtained by applying Corollary \ref{cor: A}
to the above model for the fundamental domain $\La_0=\{(0,\ldots,0)\}$ of $X=\bbZ^d$ for the action of 
$\Group=\bbZ^d$, is given as follows.

\begin{thmint}\label{thm: A1}
	Let $\omega =(\omega_{e})\in \prod_{e\in\bbE}C_\loc(\{0,1\}^{\bbZ^d})$ be a shift-invariant 
	closed form.
	Then there exists a set of constants $a_1,\ldots,a_d\in\bbR$ and a
	local function $f$ in $ C_\loc(\{0,1\}^{\bbZ^d})$ satisfying $f(\star)=0$
	such that
	\[
		\omega_e=\nabla_e\biggl(\sum_{x\in\bbZ^d}\tau_x(f)
		+\sum_{j=1}^da_j\sum_{x\in\bbZ^d} x_j\xi_x\biggr)
	\]
	in $C_\loc(\{0,1\}^{\bbZ^d})$ for any $e\in\bbE$.
\end{thmint}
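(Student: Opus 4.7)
The plan is to obtain Theorem \ref{thm: A1} as a direct specialization of Corollary \ref{cor: A} to the exclusion process on the Euclidean lattice. First, I would verify that the triple $(X, S, \phi) = (\bbZ^d, \{0,1\}, \phi_\text{ex})$ with $G = \bbZ^d$ acting by translation satisfies the hypotheses: the translation action is free; the exclusion interaction is irreducibly quantified (by Proposition \ref{prop: irreducibly quantified}); in the one-dimensional case $d=1$ the interaction is also \textit{simple}, since $c_\phi = 1$ and the basis conserved quantity $\xi(s) = s$ has image $\{0,1\}$ generating $\bbN$; and the locale $\bbZ^d$ is transferable for $d \geq 2$ and weakly transferable for $d = 1$. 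Thus the assumptions of Corollary \ref{cor: A} are met, and $G^\ab = \bbZ^d$ has finite rank $d$.

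Second, I would choose the singleton fundamental domain $\La_0 = \{\bszero\}$ for the action of $\bbZ^d$ on itself, together with the standard generators of the free abelian group $\Group^\ab = \bbZ^d$. Applying Corollary \ref{cor: A} yields the direct sum decomposition
\[
  \cC \;\cong\; \cE \;\oplus\; \bigoplus_{j=1}^d \Consv^\phi(\{0,1\}).
\]
Since $\Consv^\phi(\{0,1\}) = \bbR\,\xi$ is one-dimensional, each summand contributes exactly one real scalar $a_j$, producing the $d$ constants in the statement.

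Third, I would identify the two kinds of summands with the explicit expressions in Theorem \ref{thm: A1}. For $\cE$, I would invoke the characterization of shift-invariant uniform functions: any element of $C^0_\unif(\State)^G$ for the translation action of $G = \bbZ^d$ can be written as a formal translation-average $\sum_{x \in \bbZ^d} \tau_x(f)$ for some local $f \in C_\loc(\{0,1\}^{\bbZ^d})$ with $f(\star)=0$. Evaluated at $\state \in \State_*$, this is a finite sum since $f$ vanishes at the base state and $\state$ has finite support; its differential then gives the first term of the decomposition. For the $\bigoplus_j \Consv^\phi(\{0,1\})$ summand, I would trace through the construction of the splitting from Corollary \ref{cor: A}: given the fundamental domain $\La_0 = \{\bszero\}$ and the generators $e_j$, the cohomology class corresponding to $a_j\xi$ in the $j$-th slot is represented by the \textit{closed} form whose potential, on the universal cover level, is the non-invariant function $a_j \sum_{x \in \bbZ^d} x_j \xi_x$; this is a well-defined element of $\CS$ (again a finite sum on each $\state \in \State_*$), and its differential is shift-invariant precisely because its $\tau_y$-translate differs from it by the globally conserved quantity $a_j y_j \sum_x \xi_x$, which contributes nothing after applying $\nabe$.

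The main obstacle I anticipate is not in checking the hypotheses nor in manipulating the representative $\sum_x x_j \xi_x$, but in the bookkeeping that links the abstract cohomology class in $H^1(\Group, \Consv^\phi(S))$ to this explicit potential. Concretely, I would need to unwind the identifications embedded in Corollary \ref{cor: A}: namely, how the transgression map sends the generator $e_j \in \Group^\ab$ paired with the conserved quantity $\xi$ to the class of $\partial\bigl(\sum_x x_j \xi_x\bigr)$, and how the choice of fundamental domain $\La_0$ converts the cohomological splitting into an actual $\bbR$-linear direct sum inside $\cC$. Once this dictionary is in place, assembling the general element of $\cC$ as $\partial\bigl(\sum_x \tau_x(f) + \sum_{j=1}^d a_j \sum_x x_j \xi_x\bigr)$ is immediate, completing the proof.
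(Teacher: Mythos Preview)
Your proposal is correct and follows essentially the same route as the paper, which derives Theorem \ref{thm: A1} from Theorem \ref{thm: A2} (itself Corollary \ref{cor: A} specialized to $\Group=\bbZ^d$) using the fundamental domain $\La_0=\{\bszero\}$, Lemma \ref{lem: OK} for the $\cE$-part, and Remark \ref{rem: explicit}/Proposition \ref{prop: calculation} for the explicit potential $\sum_x x_j\xi_x$. The one step you omit is the observation that a shift-invariant closed form with \emph{local} components is automatically \emph{uniform} (hence lies in $\cC$): shift-invariance forces a single bound $R$ on the support radius of all $\omega_e$, which is needed before Corollary \ref{cor: A} can be applied.
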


Our proof does not require a choice of the transition rate, thus completely independent of the measure.
{We remark that the sums in the brackets on the right hand side of Theorem \ref{thm: V} 
and Theorem \ref{thm: A1} are uniform functions in $C^0_\unif(\{0,1\}^{\bbZ^d})$.}

Next, consider a general system $(X,S,\phi)$
satisfying the assumptions of Theorem \ref{thm: A}, and suppose
that $X$ has a free action of $\Group=\bbZ^d$.  We fix the generator of $\Group$
to be the standard basis of $\bbZ^d$, and we denote an element of $\Group=\bbZ^d$ by $\tau=(\tau_j)\in\bbZ^d$
instead of $x=(x_j)$ since the locale $X$ in general does not coincide with $\Group$. 
The disassociation of the group $\Group$ from the locale 
$X$ is another distinctive feature of our framework.
If we fix a fundamental domain $\La_0$ of
$X$ for the action of $\Group$, then 
Corollary \ref{cor: A} in this case gives the following.

\begin{thmint}\label{thm: A2}
	Assume for simplicity that $c_\phi$ is finite, and
	fix a basis $\xi^{(1)},\ldots,\xi^{(c_\phi)}$ of $\Consv^\phi(S)$.
	Let $\omega =(\omega_{e})\in\prod_{e\in E}C_\loc(\State)$ be a shift-invariant closed uniform form.
	In other words, let $\omega\in\cC$.
	Then there exists $a_{ij}\in\bbR$ for $i=1,\ldots,c_\phi$ and $j=1,\ldots,d$
	and a shift-invariant uniform function $F$ in $C^0_\unif(\State)$
	such that
	\begin{equation}\label{eq: d}
		\omega=\partial\biggl(F
		+\sum_{\indi=1}^{c_\phi}\sum_{\indj=1}^da_{ij}\biggl(\sum_{\grt\in\Group}\grt_\indj\xi^{(\indi)}_{\grt(\La_0)}\biggr)\biggr),
	\end{equation}
	where we let $\xi_W$ be the function in $C^0_\unif(\State)$
	defined as $\xi_W\coloneqq\sum_{x\in W}\xi_x$ for any conserved quantity
	$\xi\in\Consv^\phi(S)$ and $W\subset X$.
\end{thmint}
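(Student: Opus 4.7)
Theorem \ref{thm: A2} is the explicit version of Corollary \ref{cor: A} in the case $\Group = \bbZ^d$ with the specified fundamental domain $\La_0$ and chosen basis $\xi^{(1)},\ldots,\xi^{(c_\phi)}$ of $\Consv^\phi(S)$, so the strategy is to invoke Corollary \ref{cor: A} and then exhibit explicit representatives in $\cC$ of the generators of the complementary summand. Corollary \ref{cor: A} provides $\omega = \partial F + \omega'$ with $F\in C^0_\unif(\State)^\Group$ and $\omega'$ lying in a complement to $\cE$ that is identified with $\bigoplus_{\indj=1}^d \Consv^\phi(S)$ through $\cC/\cE \cong H^1(\Group, \Consv^\phi(S))$ together with the identification $H^1(\bbZ^d, \Consv^\phi(S)) \cong \Hom(\bbZ^d, \Consv^\phi(S)) \cong \Consv^\phi(S)^d$ (which uses that $\Group = \bbZ^d$ acts trivially on $\Consv^\phi(S)$). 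It thus suffices to display, for each pair $(\indi, \indj)$, a concrete uniform potential whose differential represents the element $\xi^{(\indi)}$ in the $\indj$-th factor of $\Consv^\phi(S)^d$.

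The natural candidate is $H^{(\indi,\indj)} := \sum_{\grt\in\Group} \grt_\indj\, \xi^{(\indi)}_{\grt(\La_0)}$. Evaluated on $\eta\in\State_*$, only finitely many translates $\grt(\La_0)$ meet the finite support of $\eta$, so this defines a function on $\State_*$; that $H^{(\indi,\indj)}\in C^0_\unif(\State)$ should follow directly from the definition of uniform functions, since $\nabe H^{(\indi,\indj)}$ for an edge $e$ involves only the single translate of $\La_0$ containing the endpoints of $e$, yielding the required local control. A reindexing $\grt \mapsto \sigma^{-1}\grt$ then gives, for any $\sigma\in\Group$,
\[
\sigma\, H^{(\indi,\indj)} - H^{(\indi,\indj)} = -\sigma_\indj\, \xi^{(\indi)}_X,
\]
where $\xi^{(\indi)}_X := \sum_{x\in X} \xi^{(\indi)}_x$ is a well-defined function on $\State_*$ annihilated by $\partial$ because $\xi^{(\indi)}$ is a conserved quantity. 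Hence $\partial H^{(\indi,\indj)}$ is shift-invariant, i.e.\ lies in $\cC$.

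The main obstacle is matching, under the isomorphism $\cC/\cE \cong H^1(\Group, \Consv^\phi(S))$ of Corollary \ref{cor: A}, the class $[\partial H^{(\indi,\indj)}]$ with the element corresponding to $\xi^{(\indi)}$ in the $\indj$-th factor of $\Consv^\phi(S)^d$. For this one unwinds the connecting construction: given any $\omega\in\cC$, Theorem \ref{thm: B} furnishes a (not necessarily shift-invariant) uniform potential $F_0$ with $\partial F_0 = \omega$, and the associated 1-cocycle $\sigma \mapsto \sigma F_0 - F_0$ lies in the kernel of $\partial$, which Theorem \ref{thm: B} identifies with $\Consv^\phi(S)$. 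Taking $F_0 = H^{(\indi,\indj)}$, the computation above gives the cocycle $\sigma \mapsto -\sigma_\indj\, \xi^{(\indi)}$; evaluating at the standard basis of $\Group=\bbZ^d$ and projecting to the $\indj$-th factor recovers $-\xi^{(\indi)}$. Thus, up to signs absorbed into the coefficients $a_{\indi\indj}$, the forms $\{\partial H^{(\indi,\indj)}\}_{\indi,\indj}$ provide a basis of the complementary summand in Corollary \ref{cor: A}; absorbing $\partial F$ into a single shift-invariant uniform potential $F$ yields the decomposition claimed in \eqref{eq: d}.
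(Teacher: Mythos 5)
Your proposal is correct and follows essentially the same route as the paper: it invokes Corollary \ref{cor: main} and then identifies the complementary summand by the explicit potentials $\sum_{\grt}\grt_\indj\xi^{(\indi)}_{\grt(\La_0)}$ together with the cocycle computation $(1-\grs)H^{(\indi,\indj)}=\grs_\indj\xi^{(\indi)}_X$ and the explicit description of $\delta$, which is exactly the content of Proposition \ref{prop: calculation} and Remark \ref{rem: explicit}. One small point: uniformity of $H^{(\indi,\indj)}$ is cleanest not via $\nabe$ but directly from Definition \ref{def: uniform}, since its canonical expansion consists only of the single-site functions $\grt_\indj\xi^{(\indi)}_x\in C_{\{x\}}(\State)$, whose supports have diameter zero.
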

In Theorem \ref{thm: A2}, we remark that $\partial F\in \cE$, and
by Remark \ref{rem: explicit},
\begin{equation}\label{eq: omega}
	\omega_{\psi}\coloneqq\partial
	\biggl(\sum_{i=1}^{c_\phi}\sum_{\indj=1}^da_{ij}\biggl(\sum_{\grt\in\Group}\grt_\indj\xi^{(\indi)}_{\grt(\La_0)}\biggr)\biggr)\in 
		\cC
\end{equation}
is the image of 
$
	\psi=\bigl(\sum_{i=1}^{c_\phi}a_{i1}\xi^{(\indi)},\ldots,\sum_{i=1}^{c_\phi}a_{id}\xi^{(\indi)}\bigr)
	\in	\bigoplus_{\indj=1}^d\Consv^\phi(S)
$
through the isomorphism \eqref{eq: V0} for the choice of $\La_0$ in Theorem \ref{thm: A2}.
The equality $\omega=\partial F + \omega_\psi$ of \eqref{eq: d} 
is precisely the decomposition given by \eqref{eq: V0}.

In fact, Theorem \ref{thm: A1} is a special case of Theorem \ref{thm: A2}, as follows.
If $\La_0$ is finite, then we may see from the definition that any 
$\omega\in\prod_{e\in E}C_\loc(\State)$ which is closed and shift-invariant 
is uniform.  In addition, again if $\La_0$ is finite, any shift-invariant
uniform function $F\in C^0_\unif(\State)$ 
is of the form $F=\sum_{\tau\in\bbZ^d}\tau(f)$ for some local function $f\in C_\loc(\State)$
satisfying $f(\star)=0$
(see Lemma \ref{lem: OK}). Here, $\tau(f)$ denotes the image of $f$ with 
respect to the action of $\tau\in\bbZ^d$.
For the case $X=\bbZ^d$ with the action of $\Group=\bbZ^d$ given by the standard translation, 
if we let $\La_0=\{(0,\ldots,0)\}$, then we have $\tau_{x}(\La_0)=\{x\}$ for any $x\in\bbZ^d$.
From these observations and the definition of the differential $\partial$, 
we see that Theorem \ref{thm: A1} follows from Theorem \ref{thm: A2}.

In the general setting of Theorem \ref{thm: A},
the choice of a transition rate satisfying certain conditions 
gives an inner product compatible with the norm
on the space of $L^2$-forms.  
The existence of Varadhan's decomposition amounts to the following question.
See \cite{BS21}*{Conjecture 5.5} for a precise formulation of this question as a conjecture.

\begin{question*}
	Assume that the fundamental domain of the action of $\Group$ on the vertices of $X$
	is finite.
	For a suitable definition of closed $L^2$-forms,
	if $\omega$ is a shift-invariant closed $L^2$-form, then
	does there exist $\omega_n\in\cE$ for $n\in\bbN$ and $\psi\in \Hom(G,\Consv^\phi(S))$ such that
	\[
		\lim_{n\rightarrow\infty}(\omega_n + \omega_\psi) = \omega\quad?
	\]
	Here, we let $\omega_\psi$ be the element in $\cC$ corresponding to $\psi$
	in the decomposition \eqref{eq: V0} of Corollary \ref{cor: A}
	given for a choice of a fundamental domain of $X$ for the action of $\Group$.
\end{question*}

The question is answered affirmatively for the cases that Varadhan's decomposition are shown.
Although our local forms construct the core of the $L^2$-space, and local closed forms in our sense are
closed forms in the sense of the $L^2$-space, it is currently not  generally known whether
our local closed forms form \textit{a core of closed forms} in the sense of $L^2$-spaces.
Nevertheless, in subsequent research, we prove Varadhan's decomposition for certain locales
using Theorem \ref{thm: A} of this article, when $S$ is finite and $\mu$ is a product measure,
assuming a certain spectral gap estimate pertaining to the interaction (cf.\ \cite{BS21L2}).
Through this process, we hope to understand the role played by the sharp spectral gap estimates
in the proof of hydrodynamic limits for nongradient systems,
a question which 
has been an important open question for the past thirty years
(see for example \cite{KL99}*{Preface}).

Let $\cC_{L^2}$ and $\cE_{L^2}$ be the shift-invariant closed and exact forms for the $L^2$-space.
The inner product on the $L^2$-space defines an orthogonal decomposition  
\begin{equation}\label{eq: od}
	\cC_{L^2}\cong\cE_{L^2}\oplus \Hom(G,\Consv^\phi(S))
\end{equation}
which is different from \eqref{eq: V0}.
By reinterpreting the method in hydrodynamic limits for obtaining the
macroscopic deterministic partial differential equation from the microscopic system, 
we  have come to understand that the diffusion matrix 
associated with the macroscopic partial differential equation is
given precisely by the matrix relating the two decompositions \eqref{eq: V0} and \eqref{eq: od}.
One critical observation from this fact is that the size of the diffusion matrix of our system should be $c_\phi d$,
the dimension of $\bigoplus_{\indj=1}^d\Consv^\phi(S)$.

Through our investigation, we have come to see the stochastic data consisting of the measure and compatible
inner product as a certain analogy of differential geometric data on Riemannian manifolds
-- the volume form and the metric.  Through this analogy, the orthogonal decomposition
\eqref{eq: od} may be regarded as a differential geometric decomposition given as a certain analogue of the Hodge-Kodaira decomposition in Riemannian geometry, whereas the decomposition
\eqref{eq: V0} is viewed as a more topological decomposition.
In light of this analogy, it would be interesting to
interpret the diffusion matrix relating the topological and measure theoretic structures 
of $\cC_{L^2}$ as an analogy of the period matrix in Hodge theory
comparing the topological and differential geometric structures of the manifold.
Such ideas will be explored in future research.

%
\subsection{Overview}\label{subsec: overview}
%

In this subsection, we give an overview of the proof of Theorem \ref{thm: A}.
The key result for the proof is Theorem \ref{thm: B} below
concerning the property of uniform cohomology.
Consider the system $(X,S,\phi)$.
The uniform cohomology is defined for a configuration space with transition structure as follows.

\begin{definition}\label{def: uc}
	We define the \textit{uniform cohomology} $H^m_\unif(\State)$ for $m\in\bbN$ 
	of the configuration space
	$\State$ with transition structure to be the cohomology of the cochain complex
	\[
		C^0_\unif(\State)\xrightarrow{\partial} Z_\unif^1(\State)
	\]
	which is zero in degrees $m\neq0,1$.
	Concretely, we have $H^0_\unif(\State)\coloneqq\Ker\partial$,
	$H^1_\unif(\State)\coloneqq Z^1_\unif(\State)/\Im\partial$,
	and $H^m_\unif(\State)=\{0\}$ in degrees $m\neq0,1$.
	The uniform cohomology is philosophically the \textit{reduced cohomology}
	in the sense of topology of the pointed space consisting of the configuration space 
	$\State$ and base configuration $\star\in\State$.
\end{definition}

\begin{thmint}[=Theorem \ref{thm: cohomology}]\label{thm: B}
	For the system $(X,S,\phi)$, assume that the interaction $\phi$ is irreducibly quantified.
	If $X$ is transferable, or if the interaction $\phi$ is simple and $X$ is weakly transferable,
	then we have
	\[
		H^m_\unif(\State)\cong
		\begin{cases}
			\Consv^\phi(S)  &  m=0\\
			\{0\}  &  m\neq0.
		\end{cases}
	\]
\end{thmint}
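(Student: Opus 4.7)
The plan is to verify the two nontrivial cohomology computations separately: the irreducibly quantified hypothesis will pin down $\Ker(\partial)$ in degree $0$, and the transferability hypothesis (or weak transferability together with simplicity) will be used to construct primitives in degree $1$.

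For $H^0_\unif(\State)\cong\Consv^\phi(S)$, each $\xi\in\Consv^\phi(S)$ yields the $\partial$-closed uniform function $\xi_X(\state)\coloneqq\sum_{x\in X}\xi(\eta_x)$ (closedness is immediate from \eqref{eq: conserve}), giving an injection $\Consv^\phi(S)\hookrightarrow\Ker(\partial)$. Conversely, any $f\in\Ker(\partial)$ is invariant along every edge of $\Phi$, hence constant on each connected component of $\State_*$. The irreducibly quantified hypothesis identifies the connected components of $\State_*$ with the level sets of the conserved-quantity map $\state\mapsto(\xi^{(\indi)}_X(\state))_{\indi}$, so $f$ descends to a function $F$ of these values with $F(\mathbf{0})=0$. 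Superposing disjoint-support configurations forces additivity of $F$ in each independent direction, and coupling this with the uniformity of $f$ upgrades additivity to $\bbR$-linearity, so $f=\sum_\indi c_\indi\xi^{(\indi)}_X$.

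For $H^1_\unif(\State)=\{0\}$, take a closed uniform form $\omega=(\omega_e)\in Z^1_\unif(\State)$ and construct a uniform primitive $f$ as follows. On the connected component of $\star$ in $\State_*$, define $f(\state)\coloneqq\sum_{i=0}^{n-1}\omega_{e_i}(\state_i)$ along any path $\star=\state_0\to\state_1\to\cdots\to\state_n=\state$ in $\Phi$; closedness of $\omega$ around the local squares coming from commuting transitions on disjoint edges (and the other local relations recorded in the definition of $Z^1_\unif(\State)$) yields path-independence. To extend $f$ to the remaining components of $\State_*$ (one for each nonzero tuple of conserved-quantity values), use transferability to send a configuration's support into the infinite connected complement of a large ball, producing a sequence of formal partial path-integrals whose convergence is guaranteed by the uniformity of $\omega$; set $f(\state)$ to be the limit. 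The uniform structure of $\omega$ propagates to $f\in C^0_\unif(\State)$, and a direct edge-by-edge check yields $\partial f=\omega$.

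The main obstacle is the extension of $f$ across distinct connected components of $\State_*$. Within any single component, path-integration is routine from closedness, but $\State_*$ carries infinitely many components, none of which are linked by transitions of $\Phi$, so the geometric flexibility of the locale is what must supply the bridge. Transferability is precisely the input that allows one to reach each component from $\star$ by a limiting procedure whose tail lies outside arbitrarily large balls; the uniform regularity of $\omega$ is what makes those limits exist and depend only on $\state$, not on the specific sequence chosen. The weakly transferable case with $\phi$ simple is the most delicate, since only a one-parameter family of components is available and the complement of a ball need not be connected; there one must exploit the one-dimensional conserved-quantity space and the monoid structure from Definition \ref{def: conditions} (2) to run the limiting argument in a single coordinate.
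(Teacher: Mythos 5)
Your degree-zero argument is essentially sound and close to the paper's: the injection $\xi\mapsto\xi_X$ plus constancy on components, irreducible quantification, and the far-apart-support additivity coming from uniformity do recover $f=\xi_X$ for some $\xi\in\Consv^\phi(S)$ (you do not even need to ``upgrade to $\bbR$-linearity''; additivity of the descended function on the monoid $\Val$ already gives $f(\state)=\sum_x\xi(\eta_x)$ with $\xi(s)$ the value of $f$ on a single-site configuration, and the conservation law for $\xi$ follows from $\partial f=0$ on two-site configurations).

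The degree-one part, however, has a genuine gap, and it sits exactly where the theorem is hard. A primitive $f$ of a closed form $\omega$ always exists, on \emph{every} component of $\State_*$, by pure path-integration (this is Lemma \ref{lem: exact}, which needs no hypothesis on $X$ whatsoever); the components are pairwise unjoined by transitions, so $\omega$ carries no information relating the values of $f$ on different components, and $f$ is determined only up to an arbitrary constant on each component, i.e.\ up to an arbitrary $h\circ\bsxi_{\!X}$. Your proposed ``limiting procedure'' — pushing the support of $\state$ into the infinite complement of a large ball and taking limits of partial path-integrals — does not bridge components: moving the support to infinity never changes the conserved quantities, so you never leave the component of $\state$, and there is no sequence of integrals whose limit could fix the relative normalization against the component of $\star$. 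The real difficulty is not \emph{defining} some $f$ with $\partial f=\omega$ but choosing the infinitely many per-component constants so that the resulting function is \emph{uniform}; your sentence ``the uniform structure of $\omega$ propagates to $f\in C^0_\unif(\State)$'' asserts precisely this hard step without argument, and it is false for an arbitrary primitive. The paper isolates the obstruction as a $2$-cocycle $h_f\colon\Val\times\Val\to\bbR$ obtained by comparing $f$ on far-separated supports (Proposition \ref{prop: h_f}, Proposition \ref{prop: cocycle}); transferability is used to show $h_f$ is \emph{symmetric} (Proposition \ref{prop: good}), so that the cocycle splits by a divisibility/Zorn argument (Lemma \ref{lem: splitting1}), while in the simple case $\Val\cong\bbN$ or $\bbZ$ and the cocycle splits unconditionally (Lemma \ref{lem: splitting2}); correcting $f$ by the horizontal function $h\circ\bsxi_{\!X}$ then kills $h_f$, and the criterion of Proposition \ref{prop: important} (resp.\ Proposition \ref{prop: important wr}) finally yields uniformity. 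Your sketch engages none of this, and the counterexample of Proposition \ref{prop: counterexample} shows it cannot be repaired as stated: for $X=(\bbZ,\bbE)$ with $S=\{0,1,2\}$ the complement of a ball still has infinite components into which supports can be pushed, so your limiting mechanism would ``apply'' verbatim, yet $\partial$ is not surjective there. So the hypotheses must enter through the symmetry/splitting of the obstruction pairing, not through reachability of components by limits.
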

The configuration space $\State$ with transition structure viewed geometrically as a graph
generally has an infinite number of connected components. 
Hence it may be surprising that 
$H^0_\unif(\State)$ is finite dimensional.
This calculation very beautifully reflects the fact
that assuming the conditions of the theorem,
the connected components of the graph $\State_*$ 
can be determined from the values of its conserved quantities (see Remark \ref{rem: intrinsic}).
By the definition of $H^m_\unif(\State)$, Theorem \ref{thm: B} is equivalent
to the existence of a short exact sequence
\begin{equation}\label{eq: SES}
	\xymatrix{
	0\ar[r]&\Consv^\phi(S)\ar[r]^i&C^0_\unif(\State)\ar[r]^<<<<{\partial}&Z^1_\unif(\State)\ar[r]&0.
}
\end{equation}
In other words, \eqref{eq: SES} is a sequence of $\bbR$-linear maps such that
$i$ is injective, $\partial$ is surjective, and $\Im i=\Ker\partial$.
A large portion of our article is dedicated to the construction of \eqref{eq: SES},
especially the proof that the differential $\partial$ is surjective.

First, in \S\ref{subsec: configuration}, we introduce our model and the associated configuration space.  
In \S\ref{subsec: CQ}, we  introduce the notion of a conserved quantity.
Then in \S \ref{subsec: naive}, we define the usual cohomology for the 
configuration space with transition structure and the notion of closed forms.
We then give the relation between the conserved quantities and the 
$H^0$ of the configuration space.

In \S\ref{subsec: uniform}, we introduce the notion of local functions with exact support and prove that any function $f\in C(\State_*)$ may be expanded uniquely as a possibly infinite sum of local functions with  
exact support.  We say that $f\in C(\State_*)$ is \textit{uniform} if the diameter 
of support in the expansion of $f$ is uniformly bounded.
In \S\ref{subsec: horizontal}, we first note that the function 
$\xi_X=\sum_{x\in X}\xi_x$ for any conserved quantity $\xi\in\Consv^\phi(S)$
is uniform.
Since $\xi_X(\star)=0$, the correspondence $\xi\mapsto \xi_X$ gives a natural inclusion
$i\colon\Consv^\phi(S)\hookrightarrow  C^0_\unif(\State)$.
Assume now that the interaction is irreducibly quantified.
We prove in Theorem \ref{thm: 2} that this inclusion gives an isomorphism
\[
	\Consv^\phi(S)\cong\Ker\partial.
\]
It remains to prove that $\partial$ is surjective. 
In Definition \ref{def: unif form}, we define the notion of uniform forms.
For the remainder of \S\ref{sec: local}, 
we assume in addition that $X$ is \textit{strongly transferable}, that is $X\setminus\Ba$ is
an infinite connected graph for any ball $\Ba$ in $X$.
We consider a function 
$f\in C(\State_*)$ such that $\partial f$ is uniform, and we construct in 
Proposition \ref{prop: h_f} of \S\ref{subsec: pairing} 
a \textit{symmetric} pairing $h_f\colon\Val\times\Val\rightarrow\bbR$
on a certain additive submonoid $\Val\subset\bbR^{c_\phi}$ 
(see Definition \ref{def: U})
satisfying the cocycle condition
\[
	h_f(\gra,\grb)+h_f(\gra+\grb,\grc)=h_f(\grb,\grc)+h_f(\gra,\grb+\grc)	
\]
for any $\gra,\grb,\grc\in\Val$.  We then prove in 
Proposition \ref{prop: important} of \S\ref{subsec: criterion}
that if $h_f\equiv 0$, then $f\in C^0_\unif(\State)$.

In \S\ref{sec: weakly}, we consider the case when $X$ is weakly transferable.
This section is technical and can be skipped if the reader is only interested in
the strongly transferable case.
In \S\ref{subsec: pairing wr}, we again construct a pairing $h_f$ for any 
$f\in C(\State_*)$ such that $\partial f$ is uniform,
and prove in Proposition \ref{prop: cocycle} the cocycle condition 
for $h_f$.
We note that in general, the pairing $h_f$ may not be symmetric.
We prove in Proposition \ref{prop: important wr} of \S \ref{subsec: criterion weak}
a weakly transferable version of Proposition \ref{prop: important}.
Finally, we prove in \S\ref{subsec: resilience}
that the pairing $h_f$ is symmetric if the locale $X$ is transferable.

In \S\ref{sec: cohomology}, we complete the proof that $\partial$ is surjective (see Theorem \ref{thm: 1} for details).
The method of proof is as follows.
For any closed uniform form $\omega\in Z^1_\unif(\State)$, since $\omega$
is closed, by Lemma \ref{lem: exact}, there exists $f\in C(\State_*)$ 
such that $\partial f=\omega$.  Note that $f$ may not necessarily be a uniform function.
By the previous argument, there exists a pairing $h_f\colon\Val\times\Val\rightarrow\bbR$
satisfying the cocycle condition.
We prove in Lemma \ref{lem: splitting1} that if the pairing $h_f$ is symmetric, 
or in Lemma \ref{lem: splitting1} if the interaction is simple, 
there exists a function $h\colon\Val\rightarrow\bbR$
such that 
\[
	h_f(\gra,\grb)=h(\gra)+h(\grb)-h(\gra+\grb)
\] 
for any $\gra,\grb\in\Val$.
We modify the function $f$ using $h$
to obtain a function with the same $\partial f$
but satisfies $h_f\equiv0$.  By Proposition \ref{prop: important} or Proposition \ref{prop: important wr},
we see that $f\in C^0_\unif(\State)$.
This proves that the differential $\partial$ in \eqref{eq: SES} is surjective, completing the proof
of Theorem \ref{thm: B}.

Now for the proof of Theorem \ref{thm: A}, suppose that the locale $X$ has a free action of a group $\Group$.
This gives a natural action of $\Group$ on various spaces of functions and forms.
Noting that $G$ acts trivially on $\Consv^\phi(S)$, the boundary
homomorphism of the
long exact sequence for group cohomology \eqref{eq: LES2} associated with the short exact sequence 
\eqref{eq: SES} immediately gives an inclusion
\[
	\xymatrix{
		\cC/\cE\,\,\ar@{^{(}->}[r]^<<<<<\delta&\Hom(G,\Consv^\phi(S)).
	}
\]
For a fixed fundamental domain $\La_0$ of $X$ for the action of $\Group$,
we let
\[
	\omega_\psi\coloneqq\partial\Bigl(\sum_{\grt\in\Group}\psi(\grt)_{\grt(\La_0)} \Bigr)\in\cC
\]
for any $\psi\in\Hom(\Group,\Consv^\phi(S))$, noting that $\psi(\tau)\in\Consv^\phi(S)$ for any $\tau\in\Group$.
The relation of $\omega_\psi$ to the form in \eqref{eq: omega} is explained in Remark \ref{rem: explicit}.
By explicit calculation, we see in Proposition \ref{prop: calculation} that 
$\delta(\omega_\psi)=\psi$, hence $\delta$ is surjective.  The $\bbR$-linear map 
$\omega\mapsto(\omega-\omega_\psi,\psi)$ for $\psi\coloneqq\delta(\omega)$ 
gives a decomposition of $\bbR$-linear spaces
\[
	\cC\cong\cE\oplus\Hom(G,\Consv^\phi(S)),
\]
completing the proof of Theorem \ref{thm: A}.
In Appendix \ref{sec: A}, we review well-known results concerning cohomology of graphs.
In Appendix \ref{sec: B}, we give some examples.
In Appendix \ref{subsec: B1}, we describe the objects appearing in our article 
for the exclusion process.  Finally in
Appendix \ref{subsec: B2}, we let $X=\bbZ$ and
consider the multi-color exclusion process for $S=\{0,1,2\}$.
Then $X$ is \textit{not} transferable and $c_\phi=2$.
We prove that $\partial$ of Definition \ref{def: uc} is \textit{not} surjective in this case.

%
%
%
\section{Configuration Space and Conserved Quantities}\label{sec: configuration}
%
%
%

In this section, we will introduce the configuration space 
of a large scale interacting system and the notion of a \textit{conserved quantity}.
We then define and investigate its \textit{cohomology}.

%
\subsection{Configuration Space and Transition Structure}\label{subsec: configuration}
%

In this subsection, we will give a graph structure which we call the 
transition structure on the configuration space of states on a locale.
We first review some terminology related to graphs.

A directed graph $(X,E)$, or simply a graph, 
is a pair consisting of a set $X$, which we call the set of \textit{vertices}, 
and a subset $E\subset X\times X$, which we call the set of \textit{directed edges},
or simply \textit{edges}.
For any $e\in E = X\times X$, we denote by $o(e)$ and $t(e)$ the first and second components of $e$,
which we call the \textit{origin} and \textit{target} of $e$, so that
$e = (o(e), t(e))\in X\times X$.  For any $e\in E$, we let $\bar e\coloneqq(t(e),o(e))$, which we call the \textit{opposite}
of $e$.  We say that a directed graph $(X,E)$ is \textit{symmetric} if $\bar e \in E$ for any $e\in E$,
and \textit{simple} if $(x,x)\not\in E$ for any $x\in X$.
For any $e=(o(e),t(e))\in E$, we will often use $e$ to denote the set $e=\{o(e),t(e)\}$.
We say that $(X,E)$ is locally finite, if for any $x\in X$, the set $\{ e \in E \mid  x \in e\}$ is finite.
In this article, by abuse of notation (see Remark \ref{rem: abuse}), 
we will often simply denote the graph $(X,E)$ by its set of vertices $X$.

We define a \textit{finite path} on the graph $X$ to be a finite sequence 
$\vec p\coloneqq(e^1,e^2,\ldots,e^N)$ of edges in $E$
such that $t(e^i)=o(e^{i+1})$ for any integer $0<i<N$.
We denote by $\len(\vec p)$ the number of
elements $N$ in $\vec p$, which we call the length of $\vec p$.
We let $o(\vec p)\coloneqq o(e^1)$ and $t(\vec p)\coloneqq t(e^N)$, and
we say that $\vec p$ is a path from $o(\vec p)$ to $t(\vec p)$.
For paths $\vec p_1,\vec p_2$ such that $t(\vec p_1)=o(\vec p_2)$,
we denote by $\vec p_1\vec p_2$ the path from $o(\vec  p_1)$ to  $t(\vec p_2)$ 
obtained as the composition of the two paths.
If a path $\vec p$ satisfies $o(\vec p)=t(\vec p)$, then we say that $\vec p$ is a \textit{closed path}.
For any $x,x'\in X$, we denote by $P(x,x')$ the set of paths from $x$ to $x'$.
We define the graph distance $d_X(x,x')$ between $x$ and $x'$ by 
\[
	d_X(x,x')\coloneqq\inf_{\vec p\in P(x,x')}\len(\vec p)
\]
if $P(x,x')\neq\emptyset$, and $d_X(x,x')\coloneqq\infty$ otherwise.
We say that any subset $Y\subset X$ is \textit{connected}, if $d_Y(x,x')<\infty$ for any $x,x'\in Y$,
where $d_Y$ is the graph distance on the graph $(Y,E_Y)$ given by $E_Y\coloneqq E\cap(Y\times Y)$.

\begin{definition}\label{def: locale}
	We define the \textit{locale} to be a locally finite simple symmetric directed graph $X=(X,E)$
	which is connected.  If the set of vertices of $X$ is an
	infinite set, then we say that $X$ is an \textit{infinite locale}.
\end{definition}

We will use the terminology \textit{locale} to express the discrete object that models the space
where the dynamics under question takes place. We understand the connectedness to be 
an important feature of the locale.

\begin{example}\label{example: locale}
	\begin{enumerate}
	\item The most typical example of an infinite locale is the \textit{Euclidean lattice} 
	$\bbZ^d=(\bbZ^d,\bbE^d)$ for integers $d\geq 1$, where
	$\bbZ^d$ is the $d$-fold product of $\bbZ$, and
	\[
		\bbE^d\coloneqq \bigl\{  (\bsx,\bsy) \in \bbZ^d\times\bbZ^d \,\,\big|\,\,  |\bsx-\bsy|=1\bigr\}.
	\]
	Here, we let $|\bsx-\bsy|\coloneqq\sum_{\indj=1}^d|x_\indj-y_\indj|$ for any 
	$\bsx=(x_1,\ldots,x_d)$, $\bsy=(y_1,\ldots,y_d)$ in $\bbZ^d$.
	\item For integers $d\geq 1$ and $n>0$, a variant of the Euclidean lattice is given by the \textit{Euclidean lattice with nearest $n$-neighbor} 
	$\bbZ^d_n=(\bbZ^d,\bbE^d_n)$, where
	\[
		\bbE^d_n\coloneqq \bigl\{  (\bsx,\bsy) \in \bbZ^d\times\bbZ^d \,\,\big|\,\,  0<|\bsx-\bsy|\leq n\bigr\}.
	\]
	\begin{figure}[ht]
			\centering
			\includegraphics[width=14cm]{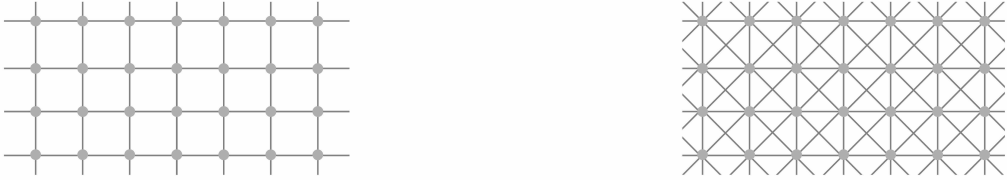}
			\caption{The Euclidean Lattices $\bbZ^2$ and with nearest $2$-neighbor $\bbZ^2_2$}\label{fig: 6}
	\end{figure}
	\item Many types of crystal lattices such as the triangular,
	hexagonal, and diamond lattices are infinite locales 
	(see for example \cite{Sun13}*{Example 3.4, Example 8.3} and \cite{KS01}*{\S5}).
	\item Let $G$ be a finitely generated group and let 
	$\cS\subset G$ be a minimal set of generators.
	Then the associated \textit{Cayley graph} $(G,E_\cS)$ given by 
	$E_\cS\coloneqq\{(\grt,\grt\grs), (\grt,\grt\grs^{-1}) \mid \grt\in G,\grs\in \cS\}$ is a locale.
	If $G$ is infinite, then the associated Cayley graph is an infinite locale.
		\item Let $X_1=(X_1,E_1)$ and $X_2=(X_2,E_2)$ be locales.
		If $X\coloneqq X_1 \times X_2$ and
		\begin{align*}
			E \coloneqq \{ & ((o(e_1), o(e_2)), (t(e_1), o(e_2)) \mid e_1\in E_1, e_2\in E_2\}\\& \cup
			\{  ((o(e_1), o(e_2)), (o(e_1), t(e_2)) \mid e_1\in E_1, e_2\in E_2\} \subset (X_1\times X_2)\times(X_1\times X_2),
		\end{align*}
		then $(X,E)$ is a locale, which we denote 
		$X_1\times X_2$.  We say that $X$ is a \textit{product} of $X_1$ and $X_2$.
		Note that $(\bbZ^d,\bbE^d)$ coincides with the 
		$d$-fold product $(\bbZ,\bbE) \times\cdots\times(\bbZ,\bbE)$.
		\item Suppose $X=(X,E)$ is a locale, and let $Y\subset  X$ be a connected subset.
		If we let $E_Y\coloneqq E\cap (Y\times Y)\subset X\times X$, then $Y=(Y,E_Y)$ gives a graph
		which is a locale. 
		We call $Y$ a \textit{sublocale} of $X$.
		\end{enumerate}
\end{example}

Next, we introduce the set of states, which is a nonempty set $S$ expressing
the possible states the model may take at each vertex, and the 
configuration space $\State$ for $S$ on $X$.

\begin{definition}
	We define the set of \textit{states} to be a nonempty set $S$.
	We call any element of $S$ a \textit{state}.
	We will designate an element $*\in S$ which we call the \textit{base state}.
	If $S\subset\bbR$ and $0\in S$, then we will often take the base state $*$ to be $0$.
	For any locale $X=(X,E)$, we define the \textit{configuration space} for $S$ on $X$ by
	\[
		\State\coloneqq \prod_{x\in X}S.
	\]	
	We call an element $\state=(\eta_x)$ of $\State$ a \textit{configuration}.
	We denote by $\star$ the \textit{base configuration},
	which is
	the configuration in $\State$ whose components are all at base state.
\end{definition}
	 
Now, we introduce the symmetric binary interaction, which 
expresses the interaction between states on adjoining vertices.

\begin{definition}\label{def: interaction}
	A \textit{symmetric binary interaction}, which we simply call an \textit{interaction} on $S$, is a map 
	$ 
		\phi\colon S\times S\rightarrow S\times S
	$ 
	such that 
	\begin{equation}\label{eq: reverse}
		\ic\circ\phi\circ\ic\circ\phi(s_1,s_2)=(s_1,s_2)
	\end{equation}
	for any $(s_1,s_2)\in S\times S$ satisfying $\phi(s_1,s_2)\neq(s_1,s_2)$, 
	where $\ic\colon S\times S\rightarrow S\times S$
	is the bijection obtained by exchanging the components of $S\times S$.   
\end{definition}

Examples of the set of states $S$ and interactions $\phi$ are given in 
Example \ref{example: interactions2} of \S\ref{subsec: CQ}.
Throughout this article, a system $(X,S,\phi)$ indicates that $X$ is a locale, $S$ is a set of states, 
and $\phi$ is an interaction.  We will next define the configuration space with transition structure
associated with such a system.  We first prepare a lemma.

\begin{lemma}\label{lem: SDG}
	For a locale $X=(X,E)$ and the set of states $S$, 
	let $\State$ be the configuration space for $S$ on $X$.
	Let $\phi\colon S\times S\rightarrow S\times S$ be an interaction on $S$.
	For any $e=(o(e),t(e))\in E\subset X\times X$, 
	we define the map $\phi_e\colon \State\rightarrow\State$ by
	$\phi_e(\state)\coloneqq\state^e$, where $\state^e=(\eta^e_x)\in\State$
	is defined as in \eqref{eq: se}.
	In other words, $\phi_e(\state)$ is obtained by applying $\phi$ to the $o(e)$ and 
	$t(e)$ components of $\state$. 
	If we denote by $\Phi$ the image of the map
	\begin{equation}\label{eq: transition}
		E\times\State\rightarrow\State\times\State, \qquad (e,\state) \mapsto (\state,\phi_e(\state)),
	\end{equation}
	then the pair $(\State, \Phi)$ is a symmetric directed graph.
\end{lemma}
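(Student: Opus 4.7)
The plan is to verify directly that $\Phi$ is symmetric as a subset of $\State\times\State$, since it is a set of ordered pairs of vertices in $\State$ by construction. So the only content is to show that for every $(\state,\state')\in\Phi$ the reversed pair $(\state',\state)$ also belongs to $\Phi$.

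By definition, $(\state,\state')\in\Phi$ means there is some $e\in E$ with $\state' = \phi_e(\state)$. Write $e=(o(e),t(e))$ and set $(s_1,s_2)\coloneqq(\eta_{o(e)},\eta_{t(e)})$. If $\phi(s_1,s_2)=(s_1,s_2)$, then $\state'=\state$ and the same edge $e$ witnesses $(\state',\state)\in\Phi$. Otherwise, I will use the reverse edge $\bar e=(t(e),o(e))$, which lies in $E$ because the locale is symmetric, and show that $\phi_{\bar e}(\state')=\state$.

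For this second case, put $(s'_1,s'_2)\coloneqq\phi(s_1,s_2)$, so that $\eta'_{o(e)}=s'_1$, $\eta'_{t(e)}=s'_2$, and $\eta'_x=\eta_x$ for $x\neq o(e),t(e)$. The hypothesis \eqref{eq: reverse} gives
\[
	\ic\circ\phi\circ\ic(s'_1,s'_2) = \ic\circ\phi\circ\ic\circ\phi(s_1,s_2) = (s_1,s_2),
\]
so applying $\ic$ yields $\phi\circ\ic(s'_1,s'_2) = \ic(s_1,s_2)$, i.e.\ $\phi(s'_2,s'_1)=(s_2,s_1)$. Now $\phi_{\bar e}(\state')$ is obtained from $\state'$ by replacing its $(t(e),o(e))$-components by $\phi(\eta'_{t(e)},\eta'_{o(e)})=\phi(s'_2,s'_1)=(s_2,s_1)$, and leaving the other components unchanged. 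Since $(s_2,s_1)=(\eta_{t(e)},\eta_{o(e)})$ and $\state'$ agrees with $\state$ away from $o(e),t(e)$, this means $\phi_{\bar e}(\state')=\state$. Hence $(\state',\state)\in\Phi$, proving symmetry.

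There is no genuine obstacle; the only substantive step is the bookkeeping above, whose entire content is that condition \eqref{eq: reverse} says exactly that $\phi$ composed with the coordinate swap is an involution on non-fixed pairs, which is precisely what is needed for $\bar e$ to reverse the transition induced by $e$. The symmetry of the locale is used only to know that $\bar e\in E$.
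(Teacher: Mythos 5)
Your proof is correct and follows essentially the same route as the paper: split into the trivial case $\phi_e(\state)=\state$ and the nontrivial case, and in the latter use the reverse edge $\bar e\in E$ (symmetry of the locale) together with condition \eqref{eq: reverse} to conclude $\phi_{\bar e}\circ\phi_e(\state)=\state$, so $(\state',\state)$ is the image of $(\bar e,\state')$ under \eqref{eq: transition}. You merely spell out the coordinate bookkeeping that the paper leaves implicit.
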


\begin{proof}
	It is sufficient to prove that the directed graph $(\State, \Phi)$ is symmetric.
	By the definition of an interaction, we have $\ic\circ\phi\circ\ic\circ\phi(s_1,s_2)=(s_1,s_2)$
	for any $(s_1,s_2)\in S\times S$ such that $\phi(s_1,s_2)\neq(s_1,s_2)$.
	This shows that for any $\state\in\State$, if $\phi_e(\state)\neq\state$,
	then we have $\phi_{\bar e}\circ\phi_e(\state)=\state$. 
	Consider the element $(\state, \phi_e(\state)) \in\Phi$.
	If $\phi_e(\state)=\state$, then $(\phi_e(\state), \state)=(\state,\state)\in\Phi$.
	If  $\phi_e(\state)\neq\state$,
	then $(\phi_e(\state), \state) = ( \phi_e(\state), \phi_{\bar e} \circ \phi_e(\state))$, 
	which is an element in $\Phi$
	since it is the image of $(\bar e, \phi_e(\state))\in E\times\State$ by the map \eqref{eq: transition}.
\end{proof}

\begin{remark}\label{rem: sufficient}
	In fact, the condition \eqref{eq: reverse} that we impose on the interaction is simply
	a \textit{sufficient condition}
	and \textit{not a necessary condition} for our theory.  The property that we actually use 
	is that $(\State, \Phi)$ is a symmetric directed graph.
\end{remark}

\begin{definition}
	For a locale $X$ and a set of states $S$, if we fix an interaction 
	$\phi\colon S\times S\rightarrow S\times S$, then Lemma \ref{lem: SDG} implies that
	$\Phi$ gives a structure of a symmetric directed graph 
	\[
		\State=(\State, \Phi)
	\] 
	on the configuration space $\State$.
	We call this structure the \textit{transition structure}, and
	we call any element $\varphi \in \Phi$ a \textit{transition}.
	In particular, we say that $\varphi = (\state, \phi_e(\state))\in\Phi$ is a transition of $\state$ by $e$.
	Following the convention in literature, 
	we will often denote $\phi_e(\state)$ by $\state^e$.
\end{definition}


\begin{remark}\label{rem: abuse}
	Our convention of denoting the locale $(X,E)$ by $X$ and the configuration space
	with transition structure $(\State,\Phi)$ by $\State$
	follows similar convention as that of topological spaces, where the topological
	space and its underlying set is denoted by the same symbol.
	We are interpreting the set of edges of a graph as giving a geometric structure to
	the set of vertices.
\end{remark}

For the configuration space $\State$ with transition structure,
the edges $\Phi$ expresses all the possible 
transitions on the configuration space with respect to the interaction $\phi$.
For any element $\state=(\eta_x)\in S^X$, we define the support of $\state$ to be the set 
$\Supp(\state)\coloneqq\{ x\in X\mid \eta_x\neq*\}\subset X$.  
The subset 
\[
	\State_*\coloneqq\big\{\state=(\eta_x)\in S^X\mid\,|\Supp(\state)|<\infty\big\}
	\subset S^X
\]
of the configuration space will play an important role in our theory.
If we let $\Phi_*\coloneqq \Phi\cap(\State_*\times\State_*)$, then $\State_*=(\State_*,\Phi_*)$ is 
again a symmetric directed graph, 
which we refer to again as a configuration space with transition structure.

For any set $A$, we let $C(A)\coloneqq\Map(A,\bbR)$ be the $\bbR$-linear space of functions from $A$ to $\bbR$.
As in \S\ref{subsec: model}, for any finite $\La\subset X$,
we let $S^\La\coloneqq\prod_{x\in\La}S$.
Following standard convention, we let $S^\emptyset=\{\star\}$ if $\La=\emptyset$.
The natural projection $\State_*\subset\State\rightarrow S^\La$
given by mapping $\state=(\eta_x)_{x\in X}\in\State$ to $\state|_{\La}\coloneqq (\eta_x)_{x\in\La}$
induces a natural injection
$C(S^\La)\hookrightarrow \CS$.  
From now on, we will identify $C(S^\La)$ with its image in $\CS$.  
Note that any $f\in \CS$ is a function in $C(S^\La)$
if and only if $f$ as a function for $\state\in\State_*$ depends 
only on $\state|_{\La}\in S^\La$.
We call any such function a \textit{local function}.
We denote by $C_\loc(\State)$ the space of local functions, 
which is a subspace of both $C(\State)$ and $C(\State_*)$.
If the set of vertices of $X$ is finite, then we simply have $C_\loc(\State)=C(\State)=\CS$.
Our methods are of interest predominantly for the case when $X$ is an infinite locale.


%
\subsection{Conserved Quantities and Irreducibly Quantified Interactions}\label{subsec: CQ}
%

In this subsection, we introduce the notion of a \textit{conserved quantity}, which
are certain invariants of states preserved by the interaction.
Using the conserved quantities,  we will 
introduce the important notion for an interaction to be \textit{irreducibly quantified}.
In what follows, let $S$ be a set of states with base state $*\in S$,
and we fix an interaction $\phi\colon S\times S\rightarrow S\times S$ on $S$.

\begin{definition}	
	A \textit{conserved quantity} for the interaction
	$\phi$ is a function $\xi\colon S\rightarrow \bbR$ satisfying $\xi(*)=0$ and
	\begin{equation}\label{eq: conserve2}
		\xi(s_1) + \xi(s_2) = \xi(s_1') + \xi(s_2')
	\end{equation}
	for any $(s_1,s_2)\in S\times S$ and $(s_1',s_2')\coloneqq\phi(s_1,s_2)$.
	We denote by $\Consv^\phi(S)$ the $\bbR$-linear subspace of $\Map(S,\bbR)$ 
	consisting of the conserved quantities for the interaction $\phi$.
	We let $c_\phi\coloneqq\dim_\bbR\Consv^\phi(S)$.
\end{definition}

Examples of interactions and corresponding conserved quantities are given as follows.

\begin{example}\label{example: interactions2}
	\begin{enumerate}
		\item
		Let $S=\{0,1,\ldots,\kappa\}$ with base state $*=0$
		for some integer $\kappa>0$.
		For the \textit{multi-species exclusion process} of
		Example \ref{example: interactions} (2), 
		we have $c_\phi=\kappa$, and
		the conserved
		quantities given by
		$\xi^{(\indi)}(s)=1$ if $s=\indi$ and $\xi^{(\indi)}(s)=0$ otherwise
		for $\indi=1,\ldots,\kappa$
		give a basis of the $\bbR$-linear space $\Consv^\phi(S)$.
		\item Let $S=\bbN$
		or $S=\{0,\ldots,\kappa\}\subset\bbN$ for some integer $\kappa>0$.
		We let $*=0$ be the base state. 
		The map $\phi\colon S\times S\rightarrow S\times S$ defined by
		\[   
			\phi(s_1,s_2)\coloneqq
			\begin{cases}
				  (s_1-1, s_2+1)  &  s_1-1, s_2+1\in S\\
				 (s_1,s_2)  &  \text{otherwise}
			\end{cases}
		\]
		is an interaction.
		The stochastic process induced from this interaction when $S$ is finite
		is the \textit{generalized exclusion process}.
		Note that for any $n>1$ in $S$, 
		we have $\phi(n,0)=(n-1,1)$, hence for any conserved quantity $\xi$,
		equation \eqref{eq: conserve} inductively gives
		\[
			\xi(n) = \xi(n) + \xi(0) = \xi(n-1) + \xi(1) = \xi(n-2) + 2\xi(1) = \cdots = n\xi(1).
		\]
		This shows that $c_\phi=1$, and
		the conserved quantity $\xi\colon S\rightarrow\bbN$ given by $\xi(s)=s$
		gives a basis of the one-dimensional $\bbR$-linear space $\Consv^\phi(S)$.
		This interaction is simple in the sense of Definition \ref{def: conditions} (2).
		\begin{figure}[ht]
			\centering
			\includegraphics[width=13cm]{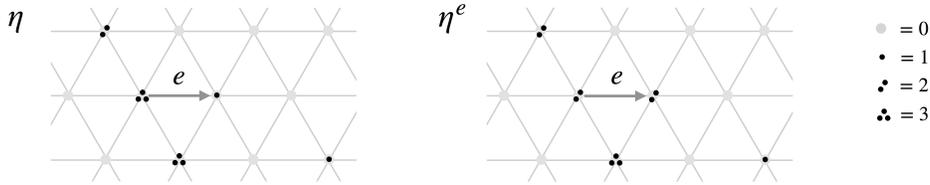}
			\caption{Generalized Exclusion Process}
		\end{figure}
		\item Let $S=\bbN$, or let 
		$S=\{0,\ldots,\kappa\}\subset\bbN$ for some integer $\kappa>1$,
		with base state $*=0$. 
		The map $\phi\colon S\times S\rightarrow S\times S$ defined by
		\[   
			\phi(s_1,s_2)\coloneqq
			\begin{cases}
				  (s_2, s_1)  & s_1>0, s_2=0\\
				  (s_1-1, s_2+1)  & s_1>1, s_2>0, s_2+1\in S\\
				 (s_1,s_2)  &  \text{otherwise}
			\end{cases}
		\]
		is an interaction.  We have $c_\phi=2$, and the functions $\xi^{(1)}(s)=s$ and 
		\[
		\xi^{(2)}(s)=
			\begin{cases}
				1  &  s>0\\
				0  &  s=0
			\end{cases}
		\]
		gives a basis of the $\bbR$-linear space $\Consv^\phi(S)$.
		This interaction is the \textit{lattice gas with energy}.
		Intuitively, $S$ represents the amount of energy on the vertex,
		with $s=0$ representing the fact that there are no particles.
		\item For $S=\{-1,0,1\}$ with base state $*=0$, the map 
		$\phi\colon S\times S\rightarrow S\times S$ defined by
		\begin{align*}
			\phi((0,0)) &= (-1,1), &\phi((-1,1)) &= (1,-1), &\phi((1,-1)) &= (0,0),&
		\end{align*}
		and $\phi((s_1,s_2))=(s_2,s_1)$ for $s_1,s_2\in S$ such that $s_1+s_2\neq0$
		is an interaction.
		We have $c_\phi=1$, and
		the conserved quantity $\xi\colon S\rightarrow\bbZ$ given by $\xi(s)=s$
		gives a basis of the $\bbR$-linear space $\Consv^\phi(S)$.
		This interaction is simple in the sense of Definition \ref{def: conditions} (2).
		\item
		For $S=\{0,1\}$ with base state $*=0$, the map 
		$\phi\colon S\times S\rightarrow S\times S$ defined by
		\begin{align*}
			\phi((0,s)) &= (1,s), &\phi((1,s)) &= (0,s)
		\end{align*}
		for any $s\in S$ does \textit{not} satisfy \eqref{eq: reverse}.  However, $(S^X,\Phi)$
		is a symmetric directed graph for any locale $X$ and our theory applies also to this case
		(see also Remark \ref{rem: sufficient}).
		This case is known as the \textit{Glauber model}.
		Since $\phi(0,0)=(1,0)$, equation \eqref{eq: conserve2} gives 
		$\xi(1)=\xi(0)=0$ for any conserved quantity
		$\xi$, hence we have $c_\phi=\dim_\bbR\Consv^\phi(S)=0$.
	\end{enumerate}
	The exclusion process of Example \ref{example: interactions} (1) is a special case of both
	the multi-species exclusion process and the generalized exclusion process, with the set
	of states given by $S=\{0,1\}$.
\end{example}

Let $X$ be a locale, and let $\State_*$ be the configuration space with transition structure
associated with our interaction.
Note that if $\xi$ is a conserved quantity, then $\xi$ defines a function
$\xi_X\colon\State_*\rightarrow\bbR$ given by
\begin{equation}\label{eq: sum}
	\xi_X(\state)\coloneqq\sum_{x\in X}\xi(\eta_x)
\end{equation}
for any $\state=(\eta_x)\in\State_*$.  The sum is well-defined since by definition,
$\eta_x=*$ outside a finite number of $x\in X$.

\begin{remark}\label{rem: interactions2}
	The interactions in Example \ref{example: interactions2} have the following interpretations.
	See Remark \ref{rem: interactions} for the case of the multi-species exclusion process.
	\begin{enumerate}
	\item[(2)]
	For $s\in S$ in the process in Example  \ref{example: interactions2} (2), 
	$s=0$ describes the state where there are no particles
	at the vertex, and $s=k$ for an integer $k>0$ the state where there are $k$ 
	indistinguishable particles at the vertex.
	Then for the conserved quantity given by $\xi(s)=s$ for any $s\in S$,
	the value
	$\xi_X(\state)$ for a configuration $\state\in\State_*$ expresses the 
	total number of particles in the configuration.
	If $S=\{0,\cdots,\kappa\}$ for some integer $\kappa>0$, then this process is called the generalized
	\textit{exclusion} process, since at most $\kappa$ particles are allowed to occupy the vertex.
	If $S=\bbN$, then the process includes the trivial case where the particles evolve as independent 
	random walks with no interactions between the particles, as described in \cite{KL99}*{Chapter 1}.
	\item[(3)]
	For $s\in S$ the process in Example  \ref{example: interactions2} (3), 
	$s=0$ describes the state where there are no particles
	at the vertex, and $s=k$ for an integer $k>0$ the state where there is a particle with energy $k$ at the vertex.
	Then $\xi_X^{(1)}(\state)$ for a configuration $\state\in\State_*$ expresses the 
	total energy of the particles in the configuration,
	and $\xi_X^{(2)}(\state)$ expresses the 
	total number of particles in the configuration. 
	\item[(4)]
	For the process in Example \ref{example: interactions2} (4), 
	$s$ describes the spin of the particle at the vertex.
	Then for the conserved quantity given by $\xi(s)=s$ for any $s\in S$,
	the value
	$\xi_X(\state)$ for a configuration $\state\in\State_*$ expresses the 
	total spin of the configuration.
	\item[(5)]
	For $s\in S$ in the Glauber model in Example \ref{example: interactions2} (5), 
	one interpretation is that
	$s=0$ describes the state where there are no particles at the vertex, 
	and $s=1$ describes the state where there is a single particle at the vertex.
	Since the interaction allows for the creation and annihilation of particles,
	there are no nontrivial conserved quantities.  Another interpretation is that
	$s$ describes the spin of the particle at each vertex.	
	\end{enumerate}	
\end{remark}

\begin{remark}\label{rem: hydrodynamic limit2}
	The hydrodynamic limits 
	for 
	the generalized exclusion process has been studied by Kipnis-Landim-Olla \cite{KLO94}.
	See also \cite{KL99}*{Chapter 7}.
	The case of lattice gas with energy has been studied by Nagahata \cite{N03},
	and the stochastic process induced from the interaction in Example \ref{example: interactions2} (4)
	was studied by Sasada \cite{Sas10}.
	In all of the known cases, the underlying locale is taken to be the Euclidean lattice.
\end{remark}

The important notion of irreducibly quantified is defined as follows.

\begin{definition}\label{def: irreducibly quantified}
	We say that an interaction $\phi\colon S\times S\rightarrow S\times S$
	is \textit{irreducibly quantified}, if for any finite locale
	$(X,E)$ and configurations $\state,\state'\in\State$ satisfying
	\[
		\xi_X(\state) = \xi_X(\state')
	\]
	for all conserved quantities $\xi\in\Consv^\phi(S)$, 
	there exists a finite path $\vec\gamma$ from $\state$ to $\state'$ in $\State$.
\end{definition}

\begin{remark}\label{rem: general}
	If we consider $S=\{0,1\}$ with base state $*=0$ and the interaction
	$\phi$ given by
	\begin{align*}
		\phi((0,0))&=(1,1), &
		\phi((1,0))&=(0,1), &
		\phi((0,1))&=(1,0), &
		\phi((1,1))&=(0,0),
	\end{align*}
	then a function $\xi\colon S\rightarrow\bbR$ satisfying 
	$\xi(0)=0$ and \eqref{eq: conserve2} would imply that $\xi(1)=0$,
	hence $\xi\equiv 0$.  
	In fact, this interaction is not irreducibly quantified.
	In order to deal with such a model, it may be necessary
	to consider conserved quantities with values in $\bbZ/2\bbZ$.
\end{remark}

The remainder of this section is devoted to the proof of Proposition \ref{prop: irreducibly quantified},
which asserts that the interactions given in Example \ref{example: interactions2} are all irreducibly quantified.
For the proof, we first introduce the notion of exchangeability.

\begin{definition}\label{def: exchangeable}
	We say that an interaction $\phi\colon S\times S\rightarrow S\times S$
	is \textit{exchangeable},  if for any $(s_1,s_2)\in S\times S$, 
	a suitable composition of maps $\phi$ and $\bar\phi$
	such that $(s_1,s_2)$ maps to $(s_2,s_1)\in S\times S$.
	Here, if we write $\phi(s_1,s_2)=(\phi_1(s_1,s_2),\phi_2(s_1,s_2))\in S\times S$, then we let
	 $\bar\phi(s_1,s_2)\coloneqq (\phi_2(s_2,s_1), \phi_1(s_2,s_1))$
	 for any $(s_1,s_2)\in S\times S$.
	 In other words, $\bar\phi\coloneqq\ic\circ\phi\circ\ic$, where $\ic\colon S\times S\rightarrow S\times S$
	is the bijection obtained by exchanging the components of $S\times S$.
\end{definition}

It is straightforward to check that the interactions in Example \ref{example: interactions2} are all
exchangeable.    Since $\bar\phi\circ\phi(s_1,s_2)=(s_1,s_2)$ if $\phi(s_1,s_2)\neq(s_1,s_2)$,
the composition of Definition \ref{def: exchangeable} is necessarily of the form $\phi^n$ or $\bar\phi^n$ for some integer $n\geq 0$.
In what follows, consider a system $(X,S,\phi)$ and the associated configuration space $\State_*$
with transition structure.  We first prepare some notations.

\begin{definition}
	Let $\state$ be a configuration in $\State_*$, and let $x,y$ be vertices in $X$.
	\begin{enumerate}
		\item
		We define $\state^{x,y}\in\State_*$
to be the configuration whose component outside $x,y\in X$ coincides
with that of $\state$, and the $x$ and $y$ components are given by $\eta^{x,y}_x\coloneqq \eta_y$ and $\eta^{x,y}_y\coloneqq \eta_x$.
		\item
		We define $\state^{x\arr y}$ to be the configuration whose component outside $x,y\in X$ coincides
	with that of $\state$, and the $x$ and $y$ components are given by 
	$\eta^{x\arr y}_x\coloneqq \phi_1(\eta_x,\eta_y)$ and 
	$\eta^{x\arr y}_y\coloneqq\phi_2(\eta_x,\eta_y)$,
	where $\phi(\eta_x,\eta_y)=(\phi_1(\eta_x,\eta_y),\phi_2(\eta_x,\eta_y))\in S\times S$ for the interaction $\phi$ on $S$.
	\end{enumerate}
\end{definition}

A path in $\State_*$ is a sequence of transitions, and a transition is induced by an interaction
on some edge of the locale.
Hence for a sequence of edges $\bse=(e^1,\ldots,e^N)$ in $E$
and $\state\in\State_*$, if we let $\state^0\coloneqq\state$ and 
$\state^i\coloneqq(\state^{i-1})^{e^i}$ for $i=1,\ldots,N$,
then $\varphi^i=(\state^i,\state^{i+1})$
is a transition and $\vec\gamma^\bse_\state\coloneqq(\varphi^1,\ldots,\varphi^N)$ gives a path from 
$\state$ to $\state^\bse\coloneqq\state^N$ in $\State_*$.  We call this path $\vec\gamma_\state^\bse$  
the \textit{path with origin $\state$ induced by $\bse$},
or a path obtained by applying the edges $\bse$ to $\state$.

We prove some existences of paths between certain configurations, 
when the interaction is exchangeable.

\begin{lemma}\label{lem: first}
	If an interaction $\phi\colon S\times S\rightarrow S\times S$
	is \textit{exchangeable}, then for any configuration $\state\in\State_*$ and
	vertices $x,y\in X$,
	there exists a path $\vec\gamma$ from $\state$ to $\state^{x,y}$ in $\State_*$.
\end{lemma}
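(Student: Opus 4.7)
The plan is to induct on the distance $n = d_X(x,y)$ in the locale. Since $X$ is connected, we fix a path $x = x_0, x_1, \ldots, x_n = y$ in $X$ so that $e^i \coloneqq (x_{i-1}, x_i) \in E$ for $i = 1, \ldots, n$. The key linking observation is that applying $\phi$ (resp.\ $\bar\phi$) to the pair $(\eta_{o(e)}, \eta_{t(e)})$ of components of a configuration is realized exactly by the transition $\phi_e$ (resp.\ $\phi_{\bar e}$) of $\State_*$; this follows directly from the definitions of $\phi_e$ and $\bar\phi = \ic \circ \phi \circ \ic$ together with the identity $\phi_{\bar e} = \ic_e \circ \phi_e \circ \ic_e$, where $\ic_e$ exchanges the $o(e)$- and $t(e)$-components.

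For the base case $n = 1$, the pair $(x, y)$ is joined by the edge $e^1 \in E$. Exchangeability applied to $(\eta_x, \eta_y)$ furnishes a composition $\psi$ of $\phi$'s and $\bar\phi$'s satisfying $\psi(\eta_x, \eta_y) = (\eta_y, \eta_x)$. By the observation above, this $\psi$, when applied successively to the current $(o(e^1), t(e^1))$-components, corresponds to a finite sequence of transitions from $\state$ to $\state^{x,y}$, hence a path in $\State_*$.

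For the inductive step, assume that for every configuration and every pair of endpoints joined by a path in $X$ of length strictly less than $n$, such a path in $\State_*$ exists; note that the conclusion of the lemma is sharp in that only the values at the two endpoints are exchanged, every other vertex of $X$ retaining its original value. Given our path $x_0, \ldots, x_n$, we concatenate three paths in $\State_*$: first, swap the values at $x_{n-1}$ and $x_n$ using the base case applied to the edge $e^n$; second, swap the values at $x_0$ and $x_{n-1}$ using the inductive hypothesis applied to the subpath $x_0, \ldots, x_{n-1}$ of length $n-1$; third, swap the values at $x_{n-1}$ and $x_n$ once more via the base case. Tracking the $x_0$-, $x_{n-1}$-, and $x_n$-components through these three operations, while noting that every intermediate vertex $x_1, \ldots, x_{n-2}$ is restored to its original value by step two and never touched by steps one and three, shows that the final configuration has $\eta_y$ at $x_0 = x$ and $\eta_x$ at $x_n = y$, with every other vertex of $X$ unchanged. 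This is precisely $\state^{x,y}$, completing the induction.

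The only real obstacle is the passage from pointwise exchangeability on $S \times S$ to paths on $\State_*$, which is resolved by the identification of $\phi$ and $\bar\phi$ on a pair with the transitions $\phi_e$ and $\phi_{\bar e}$ on a configuration; the remainder is the standard conjugation-by-adjacent-swap trick generating arbitrary transpositions from adjacent ones along a path.
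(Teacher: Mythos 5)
Your proof is correct and follows essentially the same route as the paper: both reduce the swap of $\eta_x$ and $\eta_y$ to adjacent swaps along a path in $X$, each adjacent swap being realized in $\State_*$ by repeatedly applying the transitions $\phi_e$ and $\phi_{\bar e}$ via exchangeability. The only difference is packaging — your palindromic conjugation $T_n\cdots T_1\cdots T_n$ is organized as an induction on path length, whereas the paper writes the same composition explicitly as a forward sweep along $e^1,\ldots,e^N$ followed by a backward sweep along $e^{N-1},\ldots,e^1$.
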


\begin{proof}
	Since $X$ is connected, there exists a path $\vec p=(e^1,\ldots,e^N)$ from $x$ to $y$.
	Note that by definition, $x=o(e^1)$ and $y=t(e^N)$.
	Since $\phi$ is exchangeable, applying $e^1$ or $\bar e^1$ sufficiently many times to $\state$,
	we obtain a path from $\state$ to $\state^{o(e^1),t(e^1)}$.
	Repeating this process for $e^2,\ldots,e^{N}$,
	we obtain a path $\vec\gamma_1$ from  $\state$ to $\state'$, where $\state'$ is such that
	the components of $\state'$ coincides with that of $\state$ outside the vertices appearing in 
	the edges $e^1,\ldots,e^{N}$, $\eta'_{o(e^i)}=\eta_{t(e^i)}$ for $i=1,\ldots, N-1$, and $\eta'_{y}=\eta_x$.
	Then, reversing the above process,
	first by applying $e^{N-1}$ or $\bar e^{N-1}$ sufficiently many times to $\state'$,
	we obtain a path from $\state'$ to $(\state')^{o(e^{N-1}),t(e^{N-1})}$.
	Repeating this process for $e^{N-2},\ldots,e^{1}$,
	we see that we obtain a path $\vec\gamma_2$ from $\state'$ to $\state^{x.y}$.
	Then the composition $\vec\gamma\coloneqq\vec\gamma_1\vec\gamma_2$
	gives a path from $\state$ to $\state^{x,y}$ in $\State_*$ as desired.
\end{proof}

\begin{lemma}\label{lem: second}
	If an interaction $\phi\colon S\times S\rightarrow S\times S$
	is \textit{exchangeable}, then for any configuration $\state\in\State_*$ and
	vertices $x,y\in X$,
	there exists a path $\vec\gamma$ from $\state$ to $\state^{x\arr y}$ in $\State_*$.
\end{lemma}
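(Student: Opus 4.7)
The plan is to construct the desired path by reducing to the case when $x$ and $y$ are already joined by an edge of $X$. If $(x,y)\in E$, then the single transition along $e=(x,y)$ sends $\state$ directly to $\state^{e}=\state^{x\arr y}$, so one transition suffices. The substance of the lemma is therefore the case $(x,y)\notin E$, where the interaction cannot be performed on $x,y$ directly and one must instead transport states along the graph using the swaps granted by Lemma \ref{lem: first}.

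For that case I would pick any neighbor $z$ of $x$; such a $z$ exists because $X$ is connected and contains at least two vertices (since $x\neq y$), and automatically $z\neq y$ because $y$ is not adjacent to $x$. The construction is then a three-stage composition. First, apply Lemma \ref{lem: first} to the pair $(y,z)$ to produce a path $\vec\gamma_1$ from $\state$ to $\state^{y,z}$: this parks $\eta_y$ at the vertex $z$ while keeping $\eta_x$ at $x$. Second, execute the single transition along the edge $e=(x,z)\in E$ to obtain a length-one path $\vec\gamma_2$; the $(x,z)$-components then become $\phi(\eta_x,\eta_y)=(\phi_1(\eta_x,\eta_y),\phi_2(\eta_x,\eta_y))$, while the $y$-component is still $\eta_z$. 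Third, apply Lemma \ref{lem: first} once more to the pair $(y,z)$ to obtain a path $\vec\gamma_3$ that swaps the $y$- and $z$-components, which moves $\phi_2(\eta_x,\eta_y)$ to $y$ and restores $\eta_z$ at $z$. The concatenation $\vec\gamma_1\vec\gamma_2\vec\gamma_3$ is a path from $\state$ to $\state^{x\arr y}$ in $\State_*$.

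There is no real obstacle beyond careful bookkeeping: the argument is the standard trick of conjugating an operation by a transport. The only items requiring care are checking that each intermediate configuration remains in $\State_*$, which is automatic because every modification occurs at the finite set $\{x,y,z\}\cup\Supp(\state)$, and verifying component by component at $x$, $y$, $z$ and elsewhere that the final configuration indeed agrees with $\state^{x\arr y}$. Both are routine.
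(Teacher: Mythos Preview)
Your proof is correct and takes essentially the same approach as the paper's: both conjugate a single edge-transition by the swaps of Lemma~\ref{lem: first}. The only cosmetic difference is that you transport $\eta_y$ to a neighbor $z$ of $x$ and apply $\phi$ along $(x,z)$, whereas the paper transports $\eta_x$ to a neighbor $x'$ of $y$ and applies $\phi$ along $(x',y)$; these are mirror images of the same conjugation trick.
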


\begin{proof}
	Again, since $X$ is connected, there exists a path $\vec p=(e^1,\ldots,e^N)$ from $x=o(e^1)$ to $y=t(e^N)$.
	If we let $x'=o(e^N)$, then $x'$ is a vertex connected to $y$ by the edge $e^N$.
	Then by construction, if we let $\state'\coloneqq(\state^{x,x'})^{e_N}$, then $\state'$ is a configuration
	 whose component outside $x,x',y\in X$ coincides
with that of $\state$, and the $x, x'$ and $y$ components are given by $\eta'_x=\eta_{x'}$, 
$\eta'_{x'}=\phi_1(\eta_x,\eta_y)$, and $\eta'_y = \phi_2(\eta_x,\eta_y)$.
Thus we have $(\state')^{x,x'}=\state^{x\arr y}$.
By Lemma \ref{lem: first}, there exists a path $\vec\gamma_1$ from $\state$ to $\state^{x,x'}$
and a path $\vec\gamma_2$ from $\state'$ to  $\state^{x\arr y}$ in $\State_*$.
If we denote by $\vec\varphi$ the path given by a single transition $\varphi=(\state^{x,x'},\state')$,
then the composition $\vec\gamma\coloneqq\vec\gamma_1\vec\varphi\vec\gamma_2$ gives a path from
$\state$ to $\state^{x\arr y}$ in $\State_*$ as desired.
\end{proof}

Using the above results, we may prove that various interactions are irreducibly quantified.

\begin{proposition}\label{prop: irreducibly quantified}
	The interactions of Example \ref{example: interactions2} are all irreducibly quantified.
\end{proposition}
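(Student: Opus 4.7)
The plan is to verify the proposition separately for each of the five interactions in Example \ref{example: interactions2}, using a common reduction. The key first step is to check that each interaction is \emph{exchangeable} in the sense of the definition immediately preceding Lemma \ref{lem: first}. For (1) this is trivial since $\phi$ is itself the swap; for (5) the Glauber model, two applications of the flip rules give the swap. For (2) generalized exclusion, iterating $\phi$ or $\bar\phi$ exactly $|s_1 - s_2|$ times exchanges $(s_1, s_2)$, with the intermediate states staying in $S$ by direct check. For (3) lattice gas with energy, the rule $\phi(s, 0) = (0, s)$ covers the case where one coordinate is zero, and when both are positive the iteration $\phi^{s_1 - s_2}$ (or $\bar\phi^{s_2 - s_1}$ in the reverse case) produces the swap, intermediate validity holding because the bounds $s_1, s_2 \leq \kappa$ are preserved. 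Case (4) is checked by inspection on the nine pairs in $S \times S$. Once exchangeability is established, Lemmas \ref{lem: first} and \ref{lem: second} allow us to swap the states at any two vertices $x, y \in X$ and, more generally, to apply $\phi$ between any two vertices via a finite path in $\State_*$, effectively decoupling the proof from the graph structure of $X$.

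Given this reduction, for each interaction I would define a canonical configuration depending only on the values of the conserved quantities and then show that any $\state \in \State_*$ can be brought to its canonical form via a finite path. For (1), the conserved quantities $(\xi^{(i)}_X(\state))_{i=1}^\kappa$ record the number of particles of each species; the canonical form places them at prescribed vertices, and Lemma \ref{lem: first} yields the reduction directly. For (2), the single conserved quantity is the total particle number; the canonical form concentrates all particles at a single vertex when $S = \bbN$, or on a prescribed block of consecutive vertices in the bounded case, reached by moving particles one at a time using the swap rule $(k, 0) \mapsto (0, k)$ and the transfer rule $(k, \ell) \mapsto (k-1, \ell+1)$ combined with Lemma \ref{lem: second}. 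For (4), the canonical form corresponding to total spin $T$ places $|T|$ particles of spin $\mathrm{sgn}(T)$ at prescribed vertices, obtained from an arbitrary configuration by first using the creation rule $\phi((0,0)) = (-1, 1)$ if needed and then eliminating all $(+1, -1)$ pairs via $\phi((1, -1)) = (0, 0)$ applied long-range. Case (5) is essentially trivial: since $c_\phi = 0$ the hypothesis is vacuous, and the flip rule allows changing each vertex state independently on any incident edge, so any two configurations in $\State_*$ on a locale with at least one edge are joined by a finite sequence of single-site flips.

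The main obstacle is case (3), the lattice gas with energy, where the two conserved quantities interact with the nontrivial state constraint that every particle must carry energy at least $1$. Writing $N = \xi^{(2)}_X(\state)$ for the particle number and $E = \xi^{(1)}_X(\state)$ for the total energy, I would take as canonical form $N$ particles on fixed target vertices $x_1, \ldots, x_N$ with energies $(E - N + 1, 1, \ldots, 1)$, which is valid since $E - N + 1 \leq \kappa$ in the bounded case (the original $\state$ already witnesses this). The transformation proceeds in two stages: first, move each of the $N$ particles to its assigned target vertex using $\phi(s, 0) = (0, s)$ long-range via Lemma \ref{lem: second}, being careful to choose the order of moves so that targets remain unoccupied when needed, which is always possible since particles can be relocated one at a time through vertices of $X \setminus \Supp(\state)$; second, drain energy from each $x_j$, $j \geq 2$, into $x_1$ one unit at a time via the transfer rule, which at each intermediate step is valid since the donor retains energy at least $1$ throughout. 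Reversing the second stage sends the canonical form back to any $\state'$ with the same $N, E$, so concatenating the two paths completes the proof.
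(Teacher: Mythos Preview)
Your canonical-form strategy is a legitimate alternative to the paper's induction on $|\Delta(\state,\state')|$, and the exchangeability checks together with cases (1), (2), (4), (5) are essentially correct. However, there is a genuine gap in case (3), the lattice gas with energy, when $S=\{0,\ldots,\kappa\}$ is bounded.

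Your canonical form places $N$ particles with energies $(E-N+1,1,\ldots,1)$, and you assert that $E-N+1\leq\kappa$ because ``the original $\state$ already witnesses this.'' It does not: take $\kappa=2$ and two particles each of energy $2$, so that $N=2$, $E=4$, and $E-N+1=3>\kappa$. More generally, $N\geq 2$ particles all at maximal energy give $E-N+1=N(\kappa-1)+1>\kappa$. The second stage fails for the same reason: the transfer rule requires $s_2+1\in S$, so one cannot push energy into a site already at level $\kappa$; draining everything into $x_1$ is simply impossible in the bounded case.

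The paper avoids this by never routing through an auxiliary target that might violate the cap. It inducts on $|\Delta(\state,\state')|$: first it equalizes particle positions via the swap rule at sites where exactly one of $\eta_x,\eta'_x$ vanishes, and then, once the supports agree, it transfers energy between a pair $x,y\in\Delta(\state,\state')$ with $\eta_x>\eta'_x$ and $\eta_y<\eta'_y$ only until one of them reaches its target value in $\state'$. Since $\eta'_x,\eta'_y\in S$ by hypothesis, every intermediate state respects the bound automatically. Your approach can be salvaged by choosing a canonical form that spreads energy as evenly as possible (each particle carrying $\lfloor E/N\rfloor$ or $\lceil E/N\rceil$), but then the redistribution step needs a more careful argument than a single drain into $x_1$.
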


\begin{proof}
	By Example \ref{example: interactions2},
	we see that $\Consv^\phi(S)$ are finite dimensional.
	Let $X$ be a locale and let $\state,\state'\in \State_*$
	such that $\xi_X(\state)=\xi_X(\state')$ for any conserved quantity $\xi\in \Consv^\phi(S)$.
	We prove the existence of a path $\vec\gamma$ from $\state$ to $\state'$ 
	by induction on the cardinality of the set 
	$\Delta(\state,\state')\coloneqq\{x\in X\mid \eta_x\neq \eta'_x\}$.
	Note that $\Delta(\state,\state')$ is finite since the supports of $\state$ and $\state'$ are finite.
	If $|\Delta(\state,\state')|=0$, then $\state=\state'$ and there is nothing to prove.
	Suppose $|\Delta(\state,\state')|>0$ and that the assertion is proved for $\state'',\state'$,
	where $\state''$ is any configuration in $\State_*$
	such that $|\Delta(\state'',\state')|<|\Delta(\state,\state')|$
	and $\xi_X(\state'')=\xi_X(\state')$ for any conserved quantity $\xi\in \Consv^\phi(S)$.
	\begin{enumerate}
	\item Consider the case of the multi-species exclusion process of Example \ref{example: interactions2} (1)
	with conserved quantity $\xi^{(1)},\ldots,\xi^{(c_\phi)}$.
	Let $x\in\Delta(\state,\state')$.
	If we let $\eta_x=\indi\in S$, then since 
	$\xi^{(\indi)}_X(\state)=\xi^{(\indi)}_X(\state')$, there exists
	$y\in \Delta(\state,\state')$ such that $\eta_y'=\indi$.  
	By Lemma \ref{lem: first},
	there exists a path $\vec\gamma_1$ from $\state$ to $\state^{x,y}$.
	Note that $\state^{x,y}$ coincides with $\state$ outside $x,y$, and we have 
	$\state^{x,y}_y=\eta_x=\indi=\eta_y'$, which implies that 
	$\xi_X(\state)=\xi_X(\state^{x,y})=\xi_X(\state')$ for any conserved quantity and
	$|\Delta(\state^{x,y},\state')|<|\Delta(\state,\state')|$.
	Hence by the induction hypothesis, there exists a path $\vec\gamma_2$ 
	from $\state^{x,y}$ to $\state'$ in $\State_*$.  Our assertion is proved by taking
	$\vec\gamma\coloneqq\vec\gamma_1\vec\gamma_2$.
	\item
	Consider the case of the generalized exclusion process of Example \ref{example: interactions2} (2)
	with conserved quantity $\xi$ given by $\xi(s)=s$.
	Again let $x\in\Delta(\state,\state')$ such that $\eta_x>\eta'_x$.
	Since  $\xi_X(\state)=\xi_X(\state')$, there exists $y\in\Delta(\state,\state')$
	such  that $\eta_y<\eta'_y$.
	Let $M\coloneqq\min(\eta_x-\eta'_x, \eta'_y-\eta_y)$,
	and let $\state^0=\state$ and $\state^i=(\state^{i-1})^{x\arr y}$ for $i=1,\ldots,M$.
	By Lemma \ref{lem: second}, there exists a path $\vec\gamma_i$
	from $\state^{i-1}$ to $\state^{i}$ in $\State_*$ for $1<i<M$.
	We have $\xi_X(\state^M)=\xi_X(\state)=\xi_X(\state')$, and since
	$\eta^M_x=\eta_x-M$ and $\eta^M_y=\eta_y+M$, we have either 
	$\eta^M_x=\eta'_x$ or $\eta^M_y=\eta'_y$.
	This shows that $|\Delta(\state^{M},\state')|<|\Delta(\state,\state')|$.
	Hence by the induction hypothesis, there exists a path $\vec\gamma'$
	from $\state^{M}$ to $\state'$ in $\State_*$.
	Our assertion is proved by taking
	$\vec\gamma\coloneqq\vec\gamma_1\cdots\vec\gamma_M\vec\gamma'$.
	\item
	Consider the case of the lattice gas with energy of Example \ref{example: interactions2} (3)
	with conserved quantity $\xi^{(1)}$ and $\xi^{(2)}$.
	Suppose there exists $x\in\Delta(\state,\state')$ such that
	$\eta'_x=0$. Then, since $\xi_X^{(2)}(\state)=\xi_X^{(2)}(\state')$,
	there exists $y\in\Delta(\state,\state')$ such that 
	$\eta_y=0$.  Then $\state^{x,y}$ satisfies $\xi_X(\state^{x,y})=\xi_X(\state')$
	for any conserved quantity $\xi$, and
	$|\Delta(\state^{x,y},\state')|<|\Delta(\state,\state')|$.
	Hence by the induction hypothesis, there exists a path $\vec\gamma_2$ 
	from $\state^{x,y}$ to $\state'$ in $\State_*$.
	If we let $\vec\gamma_1$ be the path from $\state$ to $\state^{x,y}$ given in Lemma
	\ref{lem: first}, then $\vec\gamma\coloneqq\vec\gamma_1\vec\gamma_2$
	satisfies the desired property.
	Otherwise, we have $\eta_x\neq0$ and $\eta'_x\neq0$ for any $x\in\Delta(\state,\state')$.
	Since $\xi_X^{(1)}(\state)=\xi_X^{(1)}(\state')$, there exists $x,y\in\Delta(\state,\state')$
	such that $\eta_x>\eta'_x$ and $\eta_y<\eta'_y$.  As in (2), let
	$M\coloneqq\min(\eta_x-\eta'_x, \eta'_y-\eta_y)$,
	and let $\state^0=\state$ and $\state^i=(\state^{i-1})^{x\arr y}$ for $i=1,\ldots,M$.
	By Lemma \ref{lem: second}, there exists a path $\vec\gamma_i$
	from $\state^{i-1}$ to $\state^{i}$ in $\State_*$ for $1<i<M$.
	We have $\xi_X(\state^M)=\xi_X(\state)=\xi_X(\state')$
	and either $s^M_x=\eta'_x$ or $s^M_y=\eta'_y$, which
	shows that $|\Delta(\state^{M},\state')|<|\Delta(\state,\state')|$.
	Hence by the induction hypothesis, there exists a path $\vec\gamma'$ 
	from $\state^{M}$ to $\state'$ in $\State_*$.
	Our assertion is proved by taking
	$\vec\gamma\coloneqq\vec\gamma_1\cdots\vec\gamma_M\vec\gamma'$.	
	\item The case of Example \ref{example: interactions2} (4) is proved in a similar fashion as that of (1),
	but by first using the interaction $\phi((0,0))=(1,-1)$ and  $\phi((-1,1))=(0,0)$ to equalize the number of 
	vertices whose states are at $+1$ and $-1$.
	\item Consider the Glauber model of Example \ref{example: interactions2} (5).
	Since in this case, the only conserved quantity is the zero map, the condition for 
	the conserved quantity is always satisfied.
	Let $x\in\Delta(\state,\state')$.  If we let $e\in E$ be any edge such that $o(e)=x$,
	then $\state^e$ coincides with $\state$ outside $x$ and we have $\eta^e_x=\eta'_x$.
	This shows we have $|\Delta(\state^e,\state')|<|\Delta(\state,\state')|$.
	Hence by the induction hypothesis, there exists a path $\vec\gamma'$ 
	from $\state^e$ to $\state'$ in $\State_*$.
	Our assertion is proved by taking
	$\vec\gamma\coloneqq\vec\varphi\vec\gamma'$, where $\vec\varphi$
	is the path given by the transition $\varphi=(\state,\state^e)$.\qedhere
\end{enumerate}
\end{proof}

%
\subsection{The Cohomology of the Configuration Space}\label{subsec: naive}
%

In this subsection, we consider the cohomology as graphs of a configuration space with transition structure.
See Appendix \ref{sec: A} for generalities concerning the homology and cohomology of graphs.
Let $(X,E)$ be a locale, and 
we let $\State_*=(\State_*,\Phi_*)$ be the configuration space 
with transition structure associated to a system $(X,S,\phi)$.
For any $\varphi=(o(\varphi),t(\varphi))\in \Phi_*$,
we let $\bar\varphi\coloneqq(t(\varphi),o(\varphi))\in \Phi_*$, 
which we call the opposite transition.

The cohomology defined in 
Definition \ref{def: coh graph} of a configuration space with transition structure
is given as follows.

\begin{definition}
	Let $(X,S,\phi)$ be a system.
	For the graph $(\State_*,\Phi_*)$, let
	\begin{align}\label{eq: cohomology}
		\CS&\coloneqq\Map(\State_*, \bbR),&
		C^1(\State_*) &\coloneqq\Map^\alt(\Phi_*, \bbR),
	\end{align}
	where $\Map^{\alt}(\Phi_*, \bbR)
	\coloneqq\{\omega\colon\Phi_*\rightarrow \bbR 
	\mid \forall\varphi\in\Phi_*\,\,\omega(\bar\varphi)=-\omega(\varphi) \}$.
	We define the differential
	\begin{equation}\label{eq: diff}
		\partial\colon\CS\rightarrow C^1(\State_*),\qquad f\mapsto\partial f
	\end{equation}
	by  $\partial f(\varphi)\coloneqq f(\state^e) - f(\state)$ for any $\varphi=(\state,\state^e)\in\Phi_*$.	
	The \textit{cohomology} of $(S^X_*,\Phi_*)$ is given by
	\begin{align*}
		H^0(S^X_*)&=\Ker\partial, &  H^1(S^X_*)&=C^1(S^X_*)/\partial C(S^X_*),
	\end{align*}
	and $H^m\bigl(S^X_*)=\{0\}$
	for any $m\in\bbN$ such that $m\neq0,1$.
\end{definition}

We will often call an element in $C^1(\State_*)$ a \textit{form}.
We next show that the differential \eqref{eq: diff} coincides with the differential given in \S\ref{subsec: model}.
By the definition of $\Phi_*$ in \eqref{eq: transition}, we have a surjection $E\times\State_*\rightarrow\Phi_*$.
This shows that we have natural injections
\[
	\xymatrix{
		C^1(\State_*) \quad\ar@{^{(}->}[r]& \quad\Map(\Phi_*, \bbR)\quad\ar@{^{(}->}[r] &\quad\Map(E\times\State_*,\bbR).
	}
\]
Hence we may view a form $\omega\in C^1(\State_*)$ 
as a family of functions $\omega = (\omega_e)_{e\in E}$
through the identification
\[
	\Map(E\times\State_*,\bbR) = \prod_{e\in E}C(\State_*),
\]
where $\omega_e\colon\State_*\rightarrow\bbR$ is the function defined as
\[
	\omega_e(\state)\coloneqq \omega(\varphi),   \qquad \varphi = (\state,\state^e) \in \Phi_*\subset\State_*\times\State_*.
\]
Conversely, any family of functions $(\omega_e) \in\prod_{e\in E} C(\State_*)$ comes from an element $\omega$ in $C^1(\State_*)$ if and only if 
$\omega_e(\state)=\omega_{e'}(\state)$ if $\state^e=\state^{e'}$,
$\omega_e(\state)=0$ if $\state^e=\state$,
and $\omega_e(\state)=-\omega_{\bar e}(\state^e)$ for any $(e,\state)\in E\times\State_*$.
For any $f\in \CS$, if we view $\partial f$
as an element $\partial f = ((\partial f)_e)_{e\in E}$ in $\prod_{e\in E}C(\State_*)$,
then we see that $(\partial f)_e$ is a function satisfying $(\partial f)_e(\state)= f(\state^e) - f(\state)$
for any $\state\in\State_*$.  If we define the function $\nabla_e f\in\Map(\State_*,\bbR)$
for any $e\in E$ by
\[
	\nabla_e(f)(\state)\coloneqq f(\state^e)-f(\state)
\]
for any $\eta\in\State_*$, then we have $\partial f = (\nabla_e f)_{e\in E}$ by construction.
Hence our differential coincides with the differential $\partial$ of \S\ref{subsec: model}.
In what follows, we will often identify a form $\omega\in C^1(\State_*)$ 
with its representation $\omega = (\omega_e)_{e\in E}$ in $\prod_{e\in E}C(\State_*)$.

We next use the conserved quantities of \S \ref{subsec: CQ}
to investigate the cohomology $H^0(\State_*)$.
We say that a function $f\in \CS$ is \textit{horizontal}, if $\partial f=0$.
We have the following.

\begin{lemma}\label{lem: special}
	Suppose $f\in \CS$ is horizontal.
	Then if $\state,\state'\in\State_*$ are in the same connected components of $S^X_*$, 
	then we have $f(\state)=f(\state')$.
	In particular, the function $f$ is constant on the connected components of $S^X_*$.
\end{lemma}

\begin{proof}
	This follows from Lemma \ref{lem: horizontal A}, applied to the graph $(S^X_*,\Phi_*)$.
\end{proof}

Let $\xi$ be a conserved quantity in $\Consv^\phi(S)$.
If we associate to $\xi$
the function $\xi_X\colon\State_*\rightarrow\bbR$ of
\eqref{eq: sum}, then this induces a
homomorphism of $\bbR$-linear spaces $\Consv^\phi(S)\rightarrow \CS$.
This homomorphism is injective since for a fixed $x\in X$, if we let $\state\in\State_*$ 
be the configuration with $s$ in the $x$-component and
at base state in the other components, then we have $\xi(s)=\xi_X(\state)$, hence $\xi_X=\xi'_X$
implies that $\xi=\xi '$ for any $\xi,\xi'\in\Consv^\phi(S)$.

\begin{lemma}\label{lem: horizontal}
	Let $\xi$ be a conserved quantity for the interaction $\phi$.
	Then the function $\xi_X\in \CS$ defined by \eqref{eq: sum} is horizontal.
	In particular, $\xi_X$ defines a class in $H^0(\State_*)$.
\end{lemma}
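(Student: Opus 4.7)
The plan is to verify directly from definitions that $\partial\xi_X = 0$, by checking that $\nabla_e \xi_X(\state) = 0$ for every edge $e \in E$ and every $\state \in \State_*$. Since $\xi_X$ is defined as a pointwise sum of $\xi(\eta_x)$ over $x \in X$ and only finitely many terms are nonzero (because $\state \in \State_*$ and $\xi(*)=0$), the sum is unambiguous and we may freely rearrange it.

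First, I would fix an edge $e = (x_1, x_2) \in E$ and a configuration $\state = (\eta_x) \in \State_*$. Unfolding the definition of $\state^e$ from \eqref{eq: se}, the configurations $\state^e$ and $\state$ agree at every vertex $x \neq x_1, x_2$, so in the difference
\[
	\nabla_e \xi_X(\state) = \xi_X(\state^e) - \xi_X(\state) = \sum_{x \in X}\bigl(\xi(\eta^e_x) - \xi(\eta_x)\bigr),
\]
every term with $x \notin \{x_1, x_2\}$ vanishes, collapsing the sum to
\[
	\nabla_e \xi_X(\state) = \xi(\eta^e_{x_1}) + \xi(\eta^e_{x_2}) - \xi(\eta_{x_1}) - \xi(\eta_{x_2}).
\]
By the definition of $\state^e$, the pair $(\eta^e_{x_1}, \eta^e_{x_2})$ equals $\phi(\eta_{x_1}, \eta_{x_2})$, so the conserved-quantity relation \eqref{eq: conserve2} applied to $(s_1, s_2) = (\eta_{x_1}, \eta_{x_2})$ says precisely that the right-hand side above is zero. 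Hence $\nabla_e \xi_X(\state) = 0$ for all $e$ and $\state$, proving $\partial \xi_X = 0$ and therefore $\xi_X$ is horizontal.

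For the \emph{in particular} clause, I would simply remark that since the cochain complex $C^\bullet(\State_*)$ is concentrated in degrees $0$ and $1$, we have $H^0_{\naive}(\State_*) = \Ker\partial^0$, so any horizontal function canonically determines a class. There is no real obstacle here: the argument is a one-line bookkeeping computation reducing an infinite sum to the two vertices moved by $\phi_e$, followed by a single invocation of the defining property of a conserved quantity.
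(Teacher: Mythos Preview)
Your proof is correct and follows essentially the same approach as the paper's own proof: both reduce the difference $\xi_X(\state^e)-\xi_X(\state)$ to the two-vertex contribution at $x_1,x_2$ and then invoke the conserved-quantity identity \eqref{eq: conserve2}. The only cosmetic difference is that you phrase the computation in terms of $\nabla_e$ while the paper writes it directly as $\partial\xi_X(\varphi)$ for a transition $\varphi$.
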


\begin{proof}
	For any $\varphi=(\state,\state^e)\in\Phi_*$, we have
	\[
		\partial\xi_X(\varphi)\coloneqq\xi_X(\state^e)-\xi_X(\state)
		=\sum_{x\in X}\xi(\eta^e_x) - \sum_{x\in X}\xi(\eta_x).
	\]
	If we let $e=(x_1, x_2)\in E\subset X\times X$, then by definition of $\phi_e$ given in 
	Lemma \ref{lem: SDG},
	we have $(\eta^e_{x_1}, \eta^e_{x_2})=\phi(\eta_{x_1}, \eta_{x_2})$
	and $\eta^e_x=\eta_x$ for $x\neq x_1,x_2$.
	This shows that 
	\[
		\partial\xi_X(\varphi) 
		=\sum_{x\in X}\xi(\eta^e_x) - \sum_{x\in X}\xi(\eta_x)
		=\bigl(\xi\bigl(\eta^e_{x_1}\bigr) + \xi\bigl(\eta^e_{x_2}\bigr)\bigr)-\bigl(\xi(\eta_{x_1}) + \xi(\eta_{x_2})\bigr)=0
	\]
	as desired,
	where the last equality follows from \eqref{eq: conserve}.
\end{proof}

Lemma \ref{lem: horizontal} shows that \eqref{eq: sum} induces an injective
homomorphism of $\bbR$-linear spaces 
\begin{equation}\label{eq: inclusion}
	\Consv^\phi(S)\hookrightarrow H^0(\State_*).
\end{equation}

\begin{remark}
	We remark that $H^0(\State_*)$ is in general very large, containing the $\bbR$-algebra generated by the image of
	$\Consv^\phi(S)$.  For example, the function $(\xi_X)^2$ on $\State_*$ for any conserved quantity
	$\xi\in\Consv^\phi(S)$ also defines an element of $H^0(\State_*)$.
	The property \eqref{eq: conserve2} ensures that $\xi_X$ is an \textit{extensive quantity},
	i.e.\ additive with respect to the size of the system. 
	We will prove in Theorem \ref{thm: cohomology} that under suitable conditions,
	\eqref{eq: inclusion} gives an isomorphism between $\Consv^\phi(S)$
	and the $0$-th \emph{uniform} cohomology of the configuration space with transition structure.
\end{remark}
Let $\state=(\eta_x)$ and $\state'=(\eta'_x)$ be configurations in $\State_*$, and let
$\vec\gamma$ be a finite path from $\state$ to $\state'$.
If $\xi$ is a conserved quantity for the interaction $\phi$, then Lemma 
\ref{lem: horizontal} and Lemma \ref{lem: special} give
the equality
\[
	\xi_X(\state) = \xi_X(\state').
\]
This shows that $\xi_X$ is constant on each of the connected components of $\State_*$.
From now until the end of this subsection, we assume that $X$ is an infinite locale.
We define a \textit{monoid} to be any set with a binary operation that is associative and has an identity element.  
We say that a monoid is commutative if the operation is commutative.

\begin{definition}\label{def: U}
	Assume for simplicity that $\Consv^\phi(S)$ is finite dimensional,
	and fix an $\bbR$-linear basis $\xi^{(1)},\ldots,\xi^{(c_\phi)}$ of $\Consv^\phi(S)$. 
	We define the map
	\[
		\bsxi_{\!X}\colon\State_*\rightarrow\bbR^{c_\phi}
	\]
	by $\bsxi_{\!X}(\state)\coloneqq(\xi_X^{(1)}(\state),\ldots,\xi_X^{c_\phi}(\state))$ 
	for any $\state=(\eta_x)\in\State_*$,
	where $\xi_X^{(\indi)}(\state)\coloneqq\sum_{x\in X}\xi^{(\indi)}(\eta_x)$ for any $\indi=1,\ldots,c_\phi$. 
	Assuming that $X$ is an infinite locale, 
	we let $\Val\coloneqq\bsxi_{\!X}(\State_*)$, which we view as a commutative monoid
	with operation induced from the addition on $\bbR^{c_\phi}$.
	Then the monoid $\Val$ is determined independently up to a natural isomorphism
	of the choice of the basis $(\xi^{(1)},\ldots,\xi^{(c_\phi)})$.
\end{definition}

\begin{remark}\label{rem: intrinsic}
	We may define the commutative monoid $\Val$ of Definition \ref{def: U} 
	intrinsically as an additive submonoid 
	of $\Hom_\bbR(\Consv^\phi(S),\bbR)$ as follows.  We define a map
	\[
		\bsxi_{\!X}^\univ\colon\State_*\rightarrow \Hom_\bbR(\Consv^\phi(S),\bbR)
	\]
	by $\state\mapsto (\xi \mapsto \xi_X(\state))$, and we let
	$\Val\coloneqq\bsxi_{\!X}^\univ(\State_*)$.   	
	In fact, the monoid $\Val$ is defined independently of the choice of the infinite locale $X$.
	In what follows, we will denote the map 
	$
		\bsxi_{\!X}^\univ\colon\State_*\rightarrow \cM
	$
	simply by $\bsxi_{\!X}$.
	If $\Consv^\phi(S)$ is finite dimensional,
	a choice of a basis $\xi^{(1)},\ldots,\xi^{(c_\phi)}$ of $\Consv^\phi(S)$ gives an
	isomorphism $\Hom_\bbR(\Consv^\phi(S),\bbR)\cong\bbR^{c_\phi}$,
	and the monoid $\Val$ maps to the $\Val$ of Definition \ref{def: U} 
	through this isomorphism.
	
	By Lemma \ref{lem: horizontal} and Lemma \ref{lem: special},
	if the configurations $\state, \state'\in\State_*$ are connected by a path, then 
	it is in the same fiber of the map $\bsxi_{\!X}$.	
	On the other hand, the condition that the interaction is irreducibly quantified
	implies that the fibers of the map $\bsxi_{\!X}$ are connected.
	Thus in this case, the connected components of $\State_*$ corresponds 
	bijectively with the elements of $\Val$.
	The authors thank Hiroyuki Ochiai for suggesting this formulation.
\end{remark}

By Remark \ref{rem: intrinsic}, if the interaction is irreducibly quantified,
then the connected components of $\State_*$ correspond bijectively
with the elements of the image $\Val$ of $\bsxi_X$.  
In particular, if $c_\phi=0$,
then $\State_*$ is connected.  However,
if $c_\phi>0$, then we have
$\dim_\bbR H^0(\State_*)=\infty$. 
Although $H^m(\State_*)$ is the standard cohomology 
of $\State_*$ and reflects the topological structure of the graph $(\State_*,\Phi_*)$,
it is not so useful in the sense that it is in general infinite dimensional over infinite locales.
In the \S\ref{sec: local}, we will define the notion of uniform functions
which gives a certain subspace $C_\unif(S^X)\subset C(S^X_*)$,
and will define in \S\ref{sec: cohomology}
the uniform cohomology $H^m_\unif(S^X)$ by replacing the functions and forms of \eqref{eq: cohomology}
with uniform functions and forms.
For uniform cohomology, the inclusion of \eqref{eq: inclusion} induces an isomorphism $\Consv^\phi(S)\cong H^0_\unif(S^X)$, hence $H^0_\unif(S^X)$ is finite dimensional if $c_\phi$ is finite.

For the first cohomology,
the group $H^1(\State_*)$ is in general also large, since there are many linearly independent
forms which are
not exact.  
For example, $H^1$ may be infinite dimensional as follows.

\begin{remark}
	Let $X=(\bbZ^d,\bbE^d)$ and $S=\{0,1,\ldots,\kappa\}$ for an integer $d>1$ and some natural number $\kappa>0$.  
	If we let $\State_*$ be the configuration space for $S$ on $X$ with transition structure 
	for the multi-species exclusion process 
	of Example \ref{example: interactions}, 
	then we have $\dim_\bbR H^1(\State_*)=\infty$.
	
	This may be seen as follows.
	For any $x\in X$, let $1_x\in\State_*$ be the configuration with $1$ in the $x$ 
	component and $0$ in the other components. 
	For any edge $e =(x,x')\in \bbE$, the configuration $1_x^e$ is the 
	configuration $1_{x'}$ with $1$ in the $x'$ component and $0$ in the other components,
	hence $1_e\coloneqq(1_x,1_{x'})$ is a transition of $\State_*$.
	If we let $\omega\in C^1(\State_*)=
	\Map^\alt(\Phi_*,\bbR)$ be the form given by $\omega(1_e)
	=-\omega(1_{\bar e})=1$ and $\omega(\varphi)=0$ for $\varphi\neq 1_e, 1_{\bar e}$, 
	then we may prove that $\omega$ gives a nonzero element in $H^1(\State_*)$.
	In addition, we may prove that such $\omega$ for a finite set of edges in $E$ 
	which do not share common vertices give linearly independent 
	elements of $H^1(\State_*)$, giving our assertion.
\end{remark}

In considering uniform cohomology,
we will  consider a class of forms called \textit{closed forms}, 
which are in fact always exact.
We recall that a finite path on a graph $\State_*$ 
is a finite sequence $(\varphi^1,\ldots,\varphi^N)$
of \textit{transitions} of $S^X_*$ such that $t(\varphi^i)=o(\varphi^{i+1})$ for any integer $0<i<N$.
As in \eqref{eq: integral A}, for a form $\omega\in C^1(\State_*)$ on $\State_*$,
we define the
\textit{integral} of $\omega$ with respect to the  path $\vec\gamma=(\varphi^1,\ldots,\varphi^N)$ by
\[
	\int_{\vec\gamma}\omega\coloneqq\sum_{j=1}^N\omega(\varphi^j).
\]
As in Definition \ref{def: closed A}, we define a closed form as follows.

\begin{definition}\label{def: closed}
	We say that a form $\omega\in C^1(\State_*)=\Map^\alt(\Phi_*,\bbR)$ is \textit{closed},
	if for any closed path $\vec\gamma$ in $\State_*$, we have
	\[
		\int_{\vec\gamma}\omega=0.
	\]
\end{definition}

We say that a form $\omega\in C^1(\State_*)$ is \textit{exact}, 
if there exists a function $f\in \CS$ such that  $\partial f = \omega$.
By Lemma \ref{lem: equivalent A}, a form is exact if and only if it is closed.

\begin{lemma}\label{lem: exact}
 	A form $\omega\in C^1(S^X_*)$ is exact if and only if it is closed.
\end{lemma}

\begin{proof}
	This follows from Lemma \ref{lem: equivalent A} applied to the graph $(S^X_*,\Phi_*)$.
\end{proof}

We will denote by $Z^1(S^X_*)$ the space of closed forms on $(S^X_*,\Phi_*)$.
Closed forms will play a role in the definition of uniform cohomology.

%
%
%
\section{Uniform Functions and Uniform Forms}\label{sec: local}
%
%
%

In this section, we will define the notion of \textit{uniform functions} and 
\textit{uniform forms}, which are functions and forms which reflect the geometry of the underlying
locale.  We will then investigate its properties, including a criterion for a function to be 
uniform.

%
\subsection{Uniform Functions on the Configuration Space}\label{subsec: uniform}
%

For any system $(X,S,\phi)$, let $\State = \prod_{x\in X}S$
be the configuration space for $S$ on $X$. 
We let $\State_* \subset\State$ be the subset consisting of configurations with 
finite support.
 In this subsection, we will prove the existence
of a canonical expansion of functions in $C(\State_*)$ in terms
of local functions with exact support (see Definition \ref{def: exact support}), 
and we will introduce the notion
of a \textit{uniform function}, which are functions which reflect the geometry
of the underlying locale.

For a finite $\La\subset X$, there exists a natural  inclusion
$
	\iota^\La\colon S^\La  \hookrightarrow S^\La \times S^{X\setminus\La}_* = \State_*
$
given by $\eta_\La \mapsto (\eta_\La,\star)$ for any $\eta_\La\in S^\La$, 
where $\star\in S^{X\setminus\La}$ is the element whose components are all at base state.
By abuse of notation, we will often denote $\iota^\La(\state|_\La)$ by $\state|_\La$.
This inclusion induces
a homomorphism
\begin{equation}\label{eq: operator}
	\iota^\La\colon \CS\rightarrow C(S^\La)\subset \CS,
\end{equation}
which may be regarded as an $\bbR$-linear operator on the set of functions $\CS$.
Note that we have $\iota^\La f(\state)=f(\state|_\La)$ for any $f\in \CS$ and $\state\in\State$.
By definition, if $f\in C(S^\La)$, then we have $\iota^{\La} f=f$.
For any $\La,\La'\subset X$, we have $\iota^{\La}\iota^{\La'} = \iota^{\La'}\iota^{\La}
=\iota^{\La\cap\La'}$.

\begin{definition}\label{def: exact support}
	For any finite $\La\subset X$, we let
	\[
		C_\La(\State)\coloneqq\big\{ f\in C(S^\La)
		\mid f(\state)=0\text{ if $\exists x\in \La$ such that $\eta_x=*$}\big\}.
	\]
	We call any function $f\in C_\La(\State)$ a local function with \textit{exact support} $\La$.
\end{definition}

Any function in $\CS$ has a unique expansion in terms of local functions with exact support,
as will be shown in Proposition \ref{prop: expansion}.
We first start with the following lemma.
\begin{lemma}\label{lem: sum}
	Let $(f_\La)$ be a set of functions such that $f_\La\in C_\La(\State)$ for any finite $\La\subset X$.
	Then the sum
	\[
		f\coloneqq\sum_{\La\subset X, |\La|<\infty}f_\La
	\]
	defines a function in $\CS$.
\end{lemma}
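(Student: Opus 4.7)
The plan is to observe that the ostensibly infinite sum defining $f$ is in fact a finite sum at every evaluation point $\state\in\State_*$, so there is nothing to check about convergence.

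First I would fix an arbitrary $\state\in\State_*$ and let $\Supp(\state)=\{x\in X\mid\eta_x\neq *\}$, which is a finite subset of $X$ by the defining property of $\State_*$. The key observation is that for any finite $\La\subset X$, the condition $f_\La\in C_\La(\State)$ includes the requirement that $f_\La(\state)=0$ whenever there exists $x\in\La$ with $\eta_x=*$. Equivalently, $f_\La(\state)$ can be nonzero only when $\La\subseteq\Supp(\state)$.

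Since $\Supp(\state)$ is finite, the collection of finite $\La\subset X$ with $\La\subseteq\Supp(\state)$ is itself finite, consisting of at most $2^{|\Supp(\state)|}$ subsets. Hence the sum
\[
f(\state)\coloneqq\sum_{\La\subset X,\,|\La|<\infty}f_\La(\state)=\sum_{\La\subseteq\Supp(\state)}f_\La(\state)
\]
has only finitely many nonzero terms and defines a well-defined real number. Letting $\state$ vary over $\State_*$, this produces a function $f\colon\State_*\to\bbR$, i.e., an element of $\CS$, as required.

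There is no real obstacle here; the only thing to be careful about is to justify why the a priori uncountable sum makes sense pointwise, which is handled entirely by the ``exact support'' condition in Definition \ref{def: exact support} together with the finiteness of $\Supp(\state)$ for $\state\in\State_*$.
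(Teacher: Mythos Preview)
Your proposal is correct and follows essentially the same approach as the paper's proof: both observe that for $\state\in\State_*$ the condition $f_\La\in C_\La(\State)$ forces $f_\La(\state)=0$ unless $\La\subseteq\Supp(\state)$, so the sum reduces to a finite sum over subsets of the finite set $\Supp(\state)$.
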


\begin{proof}
	By definition, for any $\state=(\eta_x)\in\State_*$, 
	the support $\Supp(\state)\subset X$ satisfies $|\Supp(\state)|<\infty$.
	Then we have
	\[
		f(\state)\coloneqq\sum_{\La\subset X,|\La|<\infty}f_{\La}(\state)= 
		\sum_{\La\subset\Supp(\state)}f_{\La}(\state),
	\]
	where the last sum is defined since it is a finite sum.  We see that the sum defines a function
	$f\colon\State\rightarrow\bbR$ as desired.
\end{proof}

The expansion of functions in $\CS$ in terms of local functions with exact support
is given as follows.

\begin{proposition}\label{prop: expansion}
	For any $f\in \CS$, there exists a unique expansion
	\begin{equation}\label{eq: expansion}
		f=\sum_{\La\subset X, |\La|<\infty} f_\La,
	\end{equation}
	in terms of local functions with exact support $f_\La\in C_\La(\State)$ for finite $\La\subset X$.
\end{proposition}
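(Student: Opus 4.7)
The plan is to construct the expansion explicitly by Möbius inversion on the Boolean lattice of finite subsets of $X$. For every finite $\La\subset X$, define
\[
	f_\La\coloneqq\sum_{\La'\subset\La}(-1)^{|\La\setminus\La'|}\iota^{\La'}f,
\]
which a priori lies in $C(S^\La)$ since each $\iota^{\La'}f$ with $\La'\subset\La$ depends only on $\state|_\La$. The existence half of the proposition then reduces to two verifications, one for each summand condition.

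First I would verify that $f_\La\in C_\La(\State)$, i.e.\ that $f_\La(\state)=0$ whenever some $x\in\La$ satisfies $\eta_x=*$. Fix such an $x$ and pair the subsets $\La'\subset\La$ according to whether $x\in\La'$. Within each pair $(\La'',\La''\cup\{x\})$ with $\La''\subset\La\setminus\{x\}$, the identity $\iota^{\La''\cup\{x\}}f(\state)=\iota^{\La''}f(\state)$ (which holds precisely because $\eta_x=*$) combined with the sign flip $(-1)^{|\La\setminus(\La''\cup\{x\})|}=-(-1)^{|\La\setminus\La''|}$ makes the two contributions cancel. Hence $f_\La(\state)=0$.

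Next I would verify the summation identity $\sum_{|\La|<\infty}f_\La=f$. For any $\state\in\State_*$, the exact-support property just established reduces the infinite sum to the finite sum over $\La\subset\Supp(\state)$. Swapping the order of summation over $\La$ and $\La'\subset\La$ gives
\[
	\sum_{\La\subset\Supp(\state)}f_\La(\state)
	=\sum_{\La'\subset\Supp(\state)}\iota^{\La'}f(\state)\sum_{T\subset\Supp(\state)\setminus\La'}(-1)^{|T|},
\]
and the inner sum vanishes unless $\La'=\Supp(\state)$, in which case it equals $1$. Since $\iota^{\Supp(\state)}f(\state)=f(\state)$, this gives the claim. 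The well-definedness of the infinite sum as a function in $C(\State_*)$ is handled by Lemma \ref{lem: sum}.

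For uniqueness, suppose $\sum_\La h_\La=0$ with $h_\La\in C_\La(\State)$, and induct on $|\Supp(\state)|$. The exact-support condition collapses the sum at $\state$ to $\sum_{\La\subset\Supp(\state)}h_\La(\state)=0$; by induction every term with $\La\subsetneq\Supp(\state)$ vanishes, forcing $h_{\Supp(\state)}(\state)=0$. Since any $h_\La\in C_\La(\State)$ is determined by its values on configurations with support exactly $\La$ (it vanishes on the others by the exact-support condition), this gives $h_\La\equiv 0$ for every $\La$. The main potential pitfall in this plan is the exact-support verification in the first step, where one must be careful that the pairing argument respects both the sign structure and the identification $\iota^{\La''\cup\{x\}}f(\state)=\iota^{\La''}f(\state)$; once that pairing is clean, the rest is a straightforward inclusion-exclusion calculation.
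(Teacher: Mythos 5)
Your proof is correct. It is the same underlying decomposition as the paper's, but you arrive at it by a slightly different route: you write down the closed-form M\"obius-inversion formula $f_\La=\sum_{\La'\subset\La}(-1)^{|\La\setminus\La'|}\iota^{\La'}f$ and verify everything directly, whereas the paper defines $f_\La$ by the recursion $f_\La=\iota^\La f-\sum_{\La'\subsetneq\La}f_{\La'}$ (obtained by applying $\iota^\La$ to the putative expansion), which makes uniqueness immediate since each $f_\La$ is forced, and then proves the exact-support property $f_\La\in C_\La(\State)$ by induction on $|\La|$. Your explicit formula is precisely the solution of that recursion, so the two existence arguments are equivalent in content; what yours buys is a non-inductive, self-contained verification (the sign-pairing cancellation for exact support, and the inclusion--exclusion collapse of the double sum to $\iota^{\Supp(\state)}f(\state)=f(\state)$), while the paper's recursion buys uniqueness for free. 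Your uniqueness argument is instead a separate linear-independence induction on $|\Supp(\state)|$; it is valid, with the one implicit point worth making explicit being that for $\La\subsetneq\Supp(\state)$ one has $h_\La(\state)=h_\La(\state|_\La)$ and $\state|_\La$ has strictly smaller support, so the induction hypothesis applies. Both arguments rely, as you note, on Lemma \ref{lem: sum} (equivalently, on the fact that at each $\state\in\State_*$ only the finitely many $\La\subset\Supp(\state)$ contribute) to make sense of the infinite sum.
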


\begin{proof} 
	We construct $f_\La$ by induction on the cardinality of $\La$.
	Suppose an expansion of the form \eqref{eq: expansion} exists. 
	Note that for any $\La, \La'\subset X$ such that 
	$\La'\not\subset\La$, we have $\iota^{\La} f_{\La'}=0$ 
	since $f_{\La'}\in C_{\La'}(\State)$, hence if we apply the $\bbR$-linear
	operator $\iota^{\La}$
	of \eqref{eq: operator} on \eqref{eq: expansion}, then we obtain the equality
	\[
		\iota^{\La} f=\sum_{\La'\subset\La} f_{\La'}.
	\]
	Hence this shows that assuming the existence of the expansion,
	 $f_\La$ is uniquely given inductively for the set $\La$ by
	\begin{equation}\label{eq: fL}
		f_\La=\iota^\La f - \sum_{\La'\subsetneq\La} f_{\La'}.
	\end{equation}
	We will prove by induction on the cardinality of $\La$
	that the function $f_\La$ inductively given by \eqref{eq: fL} 
	is a function in $C_\La(\State)$.
	We let $\star\in\State$ be the base state.
	For $\La=\emptyset$, equation \eqref{eq: fL} gives
	$
		f_\emptyset=\iota^\emptyset f,
	$
	which shows that $f_\emptyset$ is an element in $C_\emptyset(\State)$.
	Note that $f_\emptyset$ is the constant function given by 
	$f_\emptyset(\state)=f(\star)$ for any $\state\in\State$.
	For $|\La|>0$, suppose $f_{\La'}\in C_{\La'}(\State)$ 
	for any $\La'\subsetneq\La$.
	Then by \eqref{eq: fL}, the function $f_\La$ is a function in $C(S^\La)$.
	Next, we prove that $f\in C_\La(\State)$.
	Suppose $\state=(\eta_x)\in\State$ satisfies $\eta_x=*$ for
	some $x\in\La$. If $\La'\subsetneq\La$ and $x\in \La'$, then we have 
	$f_{\La'}(\state)=0$ since $f_{\La'}\in C_{\La'}(\State)$.  
	Hence by \eqref{eq: fL}, we have
	\[
		f_\La(\state)
		=\iota^\La f(\state) - \sum_{\La'\subset\La\setminus\{x\}} f_{\La'}(\state).
	\]
	Note that since $\eta_x=*$, we have $\iota^\La f(\state)
	=\iota^{\La\setminus\{x\}}f(\state)$ by definition of $\iota^\La$, 
	hence we have
	\begin{align*}
		f_\La(\state)
		&=\iota^{\La\setminus\{x\}}
		f(\state) - \sum_{\La'\subset\La\setminus\{x\}} f_{\La'}(\state)
		=\biggl(\iota^{\La\setminus\{x\}}
		f(\state) - \sum_{\La'\subsetneq\La\setminus\{x\}} f_{\La'}(\state)\biggr)
		- f_{\La\setminus\{x\}}(\state)\\
		&=  f_{\La\setminus\{x\}}(\state)- f_{\La\setminus\{x\}}(\state)=0.
	\end{align*}
	This proves that $f_\La\in C_\La(\State)$ as desired.
	The sum \eqref{eq: expansion} gives the function $f$ in $\CS$, 
	since  for any $\state\in\State$, we have 
	$f(\state)=\iota^{\Supp(\state)}f(\state)=\sum_{\La\subset\Supp(\state)}
	f_{\La}(\state)$.
\end{proof}

\begin{corollary}\label{cor: expansion}
	If $f\in C(S^\La)$ for some finite $\La\subset X$, 
	then we have a unique expansion
	\[
		f=\sum_{\La'\subset\La} f_{\La'},
	\]
	in terms of local functions with exact support $f_{\La'}\in C_{\La'}(\State)$ for $\La'\subset\La$.
\end{corollary}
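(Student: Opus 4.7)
The plan is to deduce the corollary from Proposition \ref{prop: expansion} by showing that for $f\in C(S^\La)$, the terms $f_{\La'}$ in the unique expansion guaranteed by Proposition \ref{prop: expansion} must vanish whenever $\La'\not\subset\La$. Once this is established, uniqueness is automatic since a more restrictive expansion is a special case of the general one, and existence is witnessed by what remains of the general expansion.

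The key tool will be a simple general identity: applying $\iota^K$ to the expansion $f=\sum_M f_M$ gives
\[
\iota^K f=\sum_{M\subset K}f_M,
\]
because $f_M\in C_M(\State)$ implies $\iota^K f_M=0$ when $M\not\subset K$ (some coordinate of $\state|_K$ at a point of $M\setminus K$ is at base state) while $\iota^K f_M=f_M$ when $M\subset K$. This identity is already implicit in the proof of Proposition \ref{prop: expansion} and I would record it first.

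The main step is then an induction on $|\La'|$ showing that $f_{\La'}=0$ whenever $\La'\not\subset\La$. Fix such a $\La'$ and assume the claim for all proper subsets. Using the recursive formula
\[
f_{\La'}=\iota^{\La'}f-\sum_{\La''\subsetneq\La'}f_{\La''},
\]
the induction hypothesis kills all summands with $\La''\not\subset\La$. Because $\La'\not\subset\La$ forces $\La'\cap\La\subsetneq\La'$, the condition $\La''\subsetneq\La'$ and $\La''\subset\La$ reduces to $\La''\subset\La'\cap\La$. On the other hand, $f\in C(S^\La)$ gives $\iota^{\La'}f=\iota^{\La'\cap\La}f$, and the general identity above yields $\iota^{\La'\cap\La}f=\sum_{\La''\subset\La'\cap\La}f_{\La''}$. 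Substituting, the two contributions cancel and $f_{\La'}=0$.

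Having established this vanishing, Proposition \ref{prop: expansion} produces the existence of the desired expansion supported on subsets of $\La$, and uniqueness is inherited directly from the uniqueness clause of that proposition. I do not anticipate a genuine obstacle here; the only delicate point is keeping the set-theoretic bookkeeping ($\La''\subsetneq\La'$ versus $\La''\subset\La'\cap\La$) straight when invoking the induction hypothesis.
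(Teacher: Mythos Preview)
Your argument is correct, but it does more work than necessary. The paper's proof is precisely your ``key tool'' identity applied once with $K=\La$: since $f\in C(S^\La)$ gives $\iota^\La f=f$, the identity $\iota^\La f=\sum_{\La'\subset\La}f_{\La'}$ already \emph{is} the desired expansion, and uniqueness follows from Proposition~\ref{prop: expansion}. Your subsequent induction proving $f_{\La'}=0$ for $\La'\not\subset\La$ is correct and establishes a slightly sharper statement (vanishing of individual terms rather than just of their sum), but it is not needed for the corollary as stated.
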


\begin{proof}
	Our statement follows by applying the $\bbR$-linear operator $\iota^\La$ to
	the expansion \eqref{eq: expansion} of
	Proposition \ref{prop: expansion}, noting that $\iota^\La f=f$
	and $\iota^\La f_{\La'}=0$ if $\La'\not\subset\La$.
\end{proof}

For any $\La\subset X$, we define the diameter $\diam{\La}$
of $\La$ by
\[
	\diam{\La}\coloneqq\sup_{x,x'\in\La} d_X(x,x').
\]
Since $X$ is connected, if $\La\subset X$ is finite, then we have $\diam{\La}<\infty$.
Uniform functions are defined as follows.

\begin{definition}\label{def: uniform}
	We say that a function $f\in \CS$ is \textit{uniform}, if there exists $R>0$
	such that the canonical expansion of \eqref{eq: expansion} is given by
	\[
		f=\sum_{\substack{\La\subset X\\\diam{\La}\leq R}} f_\La.
	\]
	In other words, $f_\La=0$ in the expansion \eqref{eq: expansion}
	if $\diam{\La}>R$.
	We denote by $C_\unif(S^X)$ the $\bbR$-linear subspace of $\CS$ consisting of
	uniform functions, and by $\CuS$ the subspace of $C_\unif(S^X)$
	consisting of functions satisfying $f(\star)=0$.
\end{definition}

For any $x\in X$ and $\La\subset X$, we let
$
	d_X(x,\La)\coloneqq\inf_{x'\in \La}d_X(x,x'),
$
and for $\La,\La'\subset X$, we let
$
	d_X(\La,\La')\coloneqq\inf_{(x,x')\in \La\times \La'}d_X(x,x').
$

\begin{remark}\label{rem: converse}
	Suppose $f\in C^0_\unif(\State)$ so that there exists some $R>0$
	such that 
	 $f_\La\equiv0$ in the expansion \eqref{eq: expansion}
	if $\diam{\La}>R$.
	Let $\La$ and $\La'$ be subsets of $X$ such that 
	$d_X(\La,\La')>R$. By Corollary \ref{cor: expansion}, we have
	$\iota^{\La\cup\La'} f =\sum_{\La''\subset\La\cup\La'}f_{\La''}$,
	$\iota^{\La} f =\sum_{\La''\subset\La}f_{\La''}$ and $\iota^{\La'} f =\sum_{\La''\subset\La'}f_{\La''}$.
	Note that if $\La''\subset\La\cup\La'$ satisfies 
	$\La''\cap\La\neq\emptyset$ and $\La''\cap\La'\neq\emptyset$,
	then we have $\diam{\La''}>R$, hence $f_{\La''}=0$ from our choice of $R$. 
	This shows that we have
	$
		\iota^{\La\cup\La'}f=\iota^{\La} f+\iota^{\La'}f,
	$
	where we have used the fact that $f(\star)=\iota^\emptyset f=0$.
\end{remark}

%
\subsection{Horizontal Uniform Functions}\label{subsec: horizontal}
%

Consider the system $(X,S,\phi)$.
From now until the end of \S\ref{sec: local}, 
we assume that the interaction $\phi$ is irreducibly quantified in the
sense of Definition \ref{def: irreducibly quantified}.
The purpose of this subsection is to prove the following theorem.

\begin{theorem}\label{thm: 2}
	For the system $(X,S,\phi$), assume that $X$ is an infinite locale 
	and the interaction $\phi$ is irreducibly quantified.
	Let $f$ be a uniform function in $C^0_\unif(\State)$.
	If $f$ is horizontal, i.e.\ if $\partial f=0$, 
	then there exists a conserved quantity $\xi\colon S\rightarrow\bbR$
	such that
	\[
		f(\state)=\sum_{x\in X}\xi(\eta_x)
	\]
	for any $\state=(\eta_x)\in\State$.
\end{theorem}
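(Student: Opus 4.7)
The plan is to define a candidate conserved quantity $\xi\colon S\to\bbR$ by testing $f$ on single-site configurations. For $x\in X$ and $s\in S$, let $\delta_x^s\in\State_*$ denote the configuration equal to $s$ at $x$ and to the base state $*$ at every other vertex. Fix $x_0\in X$ and set $\xi(s)\coloneqq f(\delta_{x_0}^s)$. Two reductions will do the heavy lifting. First, by Lemma~\ref{lem: special}, horizontality $\partial f=0$ forces $f$ to be constant on each connected component of $\State_*$. Second, since $\phi$ is irreducibly quantified, two configurations lie in the same component as soon as their values under $\bsxi_X$ agree (Remark~\ref{rem: intrinsic}). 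Together these give $f(\delta_x^s)=f(\delta_{x_0}^s)$ for every $x\in X$, so $\xi$ is independent of $x_0$ and $\xi(*)=f(\star)=0$.

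Next I would extract an additivity property from the uniformity of $f$ via Remark~\ref{rem: converse}. Let $R>0$ be such that $f_\La=0$ whenever $\diam{\La}>R$. Since $X$ is infinite, connected, and locally finite, for any $n\geq 1$ one can choose vertices $y_1,\ldots,y_n\in X$ with pairwise distance $>R$: the $R$-ball around any vertex is finite, so after selecting $y_1,\ldots,y_{k-1}$ the union of their $R$-balls is finite, and $y_k$ can be picked outside this set. Given such $y_1,\ldots,y_n$ and states $s_1,\ldots,s_n\in S$, let $\state\in\State_*$ be the configuration with $s_i$ at $y_i$ and $*$ elsewhere. Iterating Remark~\ref{rem: converse} on the singleton partition of $\{y_1,\ldots,y_n\}$ yields
\[
f(\state)=\sum_{i=1}^n f(\delta_{y_i}^{s_i})=\sum_{i=1}^n\xi(s_i).
\]

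With these ingredients the rest is direct. To see $\xi$ is conserved, write $(s_1',s_2')=\phi(s_1,s_2)$ and pick far-apart $y_1,y_2\in X$; the two-site configurations carrying $(s_1,s_2)$ and $(s_1',s_2')$ at $(y_1,y_2)$ share the same value under $\bsxi_X$ (each basis element of $\Consv^\phi(S)$ is itself conserved), so $f$ agrees on them, and the displayed additivity yields $\xi(s_1)+\xi(s_2)=\xi(s_1')+\xi(s_2')$. Finally, for arbitrary $\state=(\eta_x)\in\State_*$ with support $\{x_1,\ldots,x_n\}$, I would pick $y_1,\ldots,y_n$ at pairwise distance $>R$ and let $\state'\in\State_*$ have $\eta_{x_i}$ at $y_i$ and $*$ elsewhere; since $\bsxi_X(\state)=\bsxi_X(\state')$, the component argument gives $f(\state)=f(\state')$, and the additivity formula then gives $f(\state)=\sum_{i=1}^n\xi(\eta_{x_i})=\sum_{x\in X}\xi(\eta_x)$.

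The main obstacle I expect is the bookkeeping required to square uniformity, which grants additivity only across supports separated by more than $R$, with the need to compute $f$ on configurations whose supports may be close together. Every step will navigate this by relocating the given configuration to a spread-out copy with the same $\bsxi_X$-value, a move which essentially uses that $X$ is an infinite, locally finite locale; neither infiniteness nor local finiteness can be dropped.
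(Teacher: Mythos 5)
Your proposal is correct, and it reaches the conclusion by a mildly but genuinely different route than the paper. The shared engine is the same: Lemma \ref{lem: special} plus the irreducibly quantified hypothesis let you transport values of $f$ between any two configurations with equal conserved quantities, and uniformity lets $f$ split over supports separated by more than $R$. Where you diverge is in how the splitting is organized. The paper analyzes the canonical exact-support expansion of Proposition \ref{prop: expansion}: Lemma \ref{lem: three} shows all single-site pieces $\iota^{\{x\}}f_{\{x\}}$ agree, and Lemma \ref{lem: four} runs an induction on $|\La|$ (comparing a configuration on $\La$ with a relocated copy on a set of diameter $>R$) to force $f_\La\equiv 0$ for $|\La|\geq 2$, after which the formula and the conservation of $\xi$ are read off. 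You instead never prove the vanishing of the higher-order terms: you define $\xi(s)=f(\delta_{x_0}^s)$ (which coincides with the paper's $\iota^{\{x\}}f_{\{x\}}$ since $f_\emptyset=f(\star)=0$), obtain additivity over pairwise $R$-separated singletons directly from Remark \ref{rem: converse}, and relocate an arbitrary finite-support configuration to a spread-out copy with the same $\bsxi_X$-value; the vanishing of $f_\La$ for $|\La|\geq2$ then follows a posteriori from uniqueness of the expansion rather than being an input. Your conservation check also differs slightly: the paper applies $\partial f=0$ to a transition along an actual edge, which avoids invoking irreducibility at that step, whereas you place the pair of states at far-apart vertices and use irreducible quantification again; both are valid. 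The net effect is that your argument is somewhat more direct (no induction on support size), at the cost of using the pairwise-separation trick and Remark \ref{rem: converse} in place of the structural statement Lemma \ref{lem: four}; the paper's version, in exchange, records that structural fact explicitly.
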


Theorem \ref{thm: 2} implies that assuming that $X$ is infinite and $\phi$ is irreducibly quantified,
any uniform function which is constant on the connected components of $\State_*$
coincides with $\xi_X$ for some conserved quantity $\xi\in\Consv^\phi(S)$.
We will give the proof of Theorem \ref{thm: 2} at the end of this subsection.
We first prove the following lemma.

\begin{lemma}\label{lem: three}
	Suppose $f\in \CS$, and for any $x\in X$, let
	$f_{\{x\}}$ be the function with exact support $\La=\{x\}$ 
	in the canonical expansion \eqref{eq: expansion}.
	If $f$ is horizontal, then the functions 
	\[
		\iota^{\{x\}}  f_{\{x\}} \colon S\rightarrow \bbR
	\]
	are all equal as $x$ varies over $X$.
\end{lemma}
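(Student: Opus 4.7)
The plan is to evaluate $f$ on configurations supported at a single vertex and to exploit the irreducibly quantified hypothesis. Denote by $g_x\colon S\to\bbR$ the function $\iota^{\{x\}}f_{\{x\}}$ associated to $x\in X$. For $x\in X$ and $s\in S$, let $\state^{x,s}\in\State_*$ denote the configuration whose $x$-component equals $s$ and whose $z$-component equals the base state $*$ for every $z\neq x$. By Proposition \ref{prop: expansion},
\[
	f(\state^{x,s})=\sum_{\substack{\La\subset X\\ |\La|<\infty}}f_\La(\state^{x,s}).
\]
Every $\La$ containing some $z\neq x$ contributes zero, since $f_\La$ has exact support $\La$ while the $z$-component of $\state^{x,s}$ is $*$. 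The only surviving terms are $\La=\emptyset$, which contributes the constant $f(\star)$, and $\La=\{x\}$, which contributes $g_x(s)$, so
\[
	f(\state^{x,s})=f(\star)+g_x(s).
\]

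For any $y\in X$, the configurations $\state^{x,s}$ and $\state^{y,s}$ have identical conserved quantities, since $\xi_X(\state^{x,s})=\xi(s)=\xi_X(\state^{y,s})$ for every $\xi\in\Consv^\phi(S)$. Because $\phi$ is irreducibly quantified, there exists a finite path in $\State_*$ from $\state^{x,s}$ to $\state^{y,s}$. Since $f$ is horizontal, Lemma \ref{lem: special} gives $f(\state^{x,s})=f(\state^{y,s})$; combining this with the displayed identity yields $g_x(s)=g_y(s)$ for every $s\in S$, hence $g_x=g_y$ as required.

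The substantive input is the irreducibly quantified hypothesis, which supplies connectivity between the two single-vertex configurations in one stroke, bypassing any case analysis on the specific form of $\phi$. The remainder of the argument is routine bookkeeping with the canonical expansion of Proposition \ref{prop: expansion}, and I foresee no serious obstacle.
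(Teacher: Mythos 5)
Your proof is correct and follows essentially the same route as the paper: evaluate $f$ on single-vertex configurations, use the irreducibly quantified hypothesis to connect $\state^{x,s}$ and $\state^{y,s}$ (which share all conserved quantities), and conclude via Lemma \ref{lem: special}. Your explicit tracking of the constant term $f(\star)$ in the expansion is a slightly more careful bookkeeping step than the paper's version, but the argument is the same.
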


\begin{proof}
	Consider the expansion
	\[
		f=\sum_{\La\subset X, |\La|<\infty} f_\La
	\] 
	of \eqref{eq: expansion}. 
	We let $s$ be any element in $S$.
	For $x,x'\in X$, we let
	$\state\coloneqq\iota^{\{x\}}(s)$ and $\state'\coloneqq\iota^{\{x'\}}(s)$
	be the configuration in $\State_*$ with $s$ respectively in the $x$ and $x'$ components,
	and base state $*$ in the other components.  This implies that
	\[
		\sum_{x\in X}\xi(\eta_x) = \sum_{x\in X}\xi(\eta'_x) = \xi(s),
	\]
	in other words that $\xi_X(\state)=\xi_X(\state')$
	for any conserved quantity $\xi\in\Consv^\phi(S)$.
	Hence from the fact that
	the interaction is irreducibly quantified, there exists a finite path $\vec\gamma$ from $\state$ to $\state'$ in $\State_*$.
	By Lemma \ref{lem: special}, noting that $\partial f=0$, we have
	\[
		\iota^{\{x\}} f_{\{x\}}(s) = f(\state) = f(\state') =\iota^{\{x'\}}  f_{\{x'\}}(s),
	\]
	which shows that $\iota^{\{x\}}f_{\{x\}}\colon S\rightarrow\bbR$ 
	is independent of the choice of $x\in X$ as desired.
\end{proof}

Next, we prove the following lemma.

\begin{lemma}\label{lem: four}
	Assume that $X$ is an infinite locale, and
	suppose $f$ is a uniform function in $C^0_\unif(\State)$.
	If $f$ is horizontal, then we have
	\[
		f= \sum_{x\in X} f_{\{x\}},
	\]
	where $f_{\{x\}}$ is the function with exact support $\La=\{x\}$ 
	in the canonical expansion \eqref{eq: expansion}.
\end{lemma}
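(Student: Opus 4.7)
My plan is to start from the canonical expansion $f = \sum_\La f_\La$ of Proposition \ref{prop: expansion} and prove by strong induction on $|\La|$ that $f_\La \equiv 0$ whenever $|\La| \geq 2$. Combined with $f_\emptyset \equiv 0$ (forced by $f(\star) = 0$), this yields the claimed formula $f = \sum_{x \in X} f_{\{x\}}$. Uniformity provides some $R > 0$ with $f_\La \equiv 0$ whenever $\diam{\La} > R$, so only finitely many $\La$ of each cardinality are in play, and Lemma \ref{lem: three} furnishes a single function $\xi \coloneqq \iota^{\{x\}} f_{\{x\}} \colon S \to \bbR$ independent of $x$.

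For the inductive step, I would fix $\La = \{x_1, \ldots, x_n\}$ with $n \geq 2$ and $\diam{\La} \leq R$, choose arbitrary non-base states $s_1, \ldots, s_n \in S \setminus \{*\}$, and form the configuration $\state \in \State_*$ supported on $\La$ by $\eta_{x_i} = s_i$. Using the infiniteness and local finiteness of $X$ (which makes every bounded neighborhood of a prescribed finite set finite), I would then inductively pick distinct vertices $y_1, \ldots, y_n \in X$ whose pairwise distances all exceed $R$, and define $\state' \in \State_*$ by $\eta'_{y_i} = s_i$.

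The key comparison is between $f(\state)$ and $f(\state')$. For every $\zeta \in \Consv^\phi(S)$, we have $\zeta_X(\state) = \sum_i \zeta(s_i) = \zeta_X(\state')$, so the irreducibly quantified hypothesis provides a finite path from $\state$ to $\state'$ in $\State_*$, and horizontality of $f$ combined with Lemma \ref{lem: special} yields $f(\state) = f(\state')$. Corollary \ref{cor: expansion} applied to $\La$ together with the inductive hypothesis (that $f_{\La'} \equiv 0$ for every $\La' \subsetneq \La$ with $|\La'| \geq 2$) collapses the expansion to
\[
	f(\state) = \sum_{i=1}^n \xi(s_i) + f_\La(\state).
\]
Iterated application of Remark \ref{rem: converse} to the pairwise $R$-separated $y_i$ yields $\iota^{\{y_1,\ldots,y_n\}} f = \sum_i \iota^{\{y_i\}} f$ as functions, and evaluating at $\state'$ together with the identity $f(\iota^{\{y_i\}}(s_i)) = \xi(s_i)$ (from Corollary \ref{cor: expansion} and $f_\emptyset \equiv 0$) yields
\[
	f(\state') = \sum_{i=1}^n \xi(s_i).
\]
Equality forces $f_\La(\state) = 0$ for arbitrary non-base $s_i$, and $f_\La \in C_\La(\State)$ then upgrades this to $f_\La \equiv 0$, closing the induction.

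The main obstacle I anticipate is the iteration of Remark \ref{rem: converse}: one must check that the additive decomposition extends cleanly from two to $n$ pairwise separated sets, which is a routine induction on $n$ but does rely on the global separation hypothesis on the $y_i$ and on repeated use of $f(\star) = 0$ to discard the empty-support contribution at each stage. Every other step is a direct invocation of the irreducibly quantified/horizontal pairing or of the unique-expansion machinery from \S\ref{subsec: uniform}.
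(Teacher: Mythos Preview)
Your proof is correct and follows the same inductive strategy as the paper's. The only minor variation is in computing $f(\state')$: the paper chooses an $n$-element set $\La_n$ with $\diam{\La_n}>R$ and applies Corollary~\ref{cor: expansion} together with the induction hypothesis to its proper subsets, whereas you choose pairwise $R$-separated points $y_1,\ldots,y_n$ and iterate Remark~\ref{rem: converse}; both routes yield $f(\state')=\sum_i \xi(s_i)$ and the conclusion $f_\La\equiv 0$ identically.
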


\begin{proof}
	Since $f$ is uniform, there exists an $R>0$ such that
	$f_\La\equiv0$ if $\diam{\La}>R$.
	By Proposition \ref{prop: expansion},
	it is sufficient to prove that for any finite $\La\subset X$ satisfying $|\La|>1$,
	we have $f_\La\equiv0$ in the expansion of \eqref{eq: expansion}.
	We will prove this by induction on the cardinality of $\La\subset X$.
	We consider a finite $\La\subset X$ such that $n\coloneqq|\La|>1$, and
	assume that $f_{\La'}\equiv 0$ for any $\La'\subset X$
	such that  $1<|\La'|<n$.  
	Note that this condition is trivially true for $n=2$. 
	We let  $\La_n\subset X$ be a subset of $X$
	with $n$ elements such that $\diam{\La_n}>R$.
	Such $\La_n$ exists since $X$ is a locally finite infinite graph that is connected.
	Then by construction, we have $f_{\La_n}\equiv 0$.
	We fix a bijection between the set $\{1,\cdots,n\}$ and the sets $\La$ and $\La_n$,
	which induces bijections $S^n \cong S^\La$ and $S^n\cong  S^{\La_n}$.
	For any $(\eta_\indi)	\in S^n$, which  we view as an element in $S^\La$
	and $S^{\La_n}$, we let $\state\coloneqq\iota^{\La}((\eta_\indi))$ and 
	$\state'\coloneqq\iota^{\La_n}((\eta_\indi))$.
	Then for any conserved quantity $\xi\colon S\rightarrow\bbR$, we have
	\[
		\sum_{x\in X}\xi(\eta_x)=\sum_{x\in X}\xi(\eta'_x)=\sum_{\indi=1}^n \xi(\eta_\indi),
	\]
	in other words, $\xi_X(\state)=\xi_X(\state')$.
	Since the interaction is irreducibly quantified, there exists a finite path 
	$\vec\gamma$ from $\state$ to $\state'$ in $\State_*$.
	By Lemma \ref{lem: special} and our condition that $\partial f=0$, we have $f(\state) = f(\state')$.
	Note that by Lemma \ref{lem: three},
	if we let $\zeta\coloneqq\iota^{\{x\}}f_{\{x\}}
	\colon S\rightarrow\bbR$, then $\zeta$ is independent of the choice of $x\in X$.
	Corollary \ref{cor: expansion} implies that we have
	\begin{align*}
		 f(\state) &= \iota^{\La}  f(\state) = f_{\La}(\state) +
		 \sum_{\La'\subsetneq\La} f_{\La'}(\state)
		 = f_{\La}(\state) +
		 \sum_{x\in\La} f_{\{x\}}(\state)= f_{\La}(\state) 
		+ \sum_{\indi=1}^n \zeta(\eta_\indi)\\
		 f(\state') &= \iota^{\La_n}  f(\state') = f_{\La_n}(\state') +
		 \sum_{\La'\subsetneq\La_n} f_{\La'}(\state')
		 = 
		 \sum_{x\in\La_n}f_{\{x\}}(\state')= 
		 \sum_{\indi=1}^n \zeta(\eta_\indi),
	\end{align*}
	hence we have $f_{\La}(\state)=0$.
	Since this was true  for any $(\eta_\indi)\in S^n$, we have $f_\La\equiv 0$ as desired.
	Our assertion now follows by induction on $n$.
\end{proof}

We may now prove Theorem \ref{thm: 2}.

\begin{proof}[Proof of Theorem \ref{thm: 2}]
	Let $\xi\coloneqq\iota^{\{x\}}f_{\{x\}}\colon S\rightarrow\bbR$,
	which by Lemma \ref{lem: three} is independent of the choice of $x\in X$.
	Then by Lemma \ref{lem: four} and the definition of $\iota^{\{x\}}f_{\{x\}}$,
	 we have
	\[
		f(\state)=\sum_{x\in X}\xi(\eta_x)
	\]
	for any $\state=(\eta_x)\in\State_*$.
	It is sufficient to show that $\xi$ is a conserved quantity.
	First, note that we have $\xi(*)=f_{\{x\}}(*)=0$ for any $x\in X$
	since $f_{\{x\}}$ has exact support $\{x\}$.
	Consider an edge $e=(o(e),t(e))\in E\subset X\times X$, and for $(s_1,s_2)\in S\times S$,
	let $\state\in\State_*$ be the configuration with $s_1$ in the $o(e)$ component, $s_2$ in the $t(e)$
	component, and base states $*$ at the other components.
	Then $\state^e$ is the configuration with $s'_1$ in the $o(e)$ component, $s'_2$ in the $t(e)$
	component, and base states at the other components, where $(s_1',s_2')=\phi(s_1,s_2)$.
	If we let $\varphi=(\state,\state^e)$ be the transition from $\state$ to $\state^e$,
	then $\partial f=0$ implies that
	\[
		\partial f(\varphi) = f(\state^e) - f(\state) = (\xi(s_1')+\xi(s_2'))-(\xi(s_1)+\xi(s_2))
		=0.
	\]
	This shows that $\xi$ satisfies \eqref{eq: conserve2}, hence it is a conserved quantity as desired.
\end{proof}

%
\subsection{Pairings for Functions with Uniform Differentials}\label{subsec: pairing}
%

Let $X$ be an infinite locale and assume that the interaction is irreducibly quantified.
In this subsection, we first define the notion of \textit{uniform forms}.
Next, 
we will prove Proposition \ref{prop: h_f}, which associates 
a certain pairing $h_f\colon\Val\times\Val\rightarrow\bbR$
to any function $f\in C(\State_*)$ whose differential $\partial f\in C^1(\State_*)$ is a 
uniform form.
where $\Val$ is the commutative monoid given in
Definition \ref{def: U} (see also Remark \ref{rem: intrinsic}).

A \textit{ball} in $X$ is a set of the form
$
	 B(x,R)\coloneqq\{ x'\in X\mid d_X(x,x')\leq R\}
$
for some $x\in X$ and constant $R>0$.
We say that $x$ is the \textit{center} and $R>0$ is the \textit{radius} of $ B(x,R)$.
If $\Ba$ is a ball in $X$, then we denote by $r(\Ba)$ the radius of $\Ba$.
For any $\La\subset X$, we let $ B(\La,R)\coloneqq\bigcup_{x\in\La}  B(x,R)$, which we call the
\textit{$R$-thickening} of $\La$.
In particular, for any edge $e=(o(e),t(e))\in E \subset X\times X$, we let 
$ B(e,R)\coloneqq  B(o(e),R)\cup  B(t(e),R)$.
For any $R>0$,
we define the set of \textit{$R$-uniform forms} 
$C^1_R(\State) \subset C^1(\State_*)\subset\prod_{e\in E}\Map(\State_*,\bbR)$
by 
\[
	C^1_R(\State)\coloneqq C^1(\State_*)\cap\prod_{e\in E}C\bigl(S^{ B(e,R)}\bigr).
\]

\begin{definition}\label{def: unif form}
	We define the space of \textit{uniform forms} on $\State$ to be the $\bbR$-linear space
	\[	
		C^1_\unif(\State)\coloneqq\bigcup_{R>0}C^1_R(\State).
	\]
	We define a \textit{closed uniform form} to be a uniform form which is closed in the
	sense of Definition \ref{def: closed}.
	We will denote by $Z^1_\unif(\State)$ the space of closed uniform forms.
\end{definition}

For any subset $Y\subset X$, we denote by $\pr_{\subl}$ the map of sets
$\pr_{\subl}\colon\State_*\rightarrow S^\subl_*$
induced from the natural projection $\pr_{\subl}\colon S^X\rightarrow S^\subl$.
For any $\state\in\State_*$,
we will often denote $\pr_\subl(\state)$ 
by $\state|_\subl$.
By abuse of notation, we often write $\state|_\subl$ for the configuration $\iota_\subl(\state|_\subl)$
in $\State_*$.
We say that the configurations $\state,\state'\in\State_*$ 
\textit{coincide outside $\subl$}, if $\state|_{\sublc}=\state'|_{\sublc}$
for $\sublc\coloneqq X\setminus\subl$.

For any conserved quantity $\xi\colon S\rightarrow\bbR$ and $W\subset X$,
we define the function $\xi_W\colon\State_*\rightarrow\bbR$ by 
$\xi_W(\state)\coloneqq\sum_{x\in W}\xi(\eta_x)$ for any $\state\in\State_*$.
The following result concerns the values of functions whose differential are uniform.

\begin{lemma}\label{lem: difference}
	Let $f\in \CS$ and assume that  $\partial f\in C^1_R(\State)$
	for some $R>0$.  Let $Y\subset X$ be a sublocale, and suppose $\state,\state'\in\State_*$
	are configurations which coincide outside $Y$
	and satisfy $\xi_Y(\state)=\xi_Y(\state')$ for any conserved quantity
	$\xi\in\Consv^\phi(S)$.
	Suppose that the interaction is irreducibly quantified.
	If $\wt Y$ is any subset of $X$ such that 
	$ B(Y,R)\subset\wt Y$, then we have
	\[
		f(\state')-f(\state)=f(\state'|_{\wt Y})-f(\state|_{\wt Y}).
	\]
\end{lemma}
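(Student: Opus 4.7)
The plan is to construct a finite path from $\state$ to $\state'$ in $\State_*$ whose every transition is localized inside $Y$, and then to exploit the $R$-uniformity of $\partial f$ to telescope both sides of the claimed equality along that path.

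First, I would apply the irreducibly quantified hypothesis to the sublocale $Y$ itself. Viewing $\state|_Y$ and $\state'|_Y$ as elements of $S^Y_*$, the assumption $\xi_Y(\state)=\xi_Y(\state')$ for every $\xi\in\Consv^\phi(S)$ is precisely the statement that these restrictions carry the same conserved quantities on the locale $Y$. Definition \ref{def: irreducibly quantified} then supplies a finite path in $S^Y_*$ from $\state|_Y$ to $\state'|_Y$ via some sequence of edges $e^1,\ldots,e^N\in E_Y\subset E$. Since $\state$ and $\state'$ agree on $X\setminus Y$ and a transition via $e^i$ alters only the two components $o(e^i),t(e^i)\in Y$, this lifts to a finite path $\vec\gamma$ in $\State_*$ given by $\state^0\coloneqq\state$, $\state^{i+1}\coloneqq(\state^i)^{e^i}$, terminating at $\state^N=\state'$, in which every $\state^i$ continues to agree with $\state$ on $X\setminus Y$.

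The telescoping step is then essentially bookkeeping. Since $\partial f\in C^1_R(\State)$, the component $\omega_{e^i}\coloneqq(\partial f)_{e^i}$ lies in $C(S^{B(e^i,R)})$, and by hypothesis $B(e^i,R)\subset B(Y,R)\subset\wt Y$. Consequently $\omega_{e^i}(\state^i)=\omega_{e^i}(\state^i|_{\wt Y})$. Moreover, because $o(e^i),t(e^i)\in\wt Y$, applying the transition via $e^i$ commutes with passage to the configuration $\state|_{\wt Y}$, so $(\state^i|_{\wt Y})^{e^i}=\state^{i+1}|_{\wt Y}$, and therefore
\[
\omega_{e^i}(\state^i)\;=\;f(\state^{i+1}|_{\wt Y})-f(\state^i|_{\wt Y}).
\]
Directly from the definition of $\partial f$ we also have $\omega_{e^i}(\state^i)=f(\state^{i+1})-f(\state^i)$. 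Summing both expressions over $i=0,\ldots,N-1$ telescopes to
\[
f(\state')-f(\state)\;=\;\sum_{i=0}^{N-1}\omega_{e^i}(\state^i)\;=\;f(\state'|_{\wt Y})-f(\state|_{\wt Y}),
\]
which is the asserted identity.

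No substantive obstacle is anticipated. The sole function of the irreducibly quantified hypothesis is to produce a path inside $Y$; locality of the interaction then automatically lifts it to a path in $\State_*$ that never disturbs coordinates outside $\wt Y$, which is precisely the condition under which $R$-uniformity of $\partial f$ forces the two telescoping sums to agree.
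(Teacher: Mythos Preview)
Your proof is correct and follows essentially the same approach as the paper: use the irreducibly quantified hypothesis on the sublocale $Y$ to obtain a path in $S^Y_*$, lift it via the same edge sequence to paths in $\State_*$ starting at $\state$ and at $\state|_{\wt Y}$, and then telescope using that $\nabla_{e}f\in C(S^{B(e,R)})$ with $B(e,R)\subset\wt Y$. The only blemish is an off-by-one in your indexing (edges are named $e^1,\ldots,e^N$ but the recursion $\state^{i+1}=(\state^i)^{e^i}$ and the sum over $i=0,\ldots,N-1$ reference $e^0$); this is cosmetic and does not affect the argument.
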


\begin{proof}
	Since $\partial f\in C^1_R(\State)$, we have $\nabe f\in C\bigl(S^{ B(e,R)}\bigr)$.
	Hence since $ B(Y,R)\subset \wt Y$, we have
	\begin{equation}\label{eq: W}
		\nabe f(\state)= \nabe f(\state|_{\wt Y})
	\end{equation}
	for any $e\subset Y$.
	The condition $\xi_Y(\state)=\xi_Y(\state')$ implies that $\xi_Y(\state|_Y)=\xi_Y(\state'|_Y)$
	for any conserved quantity $\xi$.
	Since the interaction is irreducibly quantified, there exists a path $\vec\gamma|_Y$
	from $\state|_Y$ to $\state'|_Y$ in $S^Y_*$.
	If we let $\bse=(e^1,\ldots,e^N)$ be the sequence of edges in $Y$
	such that $\vec\gamma|_Y=\vec\gamma_{\state|_Y}^\bse$,
	then since $\state$ and $\state'$ coincide outside $Y$,
	the path $\vec\gamma\coloneqq\vec\gamma_{\state}^\bse$
	gives a path from $\state$ to $\state'$ and
	the path $\vec\gamma|_{\wt Y}\coloneqq\vec\gamma_{\state|_{\wt Y}}^\bse$
	gives a path from $\state|_{\wt Y}$ to $\state'|_{\wt Y}$.
	Note that for any $e\in E$, by definition, 
	$\nabe f(\state)= f(\state^e)-f(\state)$.  Hence
	\begin{align*}
		f(\state')&-f(\state)=\sum_{i=1}^N \nabla_{\!e^i} f(\state^{{i-1}}),&
		f(\state'|_{\wt Y})&-f(\state|_{\wt Y})
		=\sum_{i=1}^N \nabla_{\!e^i} f(\state^{{i-1}}|_{\wt Y}),
	\end{align*}
	where we let $\state^0\coloneqq\state$ and $\state^i\coloneqq(\state^{i-1})^{e^i}$
	for $i=1,\ldots, N$.
	Our assertion now follows from \eqref{eq: W}.
\end{proof}

For the remainder of this article, we consider the map
\[
	\bsxi_{\!X}\colon C(S^X_*)\rightarrow\cM
\]
given in Remark \ref{rem: intrinsic}. 
For finite $\La$, we denote by $\Val_{|\La|}\coloneqq\bsxi_\La(\State_*)\subset\cM$ 
the image of $\State_*$ with respect to the map $\bsxi_\La$.  
As the notation suggests, the set $\Val_{|\La|}$ as a subset of $\Val$ 
depends only on the cardinality of $\La$.
We have the following.

\begin{proposition}\label{prop: h}
	Let $f\in \CS$ and suppose $\partial f\in C^1_R(\State)$ for some $R>0$.
	Then for any finite connected $\La,\La'\subset X$ such that $d_X(\La,\La')>R$,
	there exists a function $h_f^{\La,\La'}\colon\Val_{|\La|}\times\Val_{|\La'|}\rightarrow\bbR$
	such that
	\[
		\iota^{\La\cup\La'}f(\state)
		-\iota^{\La}f(\state)
		-\iota^{\La'}f(\state)
		=h_f^{\La,\La'}\bigl(\bsxi_{\!\La}(\state),\bsxi_{\!\La'}(\state)\bigr)
	\]
	for all $\state\in\State_*$.
\end{proposition}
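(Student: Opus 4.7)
The plan is to observe first that the expression
\[
F(\state) \coloneqq \iota^{\La\cup\La'}f(\state) - \iota^{\La}f(\state) - \iota^{\La'}f(\state)
\]
depends only on $\state|_{\La\cup\La'}$, since each operator $\iota^W$ factors through the projection $\State_* \to S^W$. Thus $F$ descends to a function on $S^{\La\cup\La'}$. To produce $h_f^{\La,\La'}$, I would show that $F(\state)$ depends on $\state|_{\La\cup\La'}$ only through the pair $(\bsxi_{\!\La}(\state), \bsxi_{\!\La'}(\state))$; once this is established, setting $h_f^{\La,\La'}(\alpha,\beta)\coloneqq F(\state)$ for any representative $\state$ with $\bsxi_{\!\La}(\state)=\alpha$ and $\bsxi_{\!\La'}(\state)=\beta$ gives the desired map on $\Val_{|\La|}\times\Val_{|\La'|}$.

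The core step is to verify that $F(\state_1)=F(\state_2)$ for configurations $\state_1,\state_2\in\State_*$ supported in $\La\cup\La'$ with equal $(\bsxi_{\!\La},\bsxi_{\!\La'})$-values. Via interpolation through the configuration $\state_3$ agreeing with $\state_2$ on $\La$ and with $\state_1$ on $\La'$, it suffices to treat two symmetric cases: when $\state_1,\state_2$ agree on $\La'$, and when they agree on $\La$. In the first case I would apply Lemma \ref{lem: difference} to the sublocale $Y=\La$ (which is a sublocale by the connectedness hypothesis) and to the configurations $\iota^{\La\cup\La'}(\state_i|_{\La\cup\La'})$, which coincide outside $\La$ and share $\xi_\La$-values for every $\xi\in\Consv^\phi(S)$. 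Choosing $\wt Y= B(\La,\rad)$, the hypothesis $d_X(\La,\La')>\rad$ forces $ B(\La,\rad)\cap\La'=\emptyset$, so the restriction to $\wt Y$ of a configuration supported in $\La\cup\La'$ agrees with its restriction to $\La$, extended by the base state. This identifies the right-hand side of Lemma \ref{lem: difference} with $\iota^{\La}f(\state_1)-\iota^{\La}f(\state_2)$, yielding
\[
\iota^{\La\cup\La'}f(\state_1) - \iota^{\La\cup\La'}f(\state_2) = \iota^{\La}f(\state_1) - \iota^{\La}f(\state_2),
\]
and combined with the trivial equality $\iota^{\La'}f(\state_1)=\iota^{\La'}f(\state_2)$ this gives $F(\state_1)=F(\state_2)$. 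The symmetric case, with the roles of $\La$ and $\La'$ exchanged, is identical.

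The principal subtlety is bookkeeping of the various restriction operators and ensuring that the thickening $\wt Y= B(\La,\rad)$ does not reach into $\La'$; this is precisely where the separation hypothesis $d_X(\La,\La')>\rad$ intervenes, allowing Lemma \ref{lem: difference} to match the value of $f$ on a configuration supported in $\La\cup\La'$ with the value of $f$ on the same configuration truncated to $\La$. The irreducibly quantified condition enters only through the application of Lemma \ref{lem: difference}, and the decomposition into the two symmetric cases via interpolation is automatic, so I do not foresee further obstacles.
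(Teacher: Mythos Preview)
Your proposal is correct and follows essentially the same approach as the paper's proof: both reduce to configurations supported in $\La\cup\La'$, apply Lemma~\ref{lem: difference} with $Y=\La$ and $\wt Y=B(\La,R)$ using that $d_X(\La,\La')>R$ forces $B(\La,R)\cap\La'=\emptyset$, and then invoke symmetry in $\La,\La'$. The only difference is cosmetic: the paper phrases the conclusion as ``the expression depends only on $\bsxi_{\!\La}(\state)$ and $\state|_{\La'}$, and by symmetry only on $\state|_{\La}$ and $\bsxi_{\!\La'}(\state)$, hence only on the pair $(\bsxi_{\!\La}(\state),\bsxi_{\!\La'}(\state))$,'' whereas you make the same deduction explicit via the interpolating configuration $\state_3$.
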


\begin{proof}
	Let $\La,\La'$ be finite connected subsets of $X$ such that $d_X(\La,\La')>R$.
	Noting that $\iota^W f(\state)=f(\state|_W)$ for any $W\subset X$,
	it is sufficient to prove the statement for $\state\in\State_*$ with support in $\La\cup\La'$.
	Consider configurations $\state,\state'\in\State_*$ 
	such that $\Supp(\state),\Supp(\state')\subset \La\cup\La'$,
	and satisfying $\bsxi_\La(\state)=\bsxi_\La(\state')$ and  $\state|_{\La'}=\state'|_{\La'}$.
	Then by construction, we have
	$\bsxi_{\!X}(\state)=\bsxi_{\!X}(\state')$. 
	Since the interaction is irreducibly quantified and the configurations $\state$ and $\state'$ 
	coincide outside $\La$, by Lemma \ref{lem: difference} 
	applied to $Y\coloneqq\La$, which is a locale, and $\wt Y\coloneqq B(\La,R)$, 
	we have
	\[
		f(\state')-f(\state)=f(\state'|_{ B(\La,R)})-f(\state|_{ B(\La,R)}).
	\]
	Since $\state'=\state'|_{\La\cup\La'}$ and $\state=\state|_{\La\cup\La'}$, 
	noting that $(\La\cup\La') \cap  B(\La,R) =\La$,
	we have
	\[
		f(\state'|_{\La\cup\La'})-f(\state'|_{\La})=f(\state|_{\La\cup\La'})-f(\state|_{\La}).
	\]
	This shows that the function
	$
		\iota^{\La\cup\La'}f(\state)
		-\iota^{\La}f(\state)
	$
	depends only on $\bsxi_{\!\La}(\state)$ and $\state|_{\La'}$ if 
	$d_X(\La,\La')>R$.
	Since $\iota^{\La'}f(\state)$ also depends only on $\state|_{\La'}$,
	we see that 
	\begin{equation}\label{eq: symmetry}
		\iota^{\La\cup\La'}f(\state)
		-\iota^{\La}f(\state) -\iota^{\La'}f(\state)
	\end{equation}
	is a function which depends only on $\bsxi_{\!\La}(\state)$ and $\state|_{\La'}$.
	Due to symmetry, we can also see that \eqref{eq: symmetry} depends only on 
	$\bsxi_{\!\La'}(\state)$ and $\state|_{\La}$.  This implies that \eqref{eq: symmetry}
	only depends on $\bsxi_{\!\La}(\state)$ and $\bsxi_{\!\La'}(\state)$,
	hence there exists a function $h_f^{\La,\La'}\colon\Val_{\!|\La|}\times\Val_{\!|\La'|}\rightarrow\bbR$
	such that
	\[
			\iota^{\La\cup\La'}f(\state)
		-\iota^{\La}f(\state) -\iota^{\La'}f(\state)
		= h_f^{\La,\La'}\bigl(\bsxi_{\!\La}(\state),\bsxi_{\!\La'}(\state)\bigr)
	\]
	for any $\state\in\State_*$ as desired.
\end{proof}

\begin{lemma}\label{lem: independence}
	Let $f\in \CS$ and $\partial f\in C^1_R(\State)$ for some $R>0$.
	For finite connected subsets $\La,\La',\La''$ in $X$,
	suppose $\La'$ and $\La''$ are in the same connected component of $X\setminus  B(\La,R)$.
	Then the functions $h_f^{\La,\La'}$ and $h_f^{\La,\La''}$ of Proposition \ref{prop: h} satisfy
	\[
		h_f^{\La,\La'}(\gra,\grb)=h_f^{\La,\La''}(\gra,\grb)
	\]
	for any $\gra\in\Val_{|\La|}$ and $\grb\in\Val_{|\La'|}\cap\Val_{|\La''|}$.
\end{lemma}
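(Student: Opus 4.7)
The plan is to reduce the claim to Lemma \ref{lem: difference} applied to the connected component of $X\setminus \Ba(\La,R)$ that contains $\La'$ and $\La''$.

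First I would fix $\gra\in\Val_{|\La|}$ and $\grb\in\Val_{|\La'|}\cap\Val_{|\La''|}$ and choose configurations $\state_0,\state_{\La'},\state_{\La''}\in\State_*$ supported in $\La$, $\La'$, $\La''$ respectively with $\bsxi_{\!X}(\state_0)=\gra$ and $\bsxi_{\!X}(\state_{\La'})=\bsxi_{\!X}(\state_{\La''})=\grb$; such configurations exist by the definitions of $\Val_{|\La|}$, $\Val_{|\La'|}$ and $\Val_{|\La''|}$. Let $\state\in\State_*$ be the configuration whose restriction to $\La$ agrees with $\state_0$, whose restriction to $\La'$ agrees with $\state_{\La'}$, and which is at base state elsewhere; define $\state''$ analogously with $\La''$ in place of $\La'$. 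Unwinding the defining equation of Proposition \ref{prop: h} at $\state$ and at $\state''$ respectively yields
\begin{align*}
h_f^{\La,\La'}(\gra,\grb) &= f(\state)-f(\state_0)-f(\state_{\La'}),\\
h_f^{\La,\La''}(\gra,\grb) &= f(\state'')-f(\state_0)-f(\state_{\La''}),
\end{align*}
so it suffices to establish $f(\state'')-f(\state)=f(\state_{\La''})-f(\state_{\La'})$.

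For this remaining equality, let $Y$ be the connected component of $X\setminus \Ba(\La,R)$ containing $\La'$ and $\La''$, which is a sublocale in the sense of Example \ref{example: locale} (6). By construction $\state$ and $\state''$ coincide outside $Y$: both agree with $\state_0$ on $\La$ and are at base state everywhere else off $Y$. Moreover, for any conserved quantity $\xi\in\Consv^\phi(S)$, the values $\xi_Y(\state)=\xi_{\La'}(\state_{\La'})$ and $\xi_Y(\state'')=\xi_{\La''}(\state_{\La''})$ coincide because both equal the corresponding component of $\grb$. A one-line triangle-inequality argument yields $\Ba(Y,R)\cap\La=\emptyset$: any vertex within distance $R$ of $Y\subset X\setminus\Ba(\La,R)$ has distance at least $1$ from $\La$. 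Taking $\wt Y\coloneqq \Ba(Y,R)$, Lemma \ref{lem: difference} therefore supplies
\[
f(\state'')-f(\state)=f(\state''|_{\wt Y})-f(\state|_{\wt Y}),
\]
and since $\wt Y\cap\La=\emptyset$ while $\La',\La''\subset\wt Y$, the restrictions $\state|_{\wt Y}$ and $\state''|_{\wt Y}$ coincide with $\state_{\La'}$ and $\state_{\La''}$ respectively, delivering the desired identity.

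The only real point to verify is that $Y$ qualifies as a sublocale so that the irreducibly quantified hypothesis used inside Lemma \ref{lem: difference} is available; this is automatic, since connected components of $X\setminus\Ba(\La,R)$ are connected by definition and locally finite as subgraphs of $X$. Everything else amounts to bookkeeping with the supports of the chosen configurations together with the observation that $\Ba(Y,R)$ remains disjoint from $\La$.
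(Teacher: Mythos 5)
Your proof is correct and follows essentially the same route as the paper: both reduce the claim to Lemma \ref{lem: difference} applied with $Y$ a sublocale of $X\setminus B(\La,R)$ containing $\La'\cup\La''$ and $\wt Y=B(Y,R)$, using test configurations supported on $\La\cup\La'$ and $\La\cup\La''$ that agree on $\La$ and represent $\gra$ and $\grb$. The only cosmetic difference is that you take $Y$ to be the full connected component, while the paper allows any connected subset of it containing $\La'\cup\La''$; the verification that $B(Y,R)\cap\La=\emptyset$ and the final bookkeeping are the same.
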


\begin{proof}
	Let $\gra\in\Val_{|\La|}$ and $\grb\in\Val_{|\La'|}\cap\Val_{|\La''|}$.
	Then since $\La\cap\La'=\La\cap\La''=\emptyset$,
	there exists states $\state'$ and $\state''\in\State$ 
	at base state outside $\La\cup\La'$ and  $\La\cup\La''$ respectively
	satisfying
	$\bsxi_{\!\La}(\state')=\bsxi_{\!\La}(\state'')=\gra$ and $\bsxi_{\!\La'}(\state')=
	\bsxi_{\!\La''}(\state'')=\grb$.  Moreover, we may choose $\state'$ and
	$\state''$ so that they coincide on $\La$.
	Let $Y$ be a sublocale of $X$ containing $\La'\cup\La''$ such that $d_X(Y,\La)>R$.
	By our choice of $\state'$ and $\state''$, we have 
	$\bsxi_{\!X}(\state')=\bsxi_{\!X}(\state'')=\gra+\grb$.  
	Since the interaction is irreducibly quantified,
	by Lemma \ref{lem: difference} applied to $Y$ and $\wt Y= B(Y,R)$, we have
	\begin{equation}\label{eq: before}
		f(\state'')-f(\state')=f(\state''|_{ B(Y,R)})-f(\state'|_{ B(Y,R)}).
	\end{equation}
	Since $\state'$ and $\state''$ are at base state outside $\La\cup\La'$ and  $\La\cup\La''$
	and $\La\cap  B(Y,R)=\emptyset$,
	we have $\state'=\state'|_{\La\cup\La'}$, $\state''=\state''|_{\La\cup\La''}$,
	$\state'|_{ B(Y,R)}=\state'|_{\La'}$, and
	$\state''|_{ B(Y,R)}=\state''|_{\La''}$.  Noting that $\iota^W f(\state)=f(\state|_W)$ for any 
	$W\subset X$, equation \eqref{eq: before} gives
	\[
		\iota^{\La\cup\La''}f(\state'')-\iota^{\La''}f(\state'')=\iota^{\La\cup\La'}f(\state')-\iota^{\La'}f(\state').
	\]
	Noting also that $\iota^{\La}f(\state')=\iota^{\La}f(\state'')$ since $\state'$ and $\state''$ coincide on 
	$\La$, by the definition of 
	$h_f^{\La,\La'}$ and $h_f^{\La,\La''}$, we  have
	\begin{align*}
		h_f^{\La,\La'}(\gra,\grb)&\coloneqq
		\iota^{\La\cup\La'}f(\state')
		-\iota^{\La}f(\state') -\iota^{\La'}f(\state')\\
		&= \iota^{\La\cup\La''}f(\state'')
		-\iota^{\La}f(\state'') -\iota^{\La''}f(\state'')
		=h_f^{\La,\La''}(\gra,\grb)
	\end{align*}
	as desired.
\end{proof}

Let $\Val$ be the commutative monoid defined in Definition \ref{def: U}.
We will next construct a well-defined pairing $h_f\colon\Val\times\Val\rightarrow\bbR$.
We first consider some conditions on the locale.

\begin{definition}\label{def: we}
	 We say that a locale $X$ is \textit{weakly transferable}, 
	if for any ball $\Ba\subset X$, 
	the complement $X\setminus\Ba$ is a nonempty finite disjoint union of connected infinite graphs.
	In particular, if $X\setminus\Ba$ is a connected infinite graph for any ball $\Ba\subset X$, 
	then we say that $X$ is \textit{strongly transferable}.
\end{definition}

Immediate from the definition, weakly transferable locales are infinite locales.

\begin{remark}
	Consider the Euclidean lattice given in Example \ref{example: locale} (1).
	Then $\bbZ^d=(\bbZ^d,\bbE^d)$ for $d>1$ is  strongly transferable.
	The Euclidean lattice $\bbZ=(\bbZ,\bbE)$ is weakly transferable, 
	but not strongly transferable.
\end{remark}

For any $R>0$, consider the set
\[
	\FC_R\coloneqq\{(\La,\La')\mid \text{$\La,\La'$: finite nonempty
	connected $\subset X$}, d_X(\La,\La')>R\}.
\]
For any $(\La_1,\La'_1),(\La_2,\La_2')\in\FC_R$, we denote $(\La_1,\La'_1)\leftrightarrow(\La_2,\La_2')$
if $\La_1=\La_2$ and  $\La'_1$ and $\La_2'$ are in the same connected component of 
$X\setminus  B(\La_1,R)=X\setminus  B(\La_2,R)$, or 
$\La'_1=\La'_2$ and  $\La_1$ and $\La_2$ are in the same connected component of 
$X\setminus  B(\La'_1,R)=X\setminus  B(\La'_2,R)$.
Note that we have $(\La_1,\La'_1)\leftrightarrow(\La_2,\La_2')$ if and only if
$(\La'_1,\La_1)\leftrightarrow(\La'_2,\La_2)$.

\begin{definition}\label{def: AR}
	We denote by $\sA_R$ the subset of $\FC_R$ consisting of pairs $(\La,\La')$ 
	such that at least one of $\La$ and $\La'$ are balls. 
\end{definition}

From now until the end of this subsection,
we assume that $X$ is strongly transferable.
In this case, we will prove that the pairing 
$h_f^{\La,\La'}\colon\Val_{|\La|}\times\Val_{|\La'|}\rightarrow\bbR$ of Proposition \ref{prop: h} 
associated with $f$ is independent of the choice of $(\La,\La')\in\sA_R$,
and defines a well-defined pairing
$h_f\colon\Val\times\Val\rightarrow\bbR$
satisfying a certain cocycle condition.
We will address the weakly transferable case in \S \ref{sec: weakly}.

\begin{lemma}\label{lem: independence two}
	Suppose $X$ is strongly transferable.
	Then for any  $(\La_1,\La'_1), (\La_2,\La'_2)\in\sA_R$, we have
	\[
			h_f^{\La_1,\La'_1}(\gra,\grb)=h_f^{\La_2,\La'_2}(\gra,\grb)
	\]
	for any $\gra\in\Val_{|\La_1|}\cap\Val_{|\La_2|}$ and $\grb\in\Val_{|\La'_1|}\cap\Val_{|\La'_2|}$.
\end{lemma}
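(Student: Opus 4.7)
The plan is to deduce the lemma from Lemma \ref{lem: independence} in two stages. First, when one side of a pair is a ball, strong transferability forces $h_f^{\La,\La'}$ to be independent of the non-ball side. Second, the built-in symmetry of $h_f$ in its two slots, together with a pair of sufficiently generic auxiliary balls, lets me chain any two pairs in $\sA_R$ together in a constant number of moves.

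First I would establish the \emph{freedom of the non-ball side} statement: if $\La$ is a ball, say $\La = B(x_0,r_0)$, then $B(\La,R) = B(x_0,r_0+R)$ is again a ball, so by strong transferability $X\setminus B(\La,R)$ is a connected infinite subgraph of $X$. Hence any two finite connected subsets $\La_1',\La_2' \subset X$ with $d_X(\La,\La_i')>R$ lie in the (unique) connected component of $X\setminus B(\La,R)$, and Lemma \ref{lem: independence} gives $h_f^{\La,\La_1'}(\gra,\grb) = h_f^{\La,\La_2'}(\gra,\grb)$ for every $\gra\in\Val_{|\La|}$ and $\grb\in\Val_{|\La_1'|}\cap\Val_{|\La_2'|}$. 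The dual statement, with the ball in the second slot and the first slot varying, follows from the identity $h_f^{\La,\La'}(\gra,\grb)=h_f^{\La',\La}(\grb,\gra)$, which is immediate from the fact that the right-hand side of the defining equation in Proposition \ref{prop: h} is manifestly symmetric in $\La$ and $\La'$ (and every $(\gra,\grb)\in\Val_{|\La|}\times\Val_{|\La'|}$ is realized by some $\state\in\State_*$ because $\La\cap\La'=\emptyset$).

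With this in hand, given $(\La_1,\La_1'),(\La_2,\La_2')\in\sA_R$, I chain them through two auxiliary balls $\Ba,\Ba'\subset X$. Enclose $\La_1\cup\La_2\cup\La_1'\cup\La_2'$ in a ball $B_0\subset X$. By strong transferability $X\setminus B(B_0,R)$ is a connected infinite subgraph, so it accommodates balls of $X$ of arbitrarily large cardinality, obtained by placing centers sufficiently far from $B_0$; moreover two such balls can be chosen at mutual distance $>R$. Choose $\Ba,\Ba'\subset X\setminus B(B_0,R)$ with $|\Ba|\geq\max(|\La_1|,|\La_2|)$, $|\Ba'|\geq\max(|\La_1'|,|\La_2'|)$, and $d_X(\Ba,\Ba')>R$. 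Since $\bsxi(*)=0$ gives $\Val_n\subseteq\Val_m$ whenever $n\le m$ (padding by base states), we have $\gra\in\Val_{|\Ba|}$ and $\grb\in\Val_{|\Ba'|}$, and every intermediate pair that will appear below lies in $\FC_R$. For each $i=1,2$, at least one of $\La_i,\La_i'$ is a ball by hypothesis: if $\La_i$ is a ball I apply the first-slot form of Step~1 to replace $\La_i'$ by $\Ba'$, then the second-slot form to replace $\La_i$ by $\Ba$; if $\La_i'$ is the ball I perform the two moves in the reverse order. In either case the result is $h_f^{\Ba,\Ba'}(\gra,\grb)$, so
\[
h_f^{\La_1,\La_1'}(\gra,\grb)=h_f^{\Ba,\Ba'}(\gra,\grb)=h_f^{\La_2,\La_2'}(\gra,\grb),
\]
which is the lemma.

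The main technical obstacle is the volume-growth assertion used to produce $\Ba,\Ba'$: I need balls of $X$ of prescribed minimum cardinality whose centers are pushed arbitrarily far from $B_0$, and this relies on the elementary fact that for any vertex $v$ in an infinite connected locally finite graph, $|B(v,r)|\to\infty$ as $r\to\infty$, combined with the connectedness and infiniteness of $X\setminus B(B_0,R)$ coming from strong transferability. Everything else is bookkeeping: tracking the cardinality and distance constraints so that all intermediate pairs indeed lie in $\FC_R$ and all values of $\gra,\grb$ remain in the appropriate intersections of $\Val_n$'s.
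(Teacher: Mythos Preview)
Your proof is correct and follows essentially the same approach as the paper: both exploit that the $R$-thickening of a ball is again a ball, so strong transferability makes its complement connected, and then Lemma~\ref{lem: independence} lets one freely move the non-ball slot; both then chain an arbitrary pair in $\sA_R$ to a fixed reference pair of balls. The only cosmetic difference is that the paper first isolates the case where all four sets are balls and uses a single auxiliary ball $B$ in the chain $(\La_1,\La'_1)\leftrightarrow(\La_1,B)\leftrightarrow(\La_2,B)\leftrightarrow(\La_2,\La'_2)$ before reducing the general case to it, whereas you go directly with two auxiliary balls $\Ba,\Ba'$; the underlying mechanism is identical.
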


\begin{proof}
	We first consider the case when $\La_1,\La'_1, \La_2,\La'_2$ are all balls.
	Note that for a ball $\Ba$, the $R$-thickening $ B(\Ba,R)$ is also a ball.
	Let $\Ba$ be a ball such that $|\Ba|\geq |\La_i|$ and $d_X(\La_i,\Ba)>R$ for $i=1,2$.
	Then since $X$ is strongly transferable, the sets $X\setminus  B(\La_1,R)$, $X\setminus  B(\La_2,R)$
	and $X\setminus  B(\Ba,R)$
	are all sublocales, hence we have
	\[
		(\La_1,\La'_1)\leftrightarrow(\La_1,B)\leftrightarrow(\La_2,B)\leftrightarrow(\La_2,\La'_2).
	\]
	Our assertion follows from Lemma \ref{lem: independence}.
	Now, consider the case for general $(\La_i,\La'_i)\in\sA_R$. 
	By replacing the component which is not a ball with a ball
	of sufficiently large size in the complement, 
	we see that there exists a pair of balls $(\Ba_i,\Ba'_i)\in\sA_R$
	such that $|\Ba_i|\geq|\La_i|$, $|\Ba'_i|\geq|\La'_i|$ for $i=1,2$
	satisfying $(\La_i,\La'_i)\leftrightarrow(\Ba_i,\Ba'_i)$.
	Again by Lemma \ref{lem: independence}, we see that
	$h_f^{\La_i,\La'_i}(\gra,\grb)=h_f^{\Ba_i,\Ba'_i}(\gra,\grb)$
	for any $\gra\in\Val_{|\La_i|}$ and $\grb\in\Val_{|\La'_i|}$.
	Our assertion now follows from our assertion for balls.
\end{proof}

\begin{proposition}\label{prop: h_f}
	For the system $(X,S,\phi$),
	assume that $X$ is strongly transferable and that the interaction $\phi$
	is irreducibly quantified.
	Let $f\in C(\State_*)$ be a function such that $\partial f\in C^1_R(\State)$
	for some constant $R>0$.
	Then there exists a pairing
	\[
		h_f\colon\Val\times\Val\rightarrow\bbR
	\]
	such that for any $(\La,\La')\in\sA_R$, we have
	\begin{equation}\label{eq: partition}
		\iota^{\La\cup\La'} f(\state)-\iota^{\La} f(\state)-\iota^{\La'}f(\state)
		=h_f(\xi_\La(\state),\xi_{\La'}(\state))
	\end{equation}
	for any $\state\in\State$.
	Moreover, the pairing $h_f$ is symmetric, in other words 
	$h_f(\gra,\grb)=h_f(\grb,\gra)$
	for any $\gra,\grb\in\Val$,
	and satisfies the cocycle condition
	\begin{equation}\label{eq: cocycle}
		h_f(\gra,\grb)+h_f(\gra+\grb,\grc)=h_f(\grb,\grc)+h_f(\gra,\grb+\grc)	
	\end{equation}
	for any $\gra,\grb,\grc\in\Val$.
\end{proposition}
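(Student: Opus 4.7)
The plan is to construct $h_f$ by transporting the pairings $h_f^{\La,\La'}$ of Proposition \ref{prop: h} to the monoid $\Val$. Concretely, for $\gra, \grb \in \Val$, I would select any $(\La,\La') \in \sA_R$ with $\gra \in \Val_{|\La|}$ and $\grb \in \Val_{|\La'|}$ and set $h_f(\gra, \grb) \coloneqq h_f^{\La,\La'}(\gra, \grb)$. Such a pair exists because $X$ is an infinite strongly transferable locale: any element of $\Val$ is realized by a finitely supported configuration, hence lies in $\Val_{|\Ba|}$ for all sufficiently large balls $\Ba$ (since $\xi(*)=0$ makes $\Val_n \subset \Val_{n+1}$), and two such balls at mutual distance exceeding $R$ are easy to produce. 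Well-definedness of $h_f$ is exactly Lemma \ref{lem: independence two}. The relation \eqref{eq: partition} for $(\La,\La') \in \sA_R$ is then immediate from the defining property of $h_f^{\La,\La'}$ in Proposition \ref{prop: h}. Symmetry $h_f(\gra, \grb) = h_f(\grb, \gra)$ follows from the same equation, since the left-hand side is manifestly invariant under $(\La,\La') \leftrightarrow (\La',\La)$.

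The substantive step is the cocycle identity \eqref{eq: cocycle}. Given $\gra, \grb, \grc \in \Val$, I would choose three pairwise disjoint balls $\Ba_1, \Ba_2, \Ba_3 \subset X$ at pairwise distance greater than $R$, of cardinality large enough that $\gra \in \Val_{|\Ba_1|}$, $\grb \in \Val_{|\Ba_2|}$, $\grc \in \Val_{|\Ba_3|}$; these exist by strong transferability. Gluing representing configurations on each $\Ba_i$ yields $\state \in \State_*$ with support in $\Ba_1 \cup \Ba_2 \cup \Ba_3$ such that $\bsxi_{\Ba_1}(\state) = \gra$, $\bsxi_{\Ba_2}(\state) = \grb$, $\bsxi_{\Ba_3}(\state) = \grc$. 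Since $B(\Ba_3, R)$ is itself a ball (as the graph $R$-thickening of a ball), its complement is a connected infinite subgraph by strong transferability, so $\Ba_1$ and $\Ba_2$ may be joined by a path inside $X \setminus B(\Ba_3, R)$; adjoining this path produces a connected set $\La_{12} \supset \Ba_1 \cup \Ba_2$ with $d_X(\La_{12}, \Ba_3) > R$. Symmetrically one obtains a connected $\La_{23} \supset \Ba_2 \cup \Ba_3$ with $d_X(\Ba_1, \La_{23}) > R$.

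Because $\state$ is at base state on $\La_{12} \setminus (\Ba_1 \cup \Ba_2)$, we have $\iota^{\La_{12}} f(\state) = \iota^{\Ba_1 \cup \Ba_2} f(\state)$, $\iota^{\La_{12} \cup \Ba_3} f(\state) = \iota^{\Ba_1 \cup \Ba_2 \cup \Ba_3} f(\state)$, and $\bsxi_{\La_{12}}(\state) = \gra + \grb$. Applying \eqref{eq: partition} to the pairs $(\La_{12}, \Ba_3)$ and $(\Ba_1, \Ba_2)$ in $\sA_R$ and summing yields
\[
\iota^{\Ba_1 \cup \Ba_2 \cup \Ba_3} f(\state) - \sum_{i=1}^3 \iota^{\Ba_i} f(\state) = h_f(\gra+\grb, \grc) + h_f(\gra, \grb).
\]
The analogous computation via $(\Ba_1, \La_{23})$ and $(\Ba_2, \Ba_3)$ gives the same left-hand side equal to $h_f(\gra, \grb+\grc) + h_f(\grb, \grc)$, and equating the two expressions produces \eqref{eq: cocycle}. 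The main obstacle is this cocycle step: it hinges on producing connected bridges between two of the balls while remaining $R$-away from the third, which is exactly where strong (rather than merely weak) transferability enters. Every other ingredient is essentially bookkeeping over Proposition \ref{prop: h} and Lemma \ref{lem: independence two}.
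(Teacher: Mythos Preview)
Your proposal is correct and follows essentially the same route as the paper: define $h_f$ via Proposition \ref{prop: h} and Lemma \ref{lem: independence two}, read off \eqref{eq: partition} and symmetry directly, and for the cocycle identity pick three well-separated balls carrying $\gra,\grb,\grc$, build connected bridges $\La_{12},\La_{23}$ inside the complements of the appropriate $R$-thickened balls (using strong transferability), and expand $\iota^{\Ba_1\cup\Ba_2\cup\Ba_3}f(\state)-\sum_i\iota^{\Ba_i}f(\state)$ in two ways. This matches the paper's argument essentially line for line.
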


\begin{proof}
	Note that for any $\gra,\grb\in\Val$, there exists $k\in\bbN$ such that 
	$\gra,\grb\in\Val_k$.
	By Lemma \ref{lem: independence two}, 
	the pairing $h_f^{\La,\La'}\colon\Val_{|\La|}\times\Val_{|\La'|}\rightarrow\bbR$ 
	of Proposition \ref{prop: h} associated with $f$ 
	is independent of the choice of $(\La,\La')\in\sA_R$,
	hence we have a pairing $h_f\colon\Val\times\Val\rightarrow\bbR$.
	The equality of \eqref{eq: partition} follows from Proposition \ref{prop: h}.
	In addition, for $(\La,\La')\in\sA_R$, we have $(\La',\La)\in\sA_R$,
	which by \eqref{eq:  partition} implies that $h_f$ is symmetric.
	In order to prove the cocycle condition,
	fix an arbitrary $\gra,\grb,\grc\in\Val$, and let $k\in\bbN$ be  
	such that $\gra,\grb,\grc\in\Val_k$.
	Let $\Ba_1$ be any ball with $|\Ba_1|>k$.  
	Since $X$ is strongly transferable, $X\setminus  B(\Ba_1,R)$ is a locale.
 	Take any ball $\Ba_2$  with $|\Ba_2|>k$ in $X\setminus  B(\Ba_1,R)$.
	Let $\Ba$ be a ball sufficiently large containing both $\Ba_1$ and $\Ba_2$.
	Then again since $X$ is strongly transferable, $X\setminus B(\Ba,R)$
	is a locale.	
	Take any ball $\Ba_3$  with $|\Ba_3|>k$ in $X\setminus  B(\Ba,R)$.
	Then the balls $\Ba_1,\Ba_2,\Ba_3\subset X$ satisfy
	$|\Ba_i|\geq k$ and $d_X(\Ba_i,\Ba_j)>R$ for $i\neq j$.
	Since $\gra,\grb,\grc\in\Val_k$,
	there exists $\state\in\State$ such that $\xi^{\Ba_1}(\state)=\gra$, $\xi^{\Ba_2}(\state)=\grb$,
	$\xi^{\Ba_3}(\state)=\grc$ and is at base state outside $\Ba_1\cup  \Ba_2\cup  \Ba_3$.
	We let $\La \subset X\setminus  B(\Ba_3,R)$ be a finite connected subset of $X$
	such that $\Ba_1\cup  \Ba_2\subset \La$, and let
	$\La' \subset X\setminus  B(\Ba_1,R)$ be a finite connected subset of $X$ such that 
	$\Ba_2\cup  \Ba_3\subset \La'$.
	For example, we may take $\La$ to be the union of $\Ba_1,\Ba_2$ and points on a path
	in $X\setminus  B(\Ba_3,R)$ from the center of $\Ba_1$ to the center of $\Ba_2$,
	and similarly for $\La'$.
	Note by construction, we have $(\Ba_1,\La')$, $(\La,\Ba_3)\in\sA_R$.  Then 
	by Lemma \ref{lem: independence} and the definition of $h$, we have
	\begin{align*}
		h_f(\gra,\grb)&=h_f^{\Ba_1,\Ba_2}(\gra,\grb)=\iota^{\Ba_1\cup  \Ba_2}f(\state)
		-\iota^{\Ba_1}f(\state)-\iota^{\Ba_2}f(\state)\\
		h_f(\gra+\grb,\grc)&=h_f^{\La,\Ba_3}(\gra+\grb,\grc)
		=\iota^{\La\cup  \Ba_3}f(\state)
		-\iota^{\La}f(\state)-\iota^{\Ba_3}f(\state)\\
		h_f(\grb,\grc)&=h_f^{\Ba_2,\Ba_3}(\gra,\grb)=\iota^{\Ba_2\cup  \Ba_3}f(\state)
		-\iota^{\Ba_2}f(\state)-\iota^{\Ba_3}f(\state)\\
		h_f(\gra,\grb+\grc)&=h_f^{\Ba_1,\La'}(\gra,\grb+\grc)=\iota^{\Ba_1\cup\La'}f(\state)
		-\iota^{\Ba_1}f(\state)-\iota^{\La'}f(\state).
	\end{align*}
	Since $\state$ is at base state outside $\Ba_1\cup  \Ba_2\cup  \Ba_3$, we have
	\begin{align*}
		\iota^{\La\cup  \Ba_3}f(\state)&=\iota^{\Ba_1\cup\La'}
		f(\state)=\iota^{\Ba_1\cup  \Ba_2\cup  \Ba_3}f(\state), &
		\iota^{\La}f(\state)&=\iota^{\Ba_1\cup  \Ba_2}f(\state), &
		\iota^{\La'}f(\state)&=\iota^{\Ba_2\cup  \Ba_3}f(\state),
	\end{align*}
	which proves our assertion.
\end{proof}

%
\subsection{Criterion for Uniformity}\label{subsec: criterion}
%

In this subsection, we prove Proposition \ref{prop: important}, which gives a criterion
for a function $f\in \CS$ to be uniform when $X$ is strongly transferable.  
The weakly transferable case will be addressed in Proposition \ref{prop: important wr}.
This result will play an essential role in the proof of Theorem \ref{thm: 1}.
As in \S\ref{subsec: pairing}, we assume here that the interaction is irreducibly quantified.

\begin{proposition}\label{prop: important}
	For the system $(X,S,\phi)$, assume that $X$ is strongly transferable and that the interaction
	$\phi$
	is irreducibly quantified.
	Let $f\in C(\State_*)$ be a function such that $\partial f\in C^1_R(\State)$
	for some $R>0$, and	
	let $h_f\colon\Val\times\Val\rightarrow\bbR$ be the pairing given in Proposition \ref{prop: h_f}.
	If $h_f\equiv 0$, then we have  $f\in C^0_\unif(\State)$.
\end{proposition}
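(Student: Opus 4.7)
The plan is to show by strong induction on $|\La|$ that in the canonical expansion $f = \sum_\La f_\La$ from Proposition \ref{prop: expansion}, the summand $f_\La$ vanishes whenever $\diam{\La}$ exceeds a suitable constant $R^*$ depending only on $R$; together with the preliminary observation that $f(\star) = 0$, this yields $f \in C^0_\unif(\State)$. The equality $f(\star) = 0$ is immediate from evaluating the identity \eqref{eq: partition} at $\state = \star$: since $\iota^W f(\star) = f(\star)$ for any $W \subset X$, $\bsxi_{\!X}(\star) = 0$, and $h_f(0,0) = 0$, the identity collapses to $-f(\star) = 0$. The base case $|\La| = 2$ with $\La = \{x_1, x_2\}$ satisfying $d(x_1, x_2) > R$ is equally direct: since $(\{x_1\}, \{x_2\}) \in \sA_R$, the identity \eqref{eq: partition} gives $\iota^\La f = \iota^{\{x_1\}} f + \iota^{\{x_2\}} f$, and M\"obius inversion combined with $f(\star) = 0$ yields $f_\La = 0$.

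For the inductive step with $|\La| \geq 3$, the strategy is to locate a partition $\La = \La_1 \sqcup \La_2$ into nonempty pieces with $d(\La_1, \La_2) > R$, then to extend \eqref{eq: partition} from the $\sA_R$ setting to this more general one. The extension works by selecting connected finite $W_1, W_2 \subset X$ with $W_i \supset \La_i$, $d(W_1, W_2) > R$, and at least one $W_i$ a ball, so that $(W_1, W_2) \in \sA_R$; evaluating $\iota^{W_1 \cup W_2} f = \iota^{W_1} f + \iota^{W_2} f$ at configurations supported in $\La$ then forces $\iota^\La f = \iota^{\La_1} f + \iota^{\La_2} f$. Comparing this with $\iota^\La f = \sum_{\La' \subset \La} f_{\La'}$ from Corollary \ref{cor: expansion} gives
\[
\sum_{\substack{\La'_1 \subset \La_1,\, \La'_2 \subset \La_2 \\ \La'_1 \neq \emptyset \neq \La'_2}} f_{\La'_1 \cup \La'_2} = 0.
\]
Every term with $(\La'_1, \La'_2) \neq (\La_1, \La_2)$ has strictly smaller cardinality than $\La$ and diameter at least $d(\La_1, \La_2) > R$, hence vanishes by the inductive hypothesis, forcing $f_\La = 0$.

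The main obstacle is handling $\La$ for which no such partition exists---when $\La$ is ``$R$-connected'' in the sense that any two of its points are linked by a chain in $\La$ of successive distances $\le R$---and constructing the auxiliary $W_1, W_2$ when the partition is available. Here strong transferability enters essentially: for any ball $B \subset X$ the complement $X \setminus B$ is a connected infinite locale, which permits enclosing each piece of the partition in a connected subset lying outside a ball around the other piece, thereby exhibiting $(W_1, W_2) \in \sA_R$ with the required properties. The hypothesis $\partial f \in C^1_R(\State)$ is then invoked to rule out $R$-connected $\La$ of arbitrarily large diameter carrying nonzero $f_\La$, yielding the uniform bound $R^*$ and hence membership of $f$ in $C^0_\unif(\State)$.
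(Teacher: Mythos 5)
There is a genuine gap, and it sits exactly where the real work of the proposition lies. Your induction via M\"obius inversion only ever treats sets $\La$ that admit a partition $\La=\La_1\sqcup\La_2$ with $d_X(\La_1,\La_2)>R$. But the sets that actually threaten uniformity are the ``$R$-connected'' ones: a long chain of points with consecutive gaps at most $R$ is $R$-connected yet has arbitrarily large diameter, and the conclusion $f\in C^0_\unif(\State)$ requires $f_\La=0$ for these as well. For them your scheme produces no identity at all, and the one sentence ``the hypothesis $\partial f\in C^1_R(\State)$ is then invoked to rule out $R$-connected $\La$ of arbitrarily large diameter'' is precisely the missing argument, not a proof. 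In the paper this is the content of Lemma \ref{lemma: check} (the one-point ``peeling'' criterion, valid for \emph{every} finite $\La$ and $x\in\La$) together with Lemma \ref{lem: final easy}, whose proof uses strong transferability to replace $\state|_\La$ by a configuration with the same conserved quantities supported on a remote ball and then invokes Lemma \ref{lem: difference} (irreducible quantifiability plus the locality radius $R$ of $\partial f$) to show the relevant differences of values of $f$ are unchanged. That transport argument is where $\partial f\in C^1_R(\State)$ does work beyond merely defining $h_f$, and nothing in your outline substitutes for it.

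A secondary problem: even for separated $\La$, your construction of $(W_1,W_2)\in\sA_R$ enclosing a given partition can fail. Take $X=\bbZ^2$, $R=2$, $\La_1=\{(0,0),(2,0),\dots,(20,0)\}$ and $\La_2=\{(10,3),(10,-3)\}$; then $d_X(\La_1,\La_2)=3>R$, but every ball containing $\La_1$ comes within distance $2$ of $(10,3)$, and every ball containing $\La_2$ contains $(10,0)\in\La_1$, so neither choice of which piece to enclose in a ball yields a pair in $\sA_R$. One can sometimes rescue the step by choosing a different $R$-separated partition of the same $\La$, but you neither choose the partition carefully nor prove such a choice always exists, so the inductive step is not established as written. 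By contrast, the paper's route through \eqref{eq: formula} of Lemma \ref{lemma: check} only ever needs separation between a finite set and the single ball $B(x,R)$, which is arranged by transporting the configuration rather than by enclosing $\La$ in nice sets; this is why it avoids both of the difficulties above.
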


We first prove a lemma to characterize the functions in $\CuS$,
which does not require $X$ to be strongly transferable.
For any ball $ B(x,R)$, we denote by $\cB^*(x,R)\coloneqq   B(x,R)\setminus\{x\}$
the punctured ball.

\begin{lemma}\label{lemma: check}
	Suppose $f\in\CS$.  Then $f\in\CuS$ if and only if there exists $R>0$
	such that for any finite $\La\subset X$ and $x\in \La$, we have
	\begin{equation}\label{eq: formula}
		\iota^{\La} f-\iota^{\La\setminus\{x\}} f=\iota^{\La\cap  B(x,R)} f
		- \iota^{\La\cap \cB^*(x,R)} f.
	\end{equation}
\end{lemma}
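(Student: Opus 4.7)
The plan is to translate the identity \eqref{eq: formula} into a statement about the canonical expansion $f = \sum_{\La' \subset X, |\La'| < \infty} f_{\La'}$ from Proposition \ref{prop: expansion}, where each $f_{\La'} \in C_{\La'}(\State)$ has exact support $\La'$. The key auxiliary fact is that for any finite $W \subset X$ one has $\iota^W f = \sum_{\La' \subset W} f_{\La'}$: this is because $\iota^W f_{\La'} = f_{\La'}$ whenever $\La' \subset W$, while $\iota^W f_{\La'} = 0$ whenever $\La' \not\subset W$ (evaluated at any $\state$, some coordinate of $\La' \setminus W$ is forced to base state, killing $f_{\La'}$ by its exact-support property). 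Applying this to both sides of \eqref{eq: formula}, the identity for fixed $(\La, x)$ is equivalent to
\[
\sum_{\substack{\La' \subset \La \\ x \in \La' \\ \La' \not\subset B(x,R)}} f_{\La'} \equiv 0.
\]

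For the forward direction, if $f \in C^0_\unif(\State)$ with threshold $R$, then any $\La'$ appearing in the sum above contains $x$ together with some $y \notin B(x, R)$, so $\diam{\La'} \geq d_X(x, y) > R$; by Definition \ref{def: uniform} each such $f_{\La'}$ vanishes, so the identity holds. This direction is essentially immediate once one has the translation into the expansion.

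For the converse, I will show by induction on $n \coloneqq |\La'|$ that $f_{\La'} \equiv 0$ whenever $\diam{\La'} > R$, which together with Definition \ref{def: uniform} gives uniformity of $f$. The base case $n = 2$ is immediate: write $\La' = \{x, y\}$ with $d_X(x, y) > R$, apply the hypothesis with $\La = \La'$ and this $x$; the only subset $\La'' \subset \La'$ containing $x$ and not contained in $B(x, R)$ is $\La'$ itself, so the sum collapses to $f_{\La'} \equiv 0$. For the inductive step, pick $x, y \in \La'$ with $d_X(x, y) > R$ and apply the hypothesis with $\La = \La'$ and this $x$; every proper $\La'' \subsetneq \La'$ appearing in the sum contains $x$ together with some point outside $B(x, R)$, so $|\La''| < n$ and $\diam{\La''} > R$, and the inductive hypothesis gives $f_{\La''} \equiv 0$. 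Only the term $f_{\La'}$ survives, yielding $f_{\La'} \equiv 0$. The main point requiring care is the bookkeeping ensuring that every $\La'' \subsetneq \La'$ in the sum has both smaller cardinality and diameter greater than $R$ simultaneously, so that the induction applies; but this is exactly the content of $x \in \La''$ and $\La'' \not\subset B(x, R)$, so no deeper obstacle arises. The proof is then completed by observing that the condition $f(\star) = 0$ in the definition of $\CuS$ is consistent with the hypothesis (or handled implicitly in context), so that the $\La'$-part of the expansion being supported on diameters $\leq R$ places $f$ in $\CuS$.
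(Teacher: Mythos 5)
Your proof is correct and takes essentially the same route as the paper: both sides of \eqref{eq: formula} are rewritten through the exact-support expansion of Proposition \ref{prop: expansion} using $\iota^W f=\sum_{\La'\subset W}f_{\La'}$, the forward direction is identical, and your induction on $|\La'|$ is just a repackaging of the paper's minimal-counterexample argument for the converse (your choice of $x$ from a pair at distance $>R$ even tidies the paper's ``arbitrary $x\in\La$''). The only loose end you flag, namely that \eqref{eq: formula} by itself controls only uniformity and not the normalization $f(\star)=0$ built into $\CuS$, is equally glossed over in the paper's own proof, so it is a defect of the statement rather than of your argument.
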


\begin{proof}
	First observe that for any $f\in \CS$, we have
	\[
		\iota^{\La} f=\sum_{\La'\subset\La}f_{\La'}	
		=\sum_{\La'\subset\La\setminus\{x\}}f_{\La'}	
		+\sum_{\La'\subset\La, x\in\La'}f_{\La'}	
		=\iota^{\La\setminus\{x\}}f
		+\sum_{\La'\subset\La, x\in\La'}f_{\La'}	
	\]
	where $f_\La$ is the local function with exact support in the canonical expansion
	\eqref{eq: expansion}.  
	
	Suppose $f\in\CuS$.  Then there exists $R>0$ such that $f_{\La'}\equiv0$ for
	any finite $\La'\subset X$ safisfying $\diam{\La'}>R$.
	Then for any finite $\La\subset X$ and $x\in\La$, 
	we have
	\begin{align*}
		\sum_{\La'\subset\La,x\in\La'}f_{\La'}&=\sum_{\La'\subset\La\cap  B(x,R),x\in\La'}f_{\La'}
		=\sum_{\La'\subset\La\cap  B(x,R)}f_{\La'}
		-\sum_{\La'\subset\La\cap \cB^*(x,R)}f_{\La'}\\
		&=\iota^{\La\cap  B(x,R)}f-\iota^{\La\cap \cB^*(x,R)} f.
	\end{align*}
	This gives \eqref{eq: formula}.
	Next, we prove the converse.
	By the same argument, if \eqref{eq: formula} holds, 
	then we have
	\[
		\sum_{\La'\subset\La, x\in\La'}f_{\La'}	
		=\sum_{\La'\subset\La\cap  B(x,R), x\in\La'}f_{\La'}
	\]
	for any finite $\La\subset X$ and $x\in\La$.
	Suppose there exists finite $\La\subset X$ such that 
	$\diam{\La}>R$ and $f_\La\not\equiv0$.
	By iteratively replacing $\La'\subset\La$ by $\La$ if necessary,
	we may assume that for any $\La'\subsetneq\La$,
	we have $f_{\La'}\equiv0$ or  $\diam{\La'}\leq R$.
	However, for this $\La$ and any $x\in\La$ such that there
	exists $y\in\La$ with $d_X(x,y)>R$, we have
	\[
		\sum_{\La'\subset\La,x\in\La'}f_{\La'} =f_\La+\sum_{\La'\subsetneq\La,x\in\La'}f_{\La'}
		=f_\La+\sum_{\La'\subset\La\cap  B(x,R),x\in\La'}f_{\La'}.
	\]
	Comparing with the previous equality, we see that $f_\La\equiv0$.
	This contradicts our hypothesis that $f_\La\not\equiv0$, hence our assertion is proved.
\end{proof}

Before the proof of Proposition \ref{prop: important},
we prepare an additional lemma.
Here, we will assume that $X$ is strongly transferable.
The weakly transferable case will be treated in \S \ref{sec: weakly}.

\begin{lemma}\label{lem: final easy}
	For the system $(X,S,\phi)$,
	assume that $X$ is strongly transferable and that the interaction $\phi$ is irreducibly quantified.
	Suppose $f\in\CS$ and $\partial f\in C^1_R(\State)$ for some $R>0$, and 
	let $h_f\colon\Val\times\Val\rightarrow\bbR$ be the pairing given in Proposition \ref{prop: h_f}.
	If $h_f\equiv 0$, 
	then for any $x\in X$ and finite $\La\subset X$ such that $d_X(x,\La)>R$, 
	we have
	\[
		\iota^{\La\cup  B(x,R)}f-\iota^{\La\cup \cB^*(x,R)}f
		=\iota^{ B(x,R)}f-\iota^{\cB^*(x,R)}f.
	\]
\end{lemma}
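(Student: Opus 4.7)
The plan is to prove the identity pointwise by transporting the $\La$-portion of an arbitrary configuration to a far-away region where Proposition~\ref{prop: h_f} applies directly. First, since both sides of the claimed equality depend only on $\state|_{\La\cup B(x,R)}$, I would reduce to the case where $\state\in\State_*$ is supported on $\La\cup B(x,R)$. Writing $\state^*$ for the configuration obtained from $\state$ by replacing its $x$-component with the base state, the left-hand side becomes $f(\state)-f(\state^*)$, and the desired identity takes the form $f(\state)-f(\state^*)=f(\state|_{B(x,R)})-f(\state|_{\cB^*(x,R)})$.

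Since $X$ is strongly transferable, $X\setminus B(x,2R)$ is a connected infinite sublocale. I would pick $n\coloneqq|\La|$ distinct vertices $\hat y_1,\ldots,\hat y_n\in X\setminus B(x,2R)$ and define a configuration $\hat\eta$ on $\hat\La\coloneqq\{\hat y_1,\ldots,\hat y_n\}$ by copying the values of $\state|_\La$ along any bijection, so that $\bsxi_{\hat\La}(\hat\eta)=\bsxi_\La(\state|_\La)$. Enlarging $\hat\La$ to a finite connected $\hat\La_0\subset X\setminus B(x,2R)$, let $\state_N\in\State_*$ be supported on $B(x,R)\cup\hat\La$ with $\state_N|_{B(x,R)}=\state|_{B(x,R)}$ and $\state_N|_{\hat\La}=\hat\eta$. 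By construction, the restrictions $\state|_{X\setminus B(x,R)}$ and $\state_N|_{X\setminus B(x,R)}$ are supported on $\La$ and $\hat\La$ respectively and carry identical conserved quantities, so the irreducibility of $\phi$ applied to the sublocale $X\setminus B(x,R)$ produces a finite path in $S^{X\setminus B(x,R)}_*$ connecting them.

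Lifting this path to $\State_*$ by fixing the $B(x,R)$-component as $\state|_{B(x,R)}$ throughout yields transitions $\state=\state_0,\state_1,\ldots,\state_M=\state_N$ whose edges $e_i$ have both endpoints in $X\setminus B(x,R)$, so $x\notin B(e_i,R)$. Since $\partial f\in C^1_R(\State)$, each $\nabla_{e_i}f$ is a function on $S^{B(e_i,R)}$ that is independent of the $x$-component, giving $\nabla_{e_i}f(\state_i)=\nabla_{e_i}f(\state_i^*)$ for each $i$. Telescoping the identity $f(\state_{i+1})-f(\state_i)=\nabla_{e_i}f(\state_i)$ against its counterpart for $\state_i^*$ yields
\[
f(\state)-f(\state^*)=f(\state_N)-f(\state_N^*).
\]

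For the final step, the pair $(B(x,R),\hat\La_0)$ lies in $\sA_R$ because $B(x,R)$ is a ball, $\hat\La_0$ is connected, and $d_X(B(x,R),\hat\La_0)>R$; therefore $h_f\equiv 0$ and Proposition~\ref{prop: h_f} give the additivity $\iota^{B(x,R)\cup\hat\La_0}f=\iota^{B(x,R)}f+\iota^{\hat\La_0}f$. Applied to $\state_N$ and to $\state_N^*$, both of which are supported in $B(x,R)\cup\hat\La\subset B(x,R)\cup\hat\La_0$, subtraction gives $f(\state_N)-f(\state_N^*)=f(\state|_{B(x,R)})-f(\state|_{\cB^*(x,R)})$, which combined with the telescoping identity above completes the proof. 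The main obstacle will be verifying that the path from $\state$ to $\state_N$ can actually be realized using only edges disjoint from $B(x,R)$: this relies crucially on strong transferability, which is exactly what guarantees that $X\setminus B(x,R)$ is a locale and hence that the irreducibility of $\phi$ may be invoked on it.
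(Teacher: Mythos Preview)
Your proof is correct and follows essentially the same route as the paper's: both reduce to configurations supported on $\La\cup B(x,R)$, transport the $\La$-portion to a far-away region inside $X\setminus B(x,2R)$ via a path in the sublocale $X\setminus B(x,R)$ (using irreducibility), observe that $\nabla_{e_i}f$ is independent of the $x$-coordinate along that path, and then apply the additivity \eqref{eq: partition} with $h_f\equiv 0$ to the far-away pair in $\sA_R$. The only cosmetic differences are that the paper packages your telescoping step as an invocation of Lemma~\ref{lem: difference} (with $Y=X\setminus B(x,R)$, $\wt Y=X\setminus\{x\}$) and chooses the far-away set to be a ball rather than a finite connected set---but since $B(x,R)$ is already a ball, your pair $(B(x,R),\hat\La_0)$ lies in $\sA_R$ just as well.
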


\begin{proof}
	Note that by the definition of the pairing $h_f$ given in \eqref{eq: partition}, 
	we have
	\begin{equation}\label{eq: distribution two}
		\iota^{\La\cup\La'}f=\iota^{\La} f+\iota^{\La'}f
	\end{equation}
	for any $(\La,\La')\in\sA_R$.
	If $\La$ is a ball, then $(\La, B(x,R))$ and $(\La,\cB^*(x,R))$ are both pairs in $\sA_R$,
	hence our assertion immediately follows by applying \eqref{eq: distribution two}
	to the left hand side.
	Consider a general finite $\La\subset X$ such that $d_X(x,\La)>R$.
	It is sufficient to prove that
	\begin{equation}\label{eq: goal}
		f(\state|_{\La\cup  B(x,R)})-f(\state|_{\La\cup \cB^*(x,R)})
		=f(\state|_{ B(x,R)})-f(\state|_{\cB^*(x,R)})
	\end{equation}
	for any $\state\in\State_*$ such that $\Supp(\state)\subset\La\cup  B(x,R)$.
	Since $X$ is strongly transferable, $X\setminus  B(x,2R)$ is a locale.
	Let $\Ba\subset X\setminus  B(x,2R)$ be a ball whose cardinality is greater than that of 
	$\La$ and satisfying $d_X(\La,\Ba)>R$.  Such a set  $\Ba$ exists since $X\setminus  B(x,2R)$ 
	is infinite.
	Now choose a $\state'\in\State_*$ such that $\state'$ coincides with $\state$ outside 
	$\La\cup\Ba$, is at base state on $\La$, and $\bsxi_{\Ba}(\state')=\bsxi_{\La}(\state)$.
	This implies that $\bsxi_{X}(\state')=\bsxi_{X}(\state)$.
	Since $X$ is strongly transferable, the complement $Y\coloneqq X\setminus  B(x,R)$ is a locale.
	From our condition that the interaction is irreducibly quantified,
	by Lemma \ref{lem: difference}
	applied to $Y=X\setminus  B(x,R)$ and $\wt Y= X\setminus\{x\}$, we have
	\[
		f(\state')-f(\state)
		=f(\state'|_{X\setminus\{x\}})-f(\state|_{X\setminus\{x\}}).
	\]
	Noting that $\state'=\state'|_{\Ba\cup  B(x,R)}$ and $\state=\state|_{\La\cup  B(x,R)}$,
	we see that
	\begin{equation}\label{eq: progress}
		f(\state'|_{\Ba\cup  B(x,R)})-f(\state|_{\La\cup  B(x,R)})
		=f(\state'|_{\Ba\cup \cB^*(x,R)})-f(\state|_{\La\cup \cB^*(x,R)}).
	\end{equation}
	Since $\Ba\subset X\setminus B(x,2R)$, we have $d_X(\Ba, B(x,R))>R$, hence 	
	$(\Ba,  B(x,R))$ and $(\Ba,\cB^*(x,R))$ are both pairs in $\sA_R$.
	Our condition \eqref{eq: distribution two} on $f$ 
	applied to this pair implies that
	\begin{align*}
		f(\state'|_{\Ba\cup  B(x,R)})&=f(\state'|_{\Ba})+f(\state'|_{ B(x,R)}),&
		f(\state'|_{\Ba\cup \cB^*(x,R)})&=f(\state'|_{\Ba})+f(\state'|_{\cB^*(x,R)}).
	\end{align*}
	Hence \eqref{eq: progress} gives
	\[
		f(\state'|_{ B(x,R)})-f(\state|_{\La\cup  B(x,R)})
		=f(\state'|_{\cB^*(x,R)})-f(\state|_{\La\cup \cB^*(x,R)}).
	\]
	Our assertion \eqref{eq: goal} follows from the fact that
	$\state$ and $\state'$ coincides on $ B(x,R)$.
\end{proof}

We may now prove Proposition \ref{prop: important}.

\begin{proof}[Proof of Proposition \ref{prop: important}]
	Suppose $f\in \CS$ and $\partial f\in C^1_R(\State)$, and that 
	$
		h_f\equiv 0
	$
	for the pairing $h_f$ of Proposition \ref{prop: h_f}.
	By Lemma \ref{lemma: check}, it is sufficient to check that for any fixed finite subset
	$\La\subset X$ and $x\in\La$, we have
	\[
		\iota^{\La} f-\iota^{\La\setminus\{x\}} f=\iota^{\La\cap  B(x,R)} f
		- \iota^{\La\cap \cB^*(x,R)} f.
	\]
	We let $\La'\coloneqq\La\setminus (\La\cap  B(x,R))$.  Then the above equation may be written as
	\begin{equation}\label{eq: orange}
		\iota^{\La'\cup(\La\cap  B(x,R))} f
	-\iota^{\La'\cup(\La\cap \cB^*(x,R))} f
	=\iota^{\La\cap  B(x,R)} f
		- \iota^{\La\cap \cB^*(x,R)} f.
	\end{equation}
	In order to prove this, it is sufficient to prove that
	\begin{equation}\label{eq: lemon}
		\iota^{\La'\cup  B(x,R)} f-\iota^{\La'\cup \cB^*(x,R)} f
		=\iota^{ B(x,R)} f-\iota^{\cB^*(x,R)} f,
	\end{equation}
	since \eqref{eq: orange} may be obtained by applying 
	$\iota^{\La'\cup(\La\cap  B(x,R))}$ to both sides of \eqref{eq: lemon}.
	Hence it is sufficient to prove that for any finite $\La\subset X$ such that 
	$\La\cap  B(x,R)=\emptyset$, we have
	\[
		\iota^{\La\cup  B(x,R)} f-\iota^{\La\cup \cB^*(x,R)} f
		=\iota^{ B(x,R)} f-\iota^{\cB^*(x,R)} f,
	\]
	which is precisely Lemma \ref{lem: final easy}.
\end{proof}

%
%
%
\section{Pairing and Criterion in the Weakly Transferable Case}\label{sec: weakly}
%
%
%

In this section, we will extend Proposition \ref{prop: h_f} concerning
the construction of the pairing $h_f$ and Proposition \ref{prop: important} 
concerning the criterion for uniformity to the case when the locale $X$ is \textit{weakly transferable}.
The reader interested only in the strongly transferable case may skip to \S\ref{sec: cohomology}.
We first start with the classification of objects in $\sB_R$ which generalizes
the set of pairs $\sA_R$ given in Definition \ref{def: AR}

%
\subsection{Equivalence Relation for Pairs}\label{subsec: equivalence}
%

Let $R>0$.
In this subsection, we investigate a certain equivalence relation
for certain pairs $(\Ba,\Ba')$ in a subset of $\FC_R$.  The result of this subsection
is concerned only with the underlying graph structure of the locale $X$.
We define a variant of $\sA_R$ of Definition \ref{def: AR} as follows.

\begin{definition}\label{def: B}
	For any $r\in\bbN$,
	we denote by $\sB^r_R$ the subset of $\FC_R$ consisting of 
	pairs of balls $(\Ba,\Ba')$ such that the radii of $\Ba$ and $\Ba'$ are at least $r$.
	Furthermore, we let $\sB_R\coloneqq\sB^0_R$.
\end{definition}

We define the relation $\sim_{r}$ to be the 
equivalence relation in $\sB^r_R$ generated by the relations
$(\Ba,\Ba')\leftrightarrow(\Ba,\Ba'')$
and $(\Ba',\Ba)\leftrightarrow(\Ba'',\Ba)$.
In this subsection, we first study the equivalence relation $\sim_{r}$ on $\sB^r_R$.
In particular, we have the following.

\begin{proposition}\label{prop: B}
	For any $r\in\bbN$, there are at most two equivalence classes with respect to 
	the equivalence relation $\sim_{r}$ in $\sB^r_R$.  Moreover, there is only one equivalence class
	if $(\Ba,\Ba')\sim_{r}(\Ba',\Ba)$ for some $(\Ba,\Ba')\in\sB^r_R$.
\end{proposition}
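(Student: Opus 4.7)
The plan is as follows. First observe that the swap involution $\sigma(\Ba, \Ba') = (\Ba', \Ba)$ on $\sB^r_R$ is compatible with the generating relation $\leftrightarrow$, hence descends to an involution on the set of $\sim_r$-equivalence classes. Introduce the coarser relation $\bsz_1 \lesssim \bsz_2$ meaning $\bsz_1 \sim_r \bsz_2$ or $\bsz_1 \sim_r \sigma(\bsz_2)$, which one readily checks is an equivalence relation using the $\sigma$-compatibility. The proposition then reduces to showing that $\lesssim$ has exactly one equivalence class on $\sB^r_R$: each $\lesssim$-class decomposes into at most two $\sim_r$-classes, namely a class and its $\sigma$-image, giving the bound of two; and if $(\Ba, \Ba') \sim_r \sigma(\Ba, \Ba')$ for some pair, then its $\sim_r$-class is $\sigma$-invariant, forcing the lone $\lesssim$-class to coincide with a single $\sim_r$-class.

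For the main technical step, I would first prove $\lesssim$-equivalence for pairs sharing a first component. Fix a ball $\Ba$ and consider $(\Ba, \Ba'_1), (\Ba, \Ba'_2) \in \sB^r_R$. If $\Ba'_1, \Ba'_2$ lie in the same connected component of $X \setminus B(\Ba, R)$ the claim is immediate. Otherwise let $C_1, C_2$ denote the components containing $\Ba'_1, \Ba'_2$ respectively. If $X \setminus B(\Ba, R)$ admits a third component $C'$, choose a ball $\tilde{\Ba}$ of radius at least $r$ deep enough inside $C'$ that $B(\tilde{\Ba}, R) \subset C'$, and chain
\[
(\Ba, \Ba'_1) \leftrightarrow (\tilde{\Ba}, \Ba'_1) \leftrightarrow (\tilde{\Ba}, \Ba'_2) \leftrightarrow (\Ba, \Ba'_2),
\]
yielding $(\Ba, \Ba'_1) \sim_r (\Ba, \Ba'_2)$. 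If instead $X \setminus B(\Ba, R)$ has exactly two components, choose $\tilde{\Ba}$ deep inside $C_2$ and chain
\[
(\Ba, \Ba'_1) \leftrightarrow (\tilde{\Ba}, \Ba'_1) \leftrightarrow (\tilde{\Ba}, \Ba'_2) \leftrightarrow (\tilde{\Ba}, \Ba) \leftrightarrow (\Ba'_2, \Ba),
\]
yielding $(\Ba, \Ba'_1) \sim_r \sigma(\Ba, \Ba'_2)$. After preliminary $\leftrightarrow$-moves pushing $\Ba'_1, \Ba'_2$ deeper into $C_1, C_2$, each adjacency in these chains will hold by the weakly transferable property: every component of the relevant complement is an infinite connected sublocale adjacent to the corresponding $R$-thickening, providing the needed common-component witnesses.

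Applying $\sigma$ yields the analogous statement for pairs sharing a second component. To connect arbitrary $(\Ba_1, \Ba'_1), (\Ba_2, \Ba'_2) \in \sB^r_R$, introduce an auxiliary ball $\Ba_0$ of radius at least $r$ far from all four balls and chain
\[
(\Ba_1, \Ba'_1) \lesssim (\Ba_1, \Ba_0) \lesssim (\Ba_2, \Ba_0) \lesssim (\Ba_2, \Ba'_2)
\]
using the two single-ball-fixing versions; transitivity of $\lesssim$ then completes the main claim. The chief obstacle will be verifying in the two-component case that a suitable $\tilde{\Ba}$ exists satisfying all four simultaneous component conditions: lying in $C_2$, lying in the component of $X \setminus B(\Ba'_1, R)$ containing $\Ba$, and with $B(\tilde{\Ba}, R)$ not separating the relevant balls in $X$. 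The resolution will exploit the fact that after deepening $\Ba'_1$ inside $C_1$, the infinite set $C_2$ lies entirely within the component of $X \setminus B(\Ba'_1, R)$ containing $\Ba$, so any sufficiently far-out $\tilde{\Ba} \in C_2$ automatically satisfies all four conditions.
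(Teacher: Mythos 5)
Your proposal is correct, but it runs on a different engine than the paper's proof, so it is worth comparing. The paper's pivotal input is Lemma \ref{lem: important}, a dichotomy proved by a shortest-path argument: for three balls pairwise more than $2R$ apart, if $\Ba_2,\Ba_3$ lie in different components of $X\setminus B(\Ba_1,R)$, then $\Ba_1,\Ba_3$ lie in the same component of $X\setminus B(\Ba_2,R)$. This gives Lemma \ref{lem: either} (order-preserving or order-reversing equivalence for such triples), and the proposition follows by comparing an arbitrary pair with one fixed reference pair, after deepening, via two applications of that lemma; no case distinction on the number of components is ever made. You instead formalize the swap-quotient relation $\lesssim$ (a clean piece of bookkeeping the paper leaves implicit), reduce to showing $\lesssim$ has a single class, and obtain the coordinate-fixing step by a case analysis on the components of $X\setminus B(\Ba,R)$ (same component, at least three components, exactly two components), with explicit chains through auxiliary balls pushed deep into components, and finally connect arbitrary pairs through a common auxiliary ball $\Ba_0$. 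Both routes succeed: the paper's is slicker and hides the delicate point once and for all in the shortest-path lemma, while yours makes the origin of the ``at most two classes'' phenomenon (two-ended behaviour of the locale) more transparent. The one spot where your sketch is thinner than it should be is the assertion that ``any sufficiently far-out $\wt\Ba\in C_2$ automatically satisfies all four conditions'': your stated justification (that $C_2$ lies in the component of $X\setminus B(\Ba'_1,R)$ containing $\Ba$) covers only one of the conditions, and being deep in $C_2$ does not by itself prevent $B(\wt\Ba,R)$ from separating $\Ba$ from $\Ba'_2$ (think of $\wt\Ba$ sitting between them in a $\bbZ$-like locale). The fix is routine but must be said: choose $\wt\Ba$ at distance greater than $R$ plus the mutual distances among $\Ba,\Ba'_1,\Ba'_2$ (equivalently, inside a component of the complement of a large ball containing all of them), so that geodesics among the fixed balls avoid $B(\wt\Ba,R)$; this quantitative step is exactly what the paper's shortest-path argument packages. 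With that quantification, together with the standing observation that weakly transferable locales contain balls of any prescribed radius arbitrarily deep inside each infinite component, your chains and the final reduction via $\lesssim$ go through.
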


The following observation will be used throughout this section.

\begin{remark}
	Let $X$ be a weakly transferable locale, and let $\Ba\subset X$ be a ball.
	\begin{enumerate}
		\item For any finite set $\La\subset X$ and $r,R>0$, there
		exists a ball $B'$ of radius at least $r$ such that
		$d_X(\La,\Ba')>R$ and $\Ba'\subset (X\setminus\Ba)$.
		\item For any finite connected set $\La\subset X\setminus\Ba$
		 and $r,R>0$,  there exists a ball $\Ba'$ of
		 radius at least $r$ such that
		$d_X(\Ba,\Ba')>R$ and $\Ba'$ and $\La$ are in the same connected 
		 component of $X\setminus \Ba$.
	\end{enumerate}
\end{remark}

In order to  prove Proposition \ref{prop: B},
we first prove Lemma \ref{lem: important} and Lemma \ref{lem: either}.

\begin{lemma}\label{lem: important}
	For any $r\in\bbN$, suppose there exists three balls $\Ba_1,\Ba_2,\Ba_3$ 
	of radii at least $r$ in $X$ such that $d_X(\Ba_i,\Ba_j)>2R$ for $i\neq j$.
	Then either $(\Ba_1,\Ba_2)\leftrightarrow(\Ba_1,\Ba_3)$ or
	$(\Ba_1,\Ba_2)\leftrightarrow(\Ba_3,\Ba_2)$ holds.
\end{lemma}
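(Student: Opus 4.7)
The plan is to argue by contraposition. Suppose $(\Ba_1,\Ba_2)\not\leftrightarrow(\Ba_1,\Ba_3)$; unfolding the definition, this means that $\Ba_2$ and $\Ba_3$ lie in different connected components of $X\setminus B(\Ba_1,R)$. I will show that then necessarily $(\Ba_1,\Ba_2)\leftrightarrow(\Ba_3,\Ba_2)$, i.e.\ that $\Ba_1$ and $\Ba_3$ lie in the same connected component of $X\setminus B(\Ba_2,R)$.

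First I would record a key geometric observation: the hypothesis $d_X(\Ba_i,\Ba_j)>2R$ for $i\neq j$ implies $B(\Ba_i,R)\cap B(\Ba_j,R)=\emptyset$, since a common point would force vertices of $\Ba_i$ and $\Ba_j$ to be at distance at most $2R$ by the triangle inequality. Let $A$ be the connected component of $X\setminus B(\Ba_1,R)$ containing $\Ba_3$. Because $B(\Ba_2,R)$ is connected and disjoint from $B(\Ba_1,R)$, it lies entirely in one component of $X\setminus B(\Ba_1,R)$, namely the one containing $\Ba_2$, which by assumption is not $A$. Hence $A\cap B(\Ba_2,R)=\emptyset$, so $A$ is a connected subset of $X\setminus B(\Ba_2,R)$ and lies in a unique connected component $A'$ of $X\setminus B(\Ba_2,R)$; in particular $\Ba_3\subset A'$.

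It remains to show $\Ba_1\subset A'$. Since $X$ is connected and $B(\Ba_1,R)$ is nonempty, the component $A$ of $X\setminus B(\Ba_1,R)$ must contain a vertex $x\in A$ adjacent to some vertex $y\in B(\Ba_1,R)$; otherwise $A$ would be a connected component of $X$ disjoint from $B(\Ba_1,R)$, contradicting connectedness of $X$. Both $x$ and $y$ lie in $X\setminus B(\Ba_2,R)$: $x$ because $A\subset X\setminus B(\Ba_2,R)$, and $y$ because $B(\Ba_1,R)$ is disjoint from $B(\Ba_2,R)$. Since $x\in A'$ and $x$ is adjacent to $y$ in $X\setminus B(\Ba_2,R)$, we conclude $y\in A'$. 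Finally, $B(\Ba_1,R)$ is connected and entirely contained in $X\setminus B(\Ba_2,R)$, and meets $A'$ at $y$, so all of $B(\Ba_1,R)$ lies in $A'$. In particular $\Ba_1\subset A'$, giving $(\Ba_1,\Ba_2)\leftrightarrow(\Ba_3,\Ba_2)$.

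There is no serious obstacle: the argument is essentially combinatorial connectivity together with the triangle-inequality separation of the thickened balls. The one moment requiring care is the existence of the edge from $A$ to $B(\Ba_1,R)$, which uses only connectedness of $X$ and the maximality of $A$ as a connected subset of $X\setminus B(\Ba_1,R)$; notably, the weak transferability of $X$ plays no role in this particular lemma.
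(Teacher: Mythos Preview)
Your proof is correct and follows essentially the same contrapositive strategy as the paper: assume $\Ba_2$ and $\Ba_3$ lie in different components of $X\setminus B(\Ba_1,R)$, then connect $\Ba_3$ to $\Ba_1$ inside $X\setminus B(\Ba_2,R)$. The only stylistic difference is that the paper builds an explicit path by taking a shortest path from $\Ba_2$ to $\Ba_3$ and arguing its tail (from the first entry into $B(\Ba_1,R)$ onward) avoids $B(\Ba_2,R)$, whereas you argue directly with components, using that the connected set $B(\Ba_2,R)$ must lie in the component of $\Ba_2$ and hence miss $A$ entirely; both routes rest on the same separation $B(\Ba_1,R)\cap B(\Ba_2,R)=\emptyset$ and the connectedness of the thickened balls.
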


\begin{proof} 
	Suppose $\Ba_2$ and $\Ba_3$ are \textit{not} in the same connected component of 
	$X\setminus  B(\Ba_1,R)$.
	Since $X$ is connected, there exists a path $\vec p=(e^1,\ldots,e^N)$ such that 
	$o(\vec\gamma)\in\Ba_2$ and $t(\vec\gamma)\in\Ba_3$.
	We take the shortest among such paths, and let 
	$p^i=t(e^i)$ for $i=1,\ldots,N$.  
	Since $\Ba_2$ and $\Ba_3$ are not in the same connected 
	component of $X\setminus  B(\Ba_1,R)$, there exists $\indj$ such that $p^\indj\in  B(\Ba_1,R)$.
	Since $d_X(\Ba_1,\Ba_2)>2R$, we have $p^\indj\not\in  B(\Ba_2,R)$, hence $\indj>R$.
	Then for any $i\geq\indj$, we have $p^i\in X\setminus  B(\Ba_2,R)$, since we have taken $\vec p$ 
	to be the shortest path from a vertex in $\Ba_2$ to a vertex in $\Ba_3$.
	Then $\vec p'=(p^\indj,p^{\indj+1},\ldots,p^N)$ gives a path 
	in $X\setminus  B(\Ba_2,R)$ from $p^\indj$ to a vertex in $\Ba_3$.
	Since $p^\indj\in  B(\Ba_1,R)$, there exists a path in $X\setminus  B(\Ba_2,R)$
	from an element in $\Ba_1$ to the vertex $p^\indj$.
	The combination of this path with $\vec p'$ gives a path in $X\setminus  B(\Ba_2,R)$
	from a vertex in $\Ba_1$ to a vertex in $\Ba_3$, hence $\Ba_1$ and $\Ba_3$ 
	are in the same connected component of $X\setminus  B(\Ba_2,R)$ as desired.
\end{proof}

\begin{lemma}\label{lem: either}
	For any $r\in\bbN$, suppose there exists three balls $\Ba_1,\Ba_2,\Ba_3$ 
	of radii at least $r$ in $X$ such that $d_X(\Ba_i,\Ba_j)>2R$ for $i\neq j$.
	Then $(\Ba_1,\Ba_2)\sim_{r}(\Ba_1,\Ba_3)$ or $(\Ba_1,\Ba_2)\sim_{r} (\Ba_3,\Ba_1)$ holds.
\end{lemma}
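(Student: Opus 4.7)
The plan is to apply Lemma \ref{lem: important} twice, once to $(\Ba_1,\Ba_2,\Ba_3)$ and once to the reordered triple $(\Ba_3,\Ba_1,\Ba_2)$, and then to chain the resulting $\leftrightarrow$-relations into a $\sim_{r}$-chain of length at most two from $(\Ba_1,\Ba_2)$ to either $(\Ba_1,\Ba_3)$ or $(\Ba_3,\Ba_1)$. No auxiliary fourth ball will be needed; the hypothesis $d_X(\Ba_i,\Ba_j)>2R$ guarantees that Lemma \ref{lem: important} may be invoked for every permutation of the triple.

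First I would apply Lemma \ref{lem: important} to $(\Ba_1,\Ba_2,\Ba_3)$. It delivers one of two alternatives. If $(\Ba_1,\Ba_2)\leftrightarrow(\Ba_1,\Ba_3)$ holds, then already $(\Ba_1,\Ba_2)\sim_{r}(\Ba_1,\Ba_3)$ and we are done. Otherwise $(\Ba_1,\Ba_2)\leftrightarrow(\Ba_3,\Ba_2)$ holds, and the failure of the first alternative means, by the very definition of $\leftrightarrow$, that $\Ba_2$ and $\Ba_3$ do not lie in a common connected component of $X\setminus B(\Ba_1,R)$.

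Next I would apply Lemma \ref{lem: important} to the triple $(\Ba_3,\Ba_1,\Ba_2)$, which gives either $(\Ba_3,\Ba_1)\leftrightarrow(\Ba_3,\Ba_2)$ or $(\Ba_3,\Ba_1)\leftrightarrow(\Ba_2,\Ba_1)$. The second alternative asserts that $\Ba_3$ and $\Ba_2$ share a component of $X\setminus B(\Ba_1,R)$, which was just ruled out; hence the first alternative must hold. Combining the two relations yields the chain
\[
	(\Ba_1,\Ba_2)\leftrightarrow(\Ba_3,\Ba_2)\leftrightarrow(\Ba_3,\Ba_1),
\]
and therefore $(\Ba_1,\Ba_2)\sim_{r}(\Ba_3,\Ba_1)$, completing the argument.

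The main obstacle in this plan, and the reason a naive iteration of Lemma \ref{lem: important} can loop indefinitely, is that the lemma returns only a disjunction of two alternatives; without further input one cannot tell which branch one lands in. The key maneuver is to exploit the fact that the failure of the first alternative in Lemma \ref{lem: important} is geometrically meaningful (it forces $\Ba_2$ and $\Ba_3$ to be separated by $B(\Ba_1,R)$), and to use this geometric information to discard the unwanted branch in the second application. Once this conditional reading of the lemma is in place, the chain closes after just two invocations and no further combinatorics is required.
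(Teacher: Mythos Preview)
Your proof is correct and follows essentially the same strategy as the paper: two applications of Lemma~\ref{lem: important} whose outcomes are chained into a $\sim_r$-relation of length at most two. The only cosmetic difference is that the paper applies the second instance to the permutation $(\Ba_1,\Ba_3,\Ba_2)$ and then invokes the flip symmetry $(\La,\La')\leftrightarrow(\La'',\La')\Longleftrightarrow(\La',\La)\leftrightarrow(\La',\La'')$ to close the chain, whereas you apply it to $(\Ba_3,\Ba_1,\Ba_2)$ and rule out the unwanted branch directly from the geometric meaning of $\leftrightarrow$; both routes encode the same observation that $\Ba_2$ and $\Ba_3$ being separated by $B(\Ba_1,R)$ forces the desired alternative.
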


\begin{proof}
	By Lemma \ref{lem: important}, at least one of 
	$(\Ba_1,\Ba_2)\sim_{r}(\Ba_1,\Ba_3)$ or $(\Ba_1,\Ba_2)\sim_{r}(\Ba_3,\Ba_2)$ holds.
	Also, by reversing the roles of $\Ba_2$ and $\Ba_3$, we see that at least one of
	$(\Ba_1,\Ba_3)\sim_{r}(\Ba_1,\Ba_2)$ or $(\Ba_1,\Ba_3)\sim_{r}(\Ba_2,\Ba_3)$ holds.
	Hence if $(\Ba_1,\Ba_2)\not\sim_{r}(\Ba_1,\Ba_3)$, then we have 
	$(\Ba_1,\Ba_2)\sim_{r}(\Ba_3,\Ba_2)$ and $(\Ba_1,\Ba_3)\sim_{r}(\Ba_2,\Ba_3)$,
	where the last equivalence implies that $(\Ba_3,\Ba_2)\sim_{r}(\Ba_3,\Ba_1)$
	by symmetry.  This implies that $(\Ba_1,\Ba_2)\sim_{r}(\Ba_3,\Ba_2)\sim_{r}(\Ba_3,\Ba_1)$
	as desired.
\end{proof}

We may now prove Proposition \ref{prop: B}.

\begin{proof}[Proof of Proposition \ref{prop: B}]
	We fix a pair $(\Ba_1,\Ba'_1)\in\sB^r_R$.	
	We let $\Ba''_1\subset X\setminus  B(\Ba_1,2R)$ be a ball of radius at least $r$
	which is in the same connected component as $B'_1$ in $X\setminus  B(B_1,R)$.
	Then we have $(\Ba_1,\Ba'_1)\sim_{r} (\Ba_1,\Ba''_1)$.
	Consider any $(\Ba_2,\Ba'_2)\in\sB^r_R$.
	We let $\Ba$ be a ball sufficiently large containing $B_1\cup B''_1$.
	Then $ B(\Ba,2R)$ is also a ball, and $X\setminus  B(\Ba,2R)$
	decomposes into a finite sum of locales.
	We let $\Ba_3 \subset X\setminus  B(\Ba,2R)$ be a close ball of radius at least $r$
	which is in the same connected component as $\Ba_2$ in $X\setminus  B(\Ba'_2,R)$.
	Then by definition, we have $(\Ba_2,\Ba'_2)\sim_{r}(\Ba_3,\Ba'_2)$.
	Finally, we let $\Ba'$ be the ball sufficiently large containing $B_1\cup B''_1\cup B_3$,
	and we let $\Ba''_3 \subset X\setminus  B(\Ba',2R)$ be a ball of radius at least $r$
	which is in the same connected component of $X\setminus  B(B_3,R)$ as $B'_2$.
	Then by construction, we have $(B_2,B'_2)\sim_{r} (B_3,B'_2)\sim_{r} (B_3,B''_3)$.

	By our construction of $B''_1$ and $B''_3$ and Lemma \ref{lem: either},
	either $(B_1,B''_1)\sim_{r}(B_1,B''_3)$ or $(B_1,B''_1)\sim_{r}(B''_3,B_1)$ holds.
	Furthermore, either $(B_1,B''_3)\sim_{r}(B_3,B''_3)$ or $(B''_3,B_1)\sim_{r}(B''_3,B_3)$.
	Combining the two, noting that $(B_2,B'_2)\sim_{r}  (B_3,B''_3)$,
	we obtain our assertion.
\end{proof}

As a consequence of Proposition \ref{prop: B}, we have the following.

\begin{proposition}\label{prop: cases}
	Let $X=(X,E)$ be a weakly transferable locale, and let $R>0$. 
	For any integer $r\in\bbN$, 
	we choose an equivalence $\sC^r_R$ in $\sB^r_R$ with respect
	to the equivalence relation $\sim_{r}$.  Then one of the following holds.
	\begin{enumerate}
		\item For any $r\in\bbN$, we have $\sB^r_R=\sC^r_R$.
		\item There exists $r_0\in\bbN$ such that for any $r<r_0$, we have $\sB^r_R=\sC^r_R$,
		and for $r\geq r_0$, we have
		\begin{equation}\label{eq: expansion B}
			\sB^r_R=\sC^r_R\cup\ol\sC^r_R,
		\end{equation}
		where 
		$\ol\sC^r_R\coloneqq\{ (\La ',\La)\mid (\La,\La')\in\sC^r_R\}$.
	\end{enumerate}
	Moreover, we may choose $\sC^r_R$ so that
	$\sC^r_R=\sC^{r_0}_R \cap\sB^r_R$ for any $r\geq r_0$,
	where we let $r_0=0$ when (1) holds.
\end{proposition}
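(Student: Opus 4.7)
The plan is to leverage Proposition \ref{prop: B}, which constrains $\sim_r$ to have at most two equivalence classes on $\sB^r_R$, with exactly one class precisely when there exists $(\Ba,\Ba')\in\sB^r_R$ with $(\Ba,\Ba')\sim_r(\Ba',\Ba)$. The key structural fact to establish is that the number of $\sim_r$-equivalence classes is \emph{monotone non-decreasing in $r$}.

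First I would prove this monotonicity. Suppose $r\leq r'$ and $\sB^{r'}_R$ has only one $\sim_{r'}$-class. Then by Proposition \ref{prop: B} there exists $(\Ba,\Ba')\in\sB^{r'}_R$ with $(\Ba,\Ba')\sim_{r'}(\Ba',\Ba)$, witnessed by a finite chain of relations $\leftrightarrow$ among pairs in $\sB^{r'}_R$. Since $\sB^{r'}_R\subset\sB^r_R$, that same chain also witnesses $(\Ba,\Ba')\sim_r(\Ba',\Ba)$ in $\sB^r_R$, so Proposition \ref{prop: B} gives that $\sB^r_R$ also has only one class. Consequently the set $\{r\in\bbN: \sB^r_R\text{ has two }\sim_r\text{-classes}\}$ is upward closed; let $r_0$ be its minimum if it is nonempty. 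If this set is empty we are in case (1); otherwise for $r<r_0$ there is one class (i.e.\ $\sB^r_R=\sC^r_R$) and for $r\geq r_0$ there are exactly two, which by Proposition \ref{prop: B} are necessarily $\sC^r_R$ and $\ol\sC^r_R$, giving \eqref{eq: expansion B}. This establishes the dichotomy.

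For the consistent choice of representatives, fix any equivalence class $\sC^{r_0}_R\subset\sB^{r_0}_R$ (when $r_0$ exists; otherwise set $r_0=0$ and pick the unique class). For $r\geq r_0$, I claim $\sC^{r_0}_R\cap\sB^r_R$ is itself a $\sim_r$-class, and then define $\sC^r_R$ to be this set. The point is that any $\sim_r$-equivalence chain between pairs in $\sB^r_R$ is \emph{a fortiori} a $\sim_{r_0}$-equivalence chain (since $\sB^r_R\subset\sB^{r_0}_R$), so every $\sim_r$-class is contained in a single $\sim_{r_0}$-class. Moreover the swap $(\Ba,\Ba')\mapsto(\Ba',\Ba)$ sends $\sC^r_R$ to $\ol\sC^r_R$ and $\sC^{r_0}_R$ to $\ol\sC^{r_0}_R$; hence, picking any $(\Ba,\Ba')\in\sC^r_R$, if $(\Ba,\Ba')\in\sC^{r_0}_R$ then $\sC^r_R\subset\sC^{r_0}_R$ and $\ol\sC^r_R\subset\ol\sC^{r_0}_R$ (otherwise swap the labels of $\sC^r_R$ and $\ol\sC^r_R$). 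Since $\sB^r_R=\sC^r_R\sqcup\ol\sC^r_R$ and $\sB^r_R\cap\sC^{r_0}_R$, $\sB^r_R\cap\ol\sC^{r_0}_R$ are disjoint, we conclude $\sC^r_R=\sC^{r_0}_R\cap\sB^r_R$, as required.

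The only genuine obstacle is making sure Proposition \ref{prop: B} can be applied at each $r$, which is automatic since weak transferability is preserved (it is a hypothesis on $X$, not on $r$), and that the swap-compatibility argument in the last paragraph really forces the separate-class alternative rather than the both-into-one alternative; I handle this by using the single explicit pair $(\Ba,\Ba')\in\sC^r_R$ to pin down the labeling of $\sC^{r_0}_R$. Everything else is essentially a direct bookkeeping consequence of the dichotomy proved in Proposition \ref{prop: B}.
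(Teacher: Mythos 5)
Your proposal is correct and follows essentially the same route as the paper: the key point in both is that a $\sim_{r'}$-chain in $\sB^{r'}_R$ is automatically a $\sim_r$-chain for $r\leq r'$ (since $\sB^{r'}_R\subset\sB^{r}_R$), which combined with Proposition \ref{prop: B} makes the two-class condition upward closed in $r$, yields the threshold $r_0$, and gives the compatibility $\sC^r_R=\sC^{r_0}_R\cap\sB^r_R$. Your final step packages this compatibility via the swap involution and a disjoint-union bookkeeping instead of the paper's explicit proof that $\sim_{r_0}$ and $\sim_r$ agree on $\sB^r_R$, but this is only a cosmetic reorganization of the same argument.
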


\begin{proof}
	Suppose (1) does not hold.
	Then by Proposition \ref{prop: B}, there exists $r\in\bbN$ such that for any $(\Ba,\Ba')\in\sB^r_R$,
	we have $(\Ba,\Ba')\not\sim_r(\Ba',\Ba)$.
	Then for any $r'\geq r$ and $(\Ba,\Ba')\in\sB^{r'}_R$,  if $(\Ba,\Ba')\sim_{k'}(\Ba',\Ba)$
	then this would imply that $(\Ba,\Ba')\sim_r(\Ba',\Ba)$.
	Hence our assumption implies that $(\Ba,\Ba')\not\sim_{r'}(\Ba',\Ba)$.
	We take $r_0$ to be the minimum of  such $r$.
	Then \eqref{eq: expansion B} follows from Proposition \ref{prop: B}.
	
	Note that for any $r\geq r_0$ and $(\Ba_1,\Ba'_1)$ and $(\Ba_2,\Ba'_2)$ in $\sB^r_R$,
	we have $(\Ba_1,\Ba'_1)\sim_{r_0}(\Ba_2,\Ba'_2)$ if and only if 
	$(\Ba_1,\Ba'_1)\sim_{r}(\Ba_2,\Ba'_2)$.
	This may be proved as follows.
	It is immediate from the definition that if $\sim_{r}$ holds, then $\sim_{r_0}$ holds.
	On the other hand, if $(\Ba_1,\Ba'_1)\sim_{r_0}(\Ba_2,\Ba'_2)$ and if 
	$(\Ba_1,\Ba'_1)\not\sim_{r}(\Ba_2,\Ba'_2)$,
	then we would have $(\Ba_1,\Ba'_1)\sim_{r}(\Ba'_2,\Ba_2)$, which would imply that
	$(\Ba_1,\Ba'_1)\sim_{r_0}(\Ba'_2,\Ba_2)$.  Then 
	we would have $(\Ba_2,\Ba'_2)\sim_{r_0}(\Ba'_2,\Ba_2)$,
	contradicting our choice of $r_0$.
	
	If we choose an equivalence class $\sC^{r_0}_R$ of $\sB^{r_0}_R$ with respect
	to the equivalence $\sim_{r_0}$,  then we may take $\sC^r_R\coloneqq
	\sC^{r_0}_R\cap\sB^r_R$ for any $r\geq r_0$, 
	which gives an equivalence class of $\sB^r_R$ with respect
	to the equivalence $\sim_{r}$ satisfying the condition of our assertion.
\end{proof}

\begin{definition}\label{def: cases}
	Let $X=(X,E)$ be a weakly transferable locale, and let $R>0$. 
	If (1) of Proposition \ref{prop: cases} holds,  then we say that $\sB_R$ has a 
	\textit{unique class}, and we  define the integer $r_0$ to be \textit{one}.
	If (2) of Proposition \ref{prop: cases} holds,  then we say that $\sB_R$ is
	\textit{split},
	and we define $r_0$ to be the minimum integer satisfying \eqref{eq: expansion B}.
	Moreover, we will fix an equivalence class $\sC^r_R$ of $\sB^r_R$ with respect
	to the equivalence relation $\sim_{r}$ so that $\sC^r_R= \sC^{r_0}_R\cap\sB^r_R$ for 
	$r\geq r_0$.
\end{definition}

%
\subsection{Pairing for Functions in the Weakly Transferable Case}\label{subsec: pairing wr}
%

In this subsection, we will construct and prove the cocycle condition for
the pairing $h_f\colon\Val\times\Val\rightarrow\bbR$.
We let $X$ be a weakly transferable locale.  We let $\sB^r_R$ as in Definition \ref{def: B},
and we fix an equivalence class $\sC^r_R$ of $\sB^r_R$ with respect to the equivalence
relation $\sim_{r}$ as in Definition \ref{def: cases}.
We define the pairing $h_f$ as follows.

\begin{definition}\label{def: h case2}
	Let $f\in\CS$ such that 
	$\partial f \in C^1_R(\State)$ for some $R>0$. 
	We define the pairing $h_f\colon\Val\times\Val\rightarrow\bbR$ as follows.
	For any $\gra,\grb\in\Val$, we let $k\geq r_0$ such that $\gra,\grb\in\Val_k$.
	By taking an arbitrary $(\Ba,\Ba')\in\sC^k_R$, we let 
	\[
		h_f(\gra,\grb)\coloneqq h_f^{\Ba,\Ba'}(\gra,\grb),
	\]
	where $h_f^{\Ba,\Ba'}$ is the pairing $h_f^{\La,\La'}$ defined in Proposition \ref{prop: h}
	for $(\La,\La')=(\Ba,\Ba')$.
	Note that we have $|\Ba|\geq r(\Ba)$ and $|\Ba'|\geq r(\Ba')$, hence $|\Ba|, |\Ba'|\geq k$.
\end{definition}

For the remainder of this subsection, 
we let $R>0$, and we fix a $f\in\CS$ such that $\partial f\in C^1_R(\State)$.
If $(\Ba_1,\Ba'_1),(\Ba_2,\Ba'_2)\in\sC^k_R$,
then we have $(\Ba_1,\Ba'_1)\sim_{k}(\Ba_2,\Ba'_2)$, hence by 
Lemma \ref{lem: independence}, we see that
\[
	h_f^{\Ba_1,\Ba'_1}(\gra,\grb)=h_f^{\Ba_2,\Ba'_2}(\gra,\grb)
\]
for any $\gra,\grb\in\Val_k$.
Since $\sC^k_R=\sC^{r_0}_R\cap\sB^k_R$,
this shows that $h_f$ is independent of the choices of $k\geq r_0$ 
satisfying $\gra,\grb\in\Val_k$ and the pair $(\Ba,\Ba')\in\sC^k_R$.

When $X$ is weakly transferable, the pairing $h_f$ may not be symmetric.

\begin{lemma}\label{lem: commutative}
	If $\sB_R$ has a unique class, then the pairing $h_f$ is symmetric.
\end{lemma}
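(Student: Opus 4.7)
The plan is to reduce the symmetry of the abstract pairing $h_f$ to the manifest symmetry built into its definition on each specific pair $(\La,\La')$, using the hypothesis that $\sB_R$ has a unique class to ensure that both orderings of a pair belong to the chosen equivalence class.

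First I would unwind Proposition~\ref{prop: h}. Its defining equation
\[
\iota^{\La\cup\La'}f(\state)-\iota^{\La}f(\state)-\iota^{\La'}f(\state)=h_f^{\La,\La'}\bigl(\bsxi_{\!\La}(\state),\bsxi_{\!\La'}(\state)\bigr)
\]
is invariant under simultaneously exchanging the roles of $\La$ and $\La'$, since the left-hand side does not see the ordering of the pair. Evaluating the same identity for $(\La',\La)$ and any $\state\in\State_*$ therefore gives the pairwise symmetry
\[
h_f^{\La,\La'}(\gra,\grb)=h_f^{\La',\La}(\grb,\gra)
\qquad\text{for all }\gra\in\Val_{|\La|},\ \grb\in\Val_{|\La'|}.
\]
This step requires no additional hypothesis on $X$ and produces symmetry only for a fixed pair.

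Next I would combine this with Definition~\ref{def: h case2}. Given $\gra,\grb\in\Val$, pick $k\geq r_0$ with $\gra,\grb\in\Val_k$ and any $(\Ba,\Ba')\in\sC^k_R$; then $h_f(\gra,\grb)=h_f^{\Ba,\Ba'}(\gra,\grb)$. By the symmetry above, $h_f^{\Ba,\Ba'}(\gra,\grb)=h_f^{\Ba',\Ba}(\grb,\gra)$. Under the unique-class hypothesis, Proposition~\ref{prop: cases}(1) gives $\sC^k_R=\sB^k_R$; since $\sB^k_R$ is visibly closed under swapping coordinates, $(\Ba',\Ba)\in\sC^k_R$ as well. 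The well-definedness noted right after Definition~\ref{def: h case2} (independence of $h_f$ from the choice of representative in $\sC^k_R$) then yields $h_f^{\Ba',\Ba}(\grb,\gra)=h_f(\grb,\gra)$, and chaining the three equalities gives $h_f(\gra,\grb)=h_f(\grb,\gra)$.

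There is no real obstacle here; the only subtle point is pinpointing where the unique-class assumption is used. In the split case of Proposition~\ref{prop: cases}, the class $\sC^k_R$ contains only one of the ordered pairs $(\Ba,\Ba')$ and $(\Ba',\Ba)$ once $k\geq r_0$, so the argument of the previous paragraph would instead only identify $h_f(\gra,\grb)$ with $h_f^{\Ba',\Ba}(\grb,\gra)$, which a priori need not equal $h_f(\grb,\gra)$. Thus symmetry of $h_f$ is precisely controlled by the global structure of the equivalence classes in $\sB_R$, and the unique-class assumption is exactly what is needed for the elementary pairwise symmetry to globalize.
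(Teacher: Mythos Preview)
Your proof is correct and follows essentially the same approach as the paper's: both use the pairwise symmetry $h_f^{\Ba,\Ba'}(\gra,\grb)=h_f^{\Ba',\Ba}(\grb,\gra)$ together with the fact that the unique-class hypothesis forces $(\Ba',\Ba)\in\sC^k_R$ whenever $(\Ba,\Ba')\in\sC^k_R$. Your version is simply more explicit about why the middle equality holds and where exactly the hypothesis enters.
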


\begin{proof}
	If $\sB_R$ has a unique class, then $(\Ba,\Ba')\in\sC^k_R$ implies that 
	$(\Ba',\Ba)\in\sC^k_R$.   This shows that for any $k\geq r_0$ and
	$\gra,\grb\in\Val_k$, we have
	\[
		h_f(\gra,\grb)=h_f^{\Ba,\Ba '}(\gra,\grb)=h_f^{\Ba',\Ba}(\grb,\gra)=h_f(\grb,\gra)
	\]
	as desired.
\end{proof}

\begin{remark}
	If $\sB_R$ is split, then $h_f$ may not necessarily be symmetric.
\end{remark}

The pairing $h_f$ may be calculated as follows.

\begin{proposition}\label{prop: def h two}
	For any $(\La,\La')\in\sA_R$ and for any $\gra\in\Val_{|\La|}$ and $\grb\in\Val_{|\La'|}$,
	we have
	\[
		h_f(\gra,\grb)=h_f^{\La,\La'}(\gra,\grb)  \quad\text{or}\quad   h_f(\grb,\gra)=h_f^{\La,\La'}(\gra,\grb).
	\]
	In particular, if $h_f$ is symmetric, then we have
	\[
		h_f(\gra,\grb)=h_f^{\La,\La'}(\gra,\grb).
	\]
\end{proposition}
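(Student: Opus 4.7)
The plan is to first reduce from pairs in $\sA_R$ to pairs of balls in $\sB_R$, and then exploit the dichotomy provided by Proposition \ref{prop: cases}. Fix $(\La,\La')\in\sA_R$ with $\gra\in\Val_{|\La|}$ and $\grb\in\Val_{|\La'|}$, and choose an integer $k\geq\max(|\La|,|\La'|,r_0)$. Since configurations can be extended by the base state, the family $\Val_n$ is increasing in $n$, so $\gra,\grb\in\Val_k$. Suppose without loss of generality that $\La$ is a ball (the other case is symmetric); since $X$ is weakly transferable, the complement $X\setminus B(\La,R)$ is a finite disjoint union of infinite connected sublocales, so the connected component containing $\La'$ is infinite. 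First, I would choose a ball $\Ba'$ of radius at least $k$ inside this component, with $|\Ba'|\geq|\La'|$; by Lemma \ref{lem: independence} we have
\[
h_f^{\La,\La'}(\gra,\grb)=h_f^{\La,\Ba'}(\gra,\grb).
\]
Next, replace $\La$ itself by a ball $\Ba$ of radius at least $k$ in the connected component of $X\setminus B(\Ba',R)$ containing $\La$, using the same lemma and symmetry of roles, to obtain
\[
h_f^{\La,\La'}(\gra,\grb)=h_f^{\Ba,\Ba'}(\gra,\grb),
\]
with $(\Ba,\Ba')\in\sB^k_R$.

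Now I would invoke Proposition \ref{prop: cases}: either $\sB^k_R=\sC^k_R$, or $\sB^k_R=\sC^k_R\cup\ol\sC^k_R$. In the first case $(\Ba,\Ba')\in\sC^k_R$ automatically, and by the definition in Definition \ref{def: h case2},
\[
h_f(\gra,\grb)=h_f^{\Ba,\Ba'}(\gra,\grb)=h_f^{\La,\La'}(\gra,\grb).
\]
In the second case, either $(\Ba,\Ba')\in\sC^k_R$, giving the same equality as above, or $(\Ba,\Ba')\in\ol\sC^k_R$, meaning $(\Ba',\Ba)\in\sC^k_R$. In the latter subcase, the definition of $h_f^{\La',\La}$ as the functional $\iota^{\La\cup\La'}f-\iota^\La f-\iota^{\La'}f$ (see Proposition \ref{prop: h}) is manifestly symmetric in its two arguments when the roles of $\La$ and $\La'$ are swapped, so
\[
h_f(\grb,\gra)=h_f^{\Ba',\Ba}(\grb,\gra)=h_f^{\Ba,\Ba'}(\gra,\grb)=h_f^{\La,\La'}(\gra,\grb).
\]
Combining the subcases yields the disjunction claimed in the proposition. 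The second statement follows immediately: if $h_f$ is symmetric, then $h_f(\gra,\grb)=h_f(\grb,\gra)$, so in either subcase we land on $h_f(\gra,\grb)=h_f^{\La,\La'}(\gra,\grb)$.

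The only real subtlety is the bookkeeping in the first reduction step: one must verify that the intermediate monoid membership conditions $\gra\in\Val_{|\Ba|}$ and $\grb\in\Val_{|\Ba'|}$ hold throughout the replacement process, which is automatic once $|\Ba|\geq|\La|$ and $|\Ba'|\geq|\La'|$ are arranged. The equivalence-class dichotomy of Proposition \ref{prop: cases} then does all the remaining work and no spectral or stochastic input is needed.
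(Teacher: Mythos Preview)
Your proof is correct and follows essentially the same route as the paper: reduce $(\La,\La')$ to a pair of balls $(\Ba,\Ba')\in\sB^k_R$ via two applications of Lemma~\ref{lem: independence} (using weak transferability to find large balls in the relevant connected components), then invoke the dichotomy of Proposition~\ref{prop: cases} to see that either $(\Ba,\Ba')$ or $(\Ba',\Ba)$ lies in $\sC^k_R$, and finally use the built-in symmetry $h_f^{\Ba,\Ba'}(\gra,\grb)=h_f^{\Ba',\Ba}(\grb,\gra)$ coming from the defining expression in Proposition~\ref{prop: h}. The paper's own argument is essentially a more compressed version of exactly this.
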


\begin{proof}
	Fix an arbitrary $(\La,\La')\in\sA_R$ and $\gra\in\Val_{|\La|}$, $\grb\in\Val_{|\La'|}$.
	Let $k\geq  r_0$ such that
	$k\geq\max\{|\La|,|\La'|\}$. Assume without loss of generality that $\La$
	is a ball.  Since $X$ is weakly transferable, the connected component
	of $X\setminus  B(\La,R)$ containing $\La'$ is infinite, hence contains a closed 
	ball $\Ba'$ such that $r(\Ba')\geq k$. Then $(\La,\La')\leftrightarrow (\La,\Ba')$.
	Again, the connected component of $X\setminus   B(\Ba',R)$ containing $\La$
	is infinite, hence contains a ball $\Ba$ such that $r(\Ba)\geq k$.
	Then $(\La,\Ba')\leftrightarrow (\Ba,\Ba')$.  By construction, $(\Ba,\Ba')\in\sB^k_R$.
	If $(\Ba,\Ba')\in\sC^k_R$, then by Lemma \ref{lem: independence}, 
	noting that $|\Ba|,|\Ba'|\geq k$,
	we have
	\[
		h_f^{\La,\La'}(\gra,\grb)=h_f^{\Ba,\Ba'}(\gra,\grb)=h_f(\gra,\grb).
	\]
	If $(\Ba,\Ba')\not\in\sC^k_R$, then $(\Ba',\Ba)\in\sC^k_R$,
	hence we have
	\[
		h_f^{\La,\La'}(\gra,\grb)=h_f^{\Ba,\Ba'}(\gra,\grb)=h_f^{\Ba',\Ba}(\grb,\gra)=h_f(\grb,\gra)
	\]
	as desired.
\end{proof}

In order to prove the cocycle condition of $h_f$,
we will prepare the following lemma concerning the existence of a triplet of balls
$B_1,B_2,B_3$ sufficiently large and sufficiently apart satisfying the condition
$(B_1,B_2)\leftrightarrow(B_1,B_3)\leftrightarrow(B_2,B_3)$.  
Note that if $X$ is strongly transferable, then this condition is automatically satisfied
if $d_X(B_i,B_j)>R$ for $i\neq  j$.

\begin{lemma}\label{lem: triple}
	For any $r\in\bbN$, there exists balls $\Ba_1,\Ba_2,\Ba_3$ 
	of radii at  least $r$
	in $X$ such that $d_X(\Ba_i,\Ba_j)>R$ for $i\neq j$ and
	\begin{equation}\label{eq: leftrightarrows}
		(\Ba_1,\Ba_2)\leftrightarrow(\Ba_1,\Ba_3)\leftrightarrow(\Ba_2,\Ba_3).
	\end{equation}
	Moreover, we may take $\Ba_1,\Ba_2,\Ba_3$ so that
	$(\Ba_1,\Ba_2)$, $(\Ba_1,\Ba_3)$, $(\Ba_2,\Ba_3)\in\sC^r_R$.
\end{lemma}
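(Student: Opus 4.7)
The plan is to construct $B_3$ starting from a fixed pair $(B_1,B_2)\in\sC^r_R$ together with a path joining them, using weak transferability to push $B_3$ far enough into $X$ that it simultaneously lies in the correct component away from $B_1$ and leaves the link between $B_1$ and $B_2$ undisturbed. Since $X$ is weakly transferable it is infinite, so $\sB^r_R$ and hence $\sC^r_R$ are non-empty; fix $(B_1,B_2)\in\sC^r_R$. Connectedness of $X$ yields a finite path $\gamma$ from a vertex of $B_1$ to a vertex of $B_2$; write $V_\gamma$ for its vertex set. Set $V\coloneqq B(B_1,R)\cup B(B_2,R)\cup B(V_\gamma,R)$, a finite subset of $X$, and pick a ball $\Ba^{\mathrm{big}}\subset X$ containing $V$. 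Weak transferability then decomposes $X\setminus \Ba^{\mathrm{big}}$ into finitely many infinite connected subgraphs.

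The key step is to locate $B_3$ inside the correct component. Let $Y$ be the component of $X\setminus B(B_1,R)$ containing $B_2$. Since $B(B_1,R)\subset \Ba^{\mathrm{big}}$, every component of $X\setminus \Ba^{\mathrm{big}}$ is contained in a single component of $X\setminus B(B_1,R)$. The set $Y$ is infinite while $\Ba^{\mathrm{big}}$ is finite, so some component $Z$ of $X\setminus \Ba^{\mathrm{big}}$ satisfies $Z\subset Y$. Local finiteness together with infiniteness of $Z$ allow me to choose a vertex $z\in Z$ with $d_X(z,\Ba^{\mathrm{big}})>R+r$; any path of length $\leq r$ from $z$ to a vertex outside $Z$ would have to cross $\Ba^{\mathrm{big}}$, so $B_3\coloneqq B(z,r)\subset Z$, and the triangle inequality yields $d_X(B_3,\Ba^{\mathrm{big}})>R$.

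Verification of the required properties is then direct. From $B_1\cup V_\gamma\cup B_2\subset \Ba^{\mathrm{big}}$ and $d_X(B_3,\Ba^{\mathrm{big}})>R$ I obtain $d_X(B_1,B_3), d_X(B_2,B_3), d_X(V_\gamma,B_3)>R$, which puts $(B_1,B_3)$ and $(B_2,B_3)$ in $\sB^r_R$. From $B_2\subset Y$ and $B_3\subset Z\subset Y$ I conclude $(B_1,B_2)\leftrightarrow(B_1,B_3)$; and because $\gamma$ avoids $B(B_3,R)$ it witnesses that $B_1$ and $B_2$ lie in a common component of $X\setminus B(B_3,R)$, yielding $(B_1,B_3)\leftrightarrow(B_2,B_3)$. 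The \emph{moreover} clause follows by transitivity of $\sim_r$ applied to the two $\leftrightarrow$ relations together with $(B_1,B_2)\in\sC^r_R$. The main obstacle—and the only place the weak transferability hypothesis is genuinely used—is arranging for $B_3$ to lie simultaneously in the prescribed component of $X\setminus B(B_1,R)$ while its $R$-thickening avoids $B_1$, $B_2$, and every vertex of $\gamma$; packaging those constraints by passing through a single large ball $\Ba^{\mathrm{big}}$ is precisely what makes them compatible.
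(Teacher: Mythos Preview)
Your proof is correct and takes a genuinely different route from the paper's. The paper first constructs three balls at pairwise distance $>2R$ with no control on how they sit relative to one another, then invokes Lemma~\ref{lem: important} (via Lemma~\ref{lem: either}) to obtain three alternative $\leftrightarrow$ statements, runs a short case analysis to show that one of three configurations (a), (b), (c) must hold, and finally relabels the balls to produce the chain \eqref{eq: leftrightarrows}; the ``moreover'' clause requires a further relabeling. By contrast, you fix $(B_1,B_2)\in\sC^r_R$ from the outset, record a connecting path $\gamma$, and then place $B_3$ directly inside the component of $X\setminus B(B_1,R)$ containing $B_2$ while keeping $B(B_3,R)$ disjoint from $V_\gamma$; this simultaneously forces both $\leftrightarrow$ relations without any case analysis and makes the ``moreover'' clause automatic by transitivity. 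Your argument is more constructive and self-contained, bypassing Lemmas~\ref{lem: important} and~\ref{lem: either} entirely; the paper's approach has the minor advantage of reusing machinery already built for Proposition~\ref{prop: B}, but at the cost of the case split and relabeling.
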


\begin{proof}
	Take $\Ba_1$ to be a ball in $X$ such that $r(\Ba_1)\geq r$.
	We then take $\Ba_2$ to be an arbitrary ball in $X\setminus  B(B_1,2R)$
	such that $r(\Ba_2)\geq r$.  Finally, we take a ball $\La$ containing $B_1$ and $B_2$,
	and let $\Ba_3$ be a ball in $X\setminus  B(\La,2R)$ with $r(\Ba_3)\geq r$.
	By construction, we have $d_X(B_i,B_j)>2R$ for $i\neq j$.
	By Lemma \ref{lem: either}, the following holds.
	\begin{enumerate}
		\item Either $(\Ba_1,\Ba_2)\leftrightarrow(B_1,B_3)$ or $(\Ba_1,\Ba_2)\leftrightarrow(B_3,B_2)$.
		\item Either $(\Ba_1,\Ba_3)\leftrightarrow(B_1,B_2)$ or $(\Ba_1,\Ba_3)\leftrightarrow(B_2,B_3)$.
		\item Either $(\Ba_2,\Ba_3)\leftrightarrow(B_2,B_1)$ or $(\Ba_2,\Ba_3)\leftrightarrow(B_1,B_3)$.
	\end{enumerate}
	From this observation, we see that at least one of the following holds.
	\begin{enumerate}
		\item[(a)] $(\Ba_1,\Ba_2)\leftrightarrow(B_1,B_3)$ and $(\Ba_1,\Ba_3)\leftrightarrow(B_2,B_3)$.
		\item[(b)] $(\Ba_1,\Ba_2)\leftrightarrow(B_3,B_2)$ and $(\Ba_1,\Ba_3)\leftrightarrow(B_2,B_3)$.
		\item[(c)] $(\Ba_1,\Ba_3)\leftrightarrow(B_1,B_2)$ and $(\Ba_2,\Ba_3)\leftrightarrow(B_2,B_1)$.
	\end{enumerate}
	In fact, if (a) does not hold, then either $(\Ba_1,\Ba_2)\not\leftrightarrow(B_1,B_3)$ or 
	$(\Ba_1,\Ba_3)\not\leftrightarrow(B_2,B_3)$ holds.
	If $(\Ba_1,\Ba_2)\not\leftrightarrow(B_1,B_3)$, then by (1), we have 
	$(\Ba_1,\Ba_2)\leftrightarrow(B_3,B_2)$ and by (2), we have
	$(\Ba_1,\Ba_3)\leftrightarrow(B_2,B_3)$, hence (b) holds.
	If $(\Ba_1,\Ba_3)\not\leftrightarrow(B_2,B_3)$, then by (2), we have 
	$(\Ba_1,\Ba_2)\leftrightarrow(B_3,B_2)$ and by (3), we have
	$(\Ba_2,\Ba_3)\leftrightarrow(B_2,B_1)$, hence (c) holds.
	
	If (a) holds, then we have \eqref{eq: leftrightarrows}.
	If (b) holds, then we obtain \eqref{eq: leftrightarrows}
	by taking $B_1$ to be $B_2$ and $B_2$ to be $B_1$, 
	If (c) holds, then we obtain \eqref{eq: leftrightarrows}
	by taking $B_2$ to be $B_3$ and $B_3$ to be $B_2$.
	Our last  assertion is proved if $(B_1,B_2)\in \sC^r_R$.
	If $(B_1,B_2)\not\in\sC^r_R$, our assertion is proved by taking 
	$B_1$ to be $B_3$ and $B_3$ to be $B_1$.
\end{proof}

We are now ready to prove the cocycle condition for our $h_f$.

\begin{proposition}\label{prop: cocycle}
	Assume that $X$ is a weakly transferable locale.
	Let $R>0$ and let $f\in\CS$ such that $\partial f\in C^1_R(\State)$.
	Then the pairing $h_f\colon\Val\times\Val\rightarrow\bbR$ satisfies
	\[
		h_f(\gra,\grb)+h_f(\gra+\grb,\grc)=h_f(\grb,\grc)+h_f(\gra,\grb+\grc).
	\]
	for any $\gra,\grb,\grc\in\Val$.
\end{proposition}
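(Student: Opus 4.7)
My plan is to adapt the cocycle argument from the proof of Proposition~\ref{prop: h_f}, relying on the classification of equivalence classes in $\sB^r_R$ developed in \S\ref{subsec: equivalence}. The main difficulty, absent in the strongly transferable case, is that $h_f$ may fail to be symmetric when $\sB_R$ is split; by Proposition~\ref{prop: def h two}, each pairing $h_f^{\La,\La'}$ that arises could a priori represent either $h_f(\gra,\grb)$ or $h_f(\grb,\gra)$, and one must line up the orientations of all four pairings in the cocycle identity. The tool that resolves this is Lemma~\ref{lem: triple}, which produces a triple of balls with \emph{all three} of their pairs simultaneously in the distinguished class $\sC^r_R$.

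Concretely, I will fix $\gra,\grb,\grc\in\Val$ and choose $k\geq r_0$ large enough that $\gra,\grb,\grc,\gra+\grb,\grb+\grc\in\Val_k$. I will then apply Lemma~\ref{lem: triple} with this $k$ to obtain balls $\Ba_1,\Ba_2,\Ba_3$ of radii $\geq k$, pairwise at distance greater than $R$, with $(\Ba_1,\Ba_2),(\Ba_1,\Ba_3),(\Ba_2,\Ba_3)\in\sC^k_R$. The relation $(\Ba_1,\Ba_3)\leftrightarrow(\Ba_2,\Ba_3)$ places $\Ba_1$ and $\Ba_2$ in a common connected component of $X\setminus B(\Ba_3,R)$, so joining them by a path inside that component yields a finite connected $\La\supset\Ba_1\cup\Ba_2$ with $\La\subset X\setminus B(\Ba_3,R)$; the symmetric use of $(\Ba_1,\Ba_2)\leftrightarrow(\Ba_1,\Ba_3)$ produces a finite connected $\La'\supset\Ba_2\cup\Ba_3$ contained in $X\setminus B(\Ba_1,R)$. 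Finally I will select $\state\in\State_*$ at the base state outside $\Ba_1\cup\Ba_2\cup\Ba_3$ with $\bsxi_{\Ba_1}(\state)=\gra$, $\bsxi_{\Ba_2}(\state)=\grb$, and $\bsxi_{\Ba_3}(\state)=\grc$.

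The core of the argument is identifying the four pairings with the correct values of $h_f$. By Definition~\ref{def: h case2}, $h_f(\gra,\grb)=h_f^{\Ba_1,\Ba_2}(\gra,\grb)$ and $h_f(\grb,\grc)=h_f^{\Ba_2,\Ba_3}(\grb,\grc)$ are immediate. For $h_f(\gra+\grb,\grc)$, using the symmetry $h_f^{\La,\Ba_3}(\gra+\grb,\grc)=h_f^{\Ba_3,\La}(\grc,\gra+\grb)$ inherent in Proposition~\ref{prop: h}, I will apply Lemma~\ref{lem: independence} (with fixed first argument $\Ba_3$) to replace $\La$ by a ball $\Ba'$ of radius $\geq k$ in the same connected component of $X\setminus B(\Ba_3,R)$ as $\La$. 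Since $\Ba'$ and $\Ba_1$ then lie in the same such component, $(\Ba',\Ba_3)\leftrightarrow(\Ba_1,\Ba_3)\in\sC^k_R$, hence $(\Ba',\Ba_3)\in\sC^k_R$ and Definition~\ref{def: h case2} gives $h_f^{\La,\Ba_3}(\gra+\grb,\grc)=h_f(\gra+\grb,\grc)$; an analogous argument handles $h_f^{\Ba_1,\La'}(\gra,\grb+\grc)=h_f(\gra,\grb+\grc)$. With these four identifications in hand, expanding each pairing via Proposition~\ref{prop: h} and using that $\state$ is supported in $\Ba_1\cup\Ba_2\cup\Ba_3$ (so that $\iota^{\La}f(\state)=\iota^{\Ba_1\cup\Ba_2}f(\state)$, $\iota^{\La\cup\Ba_3}f(\state)=\iota^{\Ba_1\cup\Ba_2\cup\Ba_3}f(\state)$, and analogously for $\La'$), a routine telescoping will show both sides of the cocycle identity equal $\iota^{\Ba_1\cup\Ba_2\cup\Ba_3}f(\state)-\iota^{\Ba_1}f(\state)-\iota^{\Ba_2}f(\state)-\iota^{\Ba_3}f(\state)$. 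The hard part is the orientation issue, handled by Lemma~\ref{lem: triple} together with the careful orientation-preserving use of Lemma~\ref{lem: independence}.
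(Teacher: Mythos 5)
Your proposal is correct and follows essentially the same route as the paper's own proof: choose $k\geq r_0$ large enough, invoke Lemma~\ref{lem: triple} to get three balls with all three ordered pairs in $\sC^k_R$, build the connected sets $\La\supset\Ba_1\cup\Ba_2$ and $\La'\supset\Ba_2\cup\Ba_3$, pick $\state$ supported on the three balls realizing $\gra,\grb,\grc$, identify the four pairings $h_f^{\Ba_1,\Ba_2}$, $h_f^{\La,\Ba_3}$, $h_f^{\Ba_2,\Ba_3}$, $h_f^{\Ba_1,\La'}$ with the corresponding values of $h_f$, and telescope. Your extra bookkeeping (including $\gra+\grb,\grb+\grc\in\Val_k$ in the choice of $k$ and the auxiliary ball $\Ba'$ to pin down the orientation via Lemma~\ref{lem: independence}) only makes explicit what the paper cites tersely as ``Lemma~\ref{lem: independence} and the definition of $h$.''
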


\begin{proof}
	For an arbitrary $\gra,\grb,\grc\in\Val$, there exists $k\geq r_0$ such that
	$\gra,\grb,\grc,\in\Val_k$.  Let $\Ba_1,\Ba_2,\Ba_3$ be balls in $X$
	satisfying the condition of Lemma \ref{lem: triple} for $r=k$.
	Then since $\gra,\grb,\grc\in\Val_k$,
	there exists $\state\in\State$ such that $\xi^{B_1}(\state)=\gra$, $\xi^{B_2}(\state)=\grb$,
	$\xi^{B_3}(\state)=\grc$ and is at base state outside $B_1\cup  B_2\cup  B_3$.
	Then since $(B_1,B_3)\leftrightarrow  (B_2,B_3)$,  there exists a finite
	connected subset $\La \subset X\setminus  B(B_3,R)$
	such that $B_1\cup  B_2\subset \La$,
	and since $(B_1,B_2)\leftrightarrow(B_1,B_3)$, 
	there exists a finite connected subset
	$\La' \subset X\setminus  B(B_1,R)$ such that $B_2\cup  B_3\subset \La'$.	
	Note by construction, we have $(B_1,\La')$, $(\La,B_3)\in\sA_R$,
	and $(B_1,B_2), (B_2,B_3)\in\sC^k_R$.
	Then 
	by Lemma \ref{lem: independence} and the definition of $h$, we have
	\begin{align*}
		h_f(\gra,\grb)&=h_f^{B_1,B_2}(\gra,\grb)=\iota^{B_1\cup  B_2}f(\state)
		-\iota^{B_1}f(\state)-\iota^{B_2}f(\state)\\
		h_f(\gra+\grb,\grc)&=h_f^{\La,B_3}(\gra+\grb,\grc)
		=\iota^{\La\cup  B_3}f(\state)
		-\iota^{\La}f(\state)-\iota^{B_3}f(\state)\\
		h_f(\grb,\grc)&=h_f^{B_2,B_3}(\gra,\grb)=\iota^{B_2\cup  B_3}f(\state)
		-\iota^{B_2}f(\state)-\iota^{B_3}f(\state)\\
		h_f(\gra,\grb+\grc)&=h_f^{B_1,\La'}(\gra,\grb+\grc)=\iota^{B_1\cup\La'}f(\state)
		-\iota^{B_1}f(\state)-\iota^{\La'}f(\state).
	\end{align*}
	Since $\state$ is at base state outside $B_1\cup  B_2\cup  B_3$, we have
	\begin{align*}
		\iota^{\La\cup  B_3}f(\state)&=\iota^{B_1\cup\La'}
		f(\state)=\iota^{B_1\cup  B_2\cup  B_3}f(\state), &
		\iota^{\La}f(\state)&=\iota^{B_1\cup  B_2}f(\state), &
		\iota^{\La'}f(\state)&=\iota^{B_2\cup  B_3}f(\state),
	\end{align*}
	which proves our assertion.
\end{proof}

%
\subsection{Criterion for Uniformity in the Weakly Transferable Case}\label{subsec: criterion weak}
%

In this subsection, we will prove Proposition \ref{prop: important wr},
which is a weakly transferable version of Proposition \ref{prop: important}.

\begin{proposition}\label{prop: important wr}
	Assume that the locale $X$ is weakly transferable.
 	Suppose $f\in\CS$ satisfies $\partial f\in C^1_R(\State)$ for $R>0$,
	and let $h_f\colon\Val\times\Val\rightarrow\bbR$ be the pairing defined in Definition \ref{def: h case2}.
	If $h_f\equiv0$, then we have  $f\in C^0_\unif(\State)$.
\end{proposition}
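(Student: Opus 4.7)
The plan is to follow the scheme of the proof of Proposition \ref{prop: important}, replacing the single sublocale $X\setminus B(x,R)$ with the decomposition afforded by the weakly transferable hypothesis. By Lemma \ref{lemma: check}, applied after the same reduction as in the strong case, it suffices to establish the weakly transferable analogue of Lemma \ref{lem: final easy}: for every $x\in X$ and every finite $\La\subset X$ with $\La\cap B(x,R)=\emptyset$,
\[
	\iota^{\La\cup B(x,R)}f-\iota^{\La\cup\cB^*(x,R)}f=\iota^{B(x,R)}f-\iota^{\cB^*(x,R)}f.
\]
Proposition \ref{prop: def h two} combined with $h_f\equiv0$ shows that $h_f^{\Ba,\Ba'}\equiv0$ for every $(\Ba,\Ba')\in\sA_R$, hence the additivity $\iota^{\Ba\cup\Ba'}f=\iota^\Ba f+\iota^{\Ba'}f$ holds on such pairs, and taking both arguments to be $\star$ yields $f(\star)=0$.

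Fix $\state\in\State_*$ with $\Supp(\state)\subset\La\cup B(x,R)$. Weak transferability gives $X\setminus B(x,R)=Y_1\sqcup\cdots\sqcup Y_M$, each $Y_i$ an infinite connected sublocale. For each index $i$ with $Y_i\cap\La\neq\emptyset$, I would choose a ball $\Ba_i\subset Y_i$ with $|\Ba_i|$ so large that $\bsxi_{Y_i\cap\La}(\state)\in\Val_{|\Ba_i|}$, sufficiently deep in $Y_i$ that $B(\Ba_i,R)\subset Y_i$ (which forces $d_X(\Ba_i,B(x,R))>R$), and sufficiently separated that $d_X(\Ba_i,\Ba_j)>R$ for $i\neq j$. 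Then I would iteratively modify $\state$ only on each $Y_i$, setting $Y_i\cap\La$ to base state and placing on $\Ba_i$ a configuration realizing $\bsxi_{\Ba_i}(\state^{(i)})=\bsxi_{Y_i\cap\La}(\state)$, preserving the total conserved quantities at each stage. Applying Lemma \ref{lem: difference} to the sublocale $Y=Y_i$ with $\wt Y=X\setminus\{x\}$ (which contains $B(Y_i,R)$ since $d_X(x,Y_i)>R$) at each step and telescoping yields
\[
	f(\state')-f(\state)=f(\state'|_{X\setminus\{x\}})-f(\state|_{X\setminus\{x\}}),
\]
where $\state'$ is the final configuration, supported on $\bigcup_i\Ba_i\cup B(x,R)$.

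The remaining task is to prove the two additive decompositions
\[
	f\bigl(\state'|_{\bigcup_i\Ba_i\cup B(x,R)}\bigr)=\sum_i f(\state'|_{\Ba_i})+f(\state'|_{B(x,R)})
\]
and its $\cB^*(x,R)$-analogue, since subtracting these and using $\state'|_{B(x,R)}=\state|_{B(x,R)}$ recovers the desired equation. I would prove each by induction on the number $m$ of $\Ba_i$'s in play, peeling off one $\Ba_i$ at a time using the additivity on a pair $(W^*,\Ba_i)\in\sA_R$, where $W^*$ is a finite connected superset of the remaining portion of the set, built by adjoining to $B(x,R)$ (respectively $\cB^*(x,R)$) the remaining $\Ba_j$'s together with paths in each $Y_j$ linking $\Ba_j$ to the boundary of $B(x,R)$. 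Since $B(\Ba_i,R)\subset Y_i$, any such $W^*$ can be routed to avoid $B(\Ba_i,R)$, giving $d_X(W^*,\Ba_i)>R$; evaluating on $\state'$, whose support on $W^*$ equals its support on $\bigcup_{j\neq i}\Ba_j\cup B(x,R)$ (resp.\ $\cB^*(x,R)$), reduces the inductive step. The principal obstacle arises in the $\cB^*(x,R)$-case, because $\cB^*(x,R)$ may itself be disconnected (as for $X=\bbZ$), so the connected superset $W^*$ must be built by routing paths through the $Y_j$'s to bridge the various components of $\cB^*(x,R)$; keeping track of these auxiliary connected sets and verifying the requisite distance conditions is the most delicate part of the argument.
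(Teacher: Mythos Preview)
Your reduction to a weakly transferable analogue of Lemma~\ref{lem: final easy} matches the paper exactly (this is Lemma~\ref{lem: final hard}), as does the use of Lemma~\ref{lem: difference} on each component $Y_i$ with $\wt Y=X\setminus\{x\}$. The two arguments differ only in organization: the paper interleaves the transport and the additivity, processing one $\La_i=\La\cap Y_i$ at a time and using at each step the pair $(\Ba_i,\Ba'_i)$ with $\Ba'_i$ a single large ball containing $B(\La,R)\cup B(x,R)$; since both members of the pair are then balls, no connected set $W^*$ ever needs to be assembled by hand.

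Your proposed resolution of the $\cB^*(x,R)$ case has a gap. Routing paths through the $Y_j$'s to bridge the components of $\cB^*(x,R)$ is not always possible: for $X=\bbZ$ the two components of $\cB^*(0,R)$ can only be joined through $x=0$, so no path in $\bigcup_j Y_j$ connects them. The fix is simpler than you anticipate: since the configuration at which you evaluate has $x$-component at base state, you may include $x$ itself in $W^*$---your hypothesis $B(\Ba_i,R)\subset Y_i$ already gives $d_X(x,\Ba_i)>R$---after which $W^*$ is exactly the connected set from your $B(x,R)$ case. Equivalently, the $\cB^*(x,R)$ additivity is the $B(x,R)$ additivity evaluated at $\state'|_{X\setminus\{x\}}$, so it requires no separate argument at all.
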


The proof of Proposition \ref{prop: important wr} is the same
as that of Proposition \ref{prop: important}, but by using Lemma \ref{lem: final hard}
below which is valid even when $X$ is weakly transferable, instead of 
Lemma \ref{lem: final easy}, which assumed that $X$ is strongly transferable.

\begin{lemma}\label{lem: final hard}
	Assume that $X$ is weakly transferable.
	Suppose $f\in\CS$ and $\partial f\in C^1_R(\State)$, and 
	let $h_f\colon\Val\times\Val\rightarrow\bbR$ be the pairing defined in Definition \ref{def: h case2}.
	If $h_f\equiv0$,
	then for any $x\in X$ and finite $\La\subset X$ such that $d_X(x,\La)>R$, 
	we have
	\[
		\iota^{\La\cup  B(x,R)}f-\iota^{\La\cup \cB^*(x,R)}f
		=\iota^{ B(x,R)}f-\iota^{\cB^*(x,R)}f,
	\]
	where $\cB^*(x,R)\coloneqq  B(x,R)\setminus\{x\}$ is the punctured ball.
\end{lemma}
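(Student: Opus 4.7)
The plan is to adapt the proof of Lemma \ref{lem: final easy} to accommodate the fact that $X \setminus B(x, R)$ need not be connected in the weakly transferable case. First, from the hypothesis $h_f \equiv 0$ combined with Proposition \ref{prop: def h two}, I would derive the distribution relation $\iota^{\La \cup \La'} f = \iota^\La f + \iota^{\La'} f$ for every pair $(\La, \La') \in \sA_R$. As in Lemma \ref{lem: final easy}, the statement would then reduce to proving
\[
f(\state|_{\La \cup B(x,R)}) - f(\state|_{\La \cup \cB^*(x,R)}) = f(\state|_{B(x,R)}) - f(\state|_{\cB^*(x,R)})
\]
for each $\state \in \State_*$ with $\Supp(\state) \subset \La \cup B(x, R)$.

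Let $Y_1, \ldots, Y_n$ be the distinct connected components of $X \setminus B(x, R)$ that meet $\La$, and put $\La_i \coloneqq \La \cap Y_i$. Each $Y_i$ is an infinite connected sublocale by weak transferability. I would then inductively choose balls $\Ba_i \subset Y_i$ with $|\Ba_i| \geq |\La|$ and placed far enough from $x$ that $d_X(\Ba_i, B(x, K_{i-1})) > R$, where $K_{i-1}$ denotes the maximum of the distances from $x$ to the vertices of $\La \cup B(x,R) \cup \Ba_1 \cup \cdots \cup \Ba_{i-1}$. Existence of such $\Ba_i$ follows since $Y_i$ is infinite and locally finite, and any ball $B(y, r) \subset X$ centered at $y \in Y_i$ with $d_X(y, x) > R + r$ lies entirely in $Y_i$ by connectedness. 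Setting $\sigma_0 \coloneqq \state$, I would define $\sigma_i$ from $\sigma_{i-1}$ by transporting the mass of $\La_i$ to $\Ba_i$ while preserving all conserved quantities on $Y_i$, which is possible by irreducible quantification. Applying Lemma \ref{lem: difference} at step $i$ with $Y = Y_i$ and $\wt Y = X \setminus \{x\}$, valid since $Y_i \subset X \setminus B(x, R)$ forces $B(Y_i, R) \subset X \setminus \{x\}$, yields $f(\sigma_i) - f(\sigma_{i-1}) = f(\sigma_i|_{X \setminus \{x\}}) - f(\sigma_{i-1}|_{X \setminus \{x\}})$. Telescoping over $i = 1, \ldots, n$, setting $\state' \coloneqq \sigma_n$, and using that $\state$ and $\state'$ agree on $B(x,R)$ would produce
\[
f(\state|_{\La \cup B(x,R)}) - f(\state|_{\La \cup \cB^*(x,R)}) = f(\state'|_{W \cup B(x,R)}) - f(\state'|_{W \cup \cB^*(x,R)}),
\]
where $W \coloneqq \Ba_1 \cup \cdots \cup \Ba_n$.

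The main obstacle is the final step: showing the right-hand side above equals $f(\state'|_{B(x,R)}) - f(\state'|_{\cB^*(x,R)})$. Writing $f = \sum_\La f_\La$ via the canonical expansion of Proposition \ref{prop: expansion}, this discrepancy is the sum of $f_{\La'}(\state')$ over all $\La' \subset W \cup B(x, R)$ satisfying $x \in \La'$ and $\La' \cap W \neq \emptyset$. For each such $\La'$, I would let $i_0$ be the largest index with $\La' \cap \Ba_{i_0} \neq \emptyset$; by the rapidly growing construction, $\La' \subset \Ba_{i_0} \cup B(x, K_{i_0 - 1})$ and $(\Ba_{i_0}, B(x, K_{i_0 - 1})) \in \sA_R$. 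Applying the distribution relation from the first paragraph to this pair and invoking the uniqueness of the canonical expansion would then force $f_{\La'} \equiv 0$ as a function. Summing over all such $\La'$ cancels the discrepancy and completes the proof.
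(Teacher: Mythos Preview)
Your proof is correct, but it takes a genuinely different route from the paper's in the final step. Both proofs begin identically: derive the distribution relation $\iota^{\La\cup\La'}f=\iota^\La f+\iota^{\La'}f$ for $(\La,\La')\in\sA_R$ from $h_f\equiv0$ via Proposition~\ref{prop: def h two}, decompose $\La$ along the connected components $Y_i$ of $X\setminus B(x,R)$, and use Lemma~\ref{lem: difference} on each $Y_i$ with $\wt Y=X\setminus\{x\}$. The divergence is in how the resulting balls $\Ba_i$ are eliminated. The paper peels them off one at a time: after moving $\La_1$ to $\Ba_1$, it applies the distribution relation to the pairs $(\Ba_1,\Ba'_1)$ and $(\Ba_1,\Ba'_1\setminus\{x\})$, where $\Ba'_1$ is a single large ball containing everything else, to cancel the $\Ba_1$-contribution and reduce the problem to $\La_2\cup\cdots\cup\La_k$; then it iterates. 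You instead move all $\La_i$ to $\Ba_i$ first, telescope, and then kill the discrepancy globally by showing each cross-term $f_{\La'}$ in the canonical expansion vanishes identically, via uniqueness (Proposition~\ref{prop: expansion}) applied to the distribution relation for $(\Ba_{i_0},B(x,K_{i_0-1}))$. Your nested-ball placement is what makes this work: it guarantees every offending $\La'$ sits inside some pair in $\sA_R$. The paper's approach is more iterative and stays closer to the strongly transferable argument of Lemma~\ref{lem: final easy}; yours trades that iteration for a one-shot appeal to the expansion machinery, which is slightly more conceptual but requires the observation (implicit in your uniqueness step) that $h_f\equiv0$ forces $f(\star)=0$.
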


\begin{proof}
	Since $h_f\equiv0$, by the definition of the pairing $h_f$ given in Definition \ref{def: h case2}
	and Proposition \ref{prop: h}, we have
	\begin{equation}\label{eq: distribution three}
		\iota^{\La\cup\La'}f=\iota^{\La} f+\iota^{\La'}f
	\end{equation}
	for any $(\La,\La')\in\sA_R$.
	Again, if $B$ is a ball, then $(B, B(x,R))$ and $(B,\cB^*(x,R))$ are both
	pairs in $\sA_R$, hence our assertion immediately follows by applying \eqref{eq: distribution three}
	to the left hand side.
	Consider a general finite $\La\subset X$ such that $d_X(x,\La)>R$.
	It is sufficient to prove that
	\begin{equation}\label{eq: goal two}
		f(\state|_{\La\cup  B(x,R)})-f(\state|_{\La\cup \cB^*(x,R)})
		=f(\state|_{ B(x,R)})-f(\state|_{\cB^*(x,R)})
	\end{equation}
	for any $\state\in\State_*$ such that $\Supp(\state)\subset\La\cup  B(x,R)$.
	Since $X$ is weakly transferable, $Y\coloneqq X\setminus  B(x,R)$ decomposes
	into a finite number of locales $Y=Y_1\cup\cdots\cup Y_K$.
	We may reorder the $Y_i$ so that there exists integer $k$ such that
	$Y_i\cap\La\neq\emptyset$ for $i\leq k$ and  $Y_i\cap\La=\emptyset$ for $i>k$.
	Let $\La_i\coloneqq\La\cap Y_i$.   Then we have $\La=\La_1\cup\cdots\cup \La_k$.

	Let $\Ba'_1$ be a ball such that $ B(\La,R)\cup  B(x,R)\subset \Ba'_1$.
	Let $\Ba_1\subset (X\setminus  B(\Ba'_1,R))\cap Y_1$, 
	$d_X(\La,\Ba_1)>R$ be a ball whose cardinality is greater than that of 
	$\La_1$.  Such a set  $\Ba_1$ exists since $Y_1$ is an infinite set.
	Now choose a $\state'\in\State_*$ such that $\state'$ coincides with $\state$ outside 
	$\La_1\cup\Ba_1$, is as base state on $\La_1$, and $\bsxi_{\Ba_1}(\state')=\bsxi_{\La_1}(\state)$.
	This implies that $\bsxi_{X}(\state')=\bsxi_{X}(\state)$.
	Since the transition is irreducibly quantified, by Lemma \ref{lem: difference}
	applied to $Y_1$ and $\wt Y_1\coloneqq X\setminus\{x\}$, we have
	\[
		f(\state')-f(\state)
		=f(\state'|_{X\setminus\{x\}})-f(\state|_{X\setminus\{x\}}).
	\]
	Noting that $\state'=\state'|_{\Ba_1\cup \Ba'_1}$ and $\state=\state|_{\Ba'_1}$,
	we see that
	\begin{equation}\label{eq: progress two}
		f(\state'|_{\Ba_1\cup \Ba'_1})-f(\state|_{\Ba'_1})
		=f(\state'|_{\Ba_1\cup (\Ba'_1\setminus\{x\})})-f(\state|_{\Ba'_1\setminus\{x\}}).
	\end{equation}
	By our choice of $\Ba_1$, we have $d_X(\Ba_1,\Ba'_1)>R$, hence 	
	$(\Ba_1, \Ba'_1)$ and $(\Ba_1,\Ba'_1\setminus\{x\})$ are both pairs in $\sA_R$.
	Our condition \eqref{eq: distribution three} on $f$ 
	applied to this pair implies that
	\begin{align*}
		f(\state'|_{\Ba_1\cup \Ba'_1})&=f(\state'|_{\Ba_1})+f(\state'|_{\Ba'_1}),&
		f(\state'|_{\Ba_1\cup(\Ba'_1\setminus\{x\})})&=f(\state'|_{\Ba_1})
		+f(\state'|_{\Ba'_1\setminus\{x\}}).
	\end{align*}
	Hence \eqref{eq: progress two} gives
	$
		f(\state'|_{\Ba'_1})-f(\state|_{\Ba'_1})
		=f(\state'|_{\Ba'_1\setminus\{x\}})-f(\state|_{\Ba'_1\setminus\{x\}}).
	$
	In particular, since $\state'$ is at base state outside 
	$\Ba_1\cup\La_2\cup\cdots\cup\La_k\cup  B(x,R)$ and $\state$ is at base state
	outside $\La\cup  B(x,R)$, we have
	\[
		f(\state'|_{\La_2\cup\cdots\cup\La_k\cup  B(x,R)})-f(\state|_{\La\cup \cB^*(x,R)})
		=f(\state'|_{\La_2\cup\cdots\cup\La_k\cup \cB^*(x,R)})-f(\state|_{\La\cup  B(x,R)}).
	\]
	Note that
	$\state'|_{\La_2\cup\cdots\cup\La_k\cup  B(x,R)}
	=\state|_{\La_2\cup\cdots\cup\La_k\cup  B(x,R)}$
	since $\state'$ and $\state$ coincides outside $\La_1\cup B''_1$.  Hence
	we see that
	\[
		f(\state|_{\La\cup  B(x,R)})-f(\state|_{\La\cup \cB^*(x,R)})
		=f(\state|_{\La_2\cup\cdots\cup\La_k\cup \cB^*(x,R)})
		-f(\state|_{\La_2\cup\cdots\cup\La_k\cup  B(x,R)}).
	\]
	By applying the same argument with $\La$ replaced by $\La_2\cup\cdots\cup\La_k$
	and $\La_1$ replaced by $\La_2$, we have
	\[
		f(\state|_{\La_2\cup\cdots\cup\La_k\cup  B(x,R)})
		-f(\state|_{\La_2\cup\cdots\cup\La_k\cup \cB^*(x,R)})
		=f(\state|_{\La_3\cup\cdots\cup\La_k\cup \cB^*(x,R)})
		-f(\state|_{\La_3\cup\cdots\cup\La_k\cup  B(x,R)}).
	\]
	By repeating this process, we  see that 
	\[
		f(\state|_{\La\cup  B(x,R)})
		-f(\state|_{\La\cup \cB^*(x,R)})
		=f(\state|_{\cB^*(x,R)})
		-f(\state|_{ B(x,R)}).
	\]
	Our assertion \eqref{eq: goal two} follows from the fact that
	$\state$ is supported on $\La\cup  B(x,R)$ and the fact
	that $\iota^W f(\state)=f(\state|_W)$ for any $W\subset X$.
\end{proof}

%
\subsection{Transferability}\label{subsec: resilience}
%

In this subsection, we will introduce the notion of transferability for a locale $X$,
which ensures that the pairing $h_f$ defined in Definition \ref{def: h case2} is \textit{symmetric}.

\begin{definition}\label{def: transferable}
	Let $X$ be a locale.
	We say that $X$ is \textit{transferable}, if $X$ is weakly transferable and satisfies either one of the following conditions.
	\begin{enumerate}[label=(\alph*)]
		\item There exists a ball $\Ba$ such that $X\setminus\Ba$ has three or more 
		disjoint connected components (which are all infinite since $X$ is weakly transferable).
		\item  For any $r\in\bbN$, there exists a ball $\Ba$ of radius 
		at least $r$ such that $X\setminus\Ba$ is connected.
	\end{enumerate}
	Note that by (b), if $X$ is strongly transferable, then it is transferable.
\end{definition}

\begin{example}\label{example: transferable}
	The following subgraphs $X=(X,E)$ of the Euclidean lattice $\bbZ^2=(\bbZ^2,\bbE)$
	give examples of locales which are transferable but \textit{not} strongly transferable.
	\begin{enumerate}
	\item Let
	$
		X\coloneqq\{ (x_1,x_2)\in\bbZ^2\mid x_1x_2=0\}
	$
	and $E\coloneqq (X\times X)\cap\bbE$.
	Then $(X,E)$ is transferable by Definition \ref{def: transferable} (a).
	\item Let
	$
		X\coloneqq\{ (x_1,x_2)\in\bbZ^2\mid x_1\geq0\}\cup \{ (x_1,x_2)\in\bbZ^2\mid  x_2=0\}
	$
	and $E\coloneqq (X\times X)\cap\bbE$.
	Then $(X,E)$ is transferable by
	Definition \ref{def: transferable} (b).
	\begin{figure}[ht]
		\centering
		\includegraphics[width=15cm]{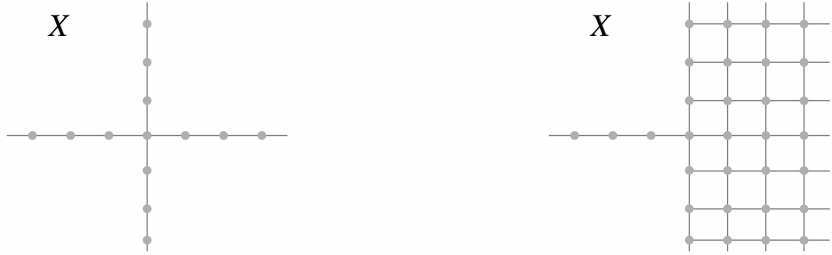}
		\caption{Examples of Transferable but not Strongly Transferable Locales}
	\end{figure}
	\end{enumerate}
\end{example}

\begin{remark}
	 The locale $\bbZ=(\bbZ,\bbE)$
	is weakly transferable but \textit{not} 
	transferable, since the complements of balls always have exactly two connected components.
	The locale $\bbZ^d=(\bbZ^d,\bbE^d)$ for $d>1$ is transferable, since it is strongly transferable.
\end{remark}

\begin{remark}\label{rem: Cayley}
	Let $G$ be a finitely generated \textit{free} group $G$ with set of generators $\cS$, and
	let $(G,E_\cS)$ be the Cayley graph given in Example \ref{example: locale} (4).
	If the number of free generators of $G$ is 
	equal to \textit{one},
	then $G=\bbZ$, hence $(G,E_\cS)$ is weakly transferable but \textit{not} transferable.
	If the number of free generators of $G$ is $d>1$, then $(G,E_\cS)$ is transferable.
	This follows from Definition \ref{def: transferable} (a) and
	the fact that $G\setminus \{\id_G\}$ has exactly $2d$ connected
	components (see Figure \ref{fig: 1} for the case $d=2$).
	
	More generally, for a finitely generated infinite group $G$, it is known that
	the number of \textit{ends} of a Cayley graph $(G,E_\cS)$ is either $1$, $2$ or infinity,
	and the number of ends is $2$ if and only if it has an infinite cyclic subgroup of finite index
	 (see \cite{Coh70}*{(1.6) Theorem}).  
	In the case that $(G,E_\cS)$ is weakly transferable,
	then it is strongly transferable, \textit{not} transferable, or transferable
	if and only if the number of ends is respectively $1$, $2$, or infinity.
\end{remark}

\begin{proposition}\label{prop: good}
	Suppose $X$ is weakly transferable.  Then $X$ is transferable if an only if
	$\sB_R$ has a unique equivalence class 
	for any $R>0$ in the sense of Definition \ref{def: cases}.
\end{proposition}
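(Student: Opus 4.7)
The plan is to analyze the generated equivalence relation $\sim_r$ on $\sB^r_R$ via Proposition \ref{prop: B}: the forward direction reduces to exhibiting, for every $r\in\bbN$ and $R>0$, a single pair $(\Ba, \Ba')\in\sB^r_R$ with $(\Ba, \Ba')\sim_r(\Ba',\Ba)$; the reverse direction, by contraposition, reduces to constructing for some $R$ a $\{+,-\}$-valued invariant on $\sB^r_R$ that distinguishes $(\Ba,\Ba')$ from $(\Ba',\Ba)$.

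For the forward direction in case (a) of Definition \ref{def: transferable}, fix a ball $\Ba^{(0)}$ such that $X\setminus\Ba^{(0)}$ has three or more connected components. Since $B(\Ba^{(0)},R)$ is again a ball and $X$ is weakly transferable, $X\setminus B(\Ba^{(0)},R)$ still decomposes into at least three infinite connected components $Y_1', Y_2', Y_3'$. Choose balls $\Ba_i\subset Y_i'$ of radius at least $r$ with $d_X(\Ba_i, \Ba^{(0)})>2R$ and mutual distances exceeding $R$, so that $B(\Ba_i, R)\subset Y_i'$. The key geometric observation is that for any permutation $\{i,j,k\}=\{1,2,3\}$, the set $B(\Ba^{(0)}, R)\cup Y_i'\cup Y_j'$ is a connected subset of $X\setminus B(\Ba_k, R)$ containing both $\Ba_i$ and $\Ba_j$, yielding the elementary moves
\[
(\Ba_1,\Ba_2)\leftrightarrow(\Ba_3,\Ba_2)\leftrightarrow(\Ba_3,\Ba_1)\leftrightarrow(\Ba_2,\Ba_1),
\]
so $(\Ba_1,\Ba_2)\sim_r(\Ba_2,\Ba_1)$. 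In case (b), applying the hypothesis with radius $r+R$ and shrinking the resulting ball by $R$ at its center yields a ball $\Ba^{(0)}$ of radius at least $r$ with $X\setminus B(\Ba^{(0)},R)$ a single infinite connected graph. Place $\Ba_1, \Ba_2$ within this complement, of radius at least $r$, with mutual distance and distance from $\Ba^{(0)}$ exceeding $R$. Connectedness of the complement yields immediately $(\Ba_1, \Ba^{(0)})\sim_r(\Ba_2, \Ba^{(0)})$ and $(\Ba^{(0)}, \Ba_1)\sim_r(\Ba^{(0)}, \Ba_2)$; combining these with the dichotomy of Lemma \ref{lem: either} on the triple $(\Ba_1, \Ba_2, \Ba^{(0)})$ and, when the dichotomy does not directly close the chain, bringing in further balls with connected thickened complements (again from case (b)), one closes up to $(\Ba_1, \Ba_2)\sim_r(\Ba_2, \Ba_1)$.

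For the reverse direction, assume $X$ is weakly transferable but not transferable. Failure of (a) forces every ball complement to have at most two components, and failure of (b) forces an $r_0\in\bbN$ such that for every ball $\Ba$ with $r(\Ba)\geq r_0$ the complement $X\setminus\Ba$ has exactly two components. Fix $R$ large. For a ball $\Ba$ of radius at least $r_0$ the two components of $X\setminus B(\Ba, R)$ can be consistently labelled $+$ and $-$ by the two ends of $X$, with the labeling stable under enlargement of balls (each component of the complement of a larger ball sits inside a unique component of the complement of a smaller one). For $(\Ba, \Ba')\in\sB^{r_0}_R$, define $\epsilon(\Ba, \Ba')\in\{+,-\}$ to be the label of the component of $X\setminus B(\Ba, R)$ containing $\Ba'$. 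Under $(\Ba, \Ba')\leftrightarrow(\Ba, \Ba'')$, preservation of $\epsilon$ is immediate because $\Ba'$ and $\Ba''$ lie in the same component of $X\setminus B(\Ba, R)$; under $(\Ba, \Ba')\leftrightarrow(\Ba'', \Ba')$, preservation follows by combining the reflection property $\epsilon(\Ba, \Ba')=-\epsilon(\Ba', \Ba)$ (a consequence of the global end-labeling consistency for sufficiently separated large balls) with the preservation of $\epsilon(\Ba',\cdot)$ under that move. Thus $\epsilon$ is $\sim_{r_0}$-invariant while the swap $(\Ba,\Ba')\mapsto(\Ba',\Ba)$ reverses it, so $(\Ba, \Ba')\not\sim_{r_0}(\Ba', \Ba)$ and $\sB^{r_0}_R$ has two equivalence classes, i.e.\ $\sB_R$ is split.

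The hard part will be case (b) of the forward direction. Lemma \ref{lem: either} produces only a dichotomy, and depending on which branch of it applies one is led to an apparently circular reduction to a swap of the form $(\Ba^{(0)}, \Ba_2)\sim_r(\Ba_2, \Ba^{(0)})$ involving $\Ba^{(0)}$ itself. Breaking this circularity requires iteratively exploiting case (b); a useful intermediate claim is that in case (b) one can find arbitrarily many pairwise-distant balls of radius $\geq r$ each with connected $R$-thickened complement, extracted by placing further good balls inside the infinite connected complement of one large good ball. On the reverse side the technical care lies in setting up the end-labeling rigorously via an inverse limit over ball enlargements and in verifying the reflection identity $\epsilon(\Ba, \Ba')=-\epsilon(\Ba', \Ba)$ for sufficiently separated large pairs, for which the confinement of each ball to a single component of the other's complement is essential.
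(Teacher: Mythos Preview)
Your treatment of case~(a) and of the reverse direction is essentially the same as the paper's: in case~(a) you correctly use three balls in three distinct components of $X\setminus B(\Ba^{(0)},R)$ to produce the chain, and in the reverse direction your end-labeling invariant $\epsilon$ is exactly the paper's ordering $\Ba<\Ba'$ (with the reflection property $\epsilon(\Ba,\Ba')=-\epsilon(\Ba',\Ba)$ corresponding to the paper's claim that $\Ba'\subset Y^+_{\Ba}$ if and only if $\Ba\subset Y^-_{\Ba'}$).

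The genuine gap is in case~(b). You set up one ``good'' ball $\Ba^{(0)}$ with connected $R$-thickened complement, place $\Ba_1,\Ba_2$ inside, and then invoke the dichotomy of Lemma~\ref{lem: either}. As you yourself note, one branch of that dichotomy reduces the swap $(\Ba_1,\Ba_2)\sim_r(\Ba_2,\Ba_1)$ to the swap $(\Ba^{(0)},\Ba_1)\sim_r(\Ba_1,\Ba^{(0)})$, which is circular. Your proposed exit---producing arbitrarily many pairwise-distant balls of radius $\geq r$ each with connected $R$-thickened complement---is not justified by hypothesis~(b) of Definition~\ref{def: transferable}: that hypothesis gives, for each radius, \emph{some} ball with connected complement, but no control over its location. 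The sequence of good balls could all be nested around a single center, so ``placing further good balls inside the infinite connected complement of one large good ball'' does not follow.

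The paper avoids the dichotomy entirely. With the single good ball $\Ba_1$ (your $\Ba^{(0)}$), it takes an arbitrary $\Ba_2$, fixes a finite connected $\La\supset\Ba_1\cup\Ba_2$, and then chooses $\Ba_3$ inside the component $Y$ of $X\setminus B(\Ba_2,R)$ containing $\Ba_1$, with $d_X(\La,\Ba_3)>R$. Each of the three elementary moves
\[
(\Ba_1,\Ba_2)\leftrightarrow(\Ba_1,\Ba_3)\leftrightarrow(\Ba_2,\Ba_3)\leftrightarrow(\Ba_2,\Ba_1)
\]
is then forced: the first because $X\setminus B(\Ba_1,R)$ is connected, the third because $\Ba_1,\Ba_3\subset Y$, and the middle because $\La$ is a connected subset of $X\setminus B(\Ba_3,R)$ containing both $\Ba_1$ and $\Ba_2$. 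No appeal to Lemma~\ref{lem: either}, hence no dichotomy and no circularity. The key idea you are missing is that the \emph{placement} of $\Ba_3$ (in $Y$ and far from $\La$) can be engineered so that all three moves hold deterministically, using only one good ball.
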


\begin{proof}
	Suppose (a) of Definition \ref{def: transferable} holds for the ball $\Ba$. 
	Let $Y_1$, $Y_2$, $Y_3$ be 
	three distinct connected components (there may be more) of $X\setminus \Ba$.
	For any $r\in\bbN$, there exists balls $\Ba_1$, $\Ba_2$, $\Ba_3$ of radii at least $r$
	such that
	$ B(\Ba_i,R)\subset Y_i$ for $i=1,2,3$ and $d_X(\Ba_i,\Ba_j)>R$ for any
	$i\neq j$.  In particular, by exchanging the numberings $i=1,2$ if necessary,
	we may assume that $(\Ba_1,\Ba_2)\in \sC^r_R$.  
	By construction, there exists paths from $\Ba$ to $Y_1$ and $\Ba$ to $Y_3$ 
	which do not pass through $Y_2$, hence $Y_1$ and $Y_3$ are in the same connected component
	of $X\setminus Y_2$. Hence in particular, $\Ba_1$ and $\Ba_3$ are in the same connected component
	of $X\setminus  B(\Ba_2,R)$, and we have $(\Ba_1,\Ba_2)\leftrightarrow(\Ba_3,\Ba_2)$.
	In the same manner, we may prove that $(\Ba_1,\Ba_3)\leftrightarrow (\Ba_1,\Ba_2)$,
	and $(\Ba_1,\Ba_3)\leftrightarrow (\Ba_2,\Ba_3)$. This shows that we have
	$(\Ba_1,\Ba_2)\sim_{r}(\Ba_3,\Ba_2)\sim_{r} (\Ba_3,\Ba_1)\sim_{r} (\Ba_2,\Ba_1)$.
	By Proposition \ref{prop: \Ba}, we see that $\sB^r_R$ has a unique equivalence class
	with respect to the equivalence relation $\sim_{r}$
	
	Next, suppose (b) of Definition \ref{def: transferable} holds.
	Fix a $r\in\bbN$ and $R>0$, and let $ B(x,R')$ be the ball given by (2)
	of radius $R'\geq k+R$ such that $X\setminus B(x,R')$ is connected. 
	If we let $\Ba_1\coloneqq  B(x,R'-R)$, then $\Ba_1$ is a ball of radius at least $r$
	such that $X\setminus B(\Ba_1,R)$ is connected.
	Take a ball $\Ba_2$ of radius at least $r$ and $d_X(\Ba_1,\Ba_2)>R$,
	and a finite connected $\La$ such that $\Ba_1\cup \Ba_2\subset \La$.
	Since $X$ is weakly transferable, $X\setminus  B(\Ba_2,R)$ decomposes 
	into a finite number of locales.
	We let $Y$ be the connected component of $X\setminus  B(\Ba_2,R)$
	containing $\Ba_1$, and we take $\Ba_3$ to be a ball of radius at least $r$
	satisfying $\Ba_3\subset Y\setminus  B(\Ba_1,R)$ and $d_X(\La,\Ba_3)>R$.
	Since $X\setminus  B(\Ba_1,R)$ is connected by our construction of $\Ba_1$, 
	we have $(\Ba_1,\Ba_2)\leftrightarrow (\Ba_1,\Ba_3)$.
	Also since $\Ba_1, \Ba_3\subset Y$, we have
	$(\Ba_2,\Ba_1)\leftrightarrow (\Ba_2,\Ba_3)$.
	Moreover, since $ B(\Ba_3,R)\cap\La=\emptyset$ and $\La$ is connected, 
	we have $(\Ba_1,\Ba_3)\leftrightarrow (\Ba_2,\Ba_3)$.
	 This shows that $(\Ba_1,\Ba_2)\sim_{r} (\Ba_1,\Ba_3)\sim_{r}(\Ba_2,\Ba_3)\sim_{r}(\Ba_2,\Ba_1)$,
	 which by Proposition \ref{prop: \Ba} shows  that $\sB^r_R$ has a 
	 unique equivalence class with respect to the equivalence relation $\sim_{r}$.

	Finally, we prove that if there exists $r\in\bbN$ such that
	for any ball $\Ba$ of radius at least $r$, the set  $X\setminus \Ba$ 
	has exactly two connected components,
	then for any $R>0$, 
	there are two equivalence classes in $\sB^r_R$ with respect to $\sim_{r}$.
	We first prove that in this situation, for any ball $\Ba$ of radius at least $r$,
	there exists a decomposition $X\setminus\Ba=Y^+_\Ba\cup Y^-_\Ba$ into infinite
	locales,
	where $Y^{\pm}_{\Ba}\cap Y^{\pm}_{\Ba'}$ are infinite sets and $Y^{\pm}_{\Ba}\cap 
	Y^{\mp}_{\Ba'}$
	are finite sets for any balls $\Ba,\Ba'$ of radius at least $r$.
	This follows from the fact that one of $Y^+_{\Ba}\cap Y^+_{\Ba'}$ 
	or $Y^+_{\Ba}\cap Y^-_{\Ba'}$ must be infinite since $Y^+_{\Ba}$ is infinite.
	If both are infinite, then there exists a ball $\Ba$ satisfying
	$\Ba\cup \Ba'\subset \Ba$ and $X\setminus\Ba$ has at least three
	disjoint connected components in 
	$Y^+_{\Ba}\cap Y^+_{\Ba'}$, $Y^+_{\Ba}\cap Y^-_{\Ba'}$ and $Y^-_{\Ba}$,
	contradicting our assertion.   Hence  exactly one of 
	$Y^+_{\Ba}\cap Y^+_{\Ba'}$ and $Y^+_{\Ba}\cap Y^-_{\Ba'}$ is infinite and the other is finite.
	So we choose the $\pm$ label of $\Ba'$ so that 
	$Y^+_{\Ba}\cap Y^+_{\Ba'}$ is infinite and $Y^+_{\Ba}\cap Y^-_{\Ba'}$ is finite.
	
	Consider any $R>0$. For each $(\Ba,\Ba')\in\sB^r_R$, 
	we denote $\Ba<\Ba'$ if $\Ba'\subset Y_{\Ba}^+$.
	This is equivalent to the condition that $\Ba\subset Y_{\Ba'}^-$ for the following reason.
	Consider $x\in Y_{\Ba}^-\cap Y_{\Ba'}^-$ and $y\in\Ba'\subset Y_{\Ba}^+$. 
	Since $X$ is connected,
	there exists a path $\vec p$ from $x$ to $y$, which must go through $\Ba$
	since $Y_{\Ba}^-$ and $ Y_{\Ba}^+$ are disjoint and connected by $\Ba$.
	This shows that $x$ hence $Y^-_{\Ba'}$ is connected to $\Ba$ in $X\setminus\Ba'$.
	Hence for any $(\Ba,\Ba')\in\sB^r_R$,
	either one of $\Ba<\Ba'$ or $\Ba>\Ba'$ hold.
	Moreover, if $\Ba<\Ba'$, then $\Ba'\subset Y^+_{ B(\Ba,R)}$ and 
	$\Ba\subset Y^-_{ B(\Ba',R)}$.
	This shows that for any $(\Ba_1,\Ba'_1), (\Ba_2,\Ba'_2)\in\sB^r_R$,
	we have $(\Ba_1,\Ba'_1)\leftrightarrow (\Ba_2,\Ba'_2)$ if and only if
	we have $\Ba_1<\Ba'_1$ and $\Ba_2<\Ba'_2$, or $\Ba_1>\Ba'_1$ and $\Ba_2>\Ba'_2$.
	In other words, pairs are in the same equivalence class if and only if their order is preserved.
	This proves that $\sB^r_R$ has exactly two equivalence classes as desired.
\end{proof}

\begin{corollary}
	If $X$ is transferable, then for any $f\in C(\State_*)$ such that $\partial f\in C^1_\unif(\State)$,
	the pairing $h_f$ defined in Definition \ref{def: h case2} is symmetric.
\end{corollary}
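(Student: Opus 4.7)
The plan is to deduce the statement directly from the two results already established, namely Proposition \ref{prop: good}, which characterizes transferability in terms of the equivalence classes of $\sB_R$, and Lemma \ref{lem: commutative}, which provides symmetry of $h_f$ whenever $\sB_R$ has a unique equivalence class. No new geometric or combinatorial work should be necessary; the argument is essentially a two-line synthesis.

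First I would unpack the hypothesis. Since $\partial f \in C^1_\unif(\State) = \bigcup_{R > 0} C^1_R(\State)$, there exists $R > 0$ with $\partial f \in C^1_R(\State)$. This is exactly the input required to invoke Definition \ref{def: h case2}, which defines $h_f$ relative to a fixed choice of equivalence class $\sC^k_R \subset \sB^k_R$ for each $k \geq r_0$. Note that transferability implies weak transferability, so the machinery of \S\ref{sec: weakly} applies and the pairing $h_f$ is well-defined.

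Next, since $X$ is transferable, Proposition \ref{prop: good} tells us that for this value of $R$ (indeed for every $R > 0$), the set $\sB_R$ has a unique equivalence class under $\sim_r$ for every $r$. In particular we are in case (1) of Proposition \ref{prop: cases}, so $r_0 = 1$ and $\sC^k_R = \sB^k_R$ for every $k$. Consequently, whenever $(\Ba, \Ba') \in \sC^k_R$, its swap $(\Ba', \Ba)$ also lies in $\sC^k_R$.

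Finally, Lemma \ref{lem: commutative} applies verbatim: for any $\gra, \grb \in \Val$, choosing $k \geq r_0$ with $\gra, \grb \in \Val_k$ and any $(\Ba, \Ba') \in \sC^k_R$, we have $(\Ba', \Ba) \in \sC^k_R$ as well, and Proposition \ref{prop: h} gives
\[
h_f(\gra, \grb) = h_f^{\Ba, \Ba'}(\gra, \grb) = h_f^{\Ba', \Ba}(\grb, \gra) = h_f(\grb, \gra).
\]
There is no genuine obstacle here, as both ingredients have already been proved. The only point requiring a moment's care is to verify that the value of $R$ produced by the hypothesis $\partial f \in C^1_\unif(\State)$ is the same $R$ used to define $h_f$ and the same $R$ to which Proposition \ref{prop: good} is applied, but this is immediate from the definitions.
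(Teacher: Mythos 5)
Your proposal is correct and follows essentially the same route as the paper: invoke Proposition \ref{prop: good} to conclude that $\sB_R$ has a unique equivalence class when $X$ is transferable, and then apply Lemma \ref{lem: commutative} to obtain symmetry of $h_f$. The extra bookkeeping you include (extracting $R$ from $\partial f\in C^1_\unif(\State)$ and noting that $\sC^k_R=\sB^k_R$) is fine but not needed beyond what the paper's two-line argument already provides.
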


\begin{proof}
	By Proposition \ref{prop: good}, if $X$ is transferable, then $\sB_R$ has a unique 
	class for any $R>0$ in the sense of Definition \ref{def: cases}.
	Our assertion now follows from Lemma \ref{lem: commutative}.
\end{proof}

%
%
%
\section{Uniform Cohomology and the Main Theorem}\label{sec: cohomology}
%
%
%

In this section, we will introduce and calculate
the \textit{uniform cohomology} of a configuration space with
transition structure satisfying certain conditions.
We will then consider a free action of a group $G$ on the locale,
and prove our main theorem, Theorem \ref{thm: main},
which coincides with Theorem \ref{thm: A} of \S\ref{subsec: main}.

%
\subsection{Uniform Cohomology of the Configuration Space}\label{subsec: uniform cohomology}
%

In this subsection, we will define the uniform cohomology of a configuration 
space with transition structure.  We will then prove our key theorem,  Theorem \ref{thm: 1},
which states that under certain conditions,
any closed uniform form is integrable by a uniform function.
This result is central to the proof of our main theorem.

Consider the system $(S,X,\phi)$ and the associated configuration space with transition structure.
In what follows, we denote by $Z^1(\State_*)$  be the set of closed forms of $\State_*$, and
$Z^1_\unif(\State)\coloneqq C^1_\unif(\State)\cap Z^1(\State_*)$ the set of uniform
closed forms.   We first prove the following.
\begin{lemma}\label{lem: preserve}
	Let $f\in C^0_\unif(\State)$.  Then we have $\partial f\in Z^1_\unif(\State)$.
\end{lemma}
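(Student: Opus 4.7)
My plan is to verify separately the two defining properties of $Z^1_\unif(\State)$: that $\partial f$ is closed and that $\partial f$ is uniform. Closedness is essentially free, since $\partial f$ is exact by construction and Lemma~\ref{lem: exact>closed} asserts that every exact form is closed. So the substance of the statement lies in finding an $R > 0$ such that $\partial f \in C^1_R(\State)$, i.e., such that for every $e \in E$ the function $\nabla_e f$ depends only on $\state|_{B(e,R)}$.

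For this, I would start from the defining property of $C^0_\unif(\State)$: by Definition~\ref{def: uniform}, there exists $R_0 > 0$ such that in the canonical expansion of Proposition~\ref{prop: expansion} we have $f_\La \equiv 0$ whenever $\diam{\La} > R_0$. Fix an edge $e = (x_1, x_2) \in E$. Since $\state$ and $\state^e$ coincide outside $\{x_1,x_2\}$, every $f_\La$ with $\La \cap \{x_1, x_2\} = \emptyset$ contributes $f_\La(\state^e) - f_\La(\state) = 0$, so only the terms with $\La$ meeting $\{x_1, x_2\}$ and $\diam{\La} \leq R_0$ survive. Each such $\La$ is automatically contained in $B(x_1, R_0) \cup B(x_2, R_0) = B(e, R_0)$, and each surviving $f_\La$ factors through $\state|_\La$, hence through $\state|_{B(e,R_0)}$. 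This would give $\nabla_e f \in C(S^{B(e,R_0)})$ for every edge $e$, and therefore $\partial f \in C^1_{R_0}(\State) \subset C^1_\unif(\State)$, completing the proof with the uniform choice $R = R_0$.

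The argument is essentially bookkeeping from the definitions, so I do not anticipate a serious obstacle. The one point that requires a moment's care is that the infinite sum expansion of $f$ must be manipulated termwise: for each fixed $\state \in \State_*$, only finitely many $f_\La$ with $\La \subset \Supp(\state) \cup \{x_1, x_2\}$ contribute to $f(\state^e)$ and $f(\state)$, so both $f(\state)$ and $f(\state^e)$ are honest finite sums via Lemma~\ref{lem: sum}, and the cancellation above is rigorously justified.
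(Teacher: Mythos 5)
Your proposal is correct and follows essentially the same route as the paper: expand $f$ via Proposition~\ref{prop: expansion} with $\diam{\La}\leq R$, observe that $\nabe f_\La\equiv 0$ when $\La\cap e=\emptyset$ and otherwise $\nabe f_\La\in C(S^{\La\cup e})\subset C(S^{B(e,R)})$, and get closedness from Lemma~\ref{lem: exact>closed}. The only cosmetic difference is that the paper records the conclusion as $\partial f\in C^1_{R+1}(\State)$ rather than $C^1_R(\State)$, which is immaterial for membership in $C^1_\unif(\State)$.
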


\begin{proof}
	By the definition of $f\in C^0_\unif(\State)$, there exists $R>0$ such that
	\[
		f=\sum_{\La\subset X,\,\diam{\La}\leq R}f_\La
	\]
	for $f_\La\in C_\La(\State)\subset C(S^\Lambda)$.
	Then for any $e\in E$, we have
	$\nabe f_\La\in C(S^{\La\cup  e})$ and $\nabe f_\La\equiv 0$ if $\La\cap e=\emptyset$,
	hence $\partial f\in C^1_{R+1}(\State)\subset C^1_\unif(\State)$.  The fact that 
	$\partial f\in Z^1(\State_*)$ follows from Lemma \ref{lem: exact},
	which states that exact forms are closed.
\end{proof}
Lemma \ref{lem: preserve} shows that 
the differential $\partial$ induces an homomorphism
\begin{equation}\label{eq: comp}
	\partial\colon C^0_\unif(\State)\rightarrow Z^1_\unif(\State).
\end{equation}
As in Definition \ref{def: uc},
we define the \textit{uniform cohomology} $H^m_\unif(\State)$ 
of the configuration space $\State$ with transition
structure to be the cohomology of the complex \eqref{eq: comp}.
Our choice for $C^0_\unif(\State)$ of restricting to functions satisfying $f(\star)=0$
implies that uniform cohomology is philosophically the reduced cohomology 
in the sense of topology of the pointed space consisting of the configuration space 
$\State$ and base configuration $\star\in\State$.

Now, let $X$ be a locale which is weakly transferable in the sense of Definition \ref{def: transferable} 
and assume that the interaction is irreducibly quantified.
Let $c_\phi=\dim_\bbR\Consv^\phi(S)$.
Our theorem concerning the integration of uniform forms is as follows.

\begin{theorem}\label{thm: 1}
	Let $(X,S,\phi)$ be a system such that the interaction $\phi$ is irreducibly quantified.
	If the locale $X$ is transferable, or the interaction $\phi$ is simple and $X$ is weakly transferable, 
	then for any $\omega\in Z^1_\unif(\State)$, there exists $f\in C^0_\unif(\State)$ such that 
	$\partial f=\omega$.  
\end{theorem}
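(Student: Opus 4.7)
The plan is to follow the strategy sketched in \S\ref{subsec: overview}. Given $\omega \in Z^1_\unif(\State)$, since $\omega$ is in particular closed in the sense of Definition \ref{def: closed}, Lemma \ref{lem: exact} produces some $f \in C(\State_*)$ with $\partial f = \omega$. This $f$ is almost certainly not uniform, so the task reduces to modifying $f$ by a horizontal function (which leaves $\partial f$ unchanged) so that the result lies in $C^0_\unif(\State)$.

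The obstruction to uniformity is detected by the pairing $h_f \colon \Val \times \Val \to \bbR$. Because $\partial f = \omega$ is uniform, Proposition \ref{prop: h_f} (when $X$ is strongly transferable) and Definition \ref{def: h case2} together with Proposition \ref{prop: cocycle} (when $X$ is weakly transferable) produce such a pairing, which in either case satisfies the cocycle condition
\[
	h_f(\gra,\grb) + h_f(\gra+\grb,\grc) = h_f(\grb,\grc) + h_f(\gra,\grb+\grc)
\]
for all $\gra,\grb,\grc \in \Val$. Furthermore, in the transferable case, the results of \S\ref{subsec: resilience} guarantee that $h_f$ is symmetric. Crucially, Propositions \ref{prop: important} and \ref{prop: important wr} tell me that if $h_f \equiv 0$ then $f \in C^0_\unif(\State)$ already, so I only need to kill $h_f$ by a horizontal modification.

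The key step is therefore a splitting result: I would show there exists a function $h \colon \Val \to \bbR$ with $h(\bszero) = 0$ such that
\[
	h_f(\gra,\grb) = h(\gra) + h(\grb) - h(\gra+\grb)
\]
for all $\gra, \grb \in \Val$. This is the content of the two splitting lemmas announced in the overview: when $h_f$ is symmetric (the transferable case), the cocycle condition makes this a standard $2$-cocycle splitting argument on the commutative monoid $\Val$; when $\phi$ is simple, so that $c_\phi = 1$ and the monoid generated by $\xi(S)$ is isomorphic to $\bbN$ or $\bbZ$, the rigidity of $\Val$ allows one to build $h$ inductively along the generator even without symmetry of $h_f$. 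Once $h$ is in hand, I define
\[
	\tilde f(\state) \coloneqq f(\state) - h(\bsxi_{\!X}(\state)),
\]
which is well-defined since $\bsxi_{\!X}$ has finite sums on $\State_*$. Because every $\xi_X$ for $\xi \in \Consv^\phi(S)$ is horizontal by Lemma \ref{lem: horizontal}, the composite $h \circ \bsxi_{\!X}$ is constant on each connected component of $\State_*$, hence $\partial \tilde f = \partial f = \omega$. A direct calculation from the defining identity \eqref{eq: partition} of $h_f$ (applied to both $f$ and $\tilde f$, using that $\bsxi_{\La \cup \La'}(\state) = \bsxi_{\La}(\state) + \bsxi_{\La'}(\state)$ for disjoint $\La, \La'$) shows
\[
	h_{\tilde f}(\gra,\grb) = h_f(\gra,\grb) - \bigl(h(\gra) + h(\grb) - h(\gra+\grb)\bigr) \equiv 0.
\]
Applying Proposition \ref{prop: important} in the strongly transferable case, or Proposition \ref{prop: important wr} in the weakly transferable case, yields $\tilde f \in C^0_\unif(\State)$, completing the proof.

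The main obstacle I anticipate is the splitting step in the weakly transferable but non-transferable case. Here $h_f$ can fail to be symmetric (cf.\ the final subsection \S\ref{subsec: final} where surjectivity of $\partial$ fails for $X = \bbZ$ and $c_\phi = 2$), so one must genuinely use the simpleness hypothesis. The difficulty is arranging $h$ so that the cocycle identity forces it to be consistent in \emph{both} orderings of arguments; the fact that $\Val \subset \bbR$ is generated by a single element as a monoid is exactly what allows this, and is presumably why the simple hypothesis cannot be dropped.
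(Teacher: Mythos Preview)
Your approach is essentially identical to the paper's proof. The one slip is a sign: with $\tilde f = f - h\circ\bsxi_{\!X}$ the computation of $\iota^{\La\cup\La'}\tilde f - \iota^{\La}\tilde f - \iota^{\La'}\tilde f$ actually gives $h_{\tilde f}(\gra,\grb) = h_f(\gra,\grb) + \bigl(h(\gra)+h(\grb)-h(\gra+\grb)\bigr)$, since $\iota^{\La\cup\La'}(h\circ\bsxi_{\!X}) - \iota^{\La}(h\circ\bsxi_{\!X}) - \iota^{\La'}(h\circ\bsxi_{\!X}) = h(\gra+\grb)-h(\gra)-h(\grb)$. You should therefore take $\tilde f \coloneqq f + h\circ\bsxi_{\!X}$, exactly as the paper does, to obtain $h_{\tilde f}\equiv 0$.
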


The proof of Theorem \ref{thm: 1} is based on Lemma \ref{lem: splitting1} and Lemma
\ref{lem: splitting2} below. 
A homomorphism of monoids is a map of sets preserving the binary operation
and the identity element.
We say that a monoid $\sE$ with operation $+_{\sE}$
satisfies the (right)-\textit{cancellation property}, if $a+_{\sE} b=a'+_{\sE} b$
implies that $a=a'$ for any $a,a',b\in \sE$.
Note that any monoid obtained as a submonoid of a group satisfies the cancellation property.
We will first prove the following lemma concerning commutative monoids satisfying the cancellation
property.

\begin{lemma}\label{lem: retract}
	Assume that $\sE$ is a commutative monoid with operation $+_{\sE}$ satisfying the
	cancellation property.
	If there exists an injective homomorphism of monoids
	\[
		\iota\colon\bbR\hookrightarrow\sE,
	\]
	where we view $\bbR$ as a an abelian group via the usual addition,
	then there exists a homomorphism of monoids $\pi\colon\sE\rightarrow\bbR$
	such that
	$\pi\circ\iota=\id_\bbR$.
\end{lemma}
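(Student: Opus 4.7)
The plan is to reduce the problem to the standard fact that $\bbR$ is an injective $\bbZ$-module, after passing from the monoid $\sE$ to its associated abelian group.

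First, I would form the Grothendieck group $G \coloneqq \sE^{\mathrm{gp}}$ as the quotient of $\sE \times \sE$ by the equivalence relation generated by $(a,b) \sim (a',b')$ when $a +_{\sE} b' +_{\sE} c = a' +_{\sE} b +_{\sE} c$ for some $c \in \sE$. Under the cancellation hypothesis, this relation collapses to $a +_{\sE} b' = a' +_{\sE} b$, and it follows that the natural monoid homomorphism $j\colon \sE \rightarrow G$ sending $a \mapsto [a,0]$ is injective.

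Second, I would show that the composite $\bar\iota \coloneqq j \circ \iota \colon \bbR \rightarrow G$ is an injective homomorphism of abelian groups. Injectivity is immediate from the injectivity of both $j$ and $\iota$. The fact that $\bar\iota$ is a group homomorphism, and not merely a monoid homomorphism, follows from $\bar\iota(x) + \bar\iota(-x) = \bar\iota(0) = 0$ in $G$, so that $\bar\iota(-x)$ is automatically the additive inverse of $\bar\iota(x)$ in the group $G$.

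Third, I would invoke the fact that $\bbR$ is a divisible abelian group and hence, by Baer's criterion, an injective object in the category of abelian groups. Applying injectivity to the inclusion $\bar\iota\colon \bbR \hookrightarrow G$ together with the identity map $\id_{\bbR}\colon \bbR \rightarrow \bbR$ produces a group homomorphism $\tilde\pi\colon G \rightarrow \bbR$ satisfying $\tilde\pi \circ \bar\iota = \id_{\bbR}$. Setting $\pi \coloneqq \tilde\pi \circ j$ yields a monoid homomorphism $\pi\colon \sE \rightarrow \bbR$ with $\pi \circ \iota = \id_{\bbR}$, as required.

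The main conceptual obstacle is the appeal to injectivity of $\bbR$ in the category of abelian groups, which relies on Zorn's lemma but is entirely standard. A more concrete alternative I could use instead is the following: since $\bbR$ is torsion-free, the induced map $\bbR \rightarrow G/G_{\tors}$ is still injective, and $G/G_{\tors}$ is a torsion-free abelian group embedding into the $\bbQ$-vector space $V \coloneqq (G/G_{\tors}) \otimes_{\bbZ} \bbQ$ in which $\bbR$ sits as a $\bbQ$-subspace; choosing a $\bbQ$-linear complement (by Zorn's lemma) yields a $\bbQ$-linear projection $V \rightarrow \bbR$, whose restriction along $\sE \hookrightarrow G \rightarrow G/G_{\tors} \hookrightarrow V$ provides the required retraction $\pi$.
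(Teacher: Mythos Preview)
Your argument is correct. In fact it coincides almost verbatim with the alternative proof the paper records in Remark~\ref{rem: divisible}: pass to the Grothendieck group $A=(\sE\times\sE)/\!\sim$, note that cancellation makes $\sE\hookrightarrow A$ injective, and then use that $\bbR$ is divisible hence injective in the category of abelian groups to split the resulting inclusion $\bbR\hookrightarrow A$.

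The paper's \emph{main} proof of the lemma takes a different route: it works directly inside the monoid $\sE$, applying Zorn's lemma to the poset of pairs $(\sN,\pi_{\sN})$ consisting of a submonoid $\sN\supset\iota(\bbR)$ together with a retraction $\pi_{\sN}\colon\sN\to\bbR$, and then shows maximality forces $\sN=\sE$ by adjoining one element at a time and using divisibility of $\bbR$ to extend $\pi_{\sN}$. Your approach is cleaner and more conceptual, since it reduces everything to a standard categorical fact; the paper's direct argument has the minor virtue of being self-contained and making the role of divisibility explicit at the extension step, which is presumably why the authors chose it as the primary proof while relegating the Grothendieck-group argument to a remark.
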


\begin{proof}
	The statement follows from the fact that $\bbR$ is a divisible abelian group (see Remark \ref{rem: divisible}).
	We will give a proof for the sake of completeness. 
	We consider the set of pairs $(\sN, \pi_{\!\sN})$, where $\sN$ is a submonoid of $\sE$ containing $i(\bbR)$
	and $\pi_{\!\sN}\colon\sN\rightarrow \bbR$ is an homomorphism of monoids such that 
	$\pi_{\!\sN}\circ\iota=\id_\bbR$.
	
	We consider an order on the set of such pairs such that $(\sN, \pi_{\!\sN})\leq (\sN', \pi_{\!\sN'})$
	if and only if $\sN\subset\sN'$ and $\pi_{\!\sN'}\big|_{\!\sN}=\pi_{\!\sN}$.
	Let $((\sN_i,\pi_{\!\sN_i}))_{i\in I}$ be a totally ordered set of such pairs.
	If we let $\sN\coloneqq\bigcup_{i\in I}\sN_i,$ and if we let 
	$\pi_{\!\sN}\colon\sN\rightarrow \bbR$ be a homomorphism of 
	monoids obtained as the collection of $\pi_{\!\sN_i}$, 
	then we have $\pi_{\!\sN}\circ\iota=\id_\bbR$.  Hence $(\sN,\pi_{\!\sN})$ is a maximal element of the totally
	ordered set $((\sN_i,\pi_{\!\sN_i}))_{i\in I}$.  By Zorn's lemma, there exists a maximal pair $(\sM,\pi_{\!\sM})$
	in the set of all such pairs.  
	
	We will prove by contradiction that $\sM=\sE$.
	Suppose $\sM\subsetneq\sE$.  Let $w\in\sE$ such that $w\not\in\sM$,
	and let  $\sM'\coloneqq\sM+_{\!\sE}\bbN w$.
	Note that since $\sE$ is commutative,
	if there exists a nontrivial algebraic relation between elements of $\sM$ and $\bbN w$,
	then there would exist $a,b\in\sM$ such that $a+_{\!\sE} mw=b+_{\!\sE}nw$
	for some $m,n\in\bbN$.  By the cancellation property of $\sE$, this implies
	that $a+_{\!\sE} nw=b$ for some $a,b\in\sM$ and integer $n>0$.
	In the case that there exists an integer $n>0$ and $a,b\in\sM$ such that $a+_{\!\sE} nw=b$,
	we define $u$ be an element in $\bbR$ such that
	\[
		nu=\bigl(\pi_{\!\sM}(b)-\pi_{\!\sM}(a)\bigr)\in\bbR.
	\]
	If $n',a',b'$ satisfies the same condition, then we have $n' a+_{\!\sE} n'nw=n'b$
	and $n a'+_{\!\sE} nn'w=nb'$.  Combining this equality,  we have
	\[
	n'a+_{\!\sE} n'nw+_{\!\sE} nb'=n'b+_{\!\sE} n a'+_{\!\sE} nn'w,
	\]	
	hence $n' a+_{\!\sE} nb'=n'a+_{\!\sE} n b'$ since $\sE$ is commutative and satisfies the cancellation property.
	This shows that we have
	\[
		n'(\pi_{\!\sM}(b) -  \pi_{\!\sM}(a)) = n( \pi_{\!\sM}(b')- \pi_{\!\sM}(a')),
	\]
	which implies that $u$ is independent of the choice of $a,b\in\sM$ and $n>0$.
	On the other hand, if for any integer $n>0$ and $a,b\in\sM$, we have  $a+_{\!\sE} nw\neq b$,
	then we let $u=0$.  
	In both cases, we define the function 
	$\pi_{\!\sM'}\colon\sM'\rightarrow \bbR$ by 
	$\pi_{\sM'}(a+nw)=\pi_{\!\sM}(a)+nu$ for any $a\in\sM$ and $n\in\bbN$.  
	This gives a homomorphism of monoids satisfying 
	$\pi_{\sM'}\circ\iota=\id_\bbR$, hence we have $(\sM,\pi_{\!\sM})<(\sM',\pi_{\!\sM'})$,
	which contradicts the fact that  $(\sM,\pi_{\!\sM})$ is maximal for such pairs.
	
	This shows that we have $\sM=\sE$.  Then $\pi\coloneqq\pi_{\!\sM}\colon\sE\rightarrow \bbR$ 
	is a homomorphism of monoids satisfying $\pi\circ\iota=\id_\bbR$ as desired.
\end{proof}

\begin{remark}\label{rem: divisible}
	Assume that $D$ is a \textit{divisible} abelian group, that is, 
	for any $v\in D$ and integer $n>0$,
	there exists $u\in D$ such that $un=v$.
	Then 
	we may prove that if $\iota\colon D\hookrightarrow\sE$ is an injective homomorphism
	into a commutative monoid $\sE$ satisfying the cancellation property,
	then there exists a homomorphism of monoids $\pi\colon\sE\rightarrow D$
	such that $\pi\circ\iota=\id_D$.	
	Indeed, by the Grothendieck construction,
	the monoid $A\coloneqq(\sE\times\sE)/\sim$ defined via the equivalence 
	relation $(a,b)\sim(a',b')$ if $a+_{\sE} b'=a'+_{\sE} b$ for any $(a,b), (a',b')\in\sE\times\sE$
	is an abelian group, since $\sE$ satisfies the cancellation property.
	In addition, we have an
	injective homomorphism $\sE\hookrightarrow A$ 
	given by mapping $a\in\sE$ to the class of $(a,0)$ in $A$.
	Since the composition $\iota_A\colon D\hookrightarrow\sE\hookrightarrow A$
	is an injective homomorphism of abelian 
	groups, from the fact that $D$ is divisible hence an injective object
	in the category of abelian groups (see \cite{Rob96}*{4.1.2}),
	there exists a homomorphism $\pi_A\colon A\rightarrow D$
	such that $\pi_A\circ\iota_A=\id_D$.
	Then $\pi\coloneqq\pi_A|_{\sE}$ satisfies $\pi\circ\iota=\id_D$ as desired.
	The authors thank Kei Hagihara and Shuji Yamamoto for discussion concerning this remark
	as well as the proof of Lemma \ref{lem: retract}.
\end{remark}

Next, let $\Val$ be a commutative monoid with operation $+$ and identity element $0$.
We first give a construction of extensions of monoids 
arising from a pairing  $H\colon \Val\times\Val\rightarrow\bbR$ satisfying $H(0,0)=0$ and
 \begin{equation}\label{eq: cc}
		H(\gra,\grb)+H(\gra+\grb,\grc)=H(\grb,\grc)+H(\gra,\grb+\grc)	
\end{equation}
for any $\gra,\grb,\grc\in\Val$.
We let $\sE \coloneqq \Val \times\bbR$, and we denote an element
of $\sE$ by $[\gra, u]$ for $\gra\in\Val$ and $u\in \bbR$.  
Define the binary operation $+_{\sE}$ on $\sE$ by
\[
	[\gra,u]+_{\sE}[\grb,v]\coloneqq[\gra+\grb,u+v-H(\gra,\grb)]
\]
for any $\gra,\grb\in\Val$ and $u,v\in\bbR$.
The element $[0,0]$ is an identity element with respect to $+_{\sE}$, and the
cocycle condition \eqref{eq: cc} ensures that $+_{\sE}$ is associative.
Hence $\sE$ is a monoid with respect to the operation $+_{\sE}$,
which fits into the exact sequence of monoids
\begin{equation}\label{eq: extension}
	\xymatrix{
			0\ar[r]&\bbR\ar[r]&\sE\ar[r]&\Val\ar[r]&0.
	}
\end{equation}
Here, the arrows are homomorphisms of monoids
with the injection $\bbR\rightarrow\sE$ 
given by $u\mapsto [0,u]$, and the surjection $\sE\rightarrow\Val$ given by $[\gra,u]\mapsto\gra$.
If the pairing $H$ is symmetric, then we see that $\sE$ is a commutative monoid.
Note that if $\Val$ satisfies the right cancellation property,
then $\sE$ also satisfies the right cancellation property.  In fact,
if $[\gra,u]+_{\sE}[\grb,v]=[\gra',u]+_{\sE}[\grb,w]$ for some $\gra,\gra',\grb\in\Val$ and $u,u',v\in\bbR$, 
then we have
\[
	[\gra+\grb,u+v-H(\gra,\grb)]=[\gra'+\grb,u'+v-H(\gra',\grb)].
\]
This shows that $\gra+\grb=\gra'+\grb$, hence $\gra=\gra'$ since $\Val$ satisfies
the right cancellation property.
Then we have $u+v-H(\gra,\grb) =u'+v-H(\gra',\grb)= u'+v-H(\gra,\grb)$ in $\bbR$,
hence $u=u'$.  This shows that $[\gra,u]=[\gra',u']$, proving that the monoid $\sE$ 
satisfies the right cancellation property.

\begin{lemma}\label{lem: splitting1}
	Let $\Val$ be a commutative monoid satisfying the cancellation property,
	and let $H\colon \Val\times\Val\rightarrow\bbR$ be a symmetric pairing satisfying
	the cocycle condition \eqref{eq: cc}.
	Then there exists a function
	\[
		h\colon\Val\rightarrow\bbR
	\]
	such that $H(\gra,\grb)=h(\gra)+h(\grb) - h(\gra+\grb)$ for any $\gra,\grb\in\Val$.
\end{lemma}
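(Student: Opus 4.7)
The plan is to recognise $H$ as a symmetric $2$-cocycle on the monoid $\Val$ with values in $\bbR$, and to use the central extension construction developed in the paragraph preceding the lemma together with Lemma \ref{lem: retract} to split off the $\bbR$-factor. The desired map $h$ will be read off from the splitting homomorphism restricted to $\Val\times\{0\}$.

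First I would reduce to the case $H(0,0)=0$. If $c\coloneqq H(0,0)$, then the pairing $H'\coloneqq H-c$ is still symmetric, and it still satisfies the cocycle condition \eqref{eq: cc} since the constant $c$ appears twice on each side and cancels. Moreover $H'(0,0)=0$. If I can find $h'$ with $H'(\gra,\grb)=h'(\gra)+h'(\grb)-h'(\gra+\grb)$, then $h\coloneqq h'+c$ satisfies the analogous relation for $H$. So from now on I assume $H(0,0)=0$. Setting $\grb=0$ in \eqref{eq: cc} gives $H(\gra,0)=H(0,\grc)$ for all $\gra,\grc\in\Val$, and $\grc=0$ then yields $H(\gra,0)=0$; by the symmetry of $H$ we also get $H(0,\gra)=0$ for all $\gra\in\Val$.

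Next I apply the explicit construction from the paragraph immediately preceding the lemma to this $H$. The set $\sE\coloneqq\Val\times\bbR$ with operation $[\gra,u]+_{\sE}[\grb,v]\coloneqq[\gra+\grb,u+v-H(\gra,\grb)]$ is a commutative monoid, commutativity coming precisely from the symmetry of $H$, and it fits into the exact sequence of monoids
\[
	0\longrightarrow\bbR\xrightarrow{\ \iota\ }\sE\longrightarrow\Val\longrightarrow 0,
\]
with $\iota(u)=[0,u]$. The paper already records that $\sE$ inherits the cancellation property from $\Val$. Consequently, Lemma \ref{lem: retract} applies and yields a homomorphism of monoids $\pi\colon\sE\to\bbR$ with $\pi\circ\iota=\id_\bbR$.

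Finally I would define $h\colon\Val\to\bbR$ by $h(\gra)\coloneqq-\pi([\gra,0])$. Using $H(\gra+\grb,0)=0$ established above, one has the identity
\[
	[\gra,0]+_{\sE}[\grb,0]=[\gra+\grb,-H(\gra,\grb)]=[\gra+\grb,0]+_{\sE}[0,-H(\gra,\grb)]
\]
in $\sE$. Applying $\pi$ to both sides, and using $\pi([0,u])=u$, gives
$
	\pi([\gra,0])+\pi([\grb,0])=\pi([\gra+\grb,0])-H(\gra,\grb),
$
which rearranges to $H(\gra,\grb)=h(\gra)+h(\grb)-h(\gra+\grb)$, as required. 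The only nontrivial input is Lemma \ref{lem: retract} (which is where Zorn's lemma / the divisibility of $\bbR$ enters); everything else is formal bookkeeping that $H$ is exactly the obstruction to splitting the extension $\sE\to\Val$. The main obstacle I anticipate is simply ensuring compatibility of conventions at $0$, which is why I first normalise to $H(0,0)=0$.
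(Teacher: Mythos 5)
Your proof is correct and follows essentially the same route as the paper: normalize to $H(0,0)=0$, form the extension monoid $\sE=\Val\times\bbR$, invoke Lemma \ref{lem: retract} to obtain the retraction $\pi$, and read off $h$ from it. The paper packages the last step through the auxiliary element $\wt\gra=[\gra,u]+_{\sE}[0,-\pi([\gra,u])]$ (shown independent of $u$), which for $u=0$ is exactly your $h(\gra)=-\pi([\gra,0])$, so the two arguments coincide up to presentation.
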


\begin{proof}	
	If we prove our assertion for the pairing $\wt H(\gra,\grb)\coloneqq H(\gra,\grb)-H(0,0)$ and show
	that  there exists a function $\wt h\colon\Val\rightarrow\bbR$ satisfying our condition, 
	then our assertion is proved also for $H(\gra,\grb)$ by taking $h(\gra)\coloneqq\wt h(\gra) + H(0,0)$.
	Hence by replacing $H$ by $\wt H$, we may assume that $H(0,0)=0$.
	Consider the extension $\sE$ given in \eqref{eq: extension} corresponding to the pairing $H$.
	Then $\sE$ is a commutative monoid since $H$ is symmetric.  Hence by Lemma \ref{lem: retract},
	there exists a homomorphism of monoids $\pi\colon\sE\rightarrow\bbR$ such that 
	$\pi([0,u])=u$ for any $u\in\bbR$.
	 For any $\gra\in\Val$, 
	choose an arbitrary $u\in\bbR$ and let
	\begin{equation}\label{eq: tilde}
		\wt\gra\coloneqq [\gra,u]+_{\sE}[0,-\pi([\gra,u])]\in\sE.
	\end{equation}
	If $v\in\bbR$, then we have
	$[\alpha,v]=[\alpha,u]+_{\sE} [0,w]$ for some $w\in\bbR$ since the classes of 
	$[\alpha,v]$ and $[\alpha,u]$ coincide on $\Val\cong\sE/\bbR$, hence
	\begin{align*}
		 [\gra,v]+_{\sE}[0,-\pi([\gra,v])]&=[\gra,u]+_{\sE} [0,w]+_{\sE}[0,-\pi([\gra,u]+_{\sE} [0,w])]\\
		&=[\gra,u]+_{\sE} [0,w]+_{\sE}[0,-\pi([\gra,u])]+_{\sE}[0,-\pi([0,w])]\\
		&=[\gra,u]+_{\sE} [0,w]+_{\sE}[0,-\pi([\gra,u])]+_{\sE}[0,-w]\\
		&=[\gra,u]+_{\sE}[0,-\pi([\gra,u])]=\wt\gra,
	\end{align*}
	hence $\wt\gra$ is independent of the choice of $u\in\bbR$.
	We define $h\colon\Val\rightarrow\bbR$ to be the function defined by
	\[
		\wt\gra=[\gra,h(\gra)].
	\]
	Note that by \eqref{eq: tilde}, for any $\gra,\grb\in\Val$, we have
	\begin{align*}
		\wt\gra+_{\sE}\wt\grb&=[\gra,u]+_{\sE}[0,-\pi([\gra,u])]+_{\sE}[\grb,H(\gra,\grb)]
		+_{\sE}[0,-\pi([\grb,H(\gra,\grb)])]\\
		&=[\gra+\grb,u]+_{\sE}[0,-\pi([\gra,u])-\pi([\grb,H(\gra,\grb)])]\\
		&=[\gra+\grb,u]+_{\sE}[0,-\pi([\gra+_{\sE}\grb,u])]=\wt{\gra+\grb}.
	\end{align*}
	This shows that
	\begin{align*}
		[\gra+\grb,h(\gra+\grb)]&=\wt{\gra+\grb}=\wt\gra+_{\sE}\wt\grb\\
		&=[\gra,h(\gra)]+_{\sE}[\grb,h(\grb)]=[\gra+\grb,h(\gra)+h(\grb)-H(\gra,\grb)].
	\end{align*}
	Hence we have
	$
		h(\gra+\grb)=h(\gra)+h(\grb)-H(\gra,\grb)
	$	
	for any $\gra,\grb\in\Val$ as desired.
\end{proof}

Next, we consider the case when $\Val\cong\bbN$ or $\Val\cong\bbZ$.

\begin{lemma}\label{lem: splitting2}
	Assume that $\Val\cong\bbN$ or $\Val\cong\bbZ$,
	viewed as a commutative monoid with respect to addition,
	and let $H\colon \Val\times\Val\rightarrow\bbR$ be a pairing satisfying
	the cocycle condition \eqref{eq: cc}.
	Then there exists a function 
	\[
		h\colon\Val\rightarrow\bbR
	\]
	such that $H(\gra,\grb)=h(\gra)+h(\grb) - h(\gra+\grb)$ for any $\gra,\grb\in\Val$.
\end{lemma}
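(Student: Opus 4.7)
The plan is to construct $h$ explicitly by a one-variable recursion along a generator of $\Val$, exploiting the fact that $\Val \cong \bbN$ or $\bbZ$ is cyclic. First I extract two consequences of the cocycle condition \eqref{eq: cc}: setting $\grb = \grc = 0$ forces $H(\gra, 0) = H(0, 0)$ for every $\gra \in \Val$, and setting $\gra = \grb = 0$ forces $H(0, \grc) = H(0, 0)$ for every $\grc \in \Val$. So, writing $c \coloneqq H(0, 0)$, the pairing $H$ takes the constant value $c$ on both coordinate axes of $\Val \times \Val$.

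Next, identify $\Val$ with $\bbN$ or $\bbZ$ so that its canonical generator is $1$. Define $h \colon \Val \to \bbR$ by $h(0) \coloneqq c$ together with the recursion
\[
	h(n+1) \coloneqq h(n) - H(n, 1),
\]
applied iteratively for $n \geq 0$. When $\Val \cong \bbZ$, the same recursion read backwards as $h(n-1) \coloneqq h(n) + H(n-1, 1)$ extends $h$ to negative integers.

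Finally, I prove $H(\gra, \grb) = h(\gra) + h(\grb) - h(\gra + \grb)$ for all $\gra, \grb \in \Val$ by fixing $\gra$ and inducting on $\grb$. The base case $\grb = 0$ reduces to $H(\gra, 0) = c = h(\gra) + h(0) - h(\gra)$, which holds. For the inductive step, the definition of $h$ directly yields
\[
	\bigl(h(\gra) + h(\grb+1) - h(\gra+\grb+1)\bigr) - \bigl(h(\gra) + h(\grb) - h(\gra+\grb)\bigr) = H(\gra+\grb, 1) - H(\grb, 1),
\]
while the cocycle identity applied to the triple $(\gra, \grb, 1)$ rearranges to $H(\gra, \grb+1) - H(\gra, \grb) = H(\gra+\grb, 1) - H(\grb, 1)$. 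Combined with the inductive hypothesis, this gives the formula at $\grb + 1$. When $\Val \cong \bbZ$, running the same argument in the opposite direction propagates the identity to negative $\grb$.

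In contrast with Lemma \ref{lem: splitting1}, the symmetry hypothesis on $H$ is dropped here, so one cannot appeal to the abstract splitting argument built from Lemma \ref{lem: retract}. However, the drastic restriction on $\Val$ more than compensates: because $\Val$ is singly generated, the recursion pins $h$ down (uniquely up to adding a homomorphism $\Val \to \bbR$), and the cocycle identity is precisely the compatibility condition that makes this single-variable recursion globally consistent. No real obstacle arises; the only mild care needed is the two-sided extension in the case $\Val \cong \bbZ$.
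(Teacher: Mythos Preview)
Your proof is correct. Both your argument and the paper's rest on the same underlying recursion $h(n+1)=h(n)-H(n,1)$ along the generator $1$, but the packaging differs. The paper first normalizes to $H(0,0)=0$, builds the extension monoid $\sE=\Val\times\bbR$ with twisted addition $[\gra,u]+_{\sE}[\grb,v]=[\gra+\grb,u+v-H(\gra,\grb)]$, and defines $h(\gra)$ implicitly via $n[1,0]=[\gra,h(\gra)]$; the coboundary identity then falls out of associativity of $+_{\sE}$. You instead bypass the extension construction entirely, absorb the normalization into the initial value $h(0)=c$, and verify the identity by a direct induction on $\grb$ using the cocycle relation for the triple $(\gra,\grb,1)$. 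Your route is more elementary and self-contained; the paper's route is more conceptual, making the link to the vanishing of $H^2(\bbZ,\bbR)$ (their Remark~\ref{rem: splitting}) transparent. Unwinding the paper's $n[1,0]$ computation yields exactly your recursion, so the two arguments are equivalent at the level of formulas.
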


\begin{proof}
	Again, as in the proof of Lemma \ref{lem: splitting2},
	by replacing the pairing $H$ by $H-H(0,0)$, we may assume that $H(0,0)=0$.
	We first treat the case $\Val\cong\bbN$, which we regard as a commutative 
	monoid with respect to the usual addition. 
	Consider the extension $\sE$ given in \eqref{eq: extension} corresponding to the pairing $H$.
	Note that $[1,0]$ gives an element in $\sE$.
	For any $\gra\in\Val$ corresponding to $n\in\bbN$, 
	we define $h(\gra)$ to be the element in $\bbR$ given by the  formula
	\[
		n[1,0] = [1,0]+_{\sE}\cdots+_{\sE}[1,0]=[\gra,h(\gra)],
	\]
	where $n[1,0]$ is the $n$-fold sum of  $[1,0]$ with respect to the operator $+_{\sE}$.
	Then for $\grb\in\Val$ corresponding to $n'\in\bbN$, we have
	$
		n'[1,0] = [ \grb, h(\grb)]
	$
	and
	\begin{equation}\label{eq: 1}
		 [ \gra, h(\gra)]+_{\sE} [ \grb, h(\grb)]=(n+n')[1,0] = [ \gra+\grb, h(\gra+\grb)].
	\end{equation}
	By the definition of the operation $+_{\sE}$, we have
	\[
		 [ \gra, h(\gra)]+_{\sE} [ \grb, h(\grb)]=[\gra+\grb,h(\gra)+h(\grb)-H(\gra,\grb)].
	\]
	Since the operation $+_{\sE}$ is associative, comparing this equality with \eqref{eq: 1},
	we have $ h(\gra+\grb)=h(\gra)+h(\grb)-H(\gra,\grb)$
	for any $\gra,\grb\in\Val$.
	Hence we see that $h\colon\Val\rightarrow\bbR$ satisfies the 
	requirement of our assertion.
	The case for $\Val\cong\bbZ$ may be proved precisely in the same manner.
	In this case, we regard $\Val$ and $\sE$ as abelian groups instead of 
	commutative monoids.
\end{proof}

\begin{remark}\label{rem: splitting}
	For the case that $\Val$ is an abelian group, the extension 
	$\sE$ in Lemma \ref{lem: splitting1} corresponding to the symmetric
	cocycle $H\colon\Val\times\Val\rightarrow\bbR$
	is a commutative group.
	The existence of $h\colon\Val\rightarrow\bbR$ 
	for this case corresponds to the well-known fact that
	such an extension $\sE$ splits, since the additive group of $\bbR$ is
	divisible hence an injective object in the category of abelian groups
	 (see \cite{Rob96}*{4.1.2}).
	The statement for Lemma \ref{lem: splitting2}
	corresponds to the fact that the group cohomology
	$H^2(\bbZ,\bbR)=\{0\}$.  The existence of $h\colon\Val\rightarrow\bbR$ 
	ensures that the cocycle $H\colon\Val\times\Val\rightarrow\bbR$ is in fact symmetric
	in this case.
\end{remark}

We are now ready to prove Theorem \ref{thm: 1}.

\begin{proof}[Proof of Theorem \ref{thm: 1}]
	Suppose $\omega\in Z_\unif(\State) = C^1_\unif(\State)\cap Z^1(\State_*)$.  
	Since $\omega\in Z^1(\State_*)$, by Lemma \ref{lem: exact}, there exists $f\in \CS$
	such that $\partial f=\omega$.  Furthermore, since $\omega\in C^1_\unif(\State)$,
	there exists $R>0$ such that $\partial f=\omega\in C^1_R(\State)$.
	Hence by 
	Definition \ref{def: h case2} and Proposition \ref{prop: cocycle}
	(see also Proposition \ref{prop: h_f}),
	there exists a pairing
	$h_f\colon \Val\times\Val\rightarrow\bbR$ satisfying the cocycle  condition \eqref{eq: cocycle}.
	By Lemma \ref{lem: commutative}
	if $X$ is transferable (see also Proposition \ref{prop: h_f})
	and by Remark \ref{rem: splitting} if the interaction is simple, we see that $h_f$
	is symmetric.  Note that here we have used the fact that $\cM\cong\bbN$ or $\cM\cong\bbZ$
	if the interaction is simple.
	Hence by 
	Proposition \ref{prop: def h two} (see also Proposition \ref{prop: h_f}),
	we have
	\[
		\iota^{\La\cup\La'} f(\state)-\iota^{\La} f(\state)-\iota^{\La'}f(\state)
		=h_f(\bsxi_\La(\state),\bsxi_{\La'}(\state))
	\]
	for any $(\La,\La')\in\sA_R$ and $\state\in\State$.  
	Note that $\Val$ satisfies the cancellation property since it is a submonoid of an abelian group.
	Hence by Lemma \ref{lem: splitting1}, there exists $h\colon\Val\rightarrow\bbR$
	such that 
	\begin{equation}\label{eq: coboundary}
		h_f(\gra,\grb)=h(\gra)+h(\grb)-h(\gra+\grb)
	\end{equation}
	for any $\gra,\grb\in\Val$.
	We define the function $g\in\CS$ by 
	\[
		g\coloneqq f+h\circ \bsxi_{\!X}.
	\] 
	We will prove that $g$ is uniform.
	By Lemma \ref{lem: horizontal}, 
	for any conserved quantity $\xi\colon S\rightarrow\bbR$, the function  $\xi_X\in\CS$
	is horizontal.  This implies that $\nabe\xi_X=0$, hence
	$\xi_X(\state^e)=\xi_X(\state)$ for any $e\in E$.
	This shows that $h\circ \bsxi_{\!X}(\state^e)= h\circ \bsxi_{\!X}(\state)$ for any $e\in E$,
	hence $\nabe(h\circ \bsxi_{\!X})=0$, which implies that $\partial (h\circ \bsxi_{\!X})=0$.
	This gives the formula $\partial g=\partial f=\omega$.	
	Furthermore, noting that 
	$\iota^{\La} (h\circ \bsxi_{\!X}(\state))=h\circ \bsxi_\La(\state)$ for any $\La\subset X$, 
	we have
	\begin{align*}
		\iota^{\La\cup\La'}g(\state)-\iota^{\La} g(\state)-\iota^{\La'}g(\state)
		&=\iota^{\La\cup\La'} f(\state)-\iota^{\La} f(\state)-\iota^{\La'}f(\state) \\
		&\qquad+h\circ \bsxi_{\La\cup\La'}(\state)- h\circ \bsxi_\La(\state)-h\circ \bsxi_{\La'}(\state)=0
	\end{align*}
	for any $(\La,\La')\in\sA_R$ and $\state\in\State$, 
	where we have used the coboundary condition \eqref{eq: coboundary} and the fact that
	$\bsxi_{\La\cup\La'}(\state) = \bsxi_{\La}(\state)+\bsxi_{\La'}(\state)$.	
	From the definition of the pairing given in Definition \ref{def: h case2}, 
	we see that $h_g\equiv0$. 
	Hence by Proposition \ref{prop: important wr}, 
	we see that $g\in C^0_\unif(\State)$.
	Our assertion is now proved by replacing $f$ by $g$.
\end{proof}

By using Theorem \ref{thm: 1},
we may calculate the uniform cohomology of $\State$ as follows.

\begin{theorem}\label{thm: cohomology}
	Let $X$ be a locale, and assume that the interaction is irreducibly quantified.
	If either $X$ is transferable, or the interaction is simple and $X$ is weakly transferable,
	then we have
	\begin{align*}
		H^m_\unif(\State) \cong
		\begin{cases}
			\Consv^\phi(S)  &m=0,\\
			 \{0\}  &  m\neq0.
		\end{cases}
	\end{align*}
	In particular, we have 
	$\dim_\bbR H^0_\unif(\State)=\dim_\bbR\Consv^\phi(S)$.
\end{theorem}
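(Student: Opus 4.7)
The plan is to assemble the theorem directly from the results already established earlier in the paper, since essentially all of the real work has been done. Recall that by Definition \ref{def: uc} the uniform cohomology is concentrated in degrees $0$ and $1$, so the statement $H^m_\unif(\State) = \{0\}$ for $m \neq 0, 1$ is immediate from the definition. It remains to compute $H^0_\unif(\State)$ and $H^1_\unif(\State)$.

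For $H^0_\unif(\State) = \Ker \partial$, the plan is to verify that the correspondence $\xi \mapsto \xi_X$ yields an isomorphism $\Consv^\phi(S) \cong \Ker \partial$. The map $\xi \mapsto \xi_X$ lands in $C^0_\unif(\State)$ because $\xi_X$ has the canonical expansion $\xi_X = \sum_{x \in X}\xi_{\{x\}}$ where $\xi_{\{x\}}(\state) = \xi(\eta_x)$, so the diameter of the supports in the canonical expansion is bounded by $0$. Lemma \ref{lem: horizontal} shows that $\xi_X$ is horizontal, so the map factors through $\Ker \partial$. Injectivity is clear: evaluating $\xi_X$ on a configuration with state $s$ at a single vertex $x$ recovers $\xi(s)$. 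Surjectivity is exactly the content of Theorem \ref{thm: 2}: any horizontal uniform function equals $\xi_X$ for some $\xi \in \Consv^\phi(S)$. This gives $H^0_\unif(\State) \cong \Consv^\phi(S)$.

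For $H^1_\unif(\State) = Z^1_\unif(\State)/\Im \partial$, we need $\partial \colon C^0_\unif(\State) \to Z^1_\unif(\State)$ to be surjective. But under either hypothesis ($X$ transferable, or the interaction simple and $X$ weakly transferable), this is precisely Theorem \ref{thm: 1}: for every $\omega \in Z^1_\unif(\State)$ there exists $f \in C^0_\unif(\State)$ with $\partial f = \omega$. Hence $H^1_\unif(\State) = \{0\}$.

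The last line, $\dim_\bbR H^0_\unif(\State) = \dim_\bbR \Consv^\phi(S)$, follows from the isomorphism just established. No step here is an obstacle in itself; the entire conceptual difficulty has been absorbed into Theorem \ref{thm: 2} (which required the irreducibly quantified hypothesis to pin down horizontal uniform functions) and Theorem \ref{thm: 1} (which required the transferability or simplicity hypothesis to produce the splitting map $h$ on the monoid $\Val$ via Lemmas \ref{lem: splitting1} and \ref{lem: splitting2}). So the proof is essentially a two-line deduction from those two theorems together with Definition \ref{def: uc} and Lemma \ref{lem: horizontal}.
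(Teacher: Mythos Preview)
Your proposal is correct and follows essentially the same approach as the paper's own proof: both reduce the computation of $H^0_\unif$ and $H^1_\unif$ to Theorem \ref{thm: 2} and Theorem \ref{thm: 1} respectively, after observing that $\xi\mapsto\xi_X$ lands in $C^0_\unif(\State)$ via its canonical expansion and is horizontal by Lemma \ref{lem: horizontal}. The only small point you leave implicit is that $\xi_X(\star)=0$ (needed so that $\xi_X\in C^0_\unif(\State)$ rather than merely $C_\unif(\State)$), but this is immediate from $\xi(*)=0$.
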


\begin{proof}
	As stated in \S\ref{subsec: overview}, Theorem \ref{thm: cohomology} is equivalent to the fact
	that the sequence
	\begin{equation}\label{eq: SES3}
	\xymatrix{
	0\ar[r]&\Consv^\phi(S)\ar[r]&C^0_\unif(\State)\ar[r]^<<<<{\partial}&Z^1_\unif(\State)\ar[r]&0
	}
	\end{equation}
	is exact.
	A conserved quantity $\xi\colon S\rightarrow\bbR$
	defines a uniform function $\xi_X\colon\State\rightarrow\bbR$
	whose definition $\xi_X\coloneqq\sum_{x\in X}\xi_x$ is
	the canonical expansion \eqref{eq: expansion} with each $\xi_x\in C_{\{x\}}(\State)$.
	This shows that 
	$\xi_X$ is uniform satisfying $\xi_X(\star)=0$, hence \eqref{eq: inclusion} induces an inclusion
	\begin{equation}\label{eq: inclusion again}
		 \Consv^\phi(S)\hookrightarrow C^0_\unif(\State).
	\end{equation}
	On the other hand, if $f\in C^0_\unif(\State)$ satisfies $\partial f=0$,
	then by Theorem \ref{thm: 2},
	there exists a conserved quantity $\xi\colon S\rightarrow\bbR$
	such that $f(\state)=\xi_X(\state)$,
	which shows that $f$ is in the image of \eqref{eq: inclusion again}.
	This proves that we have an isomorphism 
	\begin{equation}\label{eq: H0}
		 \Consv^\phi(S)\cong\Ker\partial=H^0_\unif(\State).
	\end{equation}
	From Theorem \ref{thm: 1}, we see that the differential $\partial$ is surjective,
	hence the short exact sequence \eqref{eq: SES3} is exact.
	Our assertion now follows from the definition of uniform cohomology
	given in Definition \ref{def: uc}.
\end{proof}

\begin{remark}
	The fact that $H^m_\unif=\{0\}$ for $m\neq0$ reflects the fact
	that we are viewing the configuration space as modeling a space with a simple 
	topological structure whose only topological
	feature is its connected components.
	The $H^0_\unif$ is expressed in terms of the conserved 
	quantities and is finite dimensional if
	$\Consv^\phi(S)$ is finite dimensional.	
\end{remark}

\begin{example}
	In each of the examples of Example \ref{example: interactions2},
	we have the following.
	\begin{enumerate}
		\item In the case of the multi-species exclusion process, we have $H^0_\unif(\State)\cong\bbR^{\kappa}$,
		where $\kappa>0$ is such that $S=\{0,\ldots,\kappa\}$.
		\item In the case of the generalized exclusion process, 
		we have $H^0_\unif(\State)\cong\bbR$.
		\item In the case of the lattice gas with energy process,
		we have $H^0_\unif(\State)\cong\bbR^2$.
		\item For the interaction of Example \ref{example: interactions2} (4), 
		we have $H^0_\unif(\State)\cong\bbR$.
		\item For the Glauber Model of Example \ref{example: interactions2} (5), 
		we have $H^0_\unif(\State)\cong\{0\}$.
	\end{enumerate}
\end{example}

%
\subsection{Group Action on the Configuration Space}\label{subsec: group}
%

In this subsection, we first review basic definitions and results concerning group cohomology
of a group $G$ acting on an $\bbR$-linear space $V$.
We will then consider the action of a group $\Group$ on the locale $X$.

We say that an $\bbR$-linear space $V$ is a $\Group$-module,
if any $\grs\in\Group$ gives an $\bbR$-linear homomorphism
$\grs\colon V\rightarrow V$ such that $(\grt\grs)(v)=\grt(\grs(v)) =\grt\circ\grs(v)$ 
for any $\grs,\grt\in\Group$ and $v\in  V$.
In what follows, we will often simply denote $\grs(v)$ by $\grs v$.
For any $\Group$-module $V$, we denote by $V^\Group$ the 
$\Group$-invariant subspace of $V$, defined by
$V^\Group\coloneqq\{v\in V\mid \grs v=v\,\forall\grs\in\Group\}$.

\begin{definition}\label{def: group cohomology}
	Let $V$ be a $\Group$-module.  The \textit{zeroth} group cohomology $H^0(\Group,V)$
	of $\Group$ with coefficients in $V$ is given as 
	\[
		H^0(\Group,V)\coloneqq V^G.
	\]
	Furthermore, we let 
	\begin{align*}
		Z^1(\Group,V)&\coloneqq\{\psi\colon\Group\rightarrow
		 V\mid\psi(\grs\grt)=\grs\psi(\grt)+\psi(\grs)
		\,\forall \grs,\grt\in\Group\},\\
		B^1(\Group,V)&\coloneqq\{\psi\in Z^1(\Group,V)\mid \exists v\in V,  \psi(\grs) = (\grs-1)v
		\,\forall\grs\in\Group\},
	\end{align*}
	where $1$ is the identity element of $\Group$.  Then the \textit{first} group cohomology $H^1(\Group,V)$ 
	of $\Group$ with coefficients in $V$ is given as 
	\[
		H^1(\Group,V)\coloneqq Z^1(\Group,V)/B^1(\Group,V).
	\]
	In particular, if the action of $\Group$ on $V$ is \textit{trivial}, in other words, if $\grs v=v$ 
	for any $v\in V$ and $\grs\in\Group$, then we have $H^0(\Group,V)=V$ and
	\begin{equation}\label{eq: hom}
		H^1(\Group, V)=\Hom(\Group,V),
	\end{equation}
	where $\Hom(\Group,V)$ denotes the set of homomorphisms of groups from $\Group$ to $V$.
\end{definition}

\begin{remark}
	The group  cohomology of $\Group$ with coefficients in $V$ is usually defined using the
	right derived functor of the functor $\Hom_{\bbZ[\Group]}(\bbZ,-)$ applied to $V$.
	In other words, $H^m(\Group,V)\coloneqq  \Ext^m_{\bbZ[\Group]}(\bbZ,V)$ for any integer 
	$m\in\bbZ$ (see for example \cite{AW65}*{\S1}). Definition \ref{def: group cohomology} 
	is the well-known description of this
	cohomology group in terms of explicit cocycles (see \cite{AW65}*{\S2}).
\end{remark}

Let $V$ and $V'$ be $\Group$-modules.
We say that an $\bbR$-linear homomorphism
\[
	\grhom\colon V\rightarrow V'
\]
is  a $\Group$-homomorphism, 
if $\grhom(\grs v)=\grs\grhom(v)$ for any $\grs\in\Group$ and $v\in V$.
By definition, we have $\grhom(V^\Group)\subset\grhom(V)^\Group$,
where $\grhom(V)$ is the $\Group$-submodule of $V$ defined to be the image of $V$
with respect to $\grhom$.  
Note that $\grhom$ gives an exact sequence
\[
	\xymatrix{
		0\ar[r]&\Ker\grhom\ar[r]&V\ar[r]&\grhom(V)\ar[r]&0
	}
\]
of $\Group$-modules, which by the standard theory of cohomology of groups
(see for example \cite{AW65}*{(1.3)})
gives rise to the long exact sequence
\begin{equation}\label{eq: LES}
	\xymatrix{
		0\ar[r]&(\Ker\grhom)^\Group\ar[r]&V^\Group\ar[r]^-{\grhom}& \grhom(V)^\Group\ar[r]^-{\delta}&
		H^1(\Group,\Ker\grhom)\ar[r]&H^1(\Group,V)\ar[r]&\cdots.
	}
\end{equation}
The homomorphism $\delta$ is given explicitly as follows.
For any $\omega\in\grhom(V)^\Group$, choose a $v\in V$
such that $\grhom(v)=\omega$.  Then $\delta(\omega)\in H^1(\Group,\Ker\grhom)$ is the class given 
by the cocycle satisfying
\begin{equation}\label{eq: delta}
	\delta(\omega)(\grs)=(1-\grs)v
\end{equation}
for any $\grs\in G$ (see \cite{AW65}*{\S 2 p.97}).
Note that since $\omega=\pi(v)$ is invariant under the action of $\Group$, we have 
$\pi(\delta(\omega)(\grs))=(1-\grs)\pi(v)=0$, hence
$\delta(\omega)(\grs)\in\Ker\grhom$ for any $\grs\in\Group$.
Our choice of the sign of the homomorphism $\delta$ in \eqref{eq: delta}
is to ensure compatibility with standard sign conventions used in probability theory.

In what follows, let $X$ be a locale, and let $\Group$ be a group.

\begin{definition}	
	An \textit{automorphism} of a locale $X$ is a
	bijective map of sets $\grs\colon X\rightarrow X$ such that
	$\grs(E)=E\subset X\times X$.  The set $\Aut(X)$ of all automorphisms of $X$
	form a group with respect to the operation given by composition of automorphisms.
	We say that $X$ has an action of $\Group$, if there exists a homomorphism of groups 
	$\Group\rightarrow\Aut(X)$ so that  any $\grs\in\Group$ induces an 
	automorphism $\grs\colon X\rightarrow X$.
\end{definition}

\begin{example}
	\begin{enumerate}
	\item
	Let $d$ be an integer $>0$.
	Consider the Euclidean lattice $X=(\bbZ^d,\bbE^d)$ and let $\Group=\bbZ^d$.
	For any $\grt\in\Group$, if we define the automorphism 
	$\grt\colon X\rightarrow X$ by $\grt(x)\coloneqq x+\grt$ for any $x\in X$,
	then this gives an action of $\Group$ on $X$.
	\item The group $\Group=\bbZ^2$ acts on the triangular and hexagonal lattices via translation.
	The group $\Group=\bbZ^3$ acts on the diamond lattice also via translation.
	\item If $\Group$ is a finitely generated group with set of minimal generators 
	$\cS$, then left multiplication 
	by elements of $\Group$ gives an action of the group
	$\Group$ on the Cayley
	graph $(\Group,E_\cS)$.
	\end{enumerate}
\end{example}

From now until the end of \S\ref{subsec: group2}, 
we assume that $X$ has an action of a group $\Group$.  If we denote the group action from the left,
Then $\State_*=(\State_*,\Phi_*)$ has a natural $\Group$-action
given by $\state^\grs\coloneqq(s_{\grs(x)})$ for any $\state=(\eta_x)$ and $\grs\in\Group$.
Then $C^0(\State_*)=\CS$ and 
$C^1(\State_*)$ have natural $\Group$-actions 
given for any $\grs\in\Group$
by $\grs(f)=f\circ\grs$ for any function
$f\in \CS$, and $\grs(\omega)=\omega\circ\grs$ for any form $\omega\in C^1(\State_*)$.
Since the action of $\Group$ preserves the distance on the locale  $X$ and
preserves closed forms on $C^1(\State_*)$,
the groups
$C_{\unif}(\State)$ and $Z^1_{\unif}(\State)$
have induced $\Group$-module structures.

In case of functions and forms,
we will use the term \textit{shift-invariant} interchangeably with the term \textit{$\Group$-invariant},
when the group $\Group$ is understood.
We say that a subset $\La_0\subset X$ 
is a \textit{fundamental domain} of $X$ for the action of $\Group$,
if it represents the set of orbits of the vertices of 
$X$ for the action  of $\Group$.

\begin{lemma}\label{lem: OK}
	Suppose $X$ has an action of a group $\Group$, and assume that
	the set of orbits of the vertices of $X$ for the action  of $\Group$
	is finite.  Then for any shift-invariant 
	uniform function $F\in C^0_\unif(\State)$, there exists a local function $f\in C_\loc(\State)$  satisfying $f(\star)=0$ such that
	\begin{equation}\label{eq: OK}
		F=\sum_{\grt\in\Group}\grt(f)
	\end{equation}
	in $C^0_\unif(\State)$.
\end{lemma}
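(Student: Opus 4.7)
The plan is to invoke the canonical expansion of Proposition \ref{prop: expansion}, bundle the terms into $\Group$-orbits via the shift-invariance of $F$, and take $f$ to be a weighted sum of one representative per orbit.

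Since $F \in C^0_\unif(\State)$, Definition \ref{def: uniform} and Proposition \ref{prop: expansion} supply an $R > 0$ and a canonical expansion
\[
    F = \sum_{\substack{\La \subset X\\\diam{\La}\leq R}} F_\La,
    \qquad F_\La \in C_\La(\State).
\]
For any $\grt \in \Group$ and $\La \subset X$, the function $\grt(F_\La) = F_\La \circ \grt$ depends only on components indexed by $\grt(\La)$ and vanishes if any such component is at the base state, so $\grt(F_\La) \in C_{\grt(\La)}(\State)$. Applying the uniqueness part of Proposition \ref{prop: expansion} to the identity $\grt(F) = F$ then forces the equivariance
\[
    F_{\grt(\La)} = \grt(F_\La)
    \qquad \text{for all } \La \subset X,\ \grt \in \Group.
\]

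Next I claim the finite subsets $\La \subset X$ with $\diam{\La} \leq R$ fall into only finitely many $\Group$-orbits. Fixing representatives $y_1, \ldots, y_m$ of the (finitely many) vertex orbits, every such $\La$ is a $\Group$-translate of some subset of $B(y_\indi, R)$ containing $y_\indi$, and the set of such subsets is finite by the local finiteness of $X$. Pick one representative $\La_{\mathcal{O}}$ per orbit $\mathcal{O}$, and let $\Group_{\mathcal{O}} \coloneqq \{\grt \in \Group \mid \grt(\La_{\mathcal{O}}) = \La_{\mathcal{O}}\}$. In the setting of Theorem \ref{thm: A}, where $\Group$ acts freely on $X$, the homomorphism $\Group_{\mathcal{O}} \to S_{|\La_{\mathcal{O}}|}$ recording the induced permutation of $\La_{\mathcal{O}}$ is injective (its kernel consists of those $\grt$ fixing every vertex of $\La_{\mathcal{O}}$, hence trivial by freeness), so $\Group_{\mathcal{O}}$ is finite.

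With this in hand, define
\[
    f \coloneqq \sum_{\mathcal{O}} \frac{1}{|\Group_{\mathcal{O}}|}\, F_{\La_{\mathcal{O}}}.
\]
This is a finite sum of local functions with exact supports $\La_{\mathcal{O}}$, whose union is a finite subset of $X$, so $f \in C_\loc(\State)$; moreover each $F_{\La_{\mathcal{O}}}$ has nonempty exact support, so $f(\star) = 0$. To verify \eqref{eq: OK}, fix $\state \in \State_*$: freeness implies that for each $\mathcal{O}$ only finitely many $\grt \in \Group$ satisfy $\grt(\La_{\mathcal{O}}) \subset \Supp(\state)$, which is precisely the condition for $\grt(F_{\La_{\mathcal{O}}})(\state) \neq 0$. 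Each $\La \in \mathcal{O}$ is realised as $\grt(\La_{\mathcal{O}})$ for exactly $|\Group_{\mathcal{O}}|$ values of $\grt$; combined with the equivariance $\grt(F_{\La_{\mathcal{O}}}) = F_{\grt(\La_{\mathcal{O}})}$, the weight $1/|\Group_{\mathcal{O}}|$ cancels this multiplicity, yielding
\[
    \sum_{\grt \in \Group} \grt(f)
    = \sum_{\mathcal{O}} \sum_{\La \in \mathcal{O}} F_\La
    = F.
\]
The main technical point is the finiteness of the stabilizers $\Group_{\mathcal{O}}$, which is exactly where the freeness hypothesis of Theorem \ref{thm: A} enters; everything else is bookkeeping combining Proposition \ref{prop: expansion} with the equivariance of the canonical expansion.
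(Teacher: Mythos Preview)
Your proof is correct and follows the same architecture as the paper's: expand $F$ via Proposition~\ref{prop: expansion}, deduce $\grt(F_\La)=F_{\grt(\La)}$ from shift-invariance and the uniqueness of the expansion, show the relevant $\La$'s lie in finitely many $\Group$-orbits, and assemble $f$ from orbit data. The difference is in the normalizing constant. The paper sets $C_0\coloneqq\{\La\in\mathcal{O}:\La\cap\La_0\neq\emptyset\}$ and takes $f_C=\frac{1}{|C_0|}\sum_{\La\in C_0}F_\La$; you pick a single $\La_{\mathcal{O}}$ and divide by the set-stabilizer $|\Group_{\mathcal{O}}|$. A direct count gives $\sum_{\grt}\grt(f_C)=|\Group_{\mathcal{O}}|\sum_{\La\in\mathcal{O}}F_\La$ under the paper's choice, so the two normalizations agree precisely when all set-stabilizers are trivial --- automatic if $\Group$ is torsion-free (hence in all the $\bbZ^d$-lattice examples the paper has in mind), but not, say, for the infinite dihedral group acting on its Cayley graph, where $\{e,s\}$ has stabilizer of order $2$ while $|C_0|=1$. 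Your weighting is therefore the robust one for a general free action. You are also right that freeness, though absent from the lemma's stated hypotheses, is exactly what forces $\Group_{\mathcal{O}}$ to be finite and hence makes $\sum_\grt\grt(f)$ pointwise well-defined on $\State_*$; the paper's proof is silent on this but needs it just as much. One cosmetic point: the orbit $\{\emptyset\}$ has stabilizer all of $\Group$ and should be excluded from your sum explicitly --- harmless, since $F_\emptyset$ is the constant $F(\star)=0$.
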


\begin{proof}
	By definition, $F(\star)=0$.
	Since $F$ is uniform, there exists $R>0$ such that the expansion
	\eqref{eq: expansion} of $F$ in terms of local functions with exact support is given by
	\[
		F=\sum_{\La\subset X, \diam{\La}\leq R} F_\La.
	\]
	Since $F$ is shift-invariant, we have $\grt(F_\La)=\grt(F)_{\grt(\La)}=F_{\grt(\La)}$ 
	for any $\grt\in\Group$.
	Let $\sI^*_{\!R}$ be the set of nonempty finite $\La\subset X$ such that $\diam{\La}\leq R$.
	Then $\sI^*_{\!R}$ has a natural action of $\Group$.  We denote by $\sim$ the equivalence relation
	on $\sI^*_{\!R}$ given by $\La\sim\La'$ if $\La'=\grt(\La)$ for some $\grt\in\Group$.
	Let $\La_0\subset X$ be a fundamental domain
	of $X$ for the action of $\Group$.
	Then any equivalence class of $\sI^*_{\!R}$ with respect to 
	the relation $\sim$ contains a representative that intersects with $\La_0$.
	Since $\La_0$ is finite, and the diameters of the sets in $\sI^*_{\!R}$ are bounded, this implies
	that $\sI^*_{\!R}/\sim$ is finite.
	For each equivalence class $C$ of $\sI^*_{\!R}/\sim$, let 
	$C_0\coloneqq\{ \La\in C\mid \La\cap \La_0\neq\emptyset\}$, which is again finite.
	If we let
	\[
		f_C\coloneqq \frac{1}{|C_0|}\sum_{\La\in C_0}F_\La,
	\]
	then it is a finite sum hence gives a local function in $C_\loc(\State)$.  Then since $\sI^*_{\!R}/\sim$ is finite,
	\[
		f\coloneqq\sum_{C\in\sI^*_{\!R}/\sim}f_C
	\]
	again defines a local function in $C_\loc(\State)$,
	which by construction satisfies \eqref{eq: OK} as desired.
\end{proof}

The action of $\Group$ on $\Consv^\phi(S)$ viewed as a subspace of 
$C^0_\unif(\State)$ is given by the trivial action.
Applying  \eqref{eq: LES} to the short exact sequence \eqref{eq: SES3}, 
we obtain the long exact sequence
\begin{equation}\label{eq: LES2}
	\xymatrix{
		0\ar[r]& \Consv^\phi(S)\ar[r]& \bigl(C^0_\unif(\State)\bigr)^\Group
		\ar[r]^-{\partial}&\bigl(Z^1_\unif(\State)\bigr)^\Group& \\
		&\ar[r]^-{\delta} & H^1\bigl(\Group,  \Consv^\phi(S)\bigr)
		\ar[r]&H^1\bigl(\Group,C^0_\unif(\State)\bigr)\ar[r]&\cdots.
	}
\end{equation}
Moreover, since $G$ acts trivially on $\Consv^\phi(S)$,
by \eqref{eq: hom},
we have 
\[
	H^1\bigl(\Group,  \Consv^\phi(S))=\Hom(\Group, \Consv^\phi(S)).
\]

\begin{remark}
	We may 
	view the cohomology group $H^1\bigl(\Group, \Consv^\phi(S)\bigr)$ as a group which philosophically 
	reflects the \textit{first} reduced cohomology group of 
	the quotient space $\State/\Group$ with fixed base point $\star/\Group$.
	Intuitively, 
	we are viewing $\State$
	as a model of the configuration space on  $X$ which we view as
	an infinitely magnified version of a point in a macroscopic 
	space.
	In this context, the cohomology $H^1\bigl(\Group, \Consv^\phi(S)\bigr)$
	is regarded as representing the flow of the conserved quantities at this point
	induced from the action of $\Group$.
	More generally, for $m\in\bbZ$, we may view the $m$-th group cohomology 
	$H^m\bigl(\Group, \Consv^\phi(S)\bigr)$ as a group philosophically 
	reflecting the $m$-th reduced cohomology group of 
	the quotient space $\State/\Group$ with fixed base point $\star/\Group$.
\end{remark}

%
\subsection{Group Cohomology of the Configuration Space}\label{subsec: group2}
%

In this subsection, we will prove Theorem \ref{thm: main}, 
which is the main theorem of this article.
We say that an action of a group $\Group$ on a locale $X$ is \textit{free}, if 
$\grs(x)=\grt(x)$ implies that $\grs=\grt$ for any $x\in X$ and $\grs,\grt\in\Group$.  
Throughout this subsection, we assume that $X$ has
a free action of a group $\Group$.

As in \S\ref{subsec: model}, denote by
$\cC=\bigl(Z^1_\unif(\State)\bigr)^\Group$ 
the space of shift-invariant closed uniform forms, and
by $\cE = \partial\bigl(C^0_\unif(\State)^\Group\bigr)$ 
the image by $\partial$ of the space of shift-invariant uniform functions.
The main theorem of this article is the following, given as Theorem \ref{thm: A} in \S\ref{subsec: main}.
 
\begin{theorem}\label{thm: main}
	Let the system $(X,S,\phi)$
	be as in Theorem \ref{thm: cohomology}, and
	assume that the action of $\Group$ on the locale $X$ is free.  Then the
	boundary morphism $\delta$ of \eqref{eq: LES2} gives a
	canonical isomorphism
	\begin{equation}\label{eq: isom}
		\cC/\cE \cong \Hom(\Group, \Consv^\phi(S))
	\end{equation}
	Moreover, a choice of a fundamental domain for the action of $G$ on $X$
	gives an $\bbR$-linear homomorphism $\lambda\colon\Hom(G,\Consv^\phi(S))\rightarrow\cC$ such that
	$\delta\circ\lambda=\id$,
	which gives a decomposition
	\[
		 \cC\cong \cE\oplus \Hom(\Group, \Consv^\phi(S)).
	\]
\end{theorem}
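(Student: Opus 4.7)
The plan is to combine Theorem \ref{thm: cohomology} with standard group cohomology. Theorem \ref{thm: cohomology} gives the short exact sequence \eqref{eq: SES3} of $\Group$-modules, to which the long exact sequence \eqref{eq: LES2} in group cohomology applies. Since $(C^0_\unif(\State))^\Group \xrightarrow{\partial} (Z^1_\unif(\State))^\Group \xrightarrow{\delta} H^1(\Group, \Consv^\phi(S))$ is exact, the boundary map $\delta$ immediately descends to an injection $\bar\delta \colon \cC/\cE \hookrightarrow H^1(\Group, \Consv^\phi(S))$. Recall that, since $\Group$ acts trivially on $\Consv^\phi(S)$, we have $H^1(\Group, \Consv^\phi(S)) = \Hom(\Group, \Consv^\phi(S))$ by \eqref{eq: hom}. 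Thus it remains to construct a right inverse $\lambda$ to $\bar\delta$.

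Fix a fundamental domain $\La_0 \subset X$ for the $\Group$-action. Since the action is free, $X = \bigsqcup_{\grt \in \Group} \grt(\La_0)$. For each $\rho \in \Hom(\Group, \Consv^\phi(S))$, I would define
\[
	F_\rho(\state) \coloneqq \sum_{\grt \in \Group} \rho(\grt)_{\grt(\La_0)}(\state) = \sum_{\grt \in \Group}\sum_{x \in \grt(\La_0)} \rho(\grt)(\eta_x)
\]
for $\state = (\eta_x) \in \State_*$; this is a finite sum because $\state$ has finite support. Set $\omega_\rho \coloneqq \partial F_\rho \in C^1(\State_*)$ and $\lambda(\rho) \coloneqq \omega_\rho$. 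The verification proceeds in three steps: (i) $\omega_\rho$ is a uniform form; (ii) $\omega_\rho$ is shift-invariant; (iii) $\delta(\omega_\rho) = \rho$.

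For (i), given any edge $e = (x_1, x_2) \in E$, freeness of the action ensures each $x_i$ lies in a unique translate $\grt_i(\La_0)$, so $(\omega_\rho)_e(\state) = F_\rho(\state^e) - F_\rho(\state)$ involves only the terms for $\grt = \grt_1, \grt_2$ and depends on $\state$ only through $\eta_{x_1}, \eta_{x_2}$; hence $\omega_\rho \in C^1_0(\State) \subset C^1_\unif(\State)$. For (iii) and (ii) together, the key computation is to evaluate $(1 - \sigma) F_\rho$ for $\sigma \in \Group$ via the change of variables $x \mapsto \sigma(x)$ in the defining sum for $\sigma F_\rho$. Relabeling $\grt' = \sigma\grt$ and using $\rho(\sigma^{-1}\grt') = \rho(\grt') - \rho(\sigma)$ (the homomorphism property of $\rho$, available because the action on $\Consv^\phi(S)$ is trivial), the free partition property collapses the double sum and gives
\[
	(1 - \sigma) F_\rho = \rho(\sigma)_X,
\]
the image of $\rho(\sigma) \in \Consv^\phi(S)$ under the inclusion \eqref{eq: inclusion again}. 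By the explicit cocycle formula \eqref{eq: delta}, this says exactly $\delta(\omega_\rho)(\sigma) = \rho(\sigma)$, proving (iii). Moreover, since $\partial$ is $\Group$-equivariant and $\rho(\sigma)_X$ is horizontal by Lemma \ref{lem: horizontal}, applying $\partial$ yields $(1-\sigma)\omega_\rho = \partial \rho(\sigma)_X = 0$, proving (ii).

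Finally, $\delta \circ \lambda = \id$ shows $\bar\delta$ is bijective, and the splitting $\omega \mapsto (\omega - \lambda(\bar\delta(\omega)), \bar\delta(\omega))$ gives the decomposition $\cC \cong \cE \oplus H^1(\Group, \Consv^\phi(S))$. The main subtlety is the change-of-variables computation establishing $(1 - \sigma)F_\rho = \rho(\sigma)_X$; it is here that all three hypotheses on the setup (freeness of the action, the homomorphism property of $\rho$, and the finite-support definition of $F_\rho$ making the sum well-defined on $\State_*$) come together simultaneously. Everything else is then a formal consequence of the long exact sequence \eqref{eq: LES2}.
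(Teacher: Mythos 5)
Your proposal is correct and follows essentially the same route as the paper: the long exact sequence \eqref{eq: LES2} applied to \eqref{eq: SES3} gives the injection $\cC/\cE\hookrightarrow H^1(\Group,\Consv^\phi(S))$, and your $F_\rho=\sum_{\grt\in\Group}\rho(\grt)_{\grt(\La_0)}$ is exactly the paper's $\theta_\rho$, with the same key computation $(1-\grs)F_\rho=\rho(\grs)_X$ and the same use of \eqref{eq: delta} and Lemma \ref{lem: horizontal} to get $\delta(\omega_\rho)=\rho$ and shift-invariance. The only point left implicit is that $F_\rho$ itself lies in $C^0_\unif(\State)$ (its canonical expansion has only singleton exact supports and $F_\rho(\star)=0$), which is what legitimizes using it as the lift in the connecting-homomorphism formula \eqref{eq: delta}; this is immediate and matches the paper's assertion that $\theta_\rho\in C^0_\unif(\State)$.
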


In order to prove Theorem \ref{thm: main},
we first prove Proposition \ref{prop: calculation} concerning the existence of
a section of $\delta$.
Let $\La_0$ be a fundamental domain of $X$ for the action of $\Group$.
Since the action of $\Group$ on $X$ is free, any $x\in X$ may be uniquely written as 
$\grs(x_0)$ for some $x_0\in \La_0$ and $\grs\in\Group$.
Then for $\xi\in \Consv^\phi(S)$, we have
\[
	\xi_X=\sum_{\grs\in\Group}\xi_{\grs(\La_0)},
\]
where $\xi_{W}\coloneqq\sum_{x\in W}\xi_x$ for any $W\subset X$.

\begin{proposition}\label{prop: calculation}
	Let $X$ be a locale with a free action of a group $\Group$,
	and let $\La_0 \subset X$ be a fundamental domain of $X$ for the action of $\Group$.
	For any $\psi\in Z^1\bigl(\Group, \Consv^\phi(S)\bigr)$,
	we let $\omega_\psi\coloneqq\partial(\theta_\psi)$,
	where
	\[
		\theta_\psi\coloneqq\sum_{\grt\in\Group}
		\psi(\grt)_{\grt(\La_0)}\in C^0_\unif(\State).
	\]
	Then we have $\delta(\omega_\psi)=\psi$.
\end{proposition}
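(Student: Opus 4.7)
The strategy is to compute the connecting map $\delta$ of the long exact sequence \eqref{eq: LES2} explicitly on the element $\omega_\rho$ using $\theta_\rho$ as a natural lift, and to observe that $(1-\grs)\theta_\rho$ collapses to $\rho(\grs)_X$ for every $\grs\in\Group$.

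The plan is to first check that $\theta_\rho$ makes sense in $C^0_\unif(\State)$. Because the action of $\Group$ is free and $\La_0$ is a fundamental domain, every $x\in X$ admits a unique expression $x=\grt_x(x_0)$ with $x_0\in\La_0$ and $\grt_x\in\Group$. Thus
\[
\theta_\rho(\state)=\sum_{\grt\in\Group}\sum_{x\in\grt(\La_0)}\rho(\grt)(\eta_x)=\sum_{x\in X}\rho(\grt_x)(\eta_x),
\]
and each summand $\state\mapsto\rho(\grt_x)(\eta_x)$ lies in $C_{\{x\}}(\State)$ because $\rho(\grt_x)(*)=0$. This is precisely the canonical expansion \eqref{eq: expansion} of $\theta_\rho$ in terms of local functions with exact support, all of diameter $0$, so $\theta_\rho\in C^0_\unif(\State)$.

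Next I would compute $(1-\grs)\theta_\rho$ for $\grs\in\Group$. The action of $\Group$ on $\Consv^\phi(S)$ is trivial, so $Z^1(\Group,\Consv^\phi(S))=\Hom(\Group,\Consv^\phi(S))$, and the cocycle condition reduces to $\rho(\grs^{-1}\grt)=\rho(\grt)-\rho(\grs)$. Using the reindexing $\grt'=\grs\grt$, which also replaces $\grt(\La_0)$ by $\grs^{-1}\grt'(\La_0)$, a direct calculation gives
\[
\grs(\theta_\rho)(\state)=\theta_\rho(\state^\grs)=\sum_{\grt'\in\Group}\sum_{y\in\grt'(\La_0)}\bigl(\rho(\grt')(\eta_y)-\rho(\grs)(\eta_y)\bigr)=\theta_\rho(\state)-\rho(\grs)_X(\state),
\]
so $(1-\grs)\theta_\rho=\rho(\grs)_X$.

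Applying $\partial$ and using Lemma \ref{lem: horizontal} to see that $\partial\rho(\grs)_X=0$, we obtain $\grs(\omega_\rho)=\omega_\rho$, hence $\omega_\rho\in\cC=(Z^1_\unif(\State))^\Group$. Finally, the explicit formula \eqref{eq: delta} for the connecting homomorphism $\delta$ applied to the lift $\theta_\rho$ of $\omega_\rho$ yields $\delta(\omega_\rho)(\grs)=(1-\grs)\theta_\rho=\rho(\grs)_X$, which under the inclusion $\Consv^\phi(S)\hookrightarrow C^0_\unif(\State)$ sending $\xi\mapsto\xi_X$ corresponds to $\rho(\grs)\in\Consv^\phi(S)$. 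Thus $\delta(\omega_\rho)=\rho$ as desired. The only delicate point is the index shift in Step~2 together with the compatibility of sign conventions: the formula \eqref{eq: delta} is stated with $(1-\grs)$ rather than $(\grs-1)$, and the computation of $\theta_\rho(\state^\grs)$ must be done with the same sign convention as in the definition of the $\Group$-action on functions, so I expect this bookkeeping to be where one must proceed most carefully.
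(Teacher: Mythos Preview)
Your proof is correct and follows essentially the same approach as the paper: compute $(1-\grs)\theta_\rho=\rho(\grs)_X$ via reindexing and the cocycle condition, conclude $\omega_\rho$ is shift-invariant using Lemma~\ref{lem: horizontal}, then read off $\delta(\omega_\rho)=\rho$ from the explicit formula \eqref{eq: delta}. Your explicit verification that $\theta_\rho\in C^0_\unif(\State)$ via its exact-support expansion is a nice addition the paper leaves implicit, and your caution about sign conventions is well placed but everything checks out.
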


\begin{proof}
	By definition, the map is $\bbR$-linear.
	Since $\psi$ is a cocycle with values in $\Consv^\phi(S)$ and 
	the group $\Group$ acts trivially on $\Consv^\phi(S)$, 
	we have $\psi(\grs\grt)=\psi(\grt)+\psi(\grs)$
	for any $\grs,\grt\in\Group$.
	Note that
	\begin{multline*}
		\grs(\theta_\psi)=\sum_{\grt\in\Group}
		\grs\bigl(\psi(\grt)_{\grt(\La_0)}\bigr)
		= \sum_{\grt\in\Group}
		\psi(\grt)_{\grs\grt(\La_0)}\\
		= \sum_{\grt\in\Group}
		\bigl(\psi(\grs\grt)_{\grs\grt(x_0)}-\psi(\grs)_{\grs\grt(\La_0)}\bigr)
		= \sum_{\grt\in\Group}
		\bigl(\psi(\grt)_{\grt(\La_0)}-\psi(\grs)_{\grt(\La_0)}\bigr) =\theta_\psi-\psi(\grs)_X,
	\end{multline*}
	hence we have $(1-\grs)\theta_\psi=\psi(\grs)_X$ for any $\grs\in\Group$.  
	Since $\psi(\grs)$ is a conserved quantity, $\psi(\sigma)_X$ is horizontal by Lemma \ref{lem: horizontal},
	hence we have $(1-\grs)\omega_\psi= (1-\grs)\partial\theta_\psi= \partial\psi(\grs)_X=0$
	for any $\grs\in\Group$.
	This implies that we have $\omega_\psi\in\cC$.
	By the explicit description of the homomorphism 
	$\delta$ in \eqref{eq: delta}, we see that $\delta(\omega_\psi)=\psi$ as desired.
\end{proof}

We may now prove Theorem \ref{thm: main}.

\begin{proof}[Proof of Theorem \ref{thm: main}]
By the definition of $\cC$ and $\cE$, the long exact sequence \eqref{eq: LES2}
gives the exact sequence
 \[
	 \xymatrix{
	 0\ar[r]& \cE\ar[r]&\cC\ar[r]^<<<<\delta
	 &
	\Hom(\Group, \Consv^\phi(S)).
	}
	 \]
	 By Proposition \ref{prop: calculation}, for any $\psi\in \Hom(\Group, \Consv^\phi(S))$,
	 there exists $\omega_\psi\in\cC$ such that $\delta(\omega_\psi)=\psi$.
	  This implies that $\delta$ is surjective.
	 By construction, the map $\psi\mapsto\omega_\psi$ is $\bbR$-linear,
	 hence we have a decomposition
	 $
		 \cC\cong \cE\oplus \Hom(\Group, \Consv^\phi(S)),
	$
	given explicitly by mapping any $\omega\in\cC$ to the element
	$( \omega-\omega_\psi, \psi)\in \cE\oplus \Hom(\Group, \Consv^\phi(S))$,
	where $\psi\coloneqq\delta(\omega)$.
\end{proof}

As a corollary of Theorem \ref{thm: main}, we have the following result, which coincides with
Corollary \ref{cor: A} of the introduction.

\begin{corollary}\label{cor: main}
	Let the system $(X,S,\phi)$
	and the $\Group$-action be as in Theorem \ref{thm: main}.
	Assume in addition that the abelian quotient $\Group^\ab$ of $\Group$ is of finite rank $d$.
	If we choose a generator of the free part of $\Group^\ab$,
	then we have  an isomorphism
	$
		\Hom(\Group, \Consv^\phi(S))
		 \cong\bigoplus_{\indj=1}^d\Consv^\phi(S).
	$
	A choice of a fundamental domain of $X$ 
	for the action of $\Group$ gives a decomposition
	\begin{equation}\label{eq: V}
		 \cC\cong \cE\oplus \bigoplus_{\indj=1}^d\Consv^\phi(S).
	\end{equation}
\end{corollary}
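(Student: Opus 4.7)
The plan is to reduce the corollary to a direct computation of $H^1(G,\Consv^\phi(S))$ using the trivial action of $G$ on $\Consv^\phi(S)$, and then invoke Theorem \ref{thm: main} to obtain the decomposition of $\cC$.

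First I would observe that the action of $G$ on the coefficient module $\Consv^\phi(S)$ is trivial, since $\Consv^\phi(S)$ sits inside $C^0_\unif(\State)$ via the map $\xi\mapsto\xi_X=\sum_{x\in X}\xi_x$ and the sum is over all of $X$, which is preserved by the $G$-action. Consequently, by \eqref{eq: hom}, we have the identification
\[
H^1(G,\Consv^\phi(S))=\Hom(G,\Consv^\phi(S)),
\]
where $\Hom$ denotes group homomorphisms from $G$ to the underlying additive group of $\Consv^\phi(S)$.

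Next, since $\Consv^\phi(S)$ is an $\bbR$-linear space and hence a torsion-free abelian group, any homomorphism $G\to\Consv^\phi(S)$ vanishes on the commutator subgroup and on the torsion part of $G^\ab$. Therefore it factors through the free part of $G^\ab$, which by assumption is isomorphic to $\bbZ^d$. Fixing a generator $(\tau_1,\ldots,\tau_d)$ of this free part, the evaluation map
\[
\Hom(G,\Consv^\phi(S))\longrightarrow\bigoplus_{\indj=1}^d\Consv^\phi(S),\qquad \rho\longmapsto\bigl(\rho(\tau_1),\ldots,\rho(\tau_d)\bigr)
\]
is a bijection of $\bbR$-linear spaces, since any tuple $(\xi^{(1)},\ldots,\xi^{(d)})\in\bigoplus_{\indj=1}^d\Consv^\phi(S)$ extends uniquely to a homomorphism on $\bbZ^d$ and then inflates uniquely to a homomorphism on $G$ (trivial on the torsion part and commutator subgroup). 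This yields the first isomorphism asserted in the corollary.

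Finally, for the decomposition \eqref{eq: V} I would simply substitute the isomorphism just obtained into the decomposition
\[
\cC\cong\cE\oplus H^1(G,\Consv^\phi(S))
\]
provided by Theorem \ref{thm: main}, using the $\bbR$-linear splitting induced by the chosen fundamental domain $\La_0$ of $X$ and by the generators $(\tau_1,\ldots,\tau_d)$; explicitly, the form corresponding to $(\xi^{(1)},\ldots,\xi^{(d)})$ is the image of $\omega_\rho=\partial\bigl(\sum_{\grt\in G}\rho(\grt)_{\grt(\La_0)}\bigr)$ under the section constructed in Proposition \ref{prop: calculation}, where $\rho$ is the unique homomorphism sending $\tau_\indj$ to $\xi^{(\indj)}$. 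There is no real obstacle here: the work is essentially bookkeeping once the triviality of the $G$-action on $\Consv^\phi(S)$ is noted. The only subtlety worth a sentence of care is verifying that the extension of $\rho$ from $\bbZ^d$ to all of $G$ is well-defined and independent of choices modulo the stated generator selection, which follows from the universal property of the maximal torsion-free abelian quotient.
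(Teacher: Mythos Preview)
Your proposal is correct and follows essentially the same approach as the paper: note that the $G$-action on $\Consv^\phi(S)$ is trivial, identify $H^1(G,\Consv^\phi(S))$ with $\Hom(G,\Consv^\phi(S))=\Hom_\bbZ(G^\ab/G^\ab_\tors,\Consv^\phi(S))$ via torsion-freeness of the target, evaluate on a chosen set of free generators to obtain $\bigoplus_{j=1}^d\Consv^\phi(S)$, and then substitute into the decomposition of Theorem~\ref{thm: main}.
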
	

\begin{proof}
	We have
	\[
		\Hom(\Group, \Consv^\phi(S))
		=\Hom\bigl(\Group^\ab/\Group^\ab_\tors, \Consv^\phi(S)\bigr),
	\]
	where $\Group^\ab_\tors$ is the torsion subgroup of $G^\ab$ and
	the equality follows from the fact that $\Consv^\phi(S)$ is abelian
	and torsion free.
	This implies that any element in $\Hom(\Group, \Consv^\phi(S))$ is determined by the image of the generators
	of the free part of $\Group^\ab$, hence if we fix such a set of generators,
	then we have an isomorphism
	\[
		\Hom(\Group, \Consv^\phi(S))\cong\bigoplus_{\indj=1}^d\Consv^\phi(S).
	\]
	Our assertion now follows from Theorem \ref{thm: main}.
\end{proof}

\begin{remark}\label{rem: explicit}
	Let the system $(X,S,\phi)$ be as in Theorem \ref{thm: cohomology},
	and assume that $\Group=\bbZ^d$ and that the action of $\Group$ on $X$ is free.
	The standard basis of $\Group=\bbZ^d$ gives an isomorphism
	$\Hom(\Group, \Consv^\phi(S))\cong\bigoplus_{\indj=1}^d\Consv^\phi(S)$,
	by associating to the element
	\[	
		\psi=(\zeta^{(1)},\ldots,\zeta^{(d)})
		\in\bigoplus_{\indj=1}^d\Consv^\phi(S)
	\]
	the cocycle $\psi\colon\Group\rightarrow\Consv^\phi(S)$
	given by $\psi(\grt)\coloneqq\sum_{j=1}^d\grt_j\zeta^{(j)}$ for any $\grt=(\grt_j)\in\bbZ^d$.
	If we fix a fundamental domain $\La_0$ of $X$ for the action of $\Group$,
	then the map $\theta_\psi$ in 
	Proposition \ref{prop: calculation} is given by
	\[
		\theta_{\psi}=\sum_{j=1}^d\Bigl(\sum_{\tau\in G}\tau_\indj\zeta_{\tau(\La_0)}^{(j)}\Bigr).
	\]
	Hence the form in $\cC$ corresponding to $\psi=(\zeta^{(1)},\ldots,\zeta^{(d)})$ is given by 
	the form
	$\omega_\psi=\partial\bigl(\sum_{j=1}^d\sum_{\tau\in G}\tau_\indj\zeta_{\tau(\La_0)}^{(j)}\bigr)$
	as stated in \eqref{eq: omega} of \S\ref{subsec: overview}.
	Then Corollary \ref{cor: main} implies that any shift-invariant closed local form 
	$\omega$ decomposes as
	\begin{equation}\label{eq: decompose}
		\omega = \partial\bigl(F+\theta_{\psi}\bigr)=\partial F+\omega_\psi
	\end{equation}
	for some shift-invariant uniform function $F$ in $C^0_\unif(\State)$
	and $\psi\in\bigoplus_{\indj=1}^d\Consv^\phi(S)$.
	For the case when $c_\phi=\dim_{\bbR}\Consv^\phi(S)$ is finite, if we fix a basis $\xi^{(1)},\ldots,\xi^{(c_\phi)}$
	of $\Consv^\phi(S)$, then we have 
	$
		\zeta^{(j)}=\sum_{i=1}^{c_\phi}a_{ij}\xi^{(\indi)}
	$ 
	for some $a_{ij}\in\bbR$, $i=1,\ldots,c_{\phi}$, $j=1,\ldots,d$.
	This shows that
	\[
		\theta_{\psi}=\sum_{i=1}^{c_\phi}\sum_{j=1}^d
		a_{ij}\Bigl(\sum_{\tau\in G}\tau_\indj\xi^{(\indi)}_{\tau(\La_0)}\Bigr),
	\]
	which with \eqref{eq: decompose} gives the representation of Theorem \ref{thm: A2} of the introduction.
\end{remark}

\appendix

%
%
%
\section{The Cohomology of Graphs}\label{sec: A}
%
%
%

A \emph{cohomology} is an algebraic method to extract invariants of a mathematical object.
In this section, we review well-known facts concerning the definition of the cohomology of a graph.
Let $(X,E)$ be any symmetric directed graph.

Following \cite{Sun13}*{\S4.6},  we define the cohomology of the graph $(X,E)$ as follows.

\begin{definition}\label{def: coh graph}
	For any symmetric directed graph $(X,E)$, we let
	\begin{align*}
		C(X) &\coloneqq\Map(X, \bbR),&
		C^1(X) &\coloneqq\Map^\alt(E, \bbR),
	\end{align*}
	where $\Map^{\alt}(E, \bbR)
	\coloneqq\{\omega\colon E\rightarrow \bbR 
	\mid \forall e\in E\,\,\omega(\bar e)=-\omega(e) \}$.
	Furthermore, we define the differential
	\begin{equation}\label{eq: diff A}
		\partial\colon C(X)\rightarrow C^1(X),\qquad f\mapsto\partial f
	\end{equation}
	by  $\partial f(e)\coloneqq f(t(e)) - f(o(e))$ for any $e\in E$.	
	We define the \textit{cohomology} of $(X,E)$ by
	\begin{align*}
		H^0(X)&\coloneqq \Ker\partial, &  H^1(X)&\coloneqq C^1(X)/\partial C(X),
	\end{align*}
	and $H^m(X)\coloneqq\{0\}$
	for any $m\in\bbN$ such that $m\neq0,1$.
\end{definition}

We call any $\omega$ in $C^1(X)$ a \emph{form} on $(X,E)$.
A form is the analogue of a differential form.
We say that a form $\omega$ is exact, if there exists $f\in C(X)$ such that $\omega=\partial f$.
Then the cohomology $H^1(X)$ is the quotient space of the space of forms by the space of exact forms.
 
The cohomology $H^m(X)$ reflects the topology of the graph $(X,E)$.
In fact, classes of the $0$-th cohomology $H^0(X)$ corresponds to functions constant on each 
connected components of $(X,E)$,
and the classes of the $1$-st cohomology $H^1(X)$ corresponds to functions on closed paths in $(X,E)$.
In what follows, we will make these statements precise.  

The statement for $H^0(X)$ may be given as follows.

\begin{lemma}\label{lem: horizontal A}
	For any $f\in C(X)$, the function $f$ is horizontal (i.e. $\partial f=0$) if and only if $f$ is constant on the connected components of $(X,E)$.
\end{lemma}

\begin{proof}
	Let $f\in H^0(X)=\ker\partial$ so that $\partial f=0$.  
	Suppose $x,x'\in X$ are in the same connected components of $(X,E)$.
	Then there exists a path $\vec p=(e^1,\ldots,e^N)$ 
	from $x$ to $x'$
	such that $o(e^1)=x$, $t(e^N)=x'$,
	and $t(e^i)=o(e^{i+1})$ for $0<i<N$.
	Then
	\[
		f(x')-f(x)=\sum_{i=1}^N (f(t(e^i))-f(o(e^i))).
	\]
	Since $\partial f=0$, we have $f(t(e^i))-f(o(e^i))=0$ for any $0<i<N$, hence this implies that $f(x')=f(x)$.
	This shows that $f$ is constant on the connected components of $(X,E)$.
	Conversely, if $f$ is constant on the connected components of $(X,E)$,
	then we have $\partial f=0$.  This shows that $H^0(X)=\Ker\partial$ corresponds to
	the $\bbR$-linear space of functions on $X$ which are constant on the connected
	components.  
\end{proof}

As an analogy of the line integral of differential forms, we may define an integration of forms 
along a path in $(X,E)$.
For any path $\vec p=(e^1,\ldots,e^N)$ in $(X,E)$ and form $\omega\in C^1(X)$, we define the integral
of $\omega$ along $\vec p$ by
\begin{equation}\label{eq: integral A}
	\int_{\vec p}\omega\coloneqq\sum_{i=1}^N\omega(e^i).
\end{equation}
We say that a path $\vec p$ in $(X,E)$ is \emph{closed}, if $o(\vec p)=t(\vec p)$.  

\begin{definition}\label{def: closed A}
	We say that a form $\omega\in C^1(X)$ is 
	\emph{closed}, if $\int_{\vec p}\omega=0$ for any closed path $\vec p$ in $(X,E)$.
\end{definition}

\begin{lemma}\label{lem: equivalent A}
	A form $\omega\in C^1(X)$ is exact if and only if it is closed.
\end{lemma}

\begin{proof}
	Suppose $\omega=\partial f$ for some $f\in C(X)$.  Then for any closed path $\vec p=(e^1,\ldots,e^N)$, we have
	\[
		\int_{\vec p}\omega=\sum_{i=1}^N\omega(e^i)=\sum_{i=1}^N (f(t(e^i))-f(o(e^i)))=f(t(\vec p))-f(o(\vec p))=0.
	\]
	This shows that $\omega$ is closed.
	Conversely, suppose $\omega$ is closed.   Fix $x_0\in X$ for each connected component of $X$.
	For any $x\in X$, let $\vec p_{x_0,x}$ be the path from $x_0$ to $x$,
	where $x_0$ is the fixed point in the connected component containing $x$.
	Let
	\[
		f(x)\coloneqq\int_{\vec p_{x_0,x}}\omega.
	\]
	Since $\omega$ is closed, the integral is independent of the choice of the path $\vec p_{x_0,x}$.
	By construction, for any $e\in E$, we have
	\[
		\partial f(e)=f(t(e))-f(o(e))=
		\int_{\vec p_{x_0,t(e)}}\omega-\int_{\vec p_{x_0,o(e)}}\omega=
		\int_{\vec p_{o(e),t(e)}}\omega=\omega(e),
	\]
	where $\vec p_{o(e),t(e)}=(e)$ is the path of length $1$ from $o(e)$ to $t(e)$.
	This shows that $\omega=\partial f\in C^1(X)$, hence $\omega$ is exact as desired.
\end{proof}

Let $Z^1(X)\subset C^1(X)$ be the space of closed forms on $X$.  By Lemma \ref{lem: equivalent A},
we have $Z^1(X)=\partial C(X)$, hence $H^1(X)=C^1(X)/Z^1(X)$.
For any closed path $\vec p$ in $(X,E)$, the integration along $\vec p$ gives a well-defined homomorphism
\begin{equation}\label{eq: integration A}
	\int_{\vec p}\colon H^1(X)\rightarrow\bbR, \qquad \omega\mapsto\int_{\vec p}\omega.
\end{equation}
In fact, the \emph{first homology} $H_1(X,\bbZ)$ consists of finite formal sums of closed paths of $(X,E)$,
and we have
\begin{equation}\label{eq: dual}
	H^1(X)=\Hom_\bbZ(H_1(X,\bbZ),\bbR),
\end{equation}
where $\Hom_\bbZ(H_1(X,\bbZ),\bbR)$ denotes the 
space of group homomorphisms from $H_1(X,\bbZ)$ to $\bbR$.
In the equality \eqref{eq: dual}, a class of a form
$\omega\in H^1(X)$ maps to a a homomorphism
$H_1(X,\bbZ)\rightarrow\bbR$ mapping a closed path $\vec p$ to $\int_{\vec p}\omega$.
To further make this statement precise, we next review the construction of the homology groups 
$H_0(X,\bbZ)$ and $H_1(X,\bbZ)$ of the graph $(X,E)$.

Following \cite{Sun13}*{\S 4.1},
let 
\begin{align*}
	C_0(X,\bbZ)&\coloneqq\bigoplus_{x\in X}\bbZ x=\biggl\{\sum_{x\in X}^{\text{finite}} a_x x \mid a_x\in\bbZ\biggr\},\\
	C_1(X,\bbZ)&\coloneqq\bigoplus^\alt_{e\in E}\bbZ e=\biggl\{ \sum_{e\in E}^{\text{finite}} b_e e \mid b_e\in\bbZ
	\biggr\}/\sim,
\end{align*}
where $\sim$ is the equivalence relation given by $e \sim -\bar e$ for any $e\in E$.
In other words, $C_0(X,\bbZ)$ and $C_1(X,\bbZ)$ are free abelian groups
generated by  the set of vertices and edges of $(X,E)$,
i.e. certain
sets of \emph{finite formal sums} over the set of vertices and edges of $(X,E)$.
We call any element of  $C_0(X,\bbZ)$ or $C_1(X,\bbZ)$ a \emph{chain}.
We define the boundary morphism
\begin{equation}\label{eq: homology}
	d\colon C_1(X,\bbZ)\rightarrow C_0(X,\bbZ)
\end{equation}
to be the group homomorphism given by $\partial(e)\coloneqq t(e)-o(e)$.
In other words, 
\[
	d\Bigl(\sum_{e\in E}b_e e\Bigr)=\sum_{e\in E} b_e(t(e)-o(e))\in C_0(X,\bbZ)
\]
for any $\sum_{e\in E}b_e e\in C_1(X,\bbZ)$.

\begin{definition}
	We define the homology of $(X,E)$ by 
	\begin{align*}
		H_0(X,\bbZ)&\coloneqq C_0(X,\bbZ)/d C_1(X,\bbZ), &
		H_1(X,\bbZ)&\coloneqq\Ker d,
	\end{align*}
	and $H_m(X,\bbZ)=\{0\}$ for $m\neq0,1$.
\end{definition}

Note that \emph{lower numberings} (as in $H_0$, $H_1,\ldots$) are used for objects pertaining to homology, 
in contrast with upper numberings (as in $H^0$, $H^1,\ldots$) for objects pertaining to cohomology.
The homology $H_0(X,\bbZ)$ is related to the connected components of $(X,E)$ as follows.

\begin{lemma}\label{lem: H0}
	If we denote by $\pi_0(X)$ the connected components
	of the graph $(X,E)$, then we have $H_0(X,\bbZ)\cong\bigoplus_{C\in\pi_0(X)}\bbZ C$.
\end{lemma}

\begin{proof}
	Our assertion follows from the fact that 
	the classes in $H_0(X,\bbZ)$ corresponding to vertices $x,x'\in X$ coincides if and 
	only if there exists a path $\vec p=(e^1,\ldots,e^N)$ from $x$ to $x'$, in other words, 
	if and only if $x$ and $x'$ are in the same connected component of $(X,E)$.  
\end{proof}

The first homology $H_1(X,\bbZ)$ is related to closed paths as follows.  A path $\vec p=(e^1,\ldots,e^N)$
in $(X,E)$ defines a chain $\alpha\coloneqq e^1+\cdots+e^N$ in $C_1(X,\bbZ)$.
Then
\[
	d(\alpha)=\sum_{i=1}^N (t(e^i)-o(e^i))=t(\vec p)-o(\vec p)\in C_0(X,\bbZ),
\]
where $t(\vec p)=t(e^N)$ and $o(\vec p)=o(e^1)$.
In particular, if the path $\vec p$ is closed, then we have $d(\alpha)=0$.
This shows that any closed path $\vec p$ in $(X,E)$ defines a homology class $\alpha\in H_1(X,\bbZ)=\Ker d$.
Moreover, we have the following.

\begin{lemma}\label{lem: H1}
	Any chain $\alpha\in H_1(X,\bbZ)$ may be expressed as a sum of closed paths.
\end{lemma}

\begin{proof}
	This fact is proved in \cite{Sun13}*{\S 4.2}.
\end{proof}

\begin{figure}[ht]
	\centering
	\includegraphics[width=9cm]{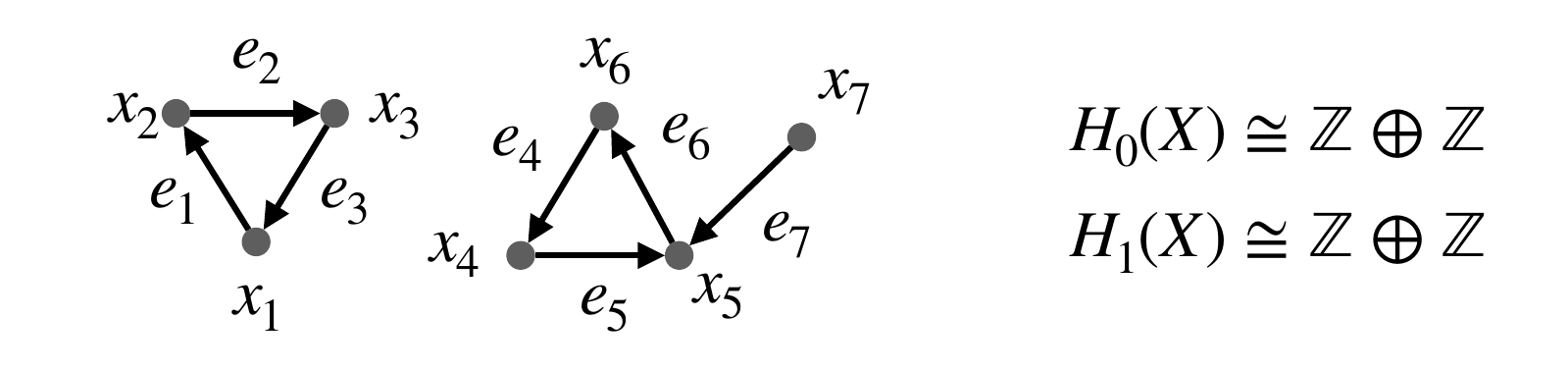}
	\caption{Example of Calculation of Homology}\label{fig: 7}
\end{figure}

In the example of Figure \ref{fig: 7}, let $X=\{x_1,\ldots,x_7\}$ and $E=\{e_1,\ldots,e_7\}\cup\{\bar e_1,\ldots, \bar e_7\}$.
Then the pair $(X,E)$ for a symmetric directed graph.  Then $\vec p_1=(e_1,e_2,e_3)$ and
$\vec p_2=(e_4,e_5,e_6)$ are closed paths of $(X,E)$.    
The vertices $x_1,x_2,x_3$ are in one connected component, and $x_4,x_5,x_6,x_7$ are in a different connected component.  Hence $\pi_0(X)=\{ C(x_1), C(x_4)\}$, where $C(x)$ denotes the connected component of $X$ containing $x\in X$.
This shows that
\[
	H_0(X)=\mathbb{Z} C(x_1)\oplus\mathbb{Z} C(x_4)\cong \mathbb{Z}\oplus\mathbb{Z}.
\]
If we let $\alpha_1\coloneqq e_1+e_2+e_3\in C_1(X,\bbZ)$,
then
\[
	d(\alpha_1)=d(e_1+e_2+e_3)=d(e_1)+d(e_2)+d(e_3)=(x_2-x_1)+(x_3-x_2)+(x_1-x_3)=0.
\]	
Hence $\alpha_1\in H_1(X)$.  Similarly $\alpha_2\coloneqq e_4+e_5+e_6\in H_1(X)$.
In fact, we have
\[
	H_1(X)=\mathbb{Z}\alpha_1\oplus\mathbb{Z}\alpha_2\cong\mathbb{Z}\oplus\mathbb{Z}.
\]

The relation between homology and cohomology is given as follows.

\begin{proposition}\label{prop: dual}
	Let $(X,E)$ be a symmetric directed graph.  We have
	\begin{align*}
		H^0(X)&=\Hom_\bbZ(H_0(X,\bbZ),\bbR),  &
		H^1(X)&=\Hom_\bbZ(H_1(X,\bbZ),\bbR).
	\end{align*}
\end{proposition}

\begin{proof}
	Note that a homomorphism $\Hom_\bbZ(C_0(X,\bbZ),\bbR)$ is determined
	by the image of $x\in X$, we have $\Hom_\bbZ(C_0(X,\bbZ),\bbR)=\Map(X,\bbR)$.
	Similarly, we have $\Hom_\bbZ(C_1(X,\bbZ),\bbR)=\Map^\alt(E,\bbR)$.
	Thus by \eqref{eq: cohomology}, we have
	\begin{align*}
		C^0(X)&=\Hom_\bbZ(C_0(X,\bbZ),\bbR),  &
		C^1(X)&=\Hom_\bbZ(C_1(X,\bbZ),\bbR).
	\end{align*}
	The differential $\partial\colon C^0(X)\rightarrow C^1(X)$ of \eqref{eq: diff A} 
	is the homomorphism induced via pull-back
	of the differential $d\colon C_1(X,\bbZ)\rightarrow C_0(X,\bbZ)$ of \eqref{eq: homology}.
	Hence by duality, we have
	\begin{align*}
		H^0(X)&=\Ker\bigl(\partial\colon \Hom_\bbZ(C_0(X,\bbZ),\bbR)\rightarrow \Hom_\bbZ(C_1(X,\bbZ),\bbR)\bigr)\\
		&= \Hom_\bbZ\bigl(C_0(X,\bbZ)/d C_1(X,\bbZ),\bbR\bigr)= \Hom_\bbZ(H_0(X,\bbZ),\bbR),\\
		H^1(X)&= \Hom_\bbZ(C_1(X,\bbZ),\bbR)/\partial \Hom_\bbZ(C_1(X,\bbZ),\bbR))\\
		&= \Hom_\bbZ(\Ker d,\bbR)= \Hom_\bbZ(H_1(X,\bbZ),\bbR).
	\end{align*}
	This proves our assertion.
\end{proof}

By Proposition \ref{prop: dual}, we see that Lemma \ref{lem: horizontal A} is in fact a consequence of
Lemma \ref{lem: H0}.  
In particular, $\dim_\bbR H^0(X)$ corresponds to the number of connected components of $(X,E)$.
Moreover, by Proposition \ref{prop: dual} and Lemma \ref{lem: H1},
we see that a class in $H^1(X)$ corresponds to the space of functions on closed paths of $(X,E)$
given by integration as in \eqref{eq: integration A}. 

%
%
%
\section{Examples}\label{sec: B}
%
%
%

%
\subsection{The Exclusion Process}\label{subsec: B1}
%

In this subsection, we focus on the exclusion process and give explicit descriptions of objects newly introduced in this article, such as the uniform functions, uniform forms and uniform cohomology.

Consider $S=\{0,1\}$ with the base state $*=0$ and $\phi(s,s')=(s',s)$. We consider any locale $X=(X,E)$. For this setting, $\Consv^\phi(S)=\{\xi : \{0,1\} \to \mathbb{R}  ; \xi(0)=0\}$ and so $c_{\phi}=1$. The class of functions with exact support $\Lambda$, which is defined in \ref{def: exact support} and denoted by $C_{\Lambda}(S^X)$, is explicitly given as 
\[
	C_{\Lambda}(S^X) = \{ f : S^X \to \mathbb{R} \ |  \ f (\eta)= a \Pi_{x \in \Lambda}\eta_x, a \in \mathbb{R}\},
\]
namely, it is a one-dimensional space. Then, Proposition \ref{prop: expansion} implies
that any function $f \in \CS$ is uniquely given as 
\[
	f = \sum_{\Lambda \subset X, |\Lambda| < \infty}a_{\Lambda} \Pi_{x \in \Lambda}\eta_x
\]
for some $a_{\Lambda} \in \mathbb{R}$ for each $\Lambda$. In particular, the space of uniform functions for this case is given as the set of functions of the form
\[
	f = \sum_{\substack{\La\subset X\\\diam{\La}\leq R}} a_{\Lambda} \Pi_{x \in \Lambda}\eta_x
\]
for some $R>0$.
In particular, $\sum_{x \in X} \eta_x$ is a uniform function. Also, if $X=\mathbb{Z}^d$, then $\sum_{x \in \mathbb{Z}^d}x_j \eta_x$ is a uniform function for $j=1,2,\dots,d$. 

The \emph{zeroth} cohomology $H^0_\unif(\State)$ is the space of functions $f \in C^0_\unif(\State)$ such that $\nabla_e f=0$ for any $e \in E$. Theorem \ref{thm: 2} claims that  $H^0_\unif(\State)$ coincides with the one-dimensional space $\{a \sum_{x \in X} \eta_x \ | \ a \in \mathbb{R}\}$. 

Next, we give some examples of the pairing $h_f$ in Proposition \ref{prop: h_f}. First note that $\Val= \{0,1,2,\dots\}$ for this setting. Let $f=(\sum_{x \in X} \eta_x)^2=\sum_{x,y \in X}\eta_x \eta_y \in C(\State_*)$.
Then $\partial f =0 \in  C^1_R(\State)$ for any $R >0$. For any $\La, \La'$ such that $d_X(\La,\La')>0$, we have
\begin{align*}
	\iota^{\La\cup\La'} f(\state)-\iota^{\La} f(\state)-\iota^{\La'}f(\state) 
	&=\Bigl(\sum_{x \in \La\cup\La'}\eta_x\Bigr)^2 -\Bigl(\sum_{x \in \La}\eta_x\Bigr)^2 
	- \Bigl(\sum_{x \in \La'}\eta_x\Bigr)^2 = 2 \sum_{x \in \La}\eta_x  \sum_{y \in \La'}\eta_y \\
	&= 2 \xi_{\La}(\state)\xi_{\La'}(\state). 
\end{align*}
Namely, $h_f(\alpha, \beta)=2\alpha \beta$ for any $\alpha,\beta\in\cM$. 
Note that $h_f \neq 0$ in this case.

%
\subsection{Counterexample for the case when $X=(\bbZ,\bbE)$ and  $c_\phi=2$.}\label{subsec: B2}
%

In Theorem \ref{thm: 1}, we assumed that the interaction is simple in the weakly transferable case.
In this subsection, we give an example of a weakly transferable locale $X$ and an
interaction $\phi$ that is irreducibly quantified and satisfies $c_\phi>1$,
but the uniform cohomology $H^1_\unif(\State)$ 
does not satisfy the conclusion of Theorem \ref{thm: 1}.
More precisely, we prove the  following.

\begin{proposition}\label{prop: counterexample}
	Suppose $\kappa=2$ so that $S=\{0,1,2\}$, and let  $X=(\bbZ,\bbE)$, which is
	weakly transferable.  If we consider the configuration with transition structure $\State$
	for the interaction $\phi\colon S\times S\rightarrow S\times S$
	given in Example \ref{example: interactions} (2),
	then we have  $H^1_\unif(\State)\neq0$.
\end{proposition}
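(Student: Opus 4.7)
My plan is to exhibit a closed uniform form $\omega \in Z^1_\unif(\State)$ that is not in the image of $\partial\colon C^0_\unif(\State) \to Z^1_\unif(\State)$. I will take $\omega \coloneqq \partial f$ where $f \in C(\State_*)$ is the non-uniform function
\[
f(\state) \coloneqq \#\{(x,y) \in \bbZ \times \bbZ : x < y,\ \eta_x = 1,\ \eta_y = 2\},
\]
which is a finite count for every $\state \in \State_*$. A direct local calculation shows that for $e = (x,x+1)$, the value $\nabla_e f(\state)$ equals $-1$ when $(\eta_x,\eta_{x+1}) = (1,2)$, equals $+1$ when $(\eta_x,\eta_{x+1}) = (2,1)$, and vanishes otherwise; hence $\omega \in C^1_1(\State) \subset C^1_\unif(\State)$, and $\omega$ is closed by Lemma \ref{lem: exact>closed}.

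The interaction is irreducibly quantified by Proposition \ref{prop: irreducibly quantified}, and the basis $\xi^{(1)},\xi^{(2)}$ identifies $\Val$ with $\bbN^2$. For intervals $\La = [a,b]$ and $\La' = [c,d]$ with $b + 1 < c$, any pair $(x,y)$ with $x < y$ that contributes to $\iota^{\La\cup\La'}f$ but not to $\iota^\La f$ or $\iota^{\La'}f$ must satisfy $x \in \La$ and $y \in \La'$, yielding
\[
\iota^{\La\cup\La'}f - \iota^\La f - \iota^{\La'}f = \xi^{(1)}_\La\,\xi^{(2)}_{\La'},
\]
so $h_f^{\La,\La'}(\alpha,\beta) = \alpha_1\beta_2$. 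Since $\bbZ$ is weakly transferable but not transferable, Proposition \ref{prop: good} tells us $\sB^k_R$ splits into two equivalence classes; fixing the class $\sC^k_R$ containing such ``left-to-right'' pairs of balls, Definition \ref{def: h case2} gives the well-defined pairing $h_f(\alpha,\beta) = \alpha_1\beta_2$, which is manifestly \emph{not} symmetric in $(\alpha,\beta)$.

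Suppose for contradiction that $\omega = \partial F$ for some $F \in C^0_\unif(\State)$. Then $\partial(F - f) = 0$, so $F - f$ is horizontal, and Lemma \ref{lem: special} together with Remark \ref{rem: intrinsic} yields $F - f = h \circ \bsxi_X$ for some $h\colon \Val \to \bbR$ with $h(0) = 0$. Since $F$ is uniform, Remark \ref{rem: converse} forces $\iota^{\La\cup\La'}F = \iota^\La F + \iota^{\La'}F$ for all pairs with $d_X(\La,\La')$ exceeding the uniform radius of $F$, and a short calculation using $\iota^W(h \circ \bsxi_X) = h(\bsxi_W)$ gives
\[
0 = h_F(\alpha,\beta) = h_f(\alpha,\beta) + h(\alpha+\beta) - h(\alpha) - h(\beta),
\]
so $\alpha_1\beta_2 = h(\alpha) + h(\beta) - h(\alpha+\beta)$ for all $\alpha,\beta \in \bbN^2$. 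The right side is symmetric in $(\alpha,\beta)$ since $\bbN^2$ is commutative, while the left is not; evaluating at $(\alpha,\beta) = ((1,0),(0,1))$ and $((0,1),(1,0))$ yields $1 = 0$, a contradiction. Hence $\omega \notin \partial C^0_\unif(\State)$ and $H^1_\unif(\State) \neq 0$.

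The main obstacle I expect is the bookkeeping with the two equivalence classes of $\sB^k_R$ in the weakly-transferable framework of \S\ref{sec: weakly}: one must verify that $h_f$ and $h_F$ are consistently computed using the same class $\sC^k_R$, that $\sC^k_R$ can be taken to contain all left-right pairs of balls on $\bbZ$ (using the explicit description of classes extracted from the proof of Proposition \ref{prop: good}), and that the cocycle identity $h_F = h_f + h(\alpha+\beta) - h(\alpha) - h(\beta)$ follows cleanly from the definitions.
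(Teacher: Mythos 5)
Your proof is correct and follows essentially the same route as the paper: the same function $f=\sum_{x<y}\bsone_{\{\eta_x=1,\eta_y=2\}}$, the same closed uniform form $\omega=\partial f$, the reduction $F-f=h\circ\bsxi_X$ via irreducible quantification, the cross-term identity $\iota^{\La\cup\La'}f-\iota^{\La}f-\iota^{\La'}f=\xi^{(1)}_{\La}\xi^{(2)}_{\La'}$ for left--right intervals, and the symmetry contradiction at $((1,0),(0,1))$ versus $((0,1),(1,0))$. The middle detour through $\sB^k_R$, $\sC^k_R$ and Proposition \ref{prop: good} is harmless but unnecessary (and is absent from the paper's proof), since your final contradiction only uses the explicit identity for far-apart interval pairs together with Remark \ref{rem: converse}, so the ``bookkeeping obstacle'' you flag never actually arises.
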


\begin{proof}
	We will explicitly construct a form $\omega\in Z^1_\unif(\State)$
	which is not integrable by a function $F\in C^0_\unif(\State)$.
	For any $W\subset\State_*$, let $\bsone_W$ be the characteristic function
	of $W$ on $\State_*$.
	Let $\omega=(\omega_e)_{e\in E}\in\Map(\Phi,\bbR)$ such that 
	\[
		\omega_e\coloneqq\bsone_{\{\eta_{o(e)}=2,\eta_{t(e)}=1\}}
		-\bsone_{\{\eta_{o(e)}=1,\eta_{t(e)}=2\}}
	\]
	if $t(e)\geq o(e)$ and $\omega_{e}\coloneqq\omega_{\bar e}$ otherwise.
	We let $f\in C(\State_*)$  be the function
	\[
		f=\sum_{y>x}\bsone_{\{\eta_x=1,\eta_y=2\}}.
	\]
	Then for any $\state\in\State_*$ and $e\in \bbE$,
	we have
	\[
		\nabe f(\state)=f(\state^e)-f(\state)
		=\sum_{y>x}\bsone_{\{\eta_x=1,\eta_y=2\}}(\state^e)-
		\sum_{y>x}\bsone_{\{\eta_x=1,\eta_y=2\}}(\state).
	\]
	If we suppose  $t(e)\geq o(e)$, then the sum of $\nabe f(\state)$ 
	cancel outside $e=(x,y)$, and we have
	\[
		\nabe f(\state)
		=\bsone_{\{\eta_{o(e)}=2,\eta_{t(e)}=1\}}(\state)-\bsone_{\{\eta_{t(e)}=1,\eta_{o(e)}=2\}}(\state)=\omega_e.
	\]
	Furthermore, 
	we have $\nabla_{\bar e}f(\state)=\omega_e=\omega_{\bar e}$, hence $\partial f=\omega$,
	which by Lemma \ref{lem: exact} shows that $\omega\in Z^1(\State_*)$.
	By definition, we have $\omega\in C^1_0(\State)\subset C^1_\unif(\State)$,
	hence $\omega\in Z^1_\unif(\State)$.
	
	We prove our assertion by contradiction.
	Suppose there exists $F\in C^0_\unif(\State)$ such that  $\partial F=\omega$.
	By taking $F-F(*)$ if necessary, we may assume that $F(*)=0$. 
	For any $\state,\state'\in\State_*$,  if $\bsxi_{\!X}(\state)=\bsxi_{\!X}(\state')$, then
	since the interaction is irreducibly quantified by Proposition \ref{prop: irreducibly quantified}, 
	there exists a path $\vec\gamma$ from $\state$ to $\state'$.  
	Since $\partial(f-F)=0$, by Lemma \ref{lem: special}, we have $(f-F)(\state)=(f-F)(\state')$.
	This shows that there exists $h\colon\Val\rightarrow\bbR$ such that
	$
		f(\state)-F(\state)=h\circ\bsxi_{\!X}(\state)
	$
	for any $\state\in\State_*$.
	By  Remark \ref{rem: converse}, we  have
	$
		\iota^{\La\cup\La'}F=\iota^{\La} F+\iota^{\La'}F
	$
	for any pair $(\La,\La')\in\sA_R$.   Hence 
	\begin{align*}
		\iota^{\La\cup\La'}f(\state)-\iota^{\La} f(\state)-\iota^{\La'}f(\state)
		&= \iota^{\La\cup\La'}(f-F)(\state)-\iota^{\La} (f-F)(\state)-\iota^{\La'}(f-F)(\state)\\
		&= h\circ\bsxi_{\La\cup\La'}(\state)-h\circ\bsxi_\La(\state)-h\circ\bsxi_{\La'}(\state)\\
		&= h\circ(\bsxi_\La(\state)+\bsxi_{\La'}(\state))-h\circ\bsxi_\La(\state)-h\circ\bsxi_{\La'}(\state).
	\end{align*}
	In particular, for $\state,\state'\in\State_*$, if $\bsxi_\La(\state)=\bsxi_{\La'}(\state')$
	and $\bsxi_\La(\state')=\bsxi_{\La'}(\state)$, then we have
	\[
		\iota^{\La\cup\La'}f(\state)-\iota^{\La} f(\state)-\iota^{\La'}f(\state)
		=\iota^{\La\cup\La'}f(\state')-\iota^{\La} f(\state')-\iota^{\La'}f(\state').
	\]
	In what follows, we prove that this it not the case.
	For any $(\La,\La')\in\sA_R$,  we have
	\begin{align*}
		\iota^{\La\cup\La'}f-\iota^{\La} f-\iota^{\La'}f
		&=\sum_{\substack{y>x\\x,y\in\La\cup\La'}}\bsone_{\{\eta_x=1,\eta_y=2\}}
		-\sum_{\substack{y>x\\x,y\in\La}}\bsone_{\{\eta_x=1,\eta_y=2\}}-\sum_{\substack{y>x\\x,y\in\La'}}\bsone_{\{\eta_x=1,\eta_y=2\}}\\
		&=\sum_{\substack{y>x\\ y\in\La,x\in\La'}}\bsone_{\{\eta_x=1,\eta_y=2\}}
		+\sum_{\substack{y>x\\ x\in\La,y\in\La'}}\bsone_{\{\eta_x=1,\eta_y=2\}}.
	\end{align*}
	Since $\La$ and $\La'$ are connected,
	replacing $\La$ by $\La'$ if necessary, we may assume that $y>x$ for any $y\in\La$ and
	$x\in\La'$.   Then we have
	\[
	\iota^{\La\cup\La'}f-\iota^{\La} f-\iota^{\La'}f
	=\sum_{\substack{y>x\\y\in\La,x\in\La'}}\bsone_{\{\eta_x=1,\eta_y=2\}}=\xi_{\La'}^{(1)}\xi_{\La}^{(2)},
	\]
	where $\xi^{(1)}$ and $\xi^{(2)}$ 
	are the conserved quantities defined in Example \ref{example: interactions} (2).
	Fix $y\in \La$ and $y'\in\La'$.  We let $\state=(\eta_x)\in\State_*$ be an element 
	such that $\eta_{y'}=1$, $\eta_y=2$ and is at base state outside $y$ and $y'$,
	and  we let $\state'=(\eta'_x)\in\State_*$ be an element 
	such that $\eta'_{y'}=2$, $\eta'_y=1$ and is at base state outside $y$ and $y'$.
	Then we have $\xi_{\La'}^{(1)}(\state)\xi_{\La'}^{(2)}(\state)=1$
	but  $\xi_{\La'}^{(1)}(\state')\xi_{\La'}^{(2)}(\state')=0$, hence we have
	\[
	\iota^{\La\cup\La'}f(\state)-\iota^{\La} f(\state)-\iota^{\La'}f(\state)
		\neq\iota^{\La\cup\La'}f(\state')-\iota^{\La} f(\state')-\iota^{\La'}f(\state'),
	\]
	which gives a contradiction as desired.
\end{proof}

%
\subsection*{Acknowledgement}
%

The authors would like to sincerely thank Megumi Harada and Hiroyuki Ochiai 
for carefully reading a preliminary 
version of this article and pointing out important improvements.
The authors would also like to thank Kei Hagihara and Shuji Yamamoto 
for detailed discussions, especially concerning the definition of 
irreducibly quantified interactions.  We are grateful to David Croydon
for pointing out that Cayley graphs also fit into our formalism.
The authors also thank Takeshi Katsura, Asuka Takatsu, Ryokichi Tanaka, and
Kazuki Yamada for comments and discussion.
The first and third authors would like to thank the Department of Mathematics at 
Columbia University, where part of this research was conducted.
The authors are very grateful to the referee for carefully reading the article and
giving helpful comments which helped to improve the article.

\begin{bibdiv}
	\begin{biblist}
		\bibselect{Bibliography}
	\end{biblist}
\end{bibdiv}

\end{document}